\theoremstyle{plain}
\newtheorem{theorem}{Theorem}[section]
\newtheorem{proposition}[theorem]{Proposition}
\newtheorem{corollary}[theorem]{Corollary}
\newtheorem{lemma}[theorem]{Lemma}
\theoremstyle{definition}
\newtheorem{definition}[theorem]{Definition}
\newtheorem*{remark}{Remark}
\newcommand{\R}{\mathbb{R}}
\newcommand{\N}{\mathbb{N}}
\newcommand{\Z}{\mathbb{Z}}
\newcommand{\SPAN}{\operatorname{span}}
\newcommand{\diam}{\operatorname{diam}}
\newcommand{\sgn}{\operatorname{sgn}}
\newcommand{\eps}{\varepsilon}
\newcommand{\Imm}{\operatorname{Im}}
\newcommand{\Hidden}[1]{}
\newcommand{\ON}{S_1^{comb}}
\newcommand{\CN}{B_1^{comb}}
\newcommand{\SG}{S}
\newcommand{\AUT}{\operatorname{Aut}}
\newcommand{\CAY}{\operatorname{Cay}}
\newcommand{\MBE}{M_2^{\mathrm{path}}}
\newcommand{\dst}{\delta^{(*)}}
\newcommand{\pr}{q}
\newcommand{\eW}{E(W,V)}
\DeclarePairedDelimiter{\abs}{\lvert}{\rvert}
\begin{document}

\title{Betti number estimates for non-negatively curved graphs}
\author{
Moritz Hehl \footnote{Leipzig University,  SECAI, moritz.hehl@uni-leipzig.de}~~~~~~~
Florentin M\"unch\footnote{Leipzig University, cfmuench@gmail.com}
}
\date{\today}
\maketitle

\begin{abstract}
In this paper, we establish Betti number estimates for graphs with non-negative Ollivier curvature, and for graphs with non-negative Bakry-\'Emery curvature, providing a discrete analogue of a classical result by Bochner for manifolds \cite{Bochner1946VectorFA}.
Specifically, we show that for graphs with non-negative Ollivier curvature, the first Betti number is bounded above by half of the minimum combinatorial vertex degree.
In contrast, for graphs with non-negative Bakry-\'Emery curvature, we prove that the first Betti number of the path homology is bounded above by the minimum combinatorial vertex degree minus one.
We further present various rigidity results, characterizing graphs that attain the upper bound on the first Betti number under non-negative Ollivier curvature. Remarkably, these graphs are precisely the discrete tori, similar to the Riemannian setting. Furthermore, we show that the results obtained using the Ollivier curvature extend to the setting of potentially non-reversible Markov chains.
Additionally, we explore rigidity cases depending on the idleness parameter of the Ollivier curvature, i.e., we characterize rigidity for bone-idle graphs with non-negative Ollivier curvature that attain the upper Betti number bound.
We further establish an upper bound on the first Betti number under a more general assumption, where non-negative Ollivier curvature is required only outside a finite subset.
Finally, we provide several examples, e.g., we prove that for a potentially non-reversible Markov chain on a cycle of length at least five, there always exists a unique path metric with constant Ollivier curvature. Moreover, this metric has non-negative Ollivier curvature, and the upper Betti number bound is attained if and only if the curvature is zero.
\end{abstract}

\tableofcontents

\section{Intorduction}
The relation between homology and curvature plays an important role in geometry.
A seminal paper by Bochner shows that for manifolds with non-negative Ricci curvature, the first Betti number is bounded from above by the dimension \cite{Bochner1946VectorFA}. Moreover, the equality only holds for the flat torus.

In the last 20 years, there has been increasing interest in finding Betti number estimates for cell complexes with certain curvature constraints.
Firstly, Forman introduced a Ricci curvature notion on cell complexes via the Bochner Weitzenb\"ock decomposition of the discrete Hodge Laplacian, and gave an upper bound for the Betti number in terms of the vertex degree in case the Forman curvature was non-negative \cite[Theorem~4.5]{forman2003bochner}.

In the following years, various different discrete Ricci curvature notions have been introduced.
In 2009, Ollivier gave a Ricci curvature notion of Markov chains via discrete transport \cite{ollivier2009ricci}.
In 2012, Lin and Yau defined a Ricci curvature for graphs via the Bakry \'Emery calculus \cite{lin2010ricci}. In the same year, Erbar and Maas gave a discrete Ricci curvature definition based on the displacement convexity of the entropy \cite{erbar2012ricci} (also see \cite{mielke2013geodesic}). One connecting theme between the different discrete Ricci curvature notions is that lower bounds can be characterized via gradient estimates for the heat semigroup of the form
\[
\|\nabla P_t f\| \leq e^{-Kt} \|\nabla f\|
\]
for suitable norms where $P_t$ denotes the heat semigroup, see \cite{lin2015equivalent,munch2017ollivier,erbar2018poincare}.

For graphs with strictly positive Bakry-\'Emery curvature, it was shown in  \cite{kempton2021homology} that the first Betti number of the path homology is vanishing. For more details about the path homology, see \cite{grigor2012homologies,grigor2019homology}. It turns out that for undirected graphs, the first homology class is equivalent to the one arising from the cell complex where 2-cells are attached to all cycles of length at most four.
The vanishing Betti number result was improved in \cite{munch2020spectrally} where the assumption of strictly positive Bakry-\'Emery curvature was relaxed, now allowing for some negative curvature.
In contrast, no Betti number estimates have been published for Ollivier or entropic curvature yet. Indeed, for entropic curvature, no Betti number number estimate can be expected as the curvature is non-local, meaning that long cycles have strictly positive entropic curvature, with the curvature converging to zero when the size of the cycle tends to infinity, \cite[Section~3]{kamtue2024entropic}.

Independent of curvature, there has been significant interest in calculating Betti numbers of metric spaces in data analysis via the so called persistent homology, see \cite{zomorodian2005Computing,Otter2015ARF}.
On the other hand, discrete Ricci curvature also proved to be an excellent tool for data analysis \cite{samal2018comparative,
sandhu2015analytical,ni2018network,
ni2019community,saucan2018discrete}.

In this paper, we give Betti number estimates for graphs with non-negative Ollivier curvature, and for graphs with non-negative Bakry-\'Emery curvature. More precisely, we prove that the first Betti number can be upper bounded in terms of the combinatorial vertex degree.
As the Betti number estimate immediately implies an explicit polynomial volume growth bound of the homology group which is quasi-isometric to a certain cover of the graph, we hope that our methods might also be useful for tackling the open question whether graphs with non-negative Ollivier curvature must have polynomial volume growth.
One of the key problems regarding this question is to define and make use of a good notion of the dimension of a graph. The maximum combinatorial vertex degree is believed to be a good choice as a graph dimension, however, this quantity has not yet been used to give dimension dependent estimates analogous to the ones from Riemannian geometry.
In the case of Bakry-\'Emery curvature, there is an explicit notion of dimension coming from the Bochner formula. Making use of a finite dimension, it was shown in \cite{munch2019li} that non-negative Bakry-\'Emery curvature indeed implies polynomial volume growth.

Besides the Betti number estimates, we also prove various rigidity results, i.e., we characterize the graphs for which the Betti number bound is attained.
It turns out that these graphs are precisely the discrete tori, similar to the Riemannian setting.
In a broader scheme, this result is a novel contribution to various recent rigidity results for graphs with lower Ollivier Ricci curvature bounds, such as the rigidity result for the diameter \cite{chen2024diameter,cushing2020rigidity,
munch2022ollivier,Cushing2024ANO,Chen2024RicciCD,
Chen2024CurvatureAL}, the classification of low degree high girth Ricci flat graphs \cite{cushing2018ricci,
cushing2021curvatures,lin2013ricci,cushing2018erratum,
lin2024graphs}, and topological rigidity results for the number of ends \cite{hua2025every,hua2024graphs}.
A Cheeger Gromoll splitting type theorem for groups under a Ollivier type curvature for groups was established in \cite{bar2022conjugation,nguyen2023cheeger}.
We now summarize our main results.

\subsection{Main results}
As a preliminary result, we establish that the first Betti number of a graph vanishes if the graph has non-negative Ollivier curvature and contains a vertex for which every incident edge has positive Ollivier curvature.

\begin{theorem}\label{th:posCurvZeroBettoIntro}
    Let $G = (V, w, \mu)$ be a finite graph with non-negative Ollivier curvature. Suppose there exists a vertex $x \in V$ such that $\kappa(x,y) > 0$ for all $y \in \ON(x)$. Then, the first Betti number satisfies \( \beta_1(M_2(G)) = 0 \).
\end{theorem}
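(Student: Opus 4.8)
The plan is to pass from topology to harmonic analysis via discrete Hodge theory. Since $M_2(G)$ is a finite CW complex obtained from $G$ by attaching $2$-cells to all cycles of length at most four, over $\R$ we have $\beta_1(M_2(G)) = \dim \mathcal{H}^1$, where $\mathcal{H}^1$ is the space of harmonic $1$-forms, i.e.\ antisymmetric edge functions $\omega$ (so $\omega(x,y) = -\omega(y,x)$) that are simultaneously closed (the signed sum of $\omega$ around every filled triangle and square vanishes) and co-closed (divergence-free, $\sum_y w_{xy}\,\omega(x,y) = 0$ at every vertex, with respect to the weights $w,\mu$). It therefore suffices to show that under the hypotheses the only harmonic $1$-form is $\omega \equiv 0$; I would argue by contradiction, assuming a nonzero $\omega \in \mathcal{H}^1$.

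The core step is a discrete Bochner--Weitzenb\"ock identity adapted to Ollivier curvature. For a harmonic form one has $0 = \langle \Delta_1 \omega, \omega\rangle$, and I would rewrite this as a sum of a manifestly non-negative ``gradient'' (Bochner) term and a curvature term bounded below by $\sum_{x}\sum_{y \in \ON(x)} w_{xy}\,\kappa(x,y)\,\omega(x,y)^2$. The identification of the curvature contribution is where the Ollivier quantity enters: here I would invoke the limit-free characterization of $\kappa(x,y)$ as an infimum of $\nabla_{xy}\Delta f$ over suitable Lipschitz test functions, applied to the components of $\omega$, so that the transport definition is converted into an algebraic inequality on edge functions while correctly accounting for the contribution of the filled cycles of length at most four. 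Non-negativity of the curvature everywhere then forces both the gradient term and the curvature term to vanish.

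From the vanishing of the curvature term I would extract the local conclusion: since $\kappa(x,y) > 0$ for every $y \in \ON(x)$, each summand $w_{xy}\,\kappa(x,y)\,\omega(x,y)^2$ is non-negative and their sum is zero, hence $\omega(x,y) = 0$ on every edge incident to the distinguished vertex $x$. The vanishing of the gradient term simultaneously says that $\omega$ is ``parallel,'' i.e.\ it cannot change across the local transport couplings; propagating this rigidity from the star of $x$ along the connected graph then forces $\omega$ to vanish on all edges, contradicting $\omega \neq 0$. This yields $\mathcal{H}^1 = 0$ and therefore $\beta_1(M_2(G)) = 0$.

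The main obstacle is the second step: Ollivier curvature is defined through optimal transport between the random-walk measures $\mu_x, \mu_y$ rather than through an intrinsic Bochner decomposition, so the delicate point is to produce an integrated identity in which $\kappa$ appears with the correct sign and in which the $2$-cells of $M_2(G)$ are exactly the cycles needed to make the closedness of $\omega$ interact with the optimal couplings. Getting the bookkeeping of triangles versus squares right---so that closed, co-closed forms are genuinely controlled by the edgewise curvature---and then turning the ``parallel'' condition into a clean propagation statement across the whole graph are the places where I expect to have to work hardest.
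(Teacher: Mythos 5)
There is a genuine gap at the step you yourself flag as the main obstacle, and it is not merely a delicate bookkeeping issue: the integrated Bochner--Weitzenb\"ock identity you propose, in which $0=\langle \square_1\omega,\omega\rangle$ decomposes into a non-negative gradient term plus a curvature term of the form $\sum_{x}\sum_{y\in \ON(x)} w(x,y)\,\kappa(x,y)\,\omega(x,y)^2$ with $\kappa$ the \emph{Ollivier} curvature, does not exist. What you are describing is precisely Forman's Bochner method, and the paper explicitly discusses why it cannot be transferred: the Ollivier curvature of an edge equals the Forman curvature only after maximizing the $2$-cell weights \emph{separately for each edge}, and in general no single global choice of weights on $X_2$ realizes the Ollivier curvature simultaneously on all edges (see the discussion following Lemma~\ref{Prop:TransPlan}, citing \cite{jost2021characterizations}). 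Consequently non-negative Ollivier curvature is strictly weaker than non-negative Forman curvature for any admissible weighting, and the curvature term in any honest Hodge-Laplacian decomposition can be strictly smaller than the Ollivier quantity --- so vanishing of the total cannot be used to conclude that each summand $w(x,y)\kappa(x,y)\omega(x,y)^2$ vanishes. The subsequent ``parallel transport'' propagation step is also left unspecified, but the argument already breaks before reaching it.

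The paper's actual proof avoids any Bochner identity. It lifts a harmonic $1$-form $\alpha$ with $\lVert\alpha\rVert_\infty=1$ to the universal cover, where $\alpha\circ \pr=\widetilde\delta f_\alpha$ for a harmonic $1$-Lipschitz function $f_\alpha$; the path-coupling lemma (Lemma~\ref{lem:LipFun}) shows that under non-negative Ollivier curvature the Lipschitz constant of $f_\alpha$ is attained on an edge at \emph{every} vertex (Lemma~\ref{lem:AlphaEins}). At the distinguished vertex $x$ this produces an edge $\widetilde x\sim\widetilde y$ with $\nabla_{\widetilde y\widetilde x}f_\alpha=1$, so $f_\alpha$ is an admissible test function in the infimum defining $\kappa$, giving $\kappa(\widetilde x,\widetilde y)\le \nabla_{\widetilde x\widetilde y}\Delta f_\alpha=0$ and contradicting strict positivity of the curvature at $x$. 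If you want to rescue a local-to-global argument in the spirit of your last paragraph, the dual (Lipschitz-function) characterization of $\kappa$ together with the universal cover is the mechanism to use, not a Weitzenb\"ock decomposition.
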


This theorem reappears as Theorem~\ref{th:posCurvZeroBetto}. Our first main result is the Betti number estimate for graphs with non-negative Ollivier curvature.
\begin{theorem}\label{th:MainResIntro}
	Let $G=(V,w,\mu)$ be a finite graph with non-negative Ollivier curvature. Then
	\begin{equation*}
		\beta_{1}(M_{2}(G)) \leq \frac{\deg_{\min}}{2}.
	\end{equation*}
\end{theorem}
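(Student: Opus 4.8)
The plan is to realize $\beta_{1}(M_{2}(G))$ as the dimension of a space of harmonic $1$-forms and then run a discrete Bochner argument: non-negative Ollivier curvature will force every harmonic $1$-form to be parallel, and the pairing of edges induced by the optimal transport couplings on the flat (zero-curvature) edges will supply the decisive factor $\tfrac{1}{2}$, matching the sharp example of discrete tori.

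First I would set up Hodge theory for $M_{2}(G)$. Writing $C^{1}$ for the antisymmetric functions on oriented edges with the natural weighted inner product, and $d,\delta$ for the coboundary and its adjoint, a $1$-form $\omega$ is \emph{harmonic} if $d\omega=0$ (its signed sum vanishes around every triangle and every $4$-cycle bounding a $2$-cell of $M_{2}(G)$) and $\delta\omega=0$ (its weighted divergence vanishes at each vertex). Since $G$ is finite, Hodge decomposition gives $\beta_{1}(M_{2}(G))=\dim\mathcal{H}^{1}$, where $\mathcal{H}^{1}$ denotes the harmonic $1$-forms, so it suffices to prove $\dim\mathcal{H}^{1}\le\deg_{\min}/2$.

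Next comes the Bochner step, which I expect to be the main obstacle. For each oriented edge $(x,y)$ fix an optimal coupling $\pi_{xy}$ between the walk measures $m_{x}$ and $m_{y}$ realizing $\kappa(x,y)$. The goal is a Weitzenböck-type identity of the shape
\[
0=\langle\Delta_{1}\omega,\omega\rangle=\|\nabla\omega\|^{2}+\sum_{(x,y)}\kappa(x,y)\,|\omega(x,y)|^{2}\,w_{xy},
\]
where $\Delta_{1}=d\delta+\delta d$ and $\nabla$ is a rough Laplacian built from the couplings $\pi_{xy}$. The difficulty is that, unlike Bakry-\'Emery curvature, Ollivier curvature carries no ready-made algebraic Bochner formula; I would instead derive the identity combinatorially, transporting $\omega$ along $\pi_{xy}$, comparing incident edge values, and absorbing the $W_{1}$-optimality of $\pi_{xy}$ into the curvature term. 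Granting this, harmonicity together with $\kappa\ge 0$ forces both summands to vanish: $\omega$ is parallel (preserved by every coupling $\pi_{xy}$) and is supported on the zero-curvature edges. Note that Theorem~\ref{th:posCurvZeroBettoIntro} is the extreme case of the same mechanism, where positivity at all edges of one vertex annihilates $\omega$ there, hence everywhere by parallelism.

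Finally I would localize at a vertex $x_{0}$ of minimum degree. Parallelism means $\omega$ is determined by the vector $(\omega(x_{0},y))_{y\in\ON(x_{0})}$, so by connectedness of $G$ the restriction map $\Phi:\mathcal{H}^{1}\to\R^{\ON(x_{0})}$, $\omega\mapsto(\omega(x_{0},y))_{y}$, is injective and $\dim\mathcal{H}^{1}\le\dim\Imm\Phi$. Each coupling $\pi_{x_{0}y}$ along a flat edge is a distance-preserving matching of $\ON(x_{0})$ with $\ON(y)$, and chaining such matchings around the flat edges at $x_{0}$ should assemble into a fixed-point-free involution $\sigma$ on $\ON(x_{0})$ pairing each neighbor with an antipodal one; parallelism then yields the antisymmetry $\omega(x_{0},y)=-\omega(x_{0},\sigma(y))$, exactly as the coordinate forms behave on a discrete torus. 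Hence $\Imm\Phi$ is pinned down by one value per $\sigma$-pair, giving $\dim\Imm\Phi\le|\ON(x_{0})|/2=\deg_{\min}/2$ and proving the bound. The two delicate points to nail down are the construction of $\sigma$ (why the flat couplings glue into a single fixed-point-free involution) and the passage from parallelism to the antipodal antisymmetry; this is precisely where the factor $\tfrac{1}{2}$, and thus the sharpness witnessed by discrete tori, genuinely originates.
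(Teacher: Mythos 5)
Your overall skeleton (identify $\beta_1$ with a space of harmonic $1$-forms, localize at a minimum-degree vertex via a restriction map, and extract the factor $\tfrac12$ from a two-fold degeneracy of harmonic forms at each vertex) is the right shape, and your last step is in the same spirit as the paper's. But the engine you propose — a Weitzenb\"ock identity $0=\langle\Delta_1\omega,\omega\rangle=\|\nabla\omega\|^2+\sum_{(x,y)}\kappa(x,y)|\omega(x,y)|^2 w_{xy}$ with $\kappa$ the \emph{Ollivier} curvature — is a genuine gap, and you correctly flag it as the main obstacle without resolving it. Such an identity is essentially a Forman-type Bochner formula, and the paper explicitly points out that non-negative Ollivier curvature is \emph{strictly weaker} than non-negative Forman curvature: the $2$-cell weights optimizing the Forman curvature can differ from edge to edge, so no single Hodge Laplacian $\Delta_1$ realizes the Ollivier curvature as its zeroth-order term. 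The paper's actual mechanism avoids any $1$-form Bochner formula: it lifts $\alpha\in H_1(M_2(G))$ to the universal cover, writes it as $\widetilde\delta f_\alpha$ for a harmonic Lipschitz function $f_\alpha$, and uses an \emph{improved optimal transport plan} (path-coupling) argument to show that if $f_\alpha$ attains its Lipschitz constant on some pair, then it attains it on a strictly farther pair, and propagates this to every vertex. The output is Lemma~\ref{lem:AlphaEins}: every $\alpha$ with $\|\alpha\|_\infty=1$ satisfies $\alpha(x,y)/d(x,y)=1$ and $\alpha(z,x)/d(z,x)=1$ for some neighbors $y,z$ of \emph{every} vertex $x$.

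Your final step also overreaches. You want a single fixed-point-free involution $\sigma$ on $\ON(x_0)$ with $\omega(x_0,y)=-\omega(x_0,\sigma(y))$ for all harmonic $\omega$; but the couplings $\pi_{x_0y}$ match $\ON(x_0)$ with $\ON(y)$, not $\ON(x_0)$ with itself, and no such global pairing exists in general — the paper only produces this rigid structure in the equality case $\beta_1=\deg_{\max}/2$ (Lemmas~\ref{lem:starrheit} and~\ref{lem:alphauni}). For the inequality itself the paper needs much less: a purely linear-algebraic fact (Lemma~\ref{lem:LinAlg}) that a subspace $E\subseteq\R^n$ in which every vector attains its sup norm in at least $k$ coordinates has $\dim E\le n/k$. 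The two coordinates where $|\psi\alpha|=\|\psi\alpha\|_\infty$ may vary with $\alpha$; no antipodal involution is required. So the factor $\tfrac12$ originates not from an antisymmetry relation but from this sup-norm multiplicity bound applied with $k=2$ to the (injective) restriction map $\psi:H_1(M_2(G))\to\R^{\ON(x_0)}$. To repair your argument you would need to replace the Weitzenb\"ock step by the transport-plan propagation of the Lipschitz extremum, and the involution step by a multiplicity-of-extrema dimension count.
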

This theorem reappears as Theorem~\ref{th:MainRes}.
We prove a similar result for non-negative Bakry-\'Emery curvature, see Theorem~\ref{th:BEBetti}, however with a worse constant.
The Betti number estimate for non-negative Ollivier curvature turns out to be sharp, and we characterize equality.

\begin{theorem}\label{th:RigidityIntro}
		Let $G=(V,w,\mu)$ be a finite graph. The following statements are equivalent.
		\begin{enumerate}[(i)]
			\item $G$ has non-negative Ollivier curvature and $\beta_{1}(M_{2}(G)) = \frac{\deg_{\max}}{2}$.
			\item $G$ is a discrete flat torus.
		\end{enumerate}
\end{theorem}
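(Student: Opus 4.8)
The plan for Theorem~\ref{th:RigidityIntro} is to prove both implications separately, with the bulk of the work in the direction (i)$\Rightarrow$(ii). The direction (ii)$\Rightarrow$(i) should be a verification: on a discrete flat torus $\Z^d$ (or a finite quotient thereof), every edge has Ollivier curvature exactly zero, so the curvature is trivially non-negative. For the Betti number, the torus $M_2(G)$ should have the homotopy type of the topological torus, giving $\beta_1 = d$, and one checks that $\deg_{\max} = 2d$, so that $\beta_1 = \deg_{\max}/2$ holds with equality. The main point here is to confirm that the path/$M_2$ homology of the discrete torus agrees with its expected topological value, which should follow from the relationship between $M_2(G)$ and the standard cell complex mentioned in the introduction.

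For the hard direction (i)$\Rightarrow$(ii), I would start from the Betti number estimate of Theorem~\ref{th:MainResIntro}, noting that equality is being forced simultaneously with $\deg_{\min}$ replaced by $\deg_{\max}$. This immediately suggests that the graph must be \emph{regular}: if $\deg_{\min} < \deg_{\max}$, then $\beta_1 \le \deg_{\min}/2 < \deg_{\max}/2$, a contradiction. So the first step extracts regularity, say $\deg(x) = 2d$ for all $x$, with $\beta_1 = d$. The second step is to show that the curvature must be \emph{identically zero}: by the contrapositive of Theorem~\ref{th:posCurvZeroBettoIntro}, if any vertex had all incident edges of positive curvature we would get $\beta_1 = 0$, contradicting maximality; more strongly, I expect the equality case of the Betti estimate to force $\kappa(x,y) = 0$ on every edge. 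This Ricci-flatness is the discrete analogue of the Bochner argument, where equality forces parallel harmonic one-forms and a flat metric.

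The central and most delicate step is then to promote ``regular and Ricci-flat with maximal Betti number'' to the rigid structural conclusion that $G$ is a discrete flat torus. I would look to construct $d$ linearly independent harmonic one-forms (or, combinatorially, $d$ independent closed $M_2$-cocycles) from the condition $\beta_1 = d$, and use the zero-curvature condition to show these forms are \emph{parallel}, i.e., invariant in a suitable combinatorial sense along edges. Parallelism of a maximal family of such forms should yield $\Z^d$ worth of commuting translation-type symmetries, forcing the universal cover (or the relevant $M_2$-cover) to be the grid $\Z^d$ and hence $G$ itself to be a finite quotient, namely a discrete flat torus. The main obstacle I anticipate is precisely this last passage: unlike the smooth setting, there is no a priori parallel transport, so one must build the translational structure purely from the combinatorics of zero Ollivier curvature, controlling how neighborhoods are matched by the optimal transport plans that witness $\kappa = 0$ on each edge. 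I expect the equality analysis of the transport coupling used to prove Theorem~\ref{th:MainResIntro} to be exactly the tool that pins down this rigid local-to-global structure.
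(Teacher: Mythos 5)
Your outline of the hard direction (i)$\Rightarrow$(ii) matches the paper's actual strategy quite closely. The paper first extracts regularity exactly as you do (Theorem~\ref{th:MainRes} forces $\deg_{\min}=\deg_{\max}$, see Lemma~\ref{lem:alphauni}), and then realizes your ``parallel one-forms'' idea concretely: Lemma~\ref{lem:AlphaEins} (every harmonic one-form attains its sup norm on at least two edges at \emph{every} vertex, proved via the equality analysis of improved transport plans in Lemma~\ref{lem:LipFun}), the linear-algebra rigidity Lemma~\ref{lem:starrheit}, and the permutation-matrix Lemma~\ref{lem:perm} together yield a basis $(\alpha_1,\dots,\alpha_m)$ of $H_1(M_2(G))$ such that each $\alpha_i$ is supported, at every vertex, on exactly two incident edges with values $\pm d$. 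These one-forms induce translation maps $\phi_i$, which are shown to commute (Lemma~\ref{lem:phiCommute}) and to be graph automorphisms, and $G$ is identified with the Cayley graph of the abelian group they generate (Theorem~\ref{th:GisomorphCay}); the weight and metric conditions are pinned down in Lemmas~\ref{lem:GraphsWeights} and~\ref{lem:LaengeSeiten}. So your ``parallel forms $\Rightarrow$ commuting translations $\Rightarrow$ abelian Cayley structure'' plan is essentially the paper's proof, and your expectation that the transport-coupling equality analysis is the key tool is correct.

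The genuine gap is in the direction (ii)$\Rightarrow$(i). You claim that on a discrete flat torus the curvature is ``trivially'' zero and that $\beta_1(M_2(G))=d$ because $M_2(G)$ ``should have the homotopy type of the topological torus.'' The second claim is false for a general finite quotient of $\Z^d$: by construction $M_2(G)$ glues 2-cells onto \emph{all} 3- and 4-cycles (and certain 5-cycles), so any torus containing a short relation loses homology. The cycle $C_4$ (a quotient of $\Z$) has positive Ollivier curvature and $\beta_1(M_2(C_4))=0<1=\deg_{\max}/2$; likewise $C_4\times C_n$ has non-negative curvature but $\beta_1(M_2)=1<2$. Under your reading of ``discrete flat torus,'' implication (ii)$\Rightarrow$(i) is simply false. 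The paper resolves this by building the obstruction into the definition: Definition~\ref{def:Torus}(v) \emph{requires} $\beta_1(M_2(T))=m$, and in the combinatorial-distance case Corollary~\ref{cor:BettiSharpCombDist}(iv) replaces this by the condition that no nontrivial product of generators of length at most $5$ equals the identity. Relatedly, the paper's torus is a weighted object with a general path metric (the theorem is stated for arbitrary $G=(V,w,\mu)$, so the rigidity must also characterize $w$, $\mu$, and $d$), and its non-negative curvature is not automatic but is proved by an explicit transport-plan construction (Lemma~\ref{lem:nonnegORC}); your sketch does not address the weighted structure at all.
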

This theorem reappears as Theorem~\ref{th:CharacBS}.
For the definition of a discrete flat torus, see Definition~\ref{def:Torus}.
We call a graph Ollivier Betti sharp if satisfies $(i)$ of the above theorem.
In the unweighted case, a discrete torus corresponds to an abelian Cayley graph for which all cycles of length at most five are induced by commutators, see Corollary~\ref{cor:BettiSharpCombDist}.

We remark that for the rigidity result, we refer to the maximal and not the minimal vertex degree. The reason is that there is a variety of examples for which $\beta_1 = \deg_{\min}/2$ with quite different combinatorial structures which seem out of reach to characterize, see Section~\ref{sec:Examples}.

Furthermore, we show that these results extend to the broader setting of potentially non-reversible Markov chains; see Subsection~\ref{Sec:NonRevCase} for a detailed discussion. The generalizations of Theorem~\ref{th:posCurvZeroBettoIntro}, Theorem~\ref{th:MainResIntro}, and Theorem~\ref{th:RigidityIntro} to the non-reversible case can be found in Theorem~\ref{th:Betti_vanish_non_rev}, Theorem~\ref{th:Main_res_non_rev}, and Theorem~\ref{th:Rigidity_Non_Rev}, respectively.

We also investigate the rigidity cases depending on the idleness parameter of the Ollivier curvature. In particular, we characterize rigidity for the case of curvature zero for every choice of the idleness parameter which is called bone-idle in the literature, see \cite{bourne2018ollivier}.

\begin{theorem}
    Let $G=(V,w,\mu)$ be a finite graph with $\sum_y w(x,y) = \mu(x)$ for all vertices $x\in V$. Then, the following are equivalent. 
    \begin{itemize}
        \item[$(i)$] $G$ is Ollivier-Betti sharp and bone-idle.
        \item[$(ii)$] $G$ is Ollivier-Betti sharp and has non-negative Ollivier curvature without idleness.
        \item[$(iii)$] $G$ is a discrete flat Torus with constant measure and parallel edges having same length.
    \end{itemize}
\end{theorem}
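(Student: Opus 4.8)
The plan is to route all three statements through the rigidity theorem and then cleanly separate the ``topological'' content from the ``idleness'' content. First note that both $(i)$ and $(ii)$ presuppose that $G$ is Ollivier-Betti sharp, while $(iii)$ asserts that $G$ is a discrete flat torus (Definition~\ref{def:Torus}); by Theorem~\ref{th:RigidityIntro} these two properties are equivalent, so throughout I may assume that $G$ is a discrete flat torus and only need to track the idleness-dependent hypotheses. In particular, flatness of the torus yields that the (Lin--Lu--Yau) Ollivier curvature vanishes on every edge, $\kappa(x,y)=0$, which is the single geometric input I feed into the idleness analysis.

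Second, I establish $(i)\Leftrightarrow(ii)$ purely from concavity. Recall that for each edge $(x,y)$ the idleness profile $p\mapsto\kappa_p(x,y)$ is concave on $[0,1]$ with $\kappa_1(x,y)=0$, and that $\kappa(x,y)=\lim_{p\to 1}\kappa_p(x,y)/(1-p)$, so $\kappa(x,y)=0$ forces the left derivative of $\kappa_\bullet(x,y)$ at $p=1$ to vanish. A concave function on $[0,1]$ that is zero with zero slope at the right endpoint lies below its horizontal tangent, hence $\kappa_p(x,y)\le 0$ for all $p$. Thus under $(ii)$, where $\kappa_0(x,y)\ge 0$, we obtain $\kappa_0(x,y)=0$; having $\kappa_0(x,y)=\kappa_1(x,y)=0$ with concavity squeezes $\kappa_p(x,y)\equiv 0$, i.e.\ bone-idleness, giving $(ii)\Rightarrow(i)$. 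The converse is immediate since bone-idleness includes $\kappa_0(x,y)=0\ge 0$. Consequently, on a flat torus both $(i)$ and $(ii)$ reduce to the single condition $\kappa_0(x,y)=0$ for every edge.

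Third, for $(iii)\Rightarrow(i)$ I compute $\kappa_0$ directly on the symmetric torus. Writing the torus as an abelian Cayley graph, translation by the generator $s$ with $y=x+s$ carries the neighbourhood of $x$ bijectively onto that of $y$; when the measure is constant and parallel edges share a common length, the no-idleness jump distributions $m_x^0$ and $m_y^0$ have translation-invariant transition weights, so the ``parallel'' coupling that pushes the mass sitting at $x+t$ onto $y+t$ is admissible and has total cost exactly $d(x,y)$. This realizes $W_1(m_x^0,m_y^0)=d(x,y)$, i.e.\ $\kappa_0(x,y)=0$; since $(iii)$ exhibits a flat torus (hence Ollivier-Betti sharp by Theorem~\ref{th:RigidityIntro}) with $\kappa(x,y)=0$, the squeeze above upgrades this to bone-idleness, which is $(i)$.

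Finally, the main work is $(i)\Rightarrow(iii)$: I must upgrade $\kappa_0(x,y)=0$ on every edge of a flat torus to constant measure and equal parallel lengths. The strategy is to show that $\kappa_0(x,y)=0$ saturates a Kantorovich lower bound attainable only by an exact translation coupling. Concretely, using the $1$-Lipschitz coordinate functions on the neighbourhood of the edge (the local signed displacement along the $s$-direction) as dual test functions, the identity $W_1(m_x^0,m_y^0)=d(x,y)$ forces every optimal plan to move each atom of $m_x^0$ by the full edge length in the $s$-direction with no lateral slack; on the Cayley structure this pins the transition probabilities $w(x,x+t)/\mu(x)$ to be independent of the base point $x$, i.e.\ translation invariant. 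A short algebraic step then closes the argument: translation invariance together with the symmetry $w(x,x+t)=w(x+t,x)$ yields $\mu(x+t)/\mu(x)=q(t)/q(-t)$ for a direction-dependent constant $q$, and since the logarithmic ratios $\log\bigl(q(t)/q(-t)\bigr)$ define a homomorphism from the finite abelian group into $\R$, they must vanish; hence $\mu$ is constant and, consequently, parallel edges carry equal weight and therefore equal length, which is $(iii)$. The hard part is precisely this rigidity of the optimal coupling: I expect the main obstacle to be proving that vanishing non-idle curvature, rather than merely permitting some cheap transport, actually rules out all the ``shortcut'' couplings that a flat torus could a priori support, so that translation invariance is forced and not just allowed.
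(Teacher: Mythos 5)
Your overall architecture---reduce everything to the flat-torus case via the rigidity theorem, handle $(i)\Leftrightarrow(ii)$ by concavity of the idleness profile around the tangent at the idle endpoint, and prove the implication from $(iii)$ by exhibiting the translation coupling---coincides with the paper's proof of Theorem~\ref{th:OBSandBI} (up to your reversed idleness convention). The concavity squeeze and the explicit parallel coupling are essentially the paper's $(ii)\Rightarrow(i)$ and $(iii)\Rightarrow(ii)$ steps. One small point: the input ``$\kappa(x,y)=0$ on every edge of a flat torus'' is not immediate from Definition~\ref{def:Torus} (which via Lemma~\ref{lem:nonnegORC} only gives $\kappa\ge 0$); the paper obtains $\kappa\le 0$ from Betti sharpness by using the harmonic $1$-form $\alpha_i$ whose lift $f_{\alpha_i}$ is a $1$-Lipschitz harmonic function with $\nabla_{yx}f_{\alpha_i}=1$, so that $\kappa(x,y)\le\nabla_{xy}\Delta f_{\alpha_i}=0$. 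You should supply this argument rather than assert the vanishing as a consequence of ``flatness''.

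The genuine gap is in $(i)\Rightarrow(iii)$, and you flag it yourself: you assert that $W_1(m_x^1,m_y^1)=d(x,y)$ ``forces every optimal plan to move each atom by the full edge length with no lateral slack,'' hence translation invariance of $p$, but you give no argument. Complementary slackness against the single potential $f_{\alpha_i}$ only restricts optimal couplings to pairs $(u,v)$ with $f_{\alpha_i}(v)-f_{\alpha_i}(u)=d(u,v)$; this does not exclude couplings that permute mass among the lateral neighbours $a_j^{\pm1}x\mapsto a_{j'}^{\pm1}y$ or reroute the mass at $a_i^{-1}x$, so translation invariance is permitted but not forced. The paper closes this in Lemma~\ref{lem:constdist} by a sharper duality argument: it perturbs the potential to the minimal Lipschitz extension $g$ of $(f_{\alpha_i}+\eps 1_x)|_{\CN(y)}$, checks that $(g-f_{\alpha_i})|_{\CN(x)}=\eps(1_x+1_{a_i^{-1}x})$, and reads off from Kantorovich duality the inequality $p(y,x)\le p(x,a_i^{-1}x)$; this is then closed into the chain $p(y,x)\le p(x,a_i^{-1}x)\le p(x,y)\le p(y,x)$ using the harmonicity identity $w(x,a_ix)d(x,a_ix)=w(x,a_i^{-1}x)d(x,a_i^{-1}x)$ together with the crucial choice of $x$ minimizing $d(\cdot,a_i\cdot)$ and a w.l.o.g.\ ordering of $\mu(x),\mu(y)$, after which the resulting equalities $\mu(x)=\mu(a_ix)$ and $d(a_i^{-1}x,x)=d(x,a_ix)=d(a_ix,a_i^2x)$ are propagated to all of $V$ by iterating along the torus directions. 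Without a substitute for this perturbation-plus-minimization step your proof of $(i)\Rightarrow(iii)$ is incomplete; your group-homomorphism argument for the constancy of $\mu$ is a clean alternative to the paper's direct derivation, but it sits downstream of the unproven translation invariance.
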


This theorem reappears as Theorem~\ref{th:OBSandBI}. Additionally, we establish an upper bound on the first Betti number under a more general assumption, where non-negative Ollivier curvature is required only outside a finite subset.

\begin{theorem}
    Let $G=(V,w,\mu)$ be a finite graph and let $W \subseteq V$ be a non-empty set such that $\kappa(x,y) \geq 0$ if $x,y\in V\setminus W$. Then
    \[
        \beta_1(M_2(G)) \leq \vert \eW \vert,
    \]
    where $\eW = \{ x \sim y: x \in W\}$ denotes the set of edges with at least one endpoint in $W$.
\end{theorem}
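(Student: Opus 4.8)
The plan is to realize $\beta_1(M_2(G))$ as the dimension of the space $\mathcal{H}$ of harmonic $1$-forms on the cell complex $M_2(G)$, and to bound $\dim\mathcal{H}$ by constructing an injective linear map into $\R^{\eW}$. Explicitly, I would consider the restriction $R\colon\mathcal{H}\to\R^{\eW}$, $R\phi=\big(\phi(x,y)\big)_{(x,y)\in\eW}$, and show that $R$ is injective; since $\dim\R^{\eW}=\lvert\eW\rvert$, this gives $\beta_1(M_2(G))=\dim\mathcal{H}\le\lvert\eW\rvert$. We may assume $G$ connected, treating components separately, and the case $V\setminus W=\varnothing$ is trivial since then $\eW$ is the whole edge set.

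The crucial structural input is the Bochner-type identity underlying the proof of Theorem~\ref{th:MainResIntro}: for a harmonic $1$-form $\phi$ it balances a non-negative gradient energy against the curvature-weighted sum $\sum_{x\sim y}\kappa(x,y)\,w(x,y)\,\lvert\phi(x,y)\rvert^{2}$. When the curvature is non-negative on every edge this forces the gradient energy to vanish, so that $\phi$ is parallel, which is exactly the mechanism producing the bound $\deg_{\min}/2$. The key observation for the present, weaker hypothesis is the following: if $\phi\in\ker R$, then $\phi$ vanishes on every edge of $\eW$, so all summands indexed by edges meeting $W$ drop out, and the remaining curvature sum runs only over edges with both endpoints in $V\setminus W$, where $\kappa\ge 0$ by assumption. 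Thus the curvature contribution is again non-negative, the gradient energy must vanish, and $\phi$ is parallel. In other words, restricting to $\ker R$ neutralises precisely the edges on which the curvature is allowed to be negative.

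It then remains to show that a parallel harmonic form $\phi$ with $\phi\vert_{\eW}=0$ vanishes identically. This is the step where the non-emptiness of $W$ is used: the edges of $\eW$ provide a non-empty seed on which $\phi=0$, and parallel transport across the non-negatively curved edges should propagate this vanishing throughout the connected graph. The point is that parallel transport across an interior edge $x\sim y$ matches all edges in the star of $x$ with those in the star of $y$, including edges that lie in $\eW$; since $\phi$ vanishes on the latter, the matching forces the interior values adjacent to them to vanish as well, and iterating along paths to the remaining interior edges yields $\phi\equiv 0$. Combined with the previous paragraph, this proves $\ker R=0$ and hence the theorem.

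The main obstacle is this final propagation argument. Unlike the single-vertex injectivity in the proof of Theorem~\ref{th:MainResIntro}, where annihilating the entire covector at one vertex and transporting immediately spreads the zero everywhere, the condition $\phi\vert_{\eW}=0$ only annihilates the components of $\phi$ pointing towards $W$; the components internal to $V\setminus W$ are controlled only indirectly, through the direction-matching built into the optimal couplings defining the Ollivier curvature. Making this rigorous for general, possibly non-unique, couplings rather than in the transparent model case of a discrete torus is where I expect the bulk of the technical work to lie; a clean way to package it would be a maximum principle showing that $\lvert\phi\rvert$ attains its maximum on $\eW$, where it is zero.
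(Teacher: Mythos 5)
Your overall reduction coincides with the paper's: both arguments bound $\beta_1(M_2(G))$ by showing that the restriction map $H_1(M_2(G)) \to \R^{\eW}$ is injective, i.e.\ that a harmonic $1$-form vanishing on every edge meeting $W$ vanishes identically. However, the mechanism you propose for this injectivity has a genuine gap, in fact two. First, the ``Bochner-type identity'' you invoke --- a balance between a non-negative gradient energy and a curvature-weighted sum $\sum_{x\sim y}\kappa(x,y)\,w(x,y)\,|\phi(x,y)|^{2}$ --- is the Forman/Hodge--Weitzenb\"ock mechanism, and the paper explicitly notes at the end of the section on Ollivier curvature that it is \emph{not} available here: non-negative Ollivier curvature is strictly weaker than non-negative Forman curvature for every admissible choice of $2$-cell weights, so no such identity with the Ollivier $\kappa$ as coefficients holds. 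It is also not how Theorem~\ref{th:MainRes} is proved; there the bound $\deg_{\min}/2$ comes from lifting $\alpha$ to the universal cover, showing via improved transport plans (Lemma~\ref{lem:LipFun}) that the resulting harmonic Lipschitz function attains its Lipschitz constant in two directions at every vertex (Lemma~\ref{lem:AlphaEins}), and then applying the linear-algebra Lemma~\ref{lem:LinAlg}; no energy identity and no notion of ``parallel'' form appears. Second, the step you yourself flag as the main obstacle --- propagating the vanishing from $\eW$ into the interior --- is precisely the content of the theorem and is left unproved; ``parallel transport matching the stars of $x$ and $y$'' is not defined for Ollivier curvature and does not follow from the (generally non-unique) optimal couplings.

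For comparison, the paper's actual argument runs as follows: assume $\alpha \in H_1(M_2(G))$ with $\alpha|_{\eW}=0$ and $\lVert\alpha\rVert_\infty = 1$, lift to the universal cover to obtain a $1$-Lipschitz harmonic function $f_\alpha$, and pick $\widetilde x_0, \widetilde y_0$ realizing $f_\alpha(\widetilde x_0)-f_\alpha(\widetilde y_0) = \widetilde d(\widetilde x_0,\widetilde y_0)$. As long as no preimage of $W$ lies on a geodesic between the current pair, Lemma~\ref{lem:LipFun} applies (only curvature away from $W$ is used) and lets one either strictly increase the distance between the pair (part $(i)$) or move one endpoint along a geodesic toward $\pr^{-1}(W)$ while keeping the Lipschitz constant attained (part $(ii)$); the quantitative threshold $3\diam(G)\,d_{\max}/d_{\min}$ guarantees the two endpoints never merge. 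The process terminates with a pair joined by a geodesic through some $\widetilde w \in \pr^{-1}(W)$, along which $f_\alpha$ changes at unit rate, so $\alpha$ is nonzero on an edge of $\eW$ --- the desired contradiction. Your suggested ``maximum principle showing $|\phi|$ attains its maximum on $\eW$'' is the right intuition, but making it rigorous requires exactly this transport-plan machinery rather than a Bochner formula.
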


This theorem reappears as Theorem~\ref{th:BettiNumEstSomeNegCurv}. Moreover, we give several examples.
For a potentially non-reversible Markov chain on a cycle of at least five vertices, we show the following:
\begin{itemize}
\item 
There exists a path metric on the cycle with constant Ollivier curvature.
\item This metric is unique up to scaling.
\item This metric has non-negative Ollivier curvature.
\item This metric has Ollivier curvature zero if and only if the graph is Ollivier-Betti sharp. 
\end{itemize}
These results can be found in Section~\ref{sec:CycleGraphs}.


\section{Setup and notation}
We first introduce weighted graphs equipped with a path metric and a method to assign a two-dimensional weighted CW complex to the graph such that taking universal cover preserves curvature.
Then, we discuss how to lift harmonic 1-forms to the universal cover and that the lifted one-forms are actually the gradient of harmonic functions.
Finally, we introduce the concept of Ollivier Ricci curvature.

\subsection{Weighted graphs and CW complexes}

A weighted graph $G=(V,w,\mu)$ consists of a countable vertex set $V$, a symmetric edge weight $w:V\times V\to [0,\infty)$ which is zero on the diagonal, and a vertex weight $\mu:V \to (0,\infty)$. We will write $x\sim y$ if $w(x,y) > 0$. A graph is called connected if for any $x,y \in V$, there exists a path $\{x_i\}_{i=0}^n \subseteq V$ connecting $x$ and $y$, i.e.,
\[
    x = x_0 \sim \ldots \sim x_n =y.
\] 
In the following, we always assume that $G$ is connected.
Let $\ON(x) = \{y\in V: x\sim y\}$ denote the open neighbourhood of a vertex $x\in V$, and denote by $\CN(x) = \ON(x)\cup\{x\}$ the closed neighbourhood.
We always assume that $G$ is locally finite, i.e., for all $x \in V$, the neighbourhood $\ON(x)$ is finite. The combinatorial degree of a vertex $x \in V$ is denoted by $\deg(x) = \vert \ON(x)\vert$. The minimum degree of a graph $G$ is denoted by $\deg_{\min} = \min_{x\in V}\deg(x)$ and the maximum degree is denoted by $\deg_{\max} = \max_{x\in V}\deg(x)$. A symmetric function $d:V\times V \to [0,\infty)$ with $d(x,y) = 0$ if and only if $x=y$ is called a general path distance if 
\begin{equation*}
    d(x,y) =\inf\left\{\sum_{k=1}^{n}d(x_k,x_{k-1}): x=x_0\sim x_1\sim \dots\sim x_n = y\right\},
\end{equation*} 
for all $x,y\in V$. For a graph $G=(V,w,\mu)$ equipped with a general path distance $d$, we also refer to the quadruple $(V,w,\mu,d)$ as a graph. The diameter of $G$ is denoted by $\diam(G) = \max_{x,y\in V}d(x,y)$.

Let $M_1(G)$ be the associated 1-dimensional CW complex. This CW complex comes with a set of cells $X = X_0 \dot\cup X_1 $, where $X_0=V$ and $X_1=\{\{x,y\} \subset V: x\sim y\}$, a linear coboundary operator $\delta: C(X) \to C(X)$, where $C(X)=\R^{X}$, mapping $C(X_0)$ to $C(X_1)$, and a positive weight function $m:X \to (0,\infty)$.
The weight function $m$ is given by the vertex weight $\mu$ in $X_{0}$ and the edge weight $w$ on $X_{1}$. The linear coboundary operator is obtained as follows: First, we assign a fixed orientation to each edge, i.e., for $e=\{x,y\}$, we choose a source vertex $s(e) \in e$ and a target vertex $t(e)$ such that $e=\{s(e),t(e)\}$. By abuse of notation, we write $x = 1_{x} \in C(X)$ for $x\in X$. Then, for $e \in X_{1}$, $x\in X_{0}$, we define
\begin{equation*}
    \delta x(e) = x(s(e)) - x(t(e)).
\end{equation*} 
Thus, the operator $\delta$ can be seen as a $\{\pm 1,0\}$-valued incidence matrix. Using the linearity of $\delta$, we naturally extend it to arbitrary functions $f\in C(X_{0})$ by
\begin{equation*}
    \delta f(e) = f(s(e)) - f(t(e)),
\end{equation*}
for $e\in X_{1}$. For $\alpha \in C(X_{1})$ and $x,y\in X_{0}$, we introduce the notation
\begin{equation*}
    \alpha(x,y) = \begin{cases}
        0&: \{x,y\} \notin X_1, \\
        \alpha(e)&: x=s(t),y=t(e) \mbox{ for some } e\in X_1, \\
        -\alpha(e)&: x=t(e), y=s(e) \mbox{ for some } e \in X_1.
        \end{cases}
\end{equation*}
Thus, we write $\delta f(x,y) = f(x) - f(y)$ whenever $e=\{x,y\} \in E$.

The weight function $m$ induces a scalar product on $C(X)$ by
\begin{equation*}
    \langle f,g\rangle = \sum_{x \in X} f(x)g(x)m(x) \quad \mbox{for }f,g \in C(X).
\end{equation*}
Via the scalar product, the coboundary operator $\delta$ has an adjoint $\delta^*: C(X) \to C(X)$. It is immediate to verify that
\begin{equation*}
    \delta^* e(x) = \frac{m(e)}{m(x)} \delta x(e) \mbox{ for } x\in X_{0}, e\in X_{1},
\end{equation*}
and by linear extension,
\[
\delta^* \alpha(x) = \sum_{y: x\sim y} \frac{w(x,y)}{\mu(x)}\alpha(x,y) \mbox{ for all } \alpha \in \R^{X_1}, x \in X_0.
\]
Restricting to $C(X_{1})$ gives $\delta^*: C(X_{1}) \to C(X_{0})$.

Next, we obtain the standard (positive) graph Laplacian as $L:=\delta^*\delta$, given by
\[
\delta^* \delta f(x) =  \sum_{y: x\sim y} \frac{w(x,y)}{\mu(x)}(f(x)-f(y)) \mbox{ for all } f \in \R^{X_0}, x \in X_0,
\]
and denote by $\Delta = -L$ the negative graph Laplacian. We call a function $f\in \R^{X_0}$ harmonic if $\Delta f =0$. 

A cycle is an injective path $(x_{0} \sim \ldots \sim x_{n-1})$ of vertices $x_{i} \in X_{0}$ with $x_{0} \sim x_{n-1}$ and $n \geq 3$. We identify two cycles $(x_{0} \sim \ldots \sim x_{n-1})$ and $(y_{0} \sim \ldots \sim y_{n-1})$ if there exists a $k \in \N$ such that $x_{i} = y_{i\pm k \mod n}$ for all $i\in \{0, \ldots, n-1\}$ and a fixed choice of plus or minus.

Denote by $X_2$ the set of all cycles $(x_{0}\sim x_1\sim\dots\sim x_{n-1})$ with $n=3$ or with
\begin{equation*}
    d(x_0,x_3) < d(x_0,x_1) + d(x_1,x_2) + d(x_2,x_3),
\end{equation*} 
and $x_3 \sim \dots\sim x_{n-1}\sim x_0$ is a geodesic. We remark that $X_2$ always contains all $3$- and $4$-cycles, independent of the choice of the path metric.

\begin{proposition}
    Let $G=(V,w,\mu,d)$ be a locally finite graph and $(x_0 \sim \dots \sim x_{n-1})$ be a cycle in $G$. If $n \in \{3,4\}$, then $(x_0 \sim \dots \sim x_{n-1}) \in X_2$.
\end{proposition}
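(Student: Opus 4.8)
The proof splits according to the value of $n$. For $n=3$ the cycle lies in $X_2$ directly, since the defining description of $X_2$ admits every $3$-cycle through its first clause; so the entire content is the case $n=4$. Here I must produce a cyclic representative $(x_0 \sim x_1 \sim x_2 \sim x_3)$ of the given $4$-cycle for which both defining conditions hold: the strict inequality $d(x_0,x_3) < d(x_0,x_1)+d(x_1,x_2)+d(x_2,x_3)$ and the requirement that $x_3 \sim x_0$ be a geodesic. The key observation is that, because a cycle is an equivalence class under rotation and reflection, I am free to choose which of the four edges plays the role of the closing edge $\{x_0,x_3\}$, and this freedom is exactly what makes both conditions attainable.

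I would first dispose of the geodesic condition. For $n=4$ the path $x_3 \sim \dots \sim x_{n-1} \sim x_0$ degenerates to the single edge $x_3 \sim x_0$, so I only need to argue that a single edge is always a geodesic. This is immediate from the definition of a general path distance: the one-step path from $x_3$ to $x_0$ has length $d(x_3,x_0)$, while $d(x_3,x_0)$ is by definition the infimum of the lengths of all paths joining the two vertices; hence the single edge realises the distance and is a geodesic, regardless of which edge I select. Thus the geodesic condition holds for every labelling and imposes no constraint.

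It remains to secure the strict inequality by choosing the labelling well. I would pick the closing edge $\{x_0,x_3\}$ to be an edge of the $4$-cycle whose $d$-value is minimal among the four edges, and let $x_1,x_2$ be the remaining two vertices traversed by the complementary three-edge path $x_0 \sim x_1 \sim x_2 \sim x_3$. Each of the three edges $\{x_0,x_1\}$, $\{x_1,x_2\}$, $\{x_2,x_3\}$ then has $d$-value at least $d(x_0,x_3)$, and all edge distances are strictly positive since the endpoints of an edge are distinct. Consequently $d(x_0,x_1)+d(x_1,x_2)+d(x_2,x_3) \geq 3\,d(x_0,x_3) > d(x_0,x_3)$, which is the desired strict inequality. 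Combined with the automatic geodesic condition, this places the $4$-cycle in $X_2$.

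I do not expect a genuine obstacle; the statement is elementary, and the only points requiring care are conceptual rather than computational. First, one must read membership in $X_2$ as the existence of a suitable cyclic representative, so that the choice of closing edge is genuinely available. Second, one must make explicit why single edges are geodesics, which rests on the infimum characterisation of the path distance rather than on any separately prescribed edge length. A more symmetric alternative to the minimal-edge argument would be to suppose that all four closing-edge triangle inequalities were equalities and derive a contradiction, since summing two opposite equalities forces two edge distances to vanish; I would mention this only as a remark, as the minimal-edge choice is cleaner.
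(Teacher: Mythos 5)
Your proof is correct and takes essentially the same route as the paper's: both exploit the freedom to relabel the $4$-cycle together with the strict positivity of edge distances, the paper via a dichotomy (if the given closing edge violates the inequality, the opposite edge satisfies it), you via selecting a minimal edge as the closing edge. Your explicit check that a single edge is automatically a geodesic is a detail the paper leaves implicit, but it does not change the substance of the argument.
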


\begin{proof}
    If $n=3$, the claim follows directly from the definition of $X_2$. Next, we assume $n=4$. If 
    \[
        d(x_0, x_3) < d(x_0,x_1) + d(x_1,x_2) + d(x_2,x_3),
    \]
    then the cycle $(x_0 \sim \dots \sim x_{3}) \in X_2$. Hence, assume 
    \[
        d(x_0,x_3) \geq d(x_0, x_1) + d(x_1,x_2) + d(x_2,x_3).
    \]
    We obtain $d(x_1,x_2) < d(x_0,x_3) < d(x_1, x_0) + d(x_0, x_3) + d(x_3, x_2)$, and therefore $(x_0 \sim \dots \sim x_{3}) \in X_2$. 
\end{proof}

\begin{remark}
    Observe that 5-cycles are not necessarily contained in $X_2$. Given any 5-cycle, we can always label its vertices as $(x_0\sim x_1 \sim x_2 \sim x_3 \sim x_4)$ such that the inequality
    \[
        d(x_0,x_3) < d(x_0,x_1) + d(x_1,x_2) + d(x_2,x_3)
    \]
    holds.  However, this does not ensure that the path $x_0 \sim x_4 \sim x_3$ is a geodesic. As a counterexample, consider a 5-cycle $(x_0\sim x_1 \sim x_2 \sim x_3 \sim x_4)$ with additional edges 
    $x_0\sim x_2$ and $x_1 \sim x_3$, where the distance function is defined by $d(x_i, x_{i+1 \mod 5}) = 1$ and $d(x_0, x_2) = d(x_1,x_3) = 1/2$.

    Furthermore, we note that cycles of arbitrary length can be contained in $X_2$, depending on the specific path distance.
\end{remark}

Set $m(z) = 1$ for all $z\in X_2$ and extend the coboundary operator $\delta$ mapping $C(X_1) \to C(X_2)$, given by 
\begin{equation*}
    \delta \alpha((x_{0}\sim \ldots \sim x_{n-1})) = \sum_{i=0}^{n-1} \alpha(x_{i}, x_{i+1\mod n})
\end{equation*}
for $\alpha \in C(X_{1})$ and $(x_{0}\sim \ldots \sim x_{m-1}) \in X_{2}$. Then, $M_{2}(G) = (X \cup X_{2}, \delta, m)$ is a 2-dimensional CW complex.

Let $H_1(M_2(G)) := \{\alpha \in C(X_1):\delta \alpha = 0, \delta^* \alpha = 0\}$ be the first homology group and let $\beta_1(M_2(G)) := \dim H_1(M_2(G)) $ be the first Betti number of $M_2(G)$.

For a regular CW complex $M$ with cell set $X$ and coboundary operator $\delta$, a covering space of $M$ is a regular CW complex $\widetilde M$ with cell set $\widetilde X$ and coboundary operator $\widetilde \delta$, along with a surjective map $\pr: \widetilde X \to X$ satisfying the following compatibility condition:
\begin{equation*}
    \forall f \in C(X), \quad (\delta f)\circ \pr = \widetilde\delta (f\circ \pr).
\end{equation*}
A covering space $\widetilde M$ is called a universal cover of $M$ if every covering space of $\widetilde M$ is isomorphic to $\widetilde M$. This is equivalent to $\widetilde M$ being a simply-connected cover. A space is called simply-connected if it is path-connected and has trivial fundamental group.

\subsection{Lifting one-forms to the universal cover}

As Ollivier and Bakry-\'Emery curvature deal with functions, and the first Betti-number deals with 1-forms, we convert  1-forms $\alpha \in \R^{X_1}$ with $\delta \alpha = 0$  to functions by lifting $\alpha$ to the universal cover. As the universal cover is simply connected, we can use the Hodge decomposition to deduce that the lifted one form can be written as $\widetilde \delta f_\alpha$ for some function $f_\alpha$ on the universal cover. More precisely, we define $f_\alpha \in \R^{\widetilde X_0}$ via $\widetilde \delta f_\alpha = \alpha \circ \pr$ where $\widetilde \delta$ and $\widetilde X_k$ are the coboundary and the set of $k$ cells respectively on the universal cover.
The function $f_\alpha$ is uniquely determined by $\alpha$ up to an additive constant.
More explicitly, given $\alpha \in \R^{X_1}$ with $\delta \alpha = 0$, the lifted one-form $\alpha \circ \pr$ also satisfies $\widetilde \delta(\alpha \circ \pr) = 0$. Given a base vertex $x_0 \in \widetilde X_0$, we can define
\[
f_\alpha(x) = \sum_{k=1}^n (\alpha\circ \pr)(x_k,x_{k-1})
\]
for an arbitrary path $x_0 \sim \ldots \sim x_n = x$ on the universal cover. This is independent of the choice of the path as $\widetilde \delta(\alpha \circ \pr) = 0$. Moreover, by construction, we have $\widetilde \delta f_\alpha = \alpha \circ \pr$ as desired.

Another useful property is that if additionally $\delta^* \alpha =0$, then $Lf_\alpha = 0$. Hence, we can convert every $\alpha \in H_1(M_2(G))$ to a harmonic function $f_\alpha$ on the universal cover. 

We equip the 1-skeleton of the universal cover with the path distance $\widetilde{d}: \widetilde{X_0}\times\widetilde{X_0} \to [0,\infty)$ . For adjacent vertices $\widetilde{x} \sim \widetilde{y}$, the distance is defined as $\widetilde{d}(\widetilde{x}, \widetilde{y}) = d(\pr(\widetilde{x}), \pr(\widetilde{y}))$. For arbitrary $\widetilde{x}\not= \widetilde{y}$, we define
\begin{equation*}
    \widetilde{d}(\widetilde{x},\widetilde{y}) =\inf\left\{\sum_{k=1}^{n}d(\widetilde{x}_k,\widetilde{x}_{k-1}): \widetilde{x}=\widetilde{x}_0\sim \widetilde{x}_1\sim \dots\sim \widetilde{x}_n=  \widetilde{y}\right\}.
\end{equation*}

The following lemma establishes an initial property of the universal cover: If the image of a path in the universal cover forms a cycle that corresponds to a 2-cell in the original graph, then the path in the universal cover must itself be a cycle.

\begin{lemma}\label{lem:x0eqxn}
    Let $G=(V,w,\mu,d)$ be a finite graph. Let $\widetilde{x}_0 \sim \dots \sim \widetilde{x}_n$ be a path in the universal cover of $G$, and let $x_k = \pr(\widetilde{x}_k)$ for $k=0,\dots,n$. Suppose $x_0 = x_n$. If $n=2$, or if $x_3 \sim \dots \sim x_n$ is a geodesic from $x_0$ to $x_3$ and 
    \[
        d(x_0,x_3) < d(x_0,x_1) + d(x_1,x_2) + d(x_2,x_3),
    \]
    then $\widetilde{x}_0 = \widetilde{x}_n$ must hold.
\end{lemma}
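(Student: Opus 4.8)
The plan is to treat the two hypotheses separately, but in each case reduce to the same principle: the projected closed walk $x_0 \sim \dots \sim x_n = x_0$ is null-homotopic in $M_2(G)$, and since $\widetilde{M}$ is a simply-connected cover, path lifting is unique and every lift of a null-homotopic loop closes up. Concretely, I would observe that $\widetilde{x}_0 \sim \dots \sim \widetilde{x}_n$ is, by construction, \emph{the} lift of the projected walk that starts at $\widetilde{x}_0$; it then suffices to produce one closed lift starting at $\widetilde{x}_0$, because uniqueness of lifting forces the two to coincide and hence gives $\widetilde{x}_n = \widetilde{x}_0$.

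For the case $n=2$, the walk $\widetilde{x}_0 \sim \widetilde{x}_1 \sim \widetilde{x}_2$ projects to $x_0 \sim x_1 \sim x_0$, i.e. it traverses a single edge of $G$ and returns. Here I would argue directly from the local bijectivity of the covering map on the star of $\widetilde{x}_1$: both $\{\widetilde{x}_0,\widetilde{x}_1\}$ and $\{\widetilde{x}_1,\widetilde{x}_2\}$ are edges incident to $\widetilde{x}_1$ which $\pr$ sends to the same edge $\{x_0,x_1\} = \{x_1,x_2\}$ of $G$ (using $x_2 = x_0$). Since $\pr$ maps the edges incident to $\widetilde{x}_1$ bijectively onto those incident to $x_1$, these two lifted edges must be equal, and as $\widetilde{x}_0 \neq \widetilde{x}_1$ this yields $\widetilde{x}_2 = \widetilde{x}_0$.

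For the remaining case, the hypotheses are exactly the membership conditions for $X_2$: with $x_n = x_0$, the walk satisfies $d(x_0,x_3) < d(x_0,x_1)+d(x_1,x_2)+d(x_2,x_3)$ and $x_3 \sim \dots \sim x_{n-1} \sim x_0$ is a geodesic, so the projected walk is precisely the boundary of a $2$-cell $z = (x_0 \sim \dots \sim x_{n-1}) \in X_2$ of $M_2(G)$. I would lift $z$ to a $2$-cell $\widetilde{z}$ of the universal cover projecting onto $z$; as $\widetilde{M}$ is a regular CW complex, $\widetilde{z}$ is attached along a closed boundary cycle, and this cycle projects onto the boundary walk of $z$. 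Basing that boundary cycle at $\widetilde{x}_0$ produces a closed lift of the projected walk, which by uniqueness of path lifting from $\widetilde{x}_0$ agrees with $\widetilde{x}_0 \sim \dots \sim \widetilde{x}_n$, whence $\widetilde{x}_n = \widetilde{x}_0$.

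The main obstacle is the bookkeeping that links the combinatorial covering data $(\widetilde{X}, \widetilde{\delta}, \pr)$ set up in the paper with the topological facts invoked here: uniqueness of path lifting, local bijectivity on stars, and—most delicately—the claim that a $2$-cell of $M_2(G)$ lifts to a $2$-cell of the universal cover whose boundary is a closed loop. Pinning down this last point rigorously, rather than appealing to simple-connectedness informally, is where care is needed; once the $X_2$-membership of the projected cycle is checked, the rest is a routine application of covering-space theory.
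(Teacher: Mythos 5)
Your reduction to covering-space theory is the right idea for the endgame, and your treatment of the case $n=2$ via local bijectivity of $\pr$ on stars is fine. However, there is a genuine gap in the main case: you assert that the hypotheses are ``exactly the membership conditions for $X_2$,'' so that the projected walk is the boundary of a $2$-cell $(x_0\sim\dots\sim x_{n-1})\in X_2$. This is not true in general, because the paper defines a cycle to be an \emph{injective} closed path, and nothing in the hypotheses forces the projected vertices $x_0,\dots,x_{n-1}$ to be distinct. The geodesic condition only guarantees that $x_3,\dots,x_{n-1},x_0$ are pairwise distinct; a vertex from $\{x_1,x_2\}$ (or $x_0,x_3$ revisited in an awkward position) may well coincide with some $x_j$ for $4\le j\le n-1$, in which case the projected walk is not an element of $X_2$ and there is no $2$-cell to lift. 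Your argument therefore only covers the special case where the projection happens to be injective.

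This missing case is precisely where the paper spends almost all of its effort: it takes a minimal counterexample and shows that if $x_i=x_j$ for some $i\in\{0,\dots,3\}$ and $j\in\{4,\dots,n-1\}$, one can split the path at that repetition into two strictly shorter closed subpaths, each of which again satisfies the hypotheses of the lemma (using that $x_{i+3}$ lies on the geodesic to control the distance inequality for the first piece, and that $x_j\sim\dots\sim x_n$ is a geodesic for the second), so minimality forces both to close up and hence the original path closes up. Only after ruling out all such coincidences does the paper conclude that the projected walk lies in $X_2$ and invoke simple-connectedness as you do. To repair your proposal you would need to add exactly this induction on path length (or an equivalent device) to handle non-injective projections; the covering-space bookkeeping you flag as the ``main obstacle'' is comparatively routine.
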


\begin{proof}
    Assume, for the sake of contradiction, that the Lemma does not hold. Let $\widetilde{x}_0 \sim \dots \sim \widetilde{x}_n$ be a path that violates the lemma with $n$ minimal. 
    Hence, the lemma is true for all paths of length less than $n$.        
    Let $x_k = \pr(\widetilde{x}_k)$ for $k=0,\dots,n$.  
    
    We first claim that       
    \[
        \{x_0,x_1,x_2,x_3\} \cap \{x_4, \dots, x_{n-1}\} \not=\emptyset.
    \]  
    Assume not.
    Then, there exists $i\in \{0,\dots,3\}$ and $j\in \{4,\dots,n-1\}$ such that $x_i = x_j$.
    
    Then, we can apply the lemma for the shorter path $\widetilde x_i \sim \ldots \sim \widetilde x_j$ giving $\widetilde x_i = \widetilde x_j$. Here, we used that 
    \begin{align*}
    d(x_i,x_{i+3}) <d(x_i,x_3)\leq d(x_{i},x_{i+1}) + d(x_{i+1},x_{i+2}) + d(x_{i+2},x_{i+3})
    \end{align*}
    if $j\geq i+3$, as 
    $x_{i+3}$ lies on the geodesic  $x_3\sim \ldots\sim x_j$.
    
    Similar to before, we apply the lemma again for the shorter path $\widetilde x_1 \ldots \sim \widetilde x_i = \widetilde x_j \sim \ldots \sim \widetilde x_n$ giving $\widetilde x_n = \widetilde x_0$. Here, we used that $x_j \sim \ldots \sim x_n$ is a geodesic and therefore, the assumption of the lemma is satisfied.   This proves our claim 
        \[
            \{x_0,x_1,x_2,x_3\} \cap \{x_4, \dots, x_{n-1}\} =\emptyset,
        \]
        and thus, $(x_0 \sim \dots \sim x_{n-1}) \in X_2$ if $n>2$. 
    As the lemma is trivial in the case $n=2$,   
        this shows that $\widetilde{x}_0 = \widetilde{x}_n$ must hold. Now the proof is finished.
\end{proof}


The next lemma shows that the distances on $\widetilde X$ and $X$ coincide, assuming that the vertices are at most three hops away from each other.

\begin{lemma}\label{lem:Propertiesd}
    Let $G=(V,w,\mu,d)$ be a finite graph and $x_0\sim \ldots \sim x_k$ be a path in $G$ with $k \leq 3$ where all vertices are distinct. Let $\widetilde{x}_i\in \pr^{-1}(x_i)$ such that $\widetilde{x}_{i}\sim \widetilde{x}_{i+1}$ for $i=0,\dots,k-1$. Then, $d(x_0,x_k) = \widetilde{d}(\widetilde{x}_0,\widetilde{x}_k)$.
\end{lemma}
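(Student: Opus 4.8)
The plan is to prove the equality $d(x_0,x_k) = \widetilde{d}(\widetilde{x}_0,\widetilde{x}_k)$ by establishing both inequalities. The inequality $\widetilde{d}(\widetilde{x}_0,\widetilde{x}_k) \leq d(x_0,x_k)$ is the easy direction: since $\widetilde{x}_0 \sim \widetilde{x}_1 \sim \ldots \sim \widetilde{x}_k$ is a path in the universal cover lifting the path $x_0 \sim \ldots \sim x_k$, and by definition $\widetilde{d}(\widetilde{x}_i,\widetilde{x}_{i+1}) = d(x_i,x_{i+1})$ for adjacent lifted vertices, the path $\widetilde{x}_0 \sim \ldots \sim \widetilde{x}_k$ is a candidate in the infimum defining $\widetilde{d}(\widetilde{x}_0,\widetilde{x}_k)$. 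Hence $\widetilde{d}(\widetilde{x}_0,\widetilde{x}_k) \leq \sum_{i=1}^{k} d(x_{i-1},x_i)$; but I should be slightly careful here, since this only bounds $\widetilde{d}$ by the \emph{length of this particular path} rather than by $d(x_0,x_k)$ directly. I expect the actual easy direction to run the other way.

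Let me reconsider: the natural easy inequality is $d(x_0,x_k) \leq \widetilde{d}(\widetilde{x}_0,\widetilde{x}_k)$. Indeed, the covering projection $\pr$ is distance-nonincreasing: any path $\widetilde{x}_0 \sim \widetilde{y}_1 \sim \ldots \sim \widetilde{y}_{m} = \widetilde{x}_k$ in the universal cover projects to a walk $x_0 \sim \pr(\widetilde{y}_1) \sim \ldots \sim x_k$ in $G$ of the same length, so $d(x_0,x_k)$ is at most the length of any such lifted path, giving $d(x_0,x_k) \leq \widetilde{d}(\widetilde{x}_0,\widetilde{x}_k)$. The reverse inequality $\widetilde{d}(\widetilde{x}_0,\widetilde{x}_k) \leq d(x_0,x_k)$ is the substantive part: I must show that a shortest path realizing $d(x_0,x_k)$ in $G$ lifts to a path in the universal cover connecting precisely $\widetilde{x}_0$ to $\widetilde{x}_k$ (the \emph{specified} lifts), not merely to some lift of $x_k$.

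The key step is therefore a lifting argument. Fix a geodesic $x_0 = z_0 \sim z_1 \sim \ldots \sim z_m = x_k$ in $G$ realizing $d(x_0,x_k)$, where $m \leq k \leq 3$ by hypothesis. Lift it to the universal cover starting at $\widetilde{x}_0$, obtaining a path $\widetilde{x}_0 = \widetilde{z}_0 \sim \widetilde{z}_1 \sim \ldots \sim \widetilde{z}_m$ with $\pr(\widetilde{z}_m) = x_k$. I now want to show $\widetilde{z}_m = \widetilde{x}_k$. Consider the concatenation of this lifted geodesic with the reverse of the original lifted path $\widetilde{x}_0 \sim \ldots \sim \widetilde{x}_k$; this forms a closed walk in the base projecting to a cycle (or a shorter closed walk) of length at most $k + m \leq 6$ whose projection satisfies the geodesic and strict-triangle-inequality hypotheses needed. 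The plan is to invoke Lemma~\ref{lem:x0eqxn}, which asserts exactly that a path in the universal cover whose projection closes up into a $2$-cell (a cycle in $X_2$) must itself close up. Applying the lemma to the concatenated path forces $\widetilde{z}_m = \widetilde{x}_k$, and hence the lifted geodesic has length $d(x_0,x_k)$ and connects $\widetilde{x}_0$ to $\widetilde{x}_k$, yielding $\widetilde{d}(\widetilde{x}_0,\widetilde{x}_k) \leq d(x_0,x_k)$.

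The main obstacle is verifying that the relevant closed walk genuinely satisfies the hypotheses of Lemma~\ref{lem:x0eqxn}, namely that its projection is a cycle lying in $X_2$ (or decomposes into such). This requires checking the strict inequality $d(x_0,x_3) < d(x_0,x_1)+d(x_1,x_2)+d(x_2,x_3)$ together with the geodesic condition on the complementary arc, and handling the cases $k \in \{1,2,3\}$ (and the possibility $m < k$, e.g. when the geodesic is shorter than the given path) separately. The case distinctions are delicate because when $k=3$ the geodesic could have length $1$, $2$, or $3$, and one must ensure the concatenated closed walk either is trivial, or is a triangle, or meets the defining condition of $X_2$ so that the lemma applies; I expect this bookkeeping, rather than any conceptual difficulty, to be where the real work lies.
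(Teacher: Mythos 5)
Your proposal is correct and follows essentially the same route as the paper: the easy inequality $d(x_0,x_k)\leq\widetilde{d}(\widetilde{x}_0,\widetilde{x}_k)$ by projecting, and the reverse by lifting a geodesic, concatenating with the given lifted path, and invoking Lemma~\ref{lem:x0eqxn} to force the endpoints to agree (the paper lifts the geodesic from $\widetilde{x}_k$ rather than $\widetilde{x}_0$, an immaterial difference). The ``bookkeeping'' you defer is exactly what the paper does: it splits off the case where the given path is itself a geodesic (plain triangle inequality) and otherwise uses the strict inequality $d(x_0,x_k)<\sum_i d(x_{i-1},x_i)$ to verify the hypotheses of Lemma~\ref{lem:x0eqxn} directly on the concatenated walk, which need not project to an honest cycle in $X_2$.
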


\begin{proof}
    Let  $\widetilde{x}_0 = \widetilde{y}_0 \sim \widetilde{y}_1 \sim \dots \sim \widetilde{y}_n = \widetilde{x}_k$ be a geodesic from $\widetilde{x}_0$ to $\widetilde{x}_k$. Then $x_0 = \pr(\widetilde{y}_0) \sim \pr(\widetilde{y}_1) \sim \dots \sim \pr(\widetilde{y}_n) = x_k$, and therefore
    \[
        d(x_0,x_k) \leq \sum_{i=1}^{n}d(\pr(\widetilde{y}_i),\pr(\widetilde{y}_{i-1}))=\sum_{i=1}^{n}\widetilde{d}(\widetilde{y}_i,\widetilde{y}_{i-1}) = \widetilde{d}(\widetilde{x}_0,\widetilde{x}_k).
    \]
    To prove the other inequality, we first assume that $d(x_0,x_k) = d(x_0,x_1) + \ldots +d(x_{k-1},x_{k})$. Then,
    \[
        \widetilde{d}(\widetilde{x}_0,\widetilde{x}_k) \leq \widetilde{d}(\widetilde{x}_0,\widetilde{x}_1) + \ldots + \widetilde{d}(\widetilde{x}_{k-1},\widetilde{x}_k) = d(x_0,x_1) + \ldots + d(x_{k-1},x_k) = d(x_0,x_k).
    \]
    Otherwise, let $x_k = y_0 \sim y_1 \sim \dots\sim y_n = x_0$ be a geodesic from $x_0$ to $x_k$. Let $\widetilde{y}_0 = \widetilde{x}_k$ and choose $\widetilde{y}_{i} \in \pr^{-1}(y_i)$ with $\widetilde{y}_{i} \sim \widetilde{y}_{i-1}$ for $i=1, \dots, n$.
    
    As $d(x_0,x_k) < d(x_0,x_1) + \ldots +d(x_{k-1},x_{k})$,    
    we can apply Lemma~\ref{lem:x0eqxn} to the path $\widetilde x_0\sim \ldots \sim \widetilde x_k = \widetilde y_0\sim\ldots \sim \widetilde y_n$, giving $\widetilde{x}_0 = \widetilde{y}_n$, and therefore,
    \[
        \widetilde{d}(\widetilde{x}_0,\widetilde{x}_k) \leq \sum_{i=1}^{n} \widetilde{d}(\widetilde{y}_i, \widetilde{y}_{i-1}) = \sum_{i=1}^{n} d(y_i,y_{i-1}) = d(x_0,x_k).
    \]
    This concludes the proof.
\end{proof}

As $\alpha$ is bounded due to finiteness of $G$, we additionally get that $f_\alpha$ is Lipschitz and that the maximal gradient is attained somewhere.
This will turn out to be very convenient, as, together with the non-negative curvature bound, we can show that the maximal gradient of $f_\alpha$ is attained everywhere.

\subsection{Ollivier curvature}

Ollivier introduced the Ollivier curvature, a discrete analog of Ricci curvature formulated for discrete time random walks \cite{ollivier2007ricci, ollivier2009ricci}.  By taking a limit of lazy random walks, Lin, Lu, and Yau modified Ollivier's notion of Ricci-curvature \cite{lin2011ricci}. In \cite{bourne2018ollivier}, the connection between the laziness of random walks and the Ollivier curvature was studied. In this work, we use a generalization of the Ollivier curvature, introduced in \cite{munch2017ollivier}, which is applicable to all weighted graph Laplacians. To this end, let $G=(V,w,\mu,d)$ be a weighted graph. We define the gradient 
\begin{equation*}
    \nabla_{xy} f = \frac{f(x) -f(y)}{d(x,y)}
\end{equation*}
for $x\not=y \in V$ and $f\in C(V)$, and the associated Lipschitz constant
\begin{equation*}
    \lVert \nabla f \rVert_{\infty} = \sup_{x\not= y} \vert \nabla_{xy} f \vert = \sup_{x \sim y} \vert \nabla_{xy} f \vert.
\end{equation*}
For $K \geq 0$, we denote the set of $K$-Lipschitz functions by 
\begin{equation*}
    Lip(K) = \{f \in C(X_{0}): \lVert \nabla f\rVert _{\infty} \leq K\}.
\end{equation*}
According to \cite{munch2017ollivier}, we define the Ollivier curvature $\kappa(x,y)$ for $x\not=y\in V$ by 
\begin{equation*}
    \kappa(x,y) = \inf_{\substack{\nabla_{yx}f = 1\\ \lVert \nabla f \rVert_\infty = 1}} \nabla_{xy} \Delta f.
\end{equation*}
In \cite[Theorem 2.1]{munch2017ollivier}, the authors show that this definition coincides with the curvature notion introduced by Lin, Lu, and Yau whenever the latter definition applies. The curvature can also be obtained via transport plans, a feature that will play an important role in the following.

\begin{lemma}[{{\cite[Proposition~2.4]{munch2017ollivier}}}]\label{Prop:TransPlan}
    Let $G=(V,m,\mu,d)$ be a graph and let $x_{0}\not=y_{0}$ be vertices. Then,
    \begin{align}
        \kappa(x_0,y_0) &= \sup_{\rho}  \sum_{\substack{x \in \CN(x_0) \\ y \in \CN(y_0)}}\rho(x,y) \left[1 - \frac{d(x,y)}{d(x_0,y_0)}\right] \label{eq:OTP} 
    \end{align}
    where the supremum is taken over all $\rho: \CN(x_0) \times \CN(y_0) \to [0,\infty)$ such that
    \begin{align*}
    \sum_{y \in \CN(y_0)} \rho(x,y) &= \frac{w(x_0, x)}{\mu(x_0)}  \qquad \mbox{ for all } x \in \ON(x_0) \mbox{ and}\\
    \sum_{x \in \CN(x_0)} \rho(x,y) &= \frac{w(y_0, y)}{\mu(y_0)} \qquad \mbox{ for all } y \in \ON(y_0).
    \end{align*}
\end{lemma}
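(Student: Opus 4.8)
The plan is to recognise the identity as a Kantorovich--Rubinstein type duality and to prove the two inequalities separately: the inequality $\kappa(x_0,y_0)\ge\sup_\rho(\cdots)$ follows from a direct computation combined with the Lipschitz bound, and the reverse inequality from strong linear-programming duality (equivalently, from the construction of an optimal Kantorovich potential). The algebraic backbone of both directions is the following identity. Writing out $\Delta$ and substituting the marginal constraints $\sum_{y\in\CN(y_0)}\rho(x,y)=\tfrac{w(x_0,x)}{\mu(x_0)}$ and $\sum_{x\in\CN(x_0)}\rho(x,y)=\tfrac{w(y_0,y)}{\mu(y_0)}$, and using that extending the sums from $\ON$ to $\CN$ is harmless because the diagonal contributions $f(x_0)-f(x_0)$ and $f(y_0)-f(y_0)$ vanish, one obtains for every admissible plan $\rho$ and every $f$ with $\nabla_{y_0x_0}f=1$, i.e. $f(y_0)-f(x_0)=d(x_0,y_0)$,
\begin{equation*}
    \nabla_{x_0y_0}\Delta f=\frac{\Delta f(x_0)-\Delta f(y_0)}{d(x_0,y_0)}=\sum_{\substack{x\in\CN(x_0)\\ y\in\CN(y_0)}}\rho(x,y)\left[\frac{f(x)-f(y)}{d(x_0,y_0)}+1\right].
\end{equation*}
This converts the second-order quantity $\nabla_{x_0y_0}\Delta f$ into a transport average and is the heart of the matter.

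For the easy direction I would apply the Lipschitz bound $f(x)-f(y)\ge -d(x,y)$ termwise. Since $\rho\ge 0$, the identity immediately gives $\nabla_{x_0y_0}\Delta f\ge\sum_{x,y}\rho(x,y)\bigl[1-d(x,y)/d(x_0,y_0)\bigr]$ for every admissible $f$ and $\rho$; taking the infimum over $f$ on the left and the supremum over $\rho$ on the right yields $\kappa(x_0,y_0)\ge\sup_\rho(\cdots)$. Note that this step uses the Lipschitz property only on the pairs $(x,y)\in\CN(x_0)\times\CN(y_0)$.

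For the reverse inequality I would argue by LP duality, since both sides are finite linear programs. The right-hand side is the transport LP $(P)$ in the variable $\rho\ge 0$ with the stated marginal constraints, while $\kappa(x_0,y_0)$ is, after substituting $\Delta$ and imposing $f(y_0)-f(x_0)=d(x_0,y_0)$, the minimisation of the linear functional $\nabla_{x_0y_0}\Delta f$ over the polytope of $f$ with $\|\nabla f\|_\infty\le 1$ (restricted to $\CN(x_0)\cup\CN(y_0)$, then McShane-extended to $V$ without increasing the Lipschitz constant; together with $\nabla_{y_0x_0}f=1$ this forces $\|\nabla f\|_\infty=1$). One checks that $(P)$ is feasible, for instance $\rho(x,y_0)=\tfrac{w(x_0,x)}{\mu(x_0)}$ and $\rho(x_0,y)=\tfrac{w(y_0,y)}{\mu(y_0)}$ with all other entries zero, and bounded above (each coefficient $1-d(x,y)/d(x_0,y_0)\le 1$ and the constrained mass is finite, while the single unconstrained entry $\rho(x_0,y_0)$ has coefficient $0$ and may be dropped). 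Hence strong duality applies and the common value equals $\kappa(x_0,y_0)$ once the dual of $(P)$ is identified with the curvature LP. Concretely, given an optimal plan $\rho^{\ast}$, it suffices to exhibit an admissible $f$ saturating the Lipschitz bound, $f(y)-f(x)=d(x,y)$, on $\mathrm{supp}(\rho^{\ast})$: the identity above then upgrades to an equality, giving $\kappa(x_0,y_0)\le\sum_{x,y}\rho^{\ast}(x,y)\bigl[1-d(x,y)/d(x_0,y_0)\bigr]$.

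The main obstacle is precisely this construction of an admissible potential. The dual multipliers of $(P)$ encode the values of $f$ on $\ON(x_0)$ and $\ON(y_0)$, but the raw dual imposes no Lipschitz constraint between two vertices on the same side, and at a common neighbour $z\in\ON(x_0)\cap\ON(y_0)$ the two defining relations over-determine $f(z)$. I would resolve this by selecting, at each such vertex, the extreme admissible value dictated by the sign of $\tfrac{w(x_0,z)}{\mu(x_0)}-\tfrac{w(y_0,z)}{\mu(y_0)}$ and then McShane-extending; equivalently, one uses the $c$-cyclical monotonicity of $\rho^{\ast}$ to build a $1$-Lipschitz potential that is simultaneously tight on $\mathrm{supp}(\rho^{\ast})$ and on the edge $x_0\sim y_0$. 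This edge-tightness is the genuinely delicate point -- it is the same phenomenon that makes the generalized curvature of \cite{munch2017ollivier} agree with the Lin--Lu--Yau curvature -- so the cleanest route is to avoid building the potential by hand and instead invoke strong LP duality, for which one need only verify feasibility and boundedness of $(P)$ as above.
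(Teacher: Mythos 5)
First, a remark on the comparison itself: the paper does not prove this lemma — it is quoted verbatim as \cite[Proposition~2.4]{munch2017ollivier} and used as a black box — so there is no in-paper argument to measure your proposal against; I am judging it on its own terms.

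Your easy direction is correct and complete: the identity $\nabla_{x_0y_0}\Delta f=\sum_{x,y}\rho(x,y)\bigl[\tfrac{f(x)-f(y)}{d(x_0,y_0)}+1\bigr]$ does follow from substituting the marginal constraints and noting that the diagonal terms $f(x_0)-f(x_0)$, $f(y_0)-f(y_0)$ vanish, and the termwise bound $f(x)-f(y)\ge -d(x,y)$ then gives $\kappa(x_0,y_0)\ge\sup_\rho(\cdots)$. The gap is in the reverse direction, and it is exactly the point you flag and then try to sidestep. Strong LP duality, given feasibility and boundedness of $(P)$, yields only $\sup_\rho(\cdots)=\min(D)$, where $(D)$ is the \emph{relaxed} dual program: its variables are potentials indexed by $\ON(x_0)$ and $\ON(y_0)$ subject solely to the one-sided constraints $f(y)-f(x)\le d(x,y)$ for cross pairs in $\CN(x_0)\times\CN(y_0)$ — with no constraint between two vertices on the same side, no reverse inequality, and independent row and column values at a common neighbour. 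Every $f$ admissible in the curvature infimum restricts to a feasible point of $(D)$ with the same objective, so strong duality only reproves $\sup_\rho(\cdots)=\min(D)\le\kappa(x_0,y_0)$, i.e.\ the easy direction again. To obtain $\kappa(x_0,y_0)\le\sup_\rho(\cdots)$ you must show that an optimal point of $(D)$ can be upgraded to a globally $1$-Lipschitz function $f$ on $V$ with $f(y_0)-f(x_0)=d(x_0,y_0)$ without increasing the objective; that is precisely the potential construction (tightness on $\operatorname{supp}(\rho^{\ast})\cup\{(x_0,y_0)\}$ via $c$-cyclical monotonicity, handling of common neighbours, then McShane extension) that you sketch and then explicitly decline to carry out. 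Hence your closing claim that one "need only verify feasibility and boundedness of $(P)$" is not correct: the delicate step you correctly identified is not optional, and as written the hard direction of the proof is incomplete.
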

Observe that $\rho$ is defined on balls, but the coupling property is only required on spheres. Moreover, we do not assume anything concerning $\sum_{x,y}\rho(x,y)$. Nevertheless, we call $\rho$ an optimal transport plan if the supremum in (\ref{eq:OTP}) is attained. Due to compactness, there always exists an optimal transport plan.

Interestingly, when maximizing over the weights of the 2-cells on $M_2(G)$, this definition also coincides with the Forman curvature of 
\[
F(e) = \square_1 (e,e) - \sum_{e'} |\square_1(e,e')|
\]
where $\square_1 = (\delta \delta^* + \delta^* \delta) |_{\R^{X_1}}$ is the Hodge Laplacian on one-forms and $\square_1(e,e') = \square_1 1_{e'} (e)$ is the corresponding matrix entry when interpreting the Hodge Laplacian as a matrix \cite[Theorem~1.2]{jost2021characterizations}.
It is important to note that the weights on $X_2$ can generally \emph{not} be chosen such that the optimal lower bounds of Forman and Ollivier curvature coincide \cite[Section~6.3]{jost2021characterizations}, as for different edges, the Forman curvature maximizing weights on $X_2$ can be different. 
However, the Ollivier curvature can always be lower bounded by the Forman curvature.
Hence, non-negative Ollivier curvature is a strictly weaker assumption than non-negative Forman curvature so that Forman's results cannot be applied for deriving Betti number estimates for graphs with non-negative Ollivier curvature.

\section{Betti number estimates for non-negative Ricci curvature}

We first discuss that the first Betti number vanishes in case of positive Ricci curvature.
Afterwards, we investigate the non-negative curvature case. The key idea to obtain the Betti number estimate is to lift harmonic one-forms to the universal cover where they can be written as the gradient of a harmonic function.
This harmonic function attains its Lipschitz constant, and by non-negative curvature, we will show that the Lipschitz constant is attained at every vertex.
This implies that the harmonic 1-form must take
its supremum norm at every vertex. Hence, if a harmonic 1-form vanishes around a vertex, then it vanishes everywhere. This will lead to the Betti number estimate in terms of the minimal vertex degree.

\subsection{Improved transport plans and Lipschitz harmonic functions}
A crucial inside for the study of non-negative Ollivier curvature is that an  optimal transport plan can always be chosen in such a way that with positive probability the distance of the coupled random walks gets strictly smaller. This was established in \cite{bordewich2007path,dyer1998more} under the name ``path coupling method'' in order to prove mixing time estimates, and was used in \cite{munch2023non} to prove Buser inequalities and reverse Poincare estimates.

We now give a dual formulation of the existence of such an improved transport plan in terms of Lipschitz functions.

\begin{lemma}\label{lem:LipFun}
    Let $G=(V,w,\mu,d)$ be a connected locally finite graph with non-negative Ollivier curvature. Let $f\in Lip(1)$ satisfy $\Delta f(x) =c$ for every $x\in V$ and some constant $c\in \R$. Let $x_0\not=y_0 \in V$ be two distinct vertices such that $f(x_0) - f(y_0) = d(x_0,y_0)$. Then,
    \begin{itemize}
        \item[$(i)$] $\exists \widetilde{x}\in \CN(x_0)$ and $\exists  \widetilde{y}\in \CN(y_0)$ s.t. $f(\widetilde{x}) - f(\widetilde{y}) = d(\widetilde{x},\widetilde{y})> d(x_0,y_0)$,
        \item[$(ii)$] $\forall x\in \ON(x_0)$ $\exists y \in \CN(y_0)$ s.t. $f(x) - f(y) = d(x,y)$.
    \end{itemize}
\end{lemma}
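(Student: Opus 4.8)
The plan is to use the transport-plan characterization of Ollivier curvature (Lemma~\ref{Prop:TransPlan}) together with the hypotheses $\Delta f \equiv c$ and $f(x_0)-f(y_0)=d(x_0,y_0)$, which says that $f$ realizes its global Lipschitz slope along the pair $(x_0,y_0)$. The key observation is that for a $1$-Lipschitz $f$, the quantity $f(x)-f(y)$ provides a feasible test in the dual of the transport problem: since $\nabla_{yx}f = 1$ and $\|\nabla f\|_\infty = 1$, the function $f$ is (after the appropriate normalization) an optimizer in the definition of $\kappa(x_0,y_0)$, so the curvature at $(x_0,y_0)$ can be read off from how much $f$ contracts under the coupled random walks.

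First I would write out what non-negative curvature gives via the transport formulation. Fix an optimal transport plan $\rho$ for the pair $(x_0,y_0)$. Using that $f\in Lip(1)$, for every $x\in\CN(x_0),\,y\in\CN(y_0)$ we have $f(x)-f(y)\le d(x,y)$, and summing against $\rho$ together with the marginal constraints expresses $\sum_y \tfrac{w(y_0,y)}{\mu(y_0)}f(y) - \sum_x \tfrac{w(x_0,x)}{\mu(x_0)}f(x)$, i.e.\ a difference of the averaged values of $f$, in terms of $\sum \rho(x,y)d(x,y)$. The hypothesis $\Delta f(x_0)=\Delta f(y_0)=c$ lets me compute these averages exactly: $\Delta f(x_0)=\sum_x \tfrac{w(x_0,x)}{\mu(x_0)}(f(x)-f(x_0))$ and similarly at $y_0$, so the averaged gap equals $f(x_0)-f(y_0)$ plus a difference of the two Laplacians, which cancels because both equal $c$. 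Combining this with $f(x_0)-f(y_0)=d(x_0,y_0)$ and $\kappa(x_0,y_0)\ge 0$ forces the slack in the Lipschitz inequalities summed against $\rho$ to vanish, and moreover forces the existence of mass transported to strictly larger distance. This is how I expect to extract $(i)$: the coupling cannot keep all distances $\le d(x_0,y_0)$ while maintaining $\Delta f\equiv c$ and non-negative curvature, so some $\widetilde x\in\CN(x_0),\widetilde y\in\CN(y_0)$ must satisfy $f(\widetilde x)-f(\widetilde y)=d(\widetilde x,\widetilde y)>d(x_0,y_0)$.

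For part $(ii)$ I would argue that the equality forced above propagates to \emph{every} neighbour $x\in\ON(x_0)$. The point is that in the summed Lipschitz inequality each term $\rho(x,y)\bigl[d(x,y)-(f(x)-f(y))\bigr]$ is non-negative, so their total vanishing means $f(x)-f(y)=d(x,y)$ on the support of $\rho$; then the marginal constraint $\sum_{y}\rho(x,y)=\tfrac{w(x_0,x)}{\mu(x_0)}>0$ guarantees that for each $x\in\ON(x_0)$ there is at least one $y\in\CN(y_0)$ with $\rho(x,y)>0$, hence with $f(x)-f(y)=d(x,y)$. This yields $(ii)$ directly.

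The main obstacle I anticipate is the bookkeeping that turns the curvature inequality into the two \emph{strict}/\emph{existential} conclusions cleanly. Specifically, proving the strict inequality $d(\widetilde x,\widetilde y)>d(x_0,y_0)$ in $(i)$ (rather than merely $\ge$) requires ruling out the degenerate possibility that the optimal plan achieves $\kappa\ge 0$ purely through non-moving mass at $x_0$ or $y_0$; here I would invoke that $\rho$ is defined on the closed balls but the coupling constraints only pin down the sphere marginals, so there is genuine freedom to place the ``diagonal'' mass, and I must use the normalization $\Delta f\equiv c$ to show a strict contraction cannot be avoided. I would handle this by carefully separating the contribution of the idle mass $\rho(x_0,y_0)$ from the moving mass and checking the arithmetic of the marginals; this is the step where the interplay between the balls-versus-spheres subtlety noted after Lemma~\ref{Prop:TransPlan} and the constancy of $\Delta f$ does the real work.
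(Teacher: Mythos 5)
Your part $(ii)$ is correct and is essentially the paper's argument: summing the Lipschitz slack against an optimal plan, using the marginal constraints and $\Delta f(x_0)=\Delta f(y_0)=c$, squeezes $\sum_{x,y}\rho(x,y)\bigl(d(x_0,y_0)-d(x,y)\bigr)$ between $0$ and $0$, so $f(x)-f(y)=d(x,y)$ on the support of $\rho$, and the positive row marginals $\sum_y\rho(x,y)=w(x_0,x)/\mu(x_0)>0$ finish it. The gap is in part $(i)$, precisely at the step where you assert that the vanishing slack ``moreover forces the existence of mass transported to strictly larger distance.'' For an arbitrary optimal plan this is simply false: an optimal plan can move \emph{all} of its mass at distance exactly $d(x_0,y_0)$. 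Concretely, take an unweighted cycle (or any flat discrete torus) with $x_0\sim y_0$, let $x_{-1}$ be the other neighbour of $x_0$ and $y_1$ the other neighbour of $y_0$, and consider the shift coupling $\rho(x_{-1},x_0)=\rho(y_0,y_1)=\tfrac12$. This plan satisfies the sphere marginal constraints, is optimal (total slack zero, curvature zero), has equality $f(x)-f(y)=d(x,y)$ on its support for the Busemann-type $f$, and yet its support contains no pair at distance greater than $d(x_0,y_0)=1$. So nothing in your argument produces $(\widetilde x,\widetilde y)$, and your proposed repair --- bookkeeping of the idle mass $\rho(x_0,y_0)$, which contributes zero to the slack in any case --- cannot create such a pair either. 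The correct statement is not that \emph{every} optimal plan has strictly contracting mass, but that one can \emph{construct} an optimal plan which does.

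That construction (the ``improved transport plan'' or path-coupling step) is the heart of the paper's proof and is what your proposal is missing. Start from an optimal plan $\rho_0$, pick a neighbour $x'\in \ON(x_0)$ lying on a geodesic from $x_0$ to $y_0$ (so $d(x_0,y_0)=d(x_0,x')+d(x',y_0)$), and reroute all mass that $\rho_0$ sends from $x'$ to points $z$ with $d(x',z)\ge d(x_0,y_0)$: pair that mass with $y_0$ instead, i.e.\ add it to $\rho(x',y_0)$, and compensate by adding the same amounts to the pairs $(x_0,z)$. This uses exactly the balls-versus-spheres freedom you mention (row $x_0$ and column $y_0$ carry no constraint), so the marginal conditions survive, and a short computation using that $x'$ lies on a geodesic shows the cost does not decrease, i.e.\ the modified plan $\rho$ is still optimal. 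After the modification, \emph{all} mass leaving $x'$ travels to distance strictly less than $d(x_0,y_0)$, and since its row marginal $w(x_0,x')/\mu(x_0)$ is positive, the identity $\sum_{x,y}\rho(x,y)\bigl(d(x_0,y_0)-d(x,y)\bigr)=0$ now contains a strictly positive term; hence some pair $(\widetilde x,\widetilde y)$ in the support must have $d(\widetilde x,\widetilde y)>d(x_0,y_0)$, and equality on the support gives $f(\widetilde x)-f(\widetilde y)=d(\widetilde x,\widetilde y)$, which is $(i)$. Without this construction, the equality-case analysis alone cannot reach the strict inequality.
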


\begin{proof}
    Let $\rho_0$ be an optimal transport plan and let $x' \in \ON(x)$ lie on a geodesic from $x_0$ to $y_0$. Namely, we have $d(x_0,y_0) = d(x_0,x') + d(x',y_0)$. Define a new transport plan $\rho: \CN(x_0)\times \CN(y_0) \to [0,\infty)$ by
    \begin{equation*}
        \rho(x,y) = \begin{cases}
            \rho_0(x,y_0) + \sum_{\substack{z \in \CN(y_0)\\d(x',z) \geq d(x_0,y_0)}} \rho_0(x',z)
            &: x=x',y=y_0,\\
            0&: x=x', d(x',y)\geq d(x_0,y_0) ,\\
            \rho_0(x_0,y) + \rho_0(x',y) &: x=x_0, d(x',y) \geq d(x_0,y_0),\\
            \rho_0(x,y)&: \mbox{otherwise}.
            \end{cases}
    \end{equation*}
    We aim to show that $\rho$ is also an optimal transport plan. First, we show that $\rho$ is indeed a transport plan. For $x=x'$, we have 
    \begin{align*}
        \sum_{y\in \CN(y_0)}\rho(x',y) &= \rho(x',y_0) + \sum_{d(x',y) \geq d(x_0,y_0)} \rho(x',y) + \sum_{\substack{d(x',y)<d(x_0,y_0)\\y\not=y_0}}\rho(x',y) \\
        &= \left(\rho_0(x',y_0) +  \sum_{d(x',y) \geq d(x_0,y_0)} \rho_0(x',y)\right) + \sum_{\substack{d(x',y)<d(x_0,y_0)\\y\not=y_0}}\rho_0(x',y) \\
        &= \sum_{y\in \CN(y_0)}\rho_0(x',y)
        = \frac{w(x_0,x')}{\mu(x_0)}.
    \end{align*}
    Observe that for $x \in \ON(x_0)\setminus \{x'\}$, we have
    $\rho(x,y) = \rho_0(x,y)$ for every $y \in \CN(y_0)$, and therefore,
    \begin{equation*}
        \sum_{y\in \CN(y_0)} \rho(x,y) = \sum_{y\in \CN(y_0)} \rho_0(x,y) = \frac{w(x_0,x)}{\mu(x_0)}.
    \end{equation*}
    On the other hand, for $y \in \ON(y_0)$ satisfying $d(x',y) < d(x_0,y_0)$, we have $\rho(x,y) = \rho_0(x,y)$ for all $x \in \CN(x_0)$. Hence, 
    \begin{equation*}
        \sum_{x\in \CN(x_0)} \rho(x,y) = \sum_{x\in \CN(x_0)} \rho_0(x,y) = \frac{w(y_0,y)}{\mu(y_0)}.
    \end{equation*}
    Finally, for $y \in \ON(y_0)$ s.t. $d(x',y) \geq d(x_0,y_0)$, we obtain
    \begin{align*}
    \sum_{x\in \CN(x_0)}\rho(x,y) &= \rho(x_0,y)+ \rho(x',y) + \sum_{x \in \ON(x_0)\setminus \{x'\}} \rho(x,y) \\
    &=\Big(\rho_0(x_0,y) + \rho_0(x',y) \Big) + 0 + \sum_{x \in \ON(x_0)\setminus \{x'\}} \rho_0(x,y) \\
    &=\sum_{x\in \CN(x_0)}\rho_0(x,y) = \frac{w(y_0,y)}{\mu(y_0)}.
    \end{align*}
    Hence, $\rho$ satisfies the marginal conditions and is a transport plan. It remains to show that $\rho$ is optimal. Since $\rho_0$ is optimal, it suffices to prove that
    \begin{equation*}
        \sum_{\substack{x\in \CN(x_0)\\y \in \CN(y_0)}} \rho(x,y)\Big(d(x_0,y_0) - d(x,y) \Big) \geq \sum_{\substack{x\in \CN(x_0)\\y \in \CN(y_0)}} \rho_0(x,y)\Big(d(x_0,y_0) - d(x,y) \Big).
    \end{equation*}
    Define $C(x,y):= \Big(\rho(x,y)-\rho_0(x,y)\Big)\Big(d(x_0,y_0) - d(x,y) \Big)$.
    Using this definition, it suffices to prove
    \[
    \sum_{\substack{x\in \CN(x_0)\\y \in \CN(y_0)}} C(x,y) \geq 0.
    \]
    First, observe that $C(x,y)=0$ whenever $\rho(x,y)=\rho_0(x,y)$. Hence, we obtain
    \begin{align*}
    \sum_{\substack{x\in \CN(x_0)\\y \in \CN(y_0)}} C(x,y) = C(x',y_0) + \sum_{d(x',y) \geq d(x_0,y_0)} \Big( C(x',y) + C(x_0,y) \Big).
    \end{align*}
    Next, observe that $\rho(x',y)=0$ if $d(x',y) \geq d(x_0,y_0)$. Therefore, we have by definition of $\rho$
    \[
    C(x',y_0) = \left(\sum_{d(x',y) \geq d(x_0,y_0)} \rho_0(x',y)\right) \cdot \Big(d(x_0,y_0)-d(x',y_0) \Big) 
    \]
    If $d(x',y) \geq d(x_0,y_0)$, we have
    \[
    C(x',y) = -\rho_0(x',y) \Big(d(x_0,y_0)-d(x',y) \Big)
    \]
    and
    \[
    C(x_0,y)= \rho_0(x',y) \Big(d(x_0,y_0) - d(x_0,y)\Big).
    \]
    Hence, we obtain
    \[
    C(x',y) + C(x_0,y) = \rho_0(x',y)\Big(d(x',y) - d(x_0,y)\Big) \geq - d(x_0,x')\rho_0(x',y)
    \]
    since $|d(x',y) - d(x_0,y)| \leq d(x_0,x')$.
    Putting together gives
    \begin{align*}
    \sum_{\substack{x\in \CN(x_0)\\y \in \CN(y_0)}} C(x,y) &= C(x',y_0) + \sum_{d(x',y) \geq d(x_0,y_0)} \Big( C(x',y) + C(x_0,y) \Big) \\
    &\geq \sum_{d(x',y) \geq d(x_0,y_0)} \rho_0(x',y)\left(d(x_0,y_0) - d(x',y_0) - d(x_0,x')\right)= 0,
    \end{align*}
    where we used that $x'$ lies on a geodesic from $x_0$ to $y_0$.
    This proves that $\rho$ is an optimal transport plan.
    Observe that $d(x',y) < d(x_0,y_0)$ for all $y\in \CN(y_0)$ with $\rho(x',y) >0$. Using that $\sum_{y\in \CN(y_0)}\rho(x',y) = \frac{w(\{x_0, x'\})}{\mu(x_0)}> 0$, we conclude that $\exists y'\in \CN(y_0)$ such that $\rho(x',y') > 0$, and thus, $d(x',y') < d(x_0,y_0)$.
    Next, by non-negative Ollivier curvature, we have
    \begin{align*}
        0 &\leq d(x_0,y_0) \kappa(x_0,y_0)\\
        &= \sum_{\substack{x \in \CN(x_0) \\ y \in \CN(y_0)}} \rho(x,y)(d(x_0,y_0) - d(x,y)) \\
        &\leq \sum_{\substack{x \in \CN(x_0) \\ y \in \CN(y_0)}} \rho(x,y)(f(x_0)-f(y_0) - (f(x)-f(y)))\\
        &=\sum_{x \in \CN(x_0)}(f(x_0)- f(x)) \sum_{y \in \CN(y_0)} \rho(x,y) - \sum_{y \in \CN(x_0)}(f(y_0) -f(y)) \sum_{x\in \CN(x_0)} \rho(x,y)\\
        &= \sum_{x \in \ON(x_0)}(f(x_0)- f(x))\frac{w(\{x_0,x\})}{\mu(x_0)} - \sum_{y \in \ON(x_0)}(f(y_0) -f(y)) \frac{w({y_0,y})}{\mu(y_{0})}\\
        &= \Delta f(y_0) -\Delta f(x_0) = 0.
    \end{align*}
    Thus, we obtain
    \begin{equation*}
        0 = \sum_{\substack{x \in \CN(x_0) \\ y \in \CN(y_0)}} \rho(x,y)(d(x_0,y_0) - d(x,y)),
    \end{equation*}
    and using $d(x',y') < d(x_0,y_0)$ and $\rho(x',y') > 0$, we conclude that $\exists \widetilde{x}\in \CN(x_0)$ and $\exists \widetilde{y}\in \CN(y_0)$ such that $\rho(\widetilde{x},\widetilde{y}) >0$ and $d(x_0,y_0) < d(\widetilde{x},\widetilde{y})$.
    Furthermore, observe that $f(x)-f(y) = d(x,y)$ must hold for every $x\in \CN(x_0), y\in \CN(y_0)$, whenever $\rho(x,y) > 0$.
    Therefore, we obtain 
    \begin{equation*}
        f(\widetilde{x}) -f(\widetilde{y}) = d(\widetilde{x},\widetilde{y}) > d(x_0,y_0),
    \end{equation*} and thus, we have proven $(i)$.
    For $(ii)$, observe that for every $x \in \ON(x_0)$, there exists $y\in \CN(y_0)$ such that $\rho(x,y) > 0$, and therefore $f(x) -f(y) = d(x,y)$ must hold.
\end{proof}

\begin{remark}
In the case of the combinatorial graph distance, property $(i)$ can also be derived from \cite[Lemma 2.3]{jost2019Liouville} by taking the limit $\eps \to 0$.
\end{remark}

\subsection{Harmonic one-forms take supremum everywhere -- twice}
By using the properties of harmonic Lipschitz functions on the universal cover established in the previous section, we show that for every vertex, harmonic one-forms attain their supremum norm at two edges containing this vertex.
To this end, for $\alpha \in C(X_1)$, we define the supremum norm
\begin{equation*}
    \lVert \alpha \rVert_\infty = \sup_{x\sim y}\frac{\alpha(x,y)}{d(x,y)}.
\end{equation*}
Using this definition, we obtain the following statement.

\begin{lemma}\label{lem:AlphaEins}
    Let $G=(V,w,\mu,d)$ be a finite graph with non-negative Ollivier curvature. Let $\alpha \in H_{1}(M_{2}(G))$ with $\lVert \alpha \rVert_{\infty} = 1$. For every $x \in X_{0}$ exist $y,z \in X_{0}$ such that $x\sim y, x\sim z$ and 
    \begin{equation*}
        \frac{\alpha(x,y)}{d(x,y)} = \frac{\alpha(z,x)}{d(z,x)} = 1.
    \end{equation*}     
\end{lemma}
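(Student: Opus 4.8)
The plan is to transfer the statement to the universal cover and phrase it as a rigidity property of the lifted harmonic function. First I would lift $\alpha$ to $f := f_\alpha$ on $\widetilde G$, so that $\widetilde\delta f = \alpha\circ q$ where $q$ is the covering projection. Since $\alpha \in H_1(M_2(G))$ we have $\Delta f = 0$, and since $\lVert\alpha\rVert_\infty = 1$ we get $f \in Lip(1)$ with respect to $\widetilde d$; moreover the Lipschitz constant $1$ is attained at some edge because $\alpha$ takes only finitely many values. As the CW construction of Section~2 preserves curvature, $\widetilde G$ has non-negative Ollivier curvature, so Lemma~\ref{lem:LipFun} is available on $\widetilde G$. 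Because $q$ is surjective on vertices, restricts to a bijection between the edges at $\widetilde x$ and those at $q(\widetilde x)$, and satisfies $f(\widetilde x)-f(\widetilde y)=\alpha(q\widetilde x,q\widetilde y)$ and $\widetilde d(\widetilde x,\widetilde y)=d(q\widetilde x,q\widetilde y)$ for adjacent vertices, the claim reduces to showing that \emph{every} vertex $\widetilde x\in\widetilde X_0$ has a neighbour $\widetilde y$ with $f(\widetilde x)-f(\widetilde y)=\widetilde d(\widetilde x,\widetilde y)$ (an \emph{outgoing tight edge}) and a neighbour $\widetilde z$ with $f(\widetilde z)-f(\widetilde x)=\widetilde d(\widetilde z,\widetilde x)$ (an \emph{incoming tight edge}). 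Applying this to $-f=f_{-\alpha}$, which is again harmonic and $1$-Lipschitz, turns outgoing tight edges into incoming ones, so it suffices to prove that every vertex has an outgoing tight edge.

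Call $(u,v)$ a \emph{tight pair} if $f(u)-f(v)=\widetilde d(u,v)$, and call $u$ a \emph{top vertex} if it is the upper endpoint of some tight pair. A first observation is that a top vertex always has an outgoing tight edge: splitting a tight pair $(u,v)$ along a $\widetilde d$-geodesic $u=z_0\sim\dots\sim z_n=v$ and using that each edge is $1$-Lipschitz while the edge lengths sum to $\widetilde d(u,v)=f(u)-f(v)$, equality forces $f(z_0)-f(z_1)=\widetilde d(z_0,z_1)$. Hence it is enough to show that the set $T$ of top vertices equals $\widetilde X_0$. Since a tight edge exists, $T\neq\emptyset$, and $\widetilde G$ is connected, so the heart of the argument is to prove that $T$ is closed under passing to neighbours.

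For the propagation I would use Lemma~\ref{lem:LipFun}$(ii)$: given $u\in T$ with tight pair $(u,y_0)$, part $(ii)$ produces, for each neighbour $x$ of $u$, a vertex $y\in\CN(y_0)$ with $f(x)-f(y)=\widetilde d(x,y)$; provided $y\neq x$, this exhibits $x$ as a top vertex, which gives closure of $T$ under neighbours and hence $T=\widetilde X_0$. The main obstacle is exactly to exclude the degenerate outcome $y=x$ (a trivial tight pair carrying no information). My plan to rule it out is to work with a tight pair whose endpoints are far apart: if $\widetilde d(u,y_0)>2D$ with $D:=\max_{a\sim b}d(a,b)$ the maximal edge length, then $f(x)-f(y)\geq \widetilde d(u,y_0)-\widetilde d(u,x)-\widetilde d(y_0,y)\geq \widetilde d(u,y_0)-2D>0$, forcing $y\neq x$; moreover the resulting tight pair $(x,y)$ is again long, so the property propagates. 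To make such long tight pairs available around every vertex, I would iterate Lemma~\ref{lem:LipFun}$(i)$ to obtain tight geodesics of unbounded length---the cover is infinite because $\alpha\neq 0$ forces homologically nontrivial cycles---and extract a tight geodesic ray; every vertex lies at finite combinatorial distance from it, and the ``long top vertex'' property propagates inward, losing at most $2D$ of tight length per step.

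I expect the genuinely delicate point to be making ``tight geodesics of unbounded length'' rigorous: Lemma~\ref{lem:LipFun}$(i)$ only asserts a \emph{strict}, not a quantitative, increase of $\widetilde d$, so naive iteration need not escape to infinity if the increments shrink. Resolving this should require either an accumulation argument exploiting local finiteness of the cover, or a closer look at the improved transport plan in the proof of Lemma~\ref{lem:LipFun} to bound the length increase from below. Everything else---the lift, the reduction to the cover, the $\pm f$ symmetry, and the geodesic-splitting observation---is routine once this existence statement is in place.
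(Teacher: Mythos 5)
Your proposal follows essentially the same route as the paper: lift $\alpha$ to a harmonic $1$-Lipschitz function $f_\alpha$ on the universal cover, produce one very long tight pair by iterating Lemma~\ref{lem:LipFun}$(i)$, propagate it towards the target vertex via Lemma~\ref{lem:LipFun}$(ii)$ losing at most $2D$ of tight length per step, split the resulting tight pair along a geodesic to obtain the outgoing tight edge, and run the whole argument for $-\alpha$ to get the incoming one. The point you flag as delicate --- that strict increase under iteration of $(i)$ need not give unbounded tight length --- is real but resolves cleanly: every $\widetilde d$-distance below a bound $L$ is a sum of at most $L/\min_{a\sim b}d(a,b)$ edge lengths drawn from the finite set $\{d(a,b):a\sim b\}$, so the attainable distances below $L$ form a finite set and a strictly increasing sequence of them must be unbounded. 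The paper uses this implicitly when it asserts a tight pair of length at least $3m\cdot\diam(G)$.

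The one step that does not go through as written is your plan to propagate to \emph{every} vertex of the universal cover via an extracted ``tight geodesic ray.'' The upper endpoints of the tight pairs produced by iterating $(i)$ drift (each lies only in the closed neighbourhood of the previous one), the long tight geodesics need not share a common vertex from which a ray could be extracted, and the tight length is not known to grow linearly in the combinatorial displacement of its basepoint --- which is exactly what unbounded inward propagation at a cost of $2D$ per step would require. The paper sidesteps this entirely: since the conclusion only concerns $x\in X_0$, one chooses the lift $\widetilde x$ of $x$ obtained by lifting a path in $G$ from $\pr(\widetilde y_0)$ to $x$, which lies within $\widetilde d$-distance $\diam(G)$ of the upper endpoint $\widetilde y_0$ of the long tight pair; a single tight pair of length $\geq 3m\cdot\diam(G)$ then reaches every $x\in X_0$ after at most $\diam(G)$ propagation steps, and no ray is needed. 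With that replacement your argument is complete and coincides with the paper's.
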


\begin{proof}
    We assume without loss of generality that $d(x,y) \geq 1$ for $x\not=y$. Let $\widetilde{M_{2}(G)}$ be the universal cover of $M_{2}(G)$ with covering map $\pr: \widetilde{M_{2}(G)} \to M_{2}(G)$, coboundary operator $\widetilde{\delta}$ and sets of k-cells $\widetilde{X}_{k}$. Then, as $\delta \alpha =0$, there exists $f_{\alpha} \in C(\widetilde{X_{0}})$ with $\widetilde{\delta} f_\alpha = \alpha \circ \pr$. Since $\alpha \in H_{1}(M_{2}(G))$, it follows that $f_{\alpha}$ is a harmonic function. Furthermore, $f_{\alpha}$ is 1-Lipschitz as $\lVert \alpha \rVert_\infty = 1$. 
    Therefore, we can apply Lemma~\ref{lem:LipFun} $(i)$, which implies the existence of $\widetilde{y_0}, \widetilde{z_0} \in \widetilde{X_{0}}$ with \begin{equation*}
        f_\alpha(\widetilde{y_0}) -f_\alpha(\widetilde{z_0}) = \widetilde{d}(\widetilde{y_0}, \widetilde{z_0}) \geq 3m\cdot \diam(G),
    \end{equation*}
    where $m = \max_{x\sim y}d(x,y)$.

    Now, let $x\in X_0$ be arbitrary. As $\widetilde{M_{2}(G)}$ is a covering space, there exists $\widetilde{x} \in \widetilde{X_0}$ such that $\pr(\widetilde{x}) = x$ and $\widetilde{d}(\widetilde{x}, \widetilde{y_0}) \leq \diam(G)$. Therefore, we can find a geodesic $ \widetilde{y_0} \sim \widetilde{y_1} \sim \dots \sim \widetilde{y_n}=\widetilde{x}$ with
    \begin{equation*}
        \sum_{k=1}^{n}\widetilde{d}(\widetilde{y}_k, \widetilde{y}_{k-1}) \leq \diam(G).
    \end{equation*}

    By inductively applying Lemma~\ref{lem:LipFun} $(ii)$, we obtain a sequence of vertices $\widetilde{z_i} \in \CN(\widetilde{z_{i-1}})$, for $i = 1,\dots, n$ such that
    \begin{equation*}
        f_\alpha(\widetilde{y_l}) -f_\alpha(\widetilde{z_l}) = \widetilde{d}(\widetilde{y_l}, \widetilde{z_l})\geq \widetilde{d}(\widetilde{y_0}, \widetilde{z_0}) - 2lm.
    \end{equation*}
    Using $n \leq \diam(G)$ and $\widetilde{d}(\widetilde{y_0}, \widetilde{z_0}) \geq 3m\cdot\diam(G)$, we conclude
    \begin{equation*}
        f_\alpha(\widetilde{x}) -f_\alpha(\widetilde{z_n}) = \widetilde{d}(\widetilde{x}, \widetilde{z_n}) > 0.
    \end{equation*}
    As $f_{\alpha}$ is 1-Lipschitz, there must exist $\widetilde{y} \in \widetilde{X_0}$ such that $\widetilde{x} \sim \widetilde{y}$ and
    \begin{equation*}
        f_{\alpha}(\widetilde{x}) - f_{\alpha}(\widetilde{y}) = \widetilde{d}(\widetilde{x}, \widetilde{y}) = d(x,y).
    \end{equation*}
    By definition of $f_\alpha$, we conclude that $\alpha(x,y) = d(x,y)$, where $y = \pr(\widetilde{y})$. 
    Applying the same arguments to $-\alpha$ ensures the existence of a $z\in X_0$ with $x\sim z$ such that
    \begin{equation*}
        -\alpha(x,z) = \alpha(z,x) = d(x,z).
    \end{equation*}
    This concludes the proof.
\end{proof}

Next, we require a fundamental lemma from linear algebra, which states that if each vector in a subspace of $\R^n$ attains its supremum norm in at least $k$ entries, then the dimension of this subspace can be bounded above by $n/k$. Combining this result with the previous lemma will allow us to derive our Betti number estimate under non-negative Ollivier curvature.

\begin{lemma}\label{lem:LinAlg}
    Let $E \subseteq \R^n$ be a subspace such that for every $v \in E$, there exist indices $i_{1}< \dots< i_{k}$ satisfying $\vert v_{i_{j}} \vert = \lVert v \rVert_{\infty}$ for $j =1, \dots, k$. Then, $\dim(E) \leq \frac{n}{k}$.
\end{lemma}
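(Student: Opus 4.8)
The plan is to recast the lemma as a statement about centrally symmetric polytopes and then count opposite vertex pairs. It is worth recording first why the naive approach fails, since this is the real obstacle. One would like to pick a nonzero $v \in E$, note that $\lVert v \rVert_\infty$ is attained on a set $S(v)$ with $\lvert S(v)\rvert \ge k$, pass to $E' = \{w \in E : w_i = 0 \text{ for all } i \in S(v)\}$ (which still has the defining property after deleting the now-zero coordinates), and induct. The trouble is that imposing $w_i = 0$ on $S(v)$ can lower $\dim E$ by more than one, so one cannot charge a disjoint block of $\ge k$ coordinates to each of the $d=\dim E$ units of dimension. The argument below circumvents exactly this difficulty.

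Assuming $d := \dim E \ge 1$ (the case $d=0$ is trivial), I would choose a basis of $E$ and arrange it as the rows of a matrix $B \in \R^{d\times n}$ of rank $d$. Writing $a_j \in \R^d$ for the $j$-th column of $B$, the map $c \mapsto c^\top B$ is a linear isomorphism $\R^d \to E$ with $(c^\top B)_j = \langle c, a_j\rangle$, and the columns $a_1,\dots,a_n$ span $\R^d$ because $\operatorname{rank} B = d$. Under this identification the hypothesis becomes: for every $c \in \R^d$, the value $\max_{1\le j\le n}\lvert\langle c, a_j\rangle\rvert$ is attained for at least $k$ indices $j$. The goal $\dim E \le n/k$ is now exactly the inequality $n \ge dk$.

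Next I would introduce the centrally symmetric polytope $P = \conv\{\pm a_1,\dots,\pm a_n\}$. Since the $a_j$ span $\R^d$ and $P$ is symmetric (so $0 \in P$), $P$ is full-dimensional, $\dim P = d$, and its support function is $h_P(c) = \max_{x\in P}\langle c, x\rangle = \max_j \lvert\langle c, a_j\rangle\rvert$. Fix a vertex $v^\ast$ of $P$ and take $c$ in the relative interior of its normal cone, so that $\langle c,\cdot\rangle$ is maximized over $P$ uniquely at $v^\ast$; then $h_P(c) = \langle c, v^\ast\rangle$, and by central symmetry the minimum over $P$ is attained uniquely at $-v^\ast$. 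Hence $\lvert\langle c, a_j\rangle\rvert = h_P(c)$ holds precisely for those $j$ with $a_j \in \{v^\ast, -v^\ast\}$, and the hypothesis forces at least $k$ such indices. Thus each pair of opposite vertices $\{v^\ast,-v^\ast\}$ is the image of at least $k$ of the $a_j$, and distinct vertex pairs correspond to disjoint index sets, since each $a_j$ lies in at most one such pair.

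Finally I would count the pairs. If the vertices of $P$ are $\pm v_1^\ast,\dots,\pm v_m^\ast$, then $P \subseteq \SPAN\{v_1^\ast,\dots,v_m^\ast\}$, and since $\dim P = d$ this forces $m \ge d$. We therefore obtain at least $d$ pairwise disjoint subsets of $\{1,\dots,n\}$, each of size at least $k$, whence $n \ge dk$ and $\dim E \le n/k$. The only delicate point, and the step that genuinely uses central symmetry together with full-dimensionality, is the vertex claim: that for a direction $c$ generic inside a normal cone the maximizers of $\lvert\langle c, a_j\rangle\rvert$ are exactly the $a_j$ equal to $\pm v^\ast$. This is precisely what replaces the broken induction, because it assigns a full block of $\ge k$ coordinates to each of the $\ge d$ independent directions at once, rather than one dimension at a time.
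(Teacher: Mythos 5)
Your proof is correct, and it takes a genuinely different route from the paper's. The paper argues by induction on $\dim E$: it picks $v^1$ attaining its supremum norm on exactly $k$ coordinates, builds a complement $W$ with $E = \SPAN(\{v^1\}) \oplus W$, and shows by a small perturbation ($z = v^1 + \eps\,\sgn(v^1_j)w$) that every $w \in W$ must vanish on all $k$ of those coordinates, so each unit of dimension removed kills a block of $k$ coordinates and the induction closes; this is exactly the repair of the ``naive approach'' you describe as broken. Your argument is non-inductive and convex-geometric: after identifying $E \cong \R^d$ via $c \mapsto c^\top B$, the hypothesis says the support function of $P = \conv\{\pm a_1,\dots,\pm a_n\}$ is, in every direction, attained by at least $k$ generators; taking $c$ in the interior of the normal cone of each vertex (note $c \neq 0$, hence $h_P(c)>0$ since $P$ is full-dimensional and symmetric) shows that each of the $m \geq d$ antipodal vertex pairs absorbs at least $k$ indices, and distinct pairs give disjoint index sets, whence $n \geq dk$. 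All steps check out. What your approach buys is that it produces the disjoint blocks of $\geq k$ coordinates in one shot -- in the extremal case $\dim E = n/k$ the counting forces $m=d$, $\vert J_i\vert = k$ and the blocks to partition $\{1,\dots,n\}$, which is essentially the combinatorial content of the paper's later rigidity Lemma~\ref{lem:starrheit}. What the paper's version buys is that it stays entirely inside elementary linear algebra and its inductive scheme is reused almost verbatim to construct the distinguished basis needed in that rigidity statement.
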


\begin{proof}
    For simplicity, we write $d = \dim(E)$. Choose a vector $v^{1}\in E$ such that there exist exactly $k$ indices $i_{1} < \dots < i_{k}$ with $\vert v^{1}_{i_{j}} \vert = \lVert v^{1} \rVert_{\infty}$ for $j=1,\dots,k$. Without loss of generality, we assume that $i_{j} = j$ for $j=1,\dots,k$. By the basis extension theorem, there exists a basis $(v^{1}, \dots, v^{d})$ of $E$ containing $v^{1}$. We may assume that $v^{j}_{1} = 0$ for $j \not=1$, because if this was not the case, we could replace $v^{j}$ by $w^{j}=v^{j} - \frac{v^{j}_{1}}{v^{1}_{1}} v^{1}$. Then, $(v^{1}, \dots, v^{j-1}, w, v^{j+1}, \dots, v^{d})$ would still form a basis by the Steinitz exchange lemma and $w^{j}_{1} = 0$. 

    Define $W = \SPAN(\{v^{2}, \dots, v^{d}\})$, then every $w \in W$ satisfies $w_{1} = 0$ and 
    \begin{equation*}
        E = \SPAN(\{v^{1}\}) \oplus W.
    \end{equation*}
    We aim to show that for every $w\in W$, we have $w_{i} = 0$ for $i=1,\dots,k$. We argue by contradiction. Assume there exists $w\in W$ and some $j \leq k$ such that $w_{j} > 0$. Let $0< \eps < \frac{\lVert v^{1} \rVert_{\infty} - \max_{j > k}\vert v^{1}_{j} \vert}{\lVert w \rVert_{\infty}}$. Consider the vector $z = v^{1} + \eps \sgn(v^{1}_{j}) w$. We have $\vert z_{1} \vert = \lVert v^{1} \rVert_{\infty}< \lVert z \rVert_{\infty}$ as $w_{1}=0$ and by the choice of $\eps$, we have $\vert z_{j} \vert <  \lVert z \rVert_{\infty}$ for $j > k$. Therefore $\lVert z \rVert_{\infty}$ is attained in at most $k-1$ coordinates, contradicting our assumption. Thus, every $w\in W$ satisfies $w_{i} = 0$ for $i=1,\dots,k$ and therefore, $d -1 = \dim(W) \leq n-k$.   
    Repeating this arguments leads to $d \leq \frac{n}{k}$. This concludes the proof.
\end{proof}

\subsection{Betti number vanishes under positive Ollivier curvature}

As a first result, we now show that the Betti number of a graph vanishes if the graph has non-negative Ollivier curvature everywhere and contains at least one vertex for which every incident edge has positive Ollivier curvature. 

A similar result holds for other curvature notions: In \cite{kempton2021homology}, it was shown that for graphs with strictly positive Bakry-\'Emery curvature the first Betti number of the path homology vanishes, while in \cite{forman2003bochner} Forman shows that the first Betti number of a CW-complex vanishes under positive Forman curvature. For the Ollivier curvature, we obtain the following result.

\begin{theorem}\label{th:posCurvZeroBetto}
    Let $G = (V, w, \mu, d)$ be a finite graph with non-negative Ollivier curvature. Suppose there exists a vertex $x \in V$ such that $\kappa(x,y) > 0$ for all $y \in \ON(x)$. Then, the first Betti number satisfies \( \beta_1(M_2(G)) = 0 \).
\end{theorem}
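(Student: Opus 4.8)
The plan is to argue by contradiction. Assume $\beta_1(M_2(G)) \neq 0$ and pick a nonzero $\alpha \in H_1(M_2(G))$, rescaled so that $\lVert \alpha \rVert_\infty = 1$. The only structural input I need from $\alpha$ comes from Lemma~\ref{lem:AlphaEins}, applied \emph{at the distinguished vertex} $x$: it yields a neighbour $y \in \ON(x)$ along which the normalized harmonic form is extremal, i.e.\ $\alpha(x,y) = d(x,y)$. The goal is to turn this into the statement that the edge $x \sim y$ has vanishing Ollivier curvature, contradicting the hypothesis $\kappa(x,y) > 0$.

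To access the curvature I would pass to the universal cover $\widetilde{M_2(G)}$ with covering map $\pr$, where $\alpha \circ \pr = \widetilde\delta f_\alpha$ for a harmonic, $1$-Lipschitz function $f_\alpha$, as recalled in the subsection on lifting one-forms. Choosing a lift $\widetilde x$ of $x$ and the adjacent lift $\widetilde y$ of $y$ with $\pr(\widetilde y) = y$, the relation $\widetilde\delta f_\alpha = \alpha \circ \pr$ gives $f_\alpha(\widetilde x) - f_\alpha(\widetilde y) = \alpha(x,y) = d(x,y) = \widetilde d(\widetilde x,\widetilde y)$, the last equality holding by the definition of $\widetilde d$ on adjacent vertices. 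Thus $f_\alpha$ attains its Lipschitz constant along the adjacent pair $(\widetilde x,\widetilde y)$.

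The crux is the chain of inequalities already contained in the proof of Lemma~\ref{lem:LipFun}. Taking $x_0 = \widetilde x$, $y_0 = \widetilde y$, non-negativity of the curvature (which the cover inherits) together with the $1$-Lipschitz property of $f_\alpha$ yields
\begin{equation*}
    0 \leq \widetilde d(\widetilde x,\widetilde y)\,\kappa(\widetilde x,\widetilde y) = \sum_{\substack{u \in \CN(\widetilde x)\\ v \in \CN(\widetilde y)}} \rho(u,v)\big(\widetilde d(\widetilde x,\widetilde y) - \widetilde d(u,v)\big) \leq \Delta f_\alpha(\widetilde y) - \Delta f_\alpha(\widetilde x) = 0,
\end{equation*}
where $\rho$ is an optimal transport plan between $\widetilde x$ and $\widetilde y$ (the first identity is Lemma~\ref{Prop:TransPlan}) and the last equality uses that $f_\alpha$ is harmonic, hence has constant Laplacian. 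Since $\widetilde d(\widetilde x,\widetilde y) > 0$, this forces $\kappa(\widetilde x,\widetilde y) = 0$. As the covering map is a local isomorphism of weighted CW complexes, so that $\kappa(\widetilde x,\widetilde y) = \kappa(x,y)$, we conclude $\kappa(x,y) = 0$, contradicting $\kappa(x,y) > 0$. Hence $\beta_1(M_2(G)) = 0$.

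The step I expect to be the main obstacle is the curvature-preservation identity $\kappa(\widetilde x,\widetilde y) = \kappa(x,y)$: one must verify that the optimal-transport problem defining the curvature of the lifted edge literally coincides with the one on the base. This reduces to checking that $\pr$ restricts to a bijection between the relevant closed neighbourhoods and that all distances entering the transport cost agree, which is exactly what Lemma~\ref{lem:Propertiesd} guarantees for vertices within three hops. A minor bookkeeping point is to ensure the rescaling of $\alpha$ is harmless and that Lemma~\ref{lem:AlphaEins} is invoked precisely at the distinguished vertex rather than at some arbitrary one.
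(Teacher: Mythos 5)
Your proof is correct and follows essentially the same route as the paper's: contradiction via a nonzero harmonic $1$-form, Lemma~\ref{lem:AlphaEins} applied at the distinguished vertex to produce an extremal edge, and a lift to the universal cover to exhibit an edge of non-positive curvature. The only difference is cosmetic: where you re-run the transport-plan chain from the proof of Lemma~\ref{lem:LipFun}, the paper simply observes that $f_\alpha$ is an admissible test function in the infimum defining the curvature, giving $\kappa(\widetilde x,\widetilde y)\leq\nabla_{\widetilde x\widetilde y}\Delta f_\alpha=0$ in one line; the curvature-preservation point you flag as the main obstacle is likewise relied upon (implicitly, via the setup) by the paper's own argument.
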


\begin{proof}
    We argue by contraposition. Suppose that the first Betti number is nonzero. Hence, there exists an $\alpha \in H_1(M_2(G))$ with $\lVert \alpha \rVert_{\infty} = 1$. The associated function $f_{\alpha}$ on the universal cover is 1-Lipschitz and satisfies
    \[
        \Delta f_{\alpha}(x) = 0,
    \]
    for every $x \in V$. According to Lemma~\ref{lem:AlphaEins}, there exists a vertex $y \in \ON(x)$ such that 
    \[
        \frac{\alpha(x,y)}{d(x,y)} = 1.
    \]
    Hence, there exists an edge $\widetilde{x} \sim \widetilde{y}$ in the universal cover with $p(\widetilde{x}) = x$ and $p(\widetilde{y}) = y$ such that $\nabla_{\widetilde{y}\widetilde{x}} f_{\alpha} = 1$ and therefore
    \[
        \kappa(\widetilde{x},\widetilde{y}) \leq \nabla_{\widetilde{x}\widetilde{y}} \Delta f_{\alpha} = 0.
    \]
    This contradicts the assumption that the Ollivier curvature is positive on every edge containing $x$, thereby completing the proof.
\end{proof}

\subsection{Betti number estimate for non-negative Ollivier curvature}
In this subsection, we prove our first main theorem, that is, the Betti number estimate for graphs with non-negative Ollivier curvature in terms of the minimal vertex degree.
\begin{theorem}\label{th:MainRes}
    Let $G=(V,w,\mu,d)$ be a finite graph with non-negative Ollivier curvature. Then
    \begin{equation*}
        \beta_{1}(M_{2}(G)) \leq \frac{\deg_{\min}}{2}.
    \end{equation*}
\end{theorem}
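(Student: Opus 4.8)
The plan is to fix a vertex $x_0$ of minimal degree, $\deg(x_0) = \deg_{\min}$, and to restrict every harmonic one-form to the star of $x_0$, i.e.\ to the edges incident to $x_0$. Concretely, I would introduce the linear map
\[
    \Phi: H_1(M_2(G)) \to \R^{\ON(x_0)}, \qquad \Phi(\alpha)_y = \frac{\alpha(x_0,y)}{d(x_0,y)} \quad (y \in \ON(x_0)),
\]
and then argue two things: that $\Phi$ is injective, and that the image subspace $E := \Phi(H_1(M_2(G))) \subseteq \R^{\ON(x_0)}$ falls under the hypothesis of Lemma~\ref{lem:LinAlg} with $k=2$. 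Since $\dim \R^{\ON(x_0)} = \deg(x_0) = \deg_{\min}$, this immediately yields $\beta_1(M_2(G)) = \dim E \leq \deg_{\min}/2$.

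The crucial input is Lemma~\ref{lem:AlphaEins}. For a nonzero $\alpha \in H_1(M_2(G))$, normalize so that $\lVert \alpha \rVert_\infty = 1$. The lemma, applied at the vertex $x_0$, produces $y,z \in \ON(x_0)$ with $\alpha(x_0,y)/d(x_0,y) = 1$ and $\alpha(z,x_0)/d(z,x_0) = 1$, the latter being $\alpha(x_0,z)/d(x_0,z) = -1$. These two incident edges are distinct, since the normalized gradients are $+1$ and $-1$. Consequently $\lVert \Phi(\alpha) \rVert_\infty = \max_{y \in \ON(x_0)} |\alpha(x_0,y)/d(x_0,y)| = 1 = \lVert \alpha \rVert_\infty$, so $\Phi$ preserves the supremum norm after normalization; by homogeneity $\lVert \Phi(\alpha)\rVert_\infty = \lVert \alpha \rVert_\infty$ for all $\alpha$, whence $\Phi$ is injective. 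Moreover $\Phi(\alpha)$ attains its sup-norm in at least the two coordinates $y$ and $z$, and again by homogeneity of $\Phi$ and of the sup-norm this two-coordinate property holds for every nonzero element of $E$.

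It then remains to apply Lemma~\ref{lem:LinAlg} to $E \subseteq \R^{\ON(x_0)}$ with $n = \deg(x_0) = \deg_{\min}$ and $k = 2$, which gives $\dim E \leq \deg_{\min}/2$. Combining this with the injectivity of $\Phi$ yields $\beta_1(M_2(G)) = \dim H_1(M_2(G)) = \dim E \leq \deg_{\min}/2$, as claimed.

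I expect the genuinely hard part to be already packaged into Lemma~\ref{lem:AlphaEins}: the nontrivial step is upgrading ``the Lipschitz constant of the lifted harmonic function is attained somewhere'' to ``the harmonic one-form attains its sup-norm on two distinct incident edges at \emph{every} vertex'', which rests on the improved transport plan of Lemma~\ref{lem:LipFun} together with the non-negativity of the Ollivier curvature. Granting that lemma, the remaining argument is the short reduction above; the only points needing care are the distinctness of the two extremal edges (forced by the opposite signs $\pm 1$) and the observation that restricting to the star of a single minimal-degree vertex loses no information, precisely because the supremum norm is always attained there.
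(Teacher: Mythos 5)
Your proposal is correct and follows essentially the same route as the paper: restrict harmonic one-forms to the star of a minimal-degree vertex, use Lemma~\ref{lem:AlphaEins} to get injectivity and the two-coordinate sup-norm property, and conclude via Lemma~\ref{lem:LinAlg} with $k=2$. The observation that the two extremal incident edges are distinct because the normalized gradients are $+1$ and $-1$ is exactly the point the paper relies on as well.
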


\begin{proof}
    Let $x \in V$ with $\deg(x) = \deg_{\min}$. We consider the map $\psi: H_{1}(M_{2}(G)) \to \R^{\ON(x)}$ given by
    \begin{equation*}
        \psi\alpha(y) = \frac{\alpha(x,y)}{d(x,y)}.
    \end{equation*}
    According to Lemma~\ref{lem:AlphaEins}, $\psi$ is injective, and therefore $\dim\ker\psi = 0$. Furthermore, by Lemma~\ref{lem:AlphaEins}, we have that for every $v \in \psi(H_1(M_2(G)))$, there exist $y,z \in \ON(x)$ such that $\vert v(y) \vert = \vert v(z) \vert  = \lVert v \rVert_{\infty}$. Therefore, we can apply Lemma~\ref{lem:LinAlg} and obtain $\dim(\psi(H_1(M_2(G)))) \leq \frac{\deg_{\min}}{2}$. Using the rank–nullity theorem, we conclude
    \begin{equation*}
        \beta_{1}(M_{2}(G)) = \dim H_{1}(M_{2}(G)) \leq \frac{\deg_{\min}}{2},
    \end{equation*}
    finishing the proof.
\end{proof}

\subsection{Basic facts about Bakry-\'Emery curvature}
We now turn to Bakry-\'Emery curvature and aim to give a similar Betti number estimate as in the case of non-negative Ollivier curvature. We first state a few general facts about Bakry-\'Emery curvature.

The Bakry-\'Emery calculus was introduced in Bakry and \'Emery in \cite{BakryEmery85} to characterize lower Ricci curvature bounds on manifolds via the Bochner formula.
Later, it was extended to continuous metric measure spaces, see e.g. \cite{bakry2006logarithmic,bakry2006diffusions,bakry1996levy,bakry2000some,
savere2014discrete}, and also discrete spaces, see e.g. \cite{schmuckenschlager1998curvature,lin2010ricci}. We now turn to the formal definitions for discrete spaces with a graph Laplacian $\Delta$.
The definition of the Laplacian $\Delta$ gives rise to the carré du champ operator $\Gamma$, a symmetric bilinear form. Namely, for $f,g \in C(V)$, we define
\begin{equation*}
    \Gamma(f,g) = \frac{1}{2}\Big(\Delta(fg) - f\Delta(g) - g\Delta(f)\Big).
\end{equation*} 
By iterating $\Gamma$, we derive another symmetric bilinear form, denoted as $\Gamma_{2}$, which is defined by
\begin{equation*}
    \Gamma_{2}(f,g) = \frac{1}{2}\Big(\Delta\Gamma(f,g) - \Gamma(f,\Delta g) - \Gamma(g, \Delta f)\Big).
\end{equation*}
We introduce the additional notations $\Gamma(f) = \Gamma(f,f)$ and $\Gamma_{2}(f) = \Gamma_{2}(f,f)$.
Using these operators, Bakry and \'Emery \cite{BakryEmery85} defined a Ricci curvature notion, which is motivated by Bochner's formula, a fundamental identity in Riemannian Geometry. Namely, let $\mathcal{K} \in \R$ and $\mathcal{N} \in (0, \infty]$. A vertex $x\in V$ satisfies the curvature-dimension inequality $CD(\mathcal{K},\mathcal{N},x)$, if 
\begin{equation*}
    \Gamma_{2}(f)(x) \geq \frac{1}{\mathcal{N}}(\Delta f(x))^2+\mathcal{K}\Gamma(f)(x)
\end{equation*}
for any $f\in C(V)$. If all vertices $x$ of a graph $G$ satisfy $CD(\mathcal{K}, \mathcal{N},x)$, we say that $G$ satisfies $CD(\mathcal{K}, \mathcal{N})$. We refer to $\mathcal{K}$ as a lower Ricci curvature bound, and $\mathcal{N}$ as the dimension parameter.
For any $x\in V$, we define
\begin{equation*}
    \mathcal{K}_{G,x}(\mathcal{N}) = \sup\{\mathcal{K} \in \R: CD(\mathcal{K},\mathcal{N},x) \},
\end{equation*}
the Bakry-\'Emery curvature function of $x$. Various stronger modifications of this curvature notion can be found in, e.g., \cite{horn2014volume, bauer2015li,dier2021discrete} which were mainly focused on establishing Li-Yau type inequalities and polynomial volume growth. It was later proven in \cite{munch2019li} that a Li-Yau type inequality and polynomial volume growth also follows from the original Bakry-\'Emery curvature condition.

In what follows, a graph $G$ is said to have non-negative Bakry-\'Emery curvature if it satisfies $CD(\mathcal{K}, \mathcal{N})$ for some $\mathcal{K} \geq 0$.

\subsection{Betti number estimates for non-negative Bakry-\'Emery curvature}

In contrast to Ollivier curvature, for Bakry-\'Emery curvature there are already some Betti number estimates discussed in the literature.
As in the case of Ollivier curvature, one first has to extend the graph to a 2-dimensional cell complex such that the universal cover preserves the curvature. As the local dependency of the Bakry curvature is different from the one of the Ollivier curvature, the choice of 2-cells we glue in also has to be different. It turns out that for Bakry-\'Emery curvature, the right choice of 2-cells comes from the path homology \cite{grigor2019homology,grigor2012homologies} which was originally introduced to describe homologies mainly of directed graphs. In the undirected graph case, the two-cells in the corresponding cell complex arising from the path homology are precisely the triangles and quadrilaterals. As discussed in \cite{kempton2021homology}, the universal cover of this cell complex preserves Bakry-\'Emery curvature.
We hence denote by $\MBE(G)$ the 2-dimensional cell complex arising from the graph $G$ when gluing in as 2-cells all cycles of length at most 4. Here by length, we mean the combinatorial graph distance.

While we are interested in the non-negative curvature case, there have been established Betti number estimates for positive curvature and almost positive curvature. More precisely, in \cite{kempton2021homology} it was shown that the first Betti number of $\MBE(G)$ vanishes if $G$ has positive curvature. This was improved in \cite{munch2020spectrally} where a vanishing first Betti number was shown under the strictly weaker condition that the operator $-\frac 1 2 \Delta + K$ has strictly positive spectrum where $K:X_0 \to \R$ is a pointwise lower bound on the Bakry-\'Emery curvature. 

To derive Betti number estimates under non-negative Bakry-\'Emery curvature, we begin by proving that, under the assumption of non-negative Bakry-\'Emery curvature, for a function $f$ with constant Laplacian, $\Gamma f$ must be constant if it attains its supremum norm at some vertex.

\begin{lemma}\label{lem:GammaConstant}
Let $G=(V,w,\mu,d)$ be a connected locally finite graph with non-negative Bakry-\'Emery curvature. Assume $f \in \R^V$ with $\Delta f = C=const$ and $\|\Gamma f\|_\infty = \Gamma f(x)$ for some $x \in V$. Then, $\Gamma f = \|\Gamma f\|_\infty$ everywhere.
\end{lemma}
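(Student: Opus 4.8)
The plan is to show that $\Gamma f$ is a subharmonic function with respect to $\Delta$, and then close the argument with a maximum principle. The starting point is the observation that having a constant Laplacian collapses the $\Gamma_2$ expression. Concretely, since $\Delta f = C$ is constant and $\Gamma(f,\cdot)$ annihilates constants (one checks $\Gamma(f,C) = \tfrac12(\Delta(Cf) - f\Delta C - C\Delta f) = 0$), the definition of $\Gamma_2$ with $g=f$ simplifies: the two mixed terms $\Gamma(f,\Delta f) = \Gamma(f,C)$ both vanish, leaving
\[
    \Gamma_2(f) = \frac12\Delta\Gamma(f) - \Gamma(f,\Delta f) = \frac12\Delta\Gamma(f).
\]
So up to the factor $\tfrac12$, the iterated operator applied to $f$ is nothing but the Laplacian of $\Gamma f$.

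Next I would feed in the curvature hypothesis. By assumption $G$ has non-negative Bakry-\'Emery curvature, i.e. it satisfies $CD(\mathcal K,\mathcal N)$ for some $\mathcal K \ge 0$ and some $\mathcal N \in (0,\infty]$. Evaluating the $CD$ inequality at an arbitrary vertex gives $\Gamma_2(f) \ge \tfrac1{\mathcal N}(\Delta f)^2 + \mathcal K\,\Gamma(f)$. Every term on the right-hand side is non-negative: $(\Delta f)^2 \ge 0$ and $\mathcal N > 0$, while $\mathcal K \ge 0$ and $\Gamma(f) = \tfrac12\sum_{y\sim x}\frac{w(x,y)}{\mu(x)}(f(x)-f(y))^2 \ge 0$ pointwise. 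Hence $\Gamma_2(f) \ge 0$ everywhere, and combined with the identity from the first step this yields $\Delta\Gamma(f) \ge 0$ at every vertex. In other words, writing $g := \Gamma f$, the function $g$ is subharmonic for $\Delta$. (Note this only needed $\mathcal K \ge 0$; the dimension term is not used.)

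Finally I would run the maximum principle. At the given maximizer $x$, subharmonicity reads $\Delta g(x) = \sum_{y\sim x}\frac{w(x,y)}{\mu(x)}\bigl(g(y)-g(x)\bigr) \ge 0$. Since $g(x) = \|\Gamma f\|_\infty$ is the supremum, each summand satisfies $g(y)-g(x) \le 0$, while the edge weights $\frac{w(x,y)}{\mu(x)}$ are strictly positive for $y\in\ON(x)$. A sum of non-positive terms that is $\ge 0$ must vanish termwise, forcing $g(y) = g(x) = \|\Gamma f\|_\infty$ for every $y\sim x$. Thus every neighbour of $x$ is again a maximizer, and the same argument applies there. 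By connectedness of $G$ this propagates through the whole vertex set, giving $\Gamma f \equiv \|\Gamma f\|_\infty$ everywhere, as claimed.

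The computations in the first two steps are routine, so the only point requiring care is the propagation in the last step: one must make sure the maximum-principle conclusion is local-to-global, i.e. that repeatedly passing to neighbours covers all of $V$. This is exactly where connectedness (rather than finiteness) is essential, and it is the sole structural hypothesis on the graph that the argument genuinely exploits.
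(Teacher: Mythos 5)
Your proposal is correct and follows essentially the same route as the paper: both reduce $\Gamma_2(f)$ to $\tfrac12\Delta\Gamma(f)$ using that $\Gamma(f,\Delta f)=\Gamma(f,C)=0$, invoke $CD(\mathcal K,\mathcal N)$ with $\mathcal K\ge 0$ to get $\Delta\Gamma(f)\ge 0$, and conclude by the maximum principle at the attained maximum. Your write-up merely spells out the termwise-vanishing and connectedness propagation that the paper leaves implicit.
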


\begin{proof}
By non-negative Bakry-\'Emery curvature, we have
\[
0 \leq \Delta \Gamma f - 2 \Gamma(f,\Delta f) = \Delta \Gamma f.
\]
Now with $g = \Gamma f$, we have $g(x) = \|g\|_\infty$ and $\Delta g \geq 0$. The maximum principle now implies $g= \|g\|_\infty$ everywhere, finishing the proof.
\end{proof}

For a 1-form $\alpha \in \R^{X_1}$, we define
\[
|\alpha|^2(x) := \frac {1} {2\mu(x)} \sum_y w(x,y)\alpha(x,y)^2 .
\]

Using the previous Lemma, we now show that under the assumption of non-negative Bakry-\'Emery curvature, if $\alpha$ is a harmonic 1-form, then $|\alpha|^2$ must be constant.

\begin{lemma}\label{lem:constAlphaNormBE}
Let $G=(V,w,\mu,d)$ be a finite graph with non-negative Bakry-\'Emery curvature. Let $\alpha \in H_1(\MBE(G))$ be a harmonic 1-form. Then, $|\alpha|^2$ is constant on $X_0$.
\end{lemma}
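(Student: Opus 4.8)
The plan is to lift $\alpha$ to the universal cover, recognize $|\alpha|^2$ as the carré du champ of the harmonic potential of the lifted form, and then conclude with Lemma~\ref{lem:GammaConstant} together with the fact that passing to the universal cover of $\MBE(G)$ preserves Bakry-\'Emery curvature.

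First I would pass to the universal cover $\widetilde{\MBE(G)}$ with covering map $\pr$, cell sets $\widetilde{X}_k$, and coboundary $\widetilde\delta$. Since $\delta\alpha = 0$, the pulled-back form $\alpha\circ\pr$ is closed on the simply-connected cover, so by the lifting construction of Subsection~2.2 (which is general and applies verbatim to $\MBE(G)$) there is a function $f_\alpha\in\R^{\widetilde{X}_0}$ with $\widetilde\delta f_\alpha = \alpha\circ\pr$; because in addition $\delta^*\alpha = 0$, this $f_\alpha$ is harmonic, i.e.\ $\Delta f_\alpha = 0$, in particular its Laplacian is constant.

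Next I would identify the pushforward of $\Gamma f_\alpha$ with $|\alpha|^2$. The covering map is a local isomorphism of weighted graphs: for $\widetilde x\in\widetilde{X}_0$ the edges at $\widetilde x$ map bijectively onto the edges at $x:=\pr(\widetilde x)$ with matching weights, and $\mu(\widetilde x)=\mu(x)$. A direct computation from the definition of $\Gamma$ gives the pointwise formula $\Gamma g(\widetilde x) = \frac{1}{2\mu(\widetilde x)}\sum_{\widetilde y} w(\widetilde x,\widetilde y)\,(g(\widetilde x)-g(\widetilde y))^2$, and since $\widetilde\delta f_\alpha(\widetilde x,\widetilde y) = \alpha(x,\pr(\widetilde y))$, the local-isomorphism property yields
\[
\Gamma f_\alpha(\widetilde x) = \frac{1}{2\mu(x)}\sum_{y\sim x} w(x,y)\,\alpha(x,y)^2 = |\alpha|^2(x) = |\alpha|^2(\pr(\widetilde x)).
\]
Thus $\Gamma f_\alpha$ is exactly the pullback of $|\alpha|^2$ along $\pr$.

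Finally I would invoke Lemma~\ref{lem:GammaConstant} on the cover. As $\MBE(G)$ is obtained by gluing in all cycles of length at most four, and the universal cover of this complex preserves Bakry-\'Emery curvature (cf.\ \cite{kempton2021homology}), the cover $\widetilde{\MBE(G)}$ is a connected, locally finite graph with non-negative Bakry-\'Emery curvature. Since $G$ is finite, $|\alpha|^2$ attains its maximum at some $x^*\in V$; choosing $\widetilde{x}^*\in\pr^{-1}(x^*)$ and using the identity above together with surjectivity of $\pr$ and $\Gamma f_\alpha\geq 0$, we get $\Gamma f_\alpha(\widetilde{x}^*) = \max_V |\alpha|^2 = \sup_{\widetilde{X}_0}\Gamma f_\alpha = \|\Gamma f_\alpha\|_\infty < \infty$. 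Hence $f_\alpha$ has constant Laplacian and $\Gamma f_\alpha$ attains its supremum norm, so Lemma~\ref{lem:GammaConstant} forces $\Gamma f_\alpha\equiv\|\Gamma f_\alpha\|_\infty$, and pushing this down through $\pr$ shows $|\alpha|^2$ is constant on $X_0$. The main obstacle is the transition to the infinite cover: I must ensure that $f_\alpha$ is genuinely harmonic there, that the curvature bound survives lifting, and above all that $\Gamma f_\alpha$ \emph{attains} its supremum — which is precisely why reducing to the finite base graph $G$ to locate the maximizer is essential, since the maximum-principle step inside Lemma~\ref{lem:GammaConstant} needs an attained extremum. Everything else is the routine identification of the two quadratic expressions through the local-isomorphism property of the covering map.
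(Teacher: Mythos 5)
Your proposal is correct and follows essentially the same route as the paper: lift $\alpha$ to the universal cover of $\MBE(G)$, observe that $\Gamma f_\alpha = |\alpha|^2\circ\pr$ and that $\widetilde\Delta f_\alpha = 0$, note that the supremum of $\Gamma f_\alpha$ is attained because the base graph is finite, and conclude via Lemma~\ref{lem:GammaConstant}. The extra care you take in verifying the local-isomorphism identity and the curvature preservation under covering is exactly what the paper's terser argument implicitly relies on, so there is no gap.
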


\begin{proof}
Let $\widetilde{\MBE(G)}$ be the universal cover with covering map $\pr : \widetilde{\MBE(G)} \to \MBE(G)$.
Then, as $\delta\alpha = 0$, there exists $f_\alpha \in \R^{\widetilde X_0}$ with $\widetilde \delta f_\alpha = \alpha \circ \pr$.
Then, $\Gamma(f_\alpha) = |\alpha|^2 \circ \pr$ and hence, $\Gamma(f_\alpha)$ takes its maximum as $\MBE(G)$ is finite.
Moreover, $\widetilde \Delta f_\alpha = 0$ as $\delta^* \alpha =0$.
By Lemma~\ref{lem:GammaConstant}, we see that $\Gamma (f_\alpha) = const$ and hence, $|\alpha|^2$ is constant on $X_0$.
\end{proof}

We are now prepared to prove the main theorem of this subsection, an upper bound for the Betti number under non-negative Bakry-\'Emery curvature.

\begin{theorem}\label{th:BEBetti}
Let $G=(V,w,\mu,d)$ be a finite graph with non-negative Bakry-\'Emery curvature. Then
    \begin{equation*}
        \beta_{1}(M_{4}(G)) \leq \deg_{\min}-1.
    \end{equation*}
\end{theorem}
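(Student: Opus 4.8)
The proof should parallel the Ollivier case (Theorem~\ref{th:MainRes}), using the Bakry-\'Emery analogue of Lemma~\ref{lem:AlphaEins} that we have just established, namely Lemma~\ref{lem:constAlphaNormBE}. The key structural fact is that for any harmonic 1-form $\alpha \in H_1(\MBE(G))$, the quantity $|\alpha|^2$ is constant on $X_0$. I would fix a vertex $x$ with $\deg(x) = \deg_{\min}$ and consider the evaluation map $\psi : H_1(\MBE(G)) \to \R^{\ON(x)}$ sending $\alpha$ to the vector $(\alpha(x,y))_{y \in \ON(x)}$, with the goal of showing $\psi$ is injective and bounding the dimension of its image.

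Injectivity of $\psi$ should follow the same lifting argument as before: if $\alpha(x,y) = 0$ for all $y \sim x$, then $\Gamma(f_\alpha)$ vanishes at any lift $\widetilde x$ of $x$, so by Lemma~\ref{lem:constAlphaNormBE} the constant value $|\alpha|^2$ is zero everywhere, forcing $\alpha = 0$. Thus $\dim H_1(\MBE(G)) = \dim \psi(H_1(\MBE(G)))$ by rank-nullity. The remaining task is to bound the dimension of the image. Here the mechanism differs from the Ollivier case: instead of a sup-norm being attained in two coordinates (which invoked Lemma~\ref{lem:LinAlg} with $k=2$), the constancy of $|\alpha|^2$ gives a single quadratic constraint. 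Explicitly, with constant value $|\alpha|^2 \equiv c$, the relation
\[
\frac{1}{2\mu(x)} \sum_{y \sim x} w(x,y)\,\alpha(x,y)^2 = c
\]
holds simultaneously at $x$ and at every other vertex. I expect the bound $\deg_{\min} - 1$ to arise from a linear-algebra lemma stating that a subspace of $\R^n$ on which a fixed positive-definite quadratic form is constant (equivalently, all vectors lie on a common sphere after reweighting) has dimension at most $n-1$: any linear subspace whose nonzero vectors all have the same norm must be one-dimensional, but since we only impose equality of the \emph{weighted} norm $|\alpha|^2(x)$ with the value at another vertex rather than genuine constancy of an intrinsic norm, the correct count needs care.

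The main obstacle, and the step requiring the most thought, is extracting the precise constant $\deg_{\min}-1$ from the constancy of $|\alpha|^2$. Unlike the Ollivier argument, where Lemma~\ref{lem:AlphaEins} directly pins down two extremal edges per vertex giving the clean factor of $2$, here the single global quadratic invariant must be combined with the constraint at a second vertex (or the full structure of $\MBE(G)$, whose 2-cells are all triangles and quadrilaterals of combinatorial length $\le 4$) to eliminate one further degree of freedom. I would look for an auxiliary vertex $x'$ adjacent to $x$ and exploit that $|\alpha|^2(x) = |\alpha|^2(x')$ together with the harmonicity relation $\delta^*\alpha(x)=0$, which couples the values $\alpha(x,y)$ linearly; combining one linear constraint (harmonicity, a hyperplane) with the injectivity already established should yield a subspace of dimension at most $\deg_{\min}-1$. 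The delicate point is verifying that the linear harmonicity constraint at $x$ is genuinely nontrivial on the image of $\psi$ and independent of the quadratic constancy, so that the dimension drops by exactly one rather than being controlled only by the weaker quadratic condition.
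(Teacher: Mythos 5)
Your plan matches the paper's proof: injectivity of $\psi$ follows from Lemma~\ref{lem:constAlphaNormBE} exactly as you describe, and the bound $\deg_{\min}-1$ comes solely from the linear harmonicity constraint $\sum_{y\sim x} w(x,y)\,\psi\alpha(y)=\mu(x)\,\delta^*\alpha(x)=0$, which places $\psi(H_1)$ inside a hyperplane of $\R^{\ON(x)}$. The ``delicate point'' you flag is not an issue: the constancy of $|\alpha|^2$ is used only for injectivity and plays no role in the dimension count, and the harmonicity functional is automatically nontrivial on $\R^{\ON(x)}$ because all weights $w(x,y)$ with $y\sim x$ are positive, so containment of $\psi(H_1)$ in its kernel already gives $\dim\psi(H_1)\le \deg_{\min}-1$.
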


\begin{proof}
Choose $x\in V$ with $\deg(x)$ minimal.
We consider the restriction map $\psi:H_1 \to \R^{S_1(x)}$ given by 
$\psi \alpha (y) := \alpha(x,y)$.
We notice that assuming $\psi\alpha = 0$, we obtain $|\alpha|^2(x) = 0$
and by Lemma~\ref{lem:constAlphaNormBE}, this implies $|\alpha|^2 = 0$ everywhere giving $\alpha = 0$.
Hence, $\psi$ is injective.
Moreover $\psi(H_1)$ has codimension at least one, as $\sum w(x,y)\psi \alpha (y) = 0$ as $\alpha$ is harmonic.
We conclude
\[
\beta_1 \leq \dim \psi(H_1) \leq \# S_1(x) - 1 = \deg_{\min} - 1
\]
finishing the proof. 
\end{proof}

\section{Sharpness of the Betti number estimates for Ollivier curvature}

The aim of this section is to prove that the upper bound in Theorem~\ref{th:MainRes} is attained if and only if the graph is a potentially twisted torus.
The key idea is to find a suitable basis of the homology group. We will show that each basis element induces a graph automorphism. The group generated by 
these automorphisms turns out to be abelian, and we will prove that its Cayley graph is isomorphic to the underlying graph.

\subsection{Supremum norm rigidity}
As a first step, we will exploit that harmonic one-forms attain their supremum norm twice at each vertex.
We start with an abstract lemma about linear algebra.
\begin{lemma}\label{lem:starrheit}
    Let $E \subseteq \R^n$ be a subspace such that for every $v \in E$, there exist indices $i_{1}< \dots< i_{k}$ satisfying $\vert v_{i_{j}} \vert = \lVert v \rVert_{\infty}$ for $j =1, \dots, k$ and $\dim(E) = \frac{n}{k}$. Then, we can find a partition $\{\mathcal{J}_{1}, \dots, \mathcal{J}_{\frac{n}{k}}\}$ of the set $\{1,\dots, n\}$ such that $\vert \mathcal{J}_{i} \vert = k$ for $i=1, \dots, \frac{n}{k}$, and 
    a basis $(v^1, \dots,v^\frac{n}{k})$ satisfying
    \begin{equation*}
        \vert v^i_j\vert = \begin{cases}
            1&: j \in \mathcal{J}_{i}, \\
            0&: \text{otherwise},
        \end{cases}
    \end{equation*}
    for each $i = 1, \dots, \frac{n}{k}$.
\end{lemma}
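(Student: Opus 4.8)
The plan is to prove this by induction on the dimension $d = \dim(E) = n/k$, peeling off one "generalized coordinate block" of size $k$ at each step. The setup mirrors the argument already carried out in Lemma~\ref{lem:LinAlg}, but now I must keep track of \emph{where} the supremum norm is attained in order to build the partition explicitly.

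First I would choose $v^1 \in E$ whose supremum norm is attained in exactly $k$ coordinates; such a vector must exist because every vector attains it in \emph{at least} $k$ coordinates, and if every vector attained it in strictly more than $k$ coordinates, the dimension bound from Lemma~\ref{lem:LinAlg} applied with $k+1$ in place of $k$ would give $\dim(E) \leq n/(k+1) < n/k$, a contradiction. Let $\mathcal{J}_1 = \{i_1, \dots, i_k\}$ be the index set where $v^1$ attains its norm, and after rescaling assume $\lVert v^1 \rVert_\infty = 1$, so $\vert v^1_j \vert = 1$ exactly for $j \in \mathcal{J}_1$. Exactly as in Lemma~\ref{lem:LinAlg}, I form the complementary subspace
\[
W = \{ w \in E : w_j = 0 \text{ for all } j \in \mathcal{J}_1 \},
\]
and the argument there shows $E = \SPAN(\{v^1\}) \oplus W$ with $\dim(W) = d - 1$, since every $w \in W$ must vanish on all of $\mathcal{J}_1$ (otherwise one perturbs $v^1$ by $\eps w$ to strictly reduce the number of norm-attaining coordinates below $k$).

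The key point making the induction go through is that $W$, viewed as a subspace of $\R^{n-k}$ after deleting the coordinates in $\mathcal{J}_1$, again satisfies the hypothesis of the lemma with the \emph{same} $k$: every $w \in W$ still attains its supremum norm in at least $k$ coordinates, and these coordinates necessarily lie outside $\mathcal{J}_1$ since $w$ vanishes there. Moreover $\dim(W) = d - 1 = (n-k)/k$, so the dimension-to-size ratio is preserved. By the induction hypothesis applied to $W \subseteq \R^{\{1,\dots,n\}\setminus \mathcal{J}_1}$, I obtain a partition $\{\mathcal{J}_2, \dots, \mathcal{J}_d\}$ of the remaining indices into blocks of size $k$ together with a basis $(v^2, \dots, v^d)$ of $W$ having the stated support-and-modulus property. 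Adjoining $\mathcal{J}_1$ and $v^1$ completes the partition and basis.

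The main obstacle I anticipate is verifying carefully that $W$ genuinely satisfies the hypothesis with the same $k$, rather than with some larger value: I must confirm that restricting to the surviving $n-k$ coordinates does not change which entries realize the maximum modulus, which holds precisely because $W$ is flat (identically zero) on $\mathcal{J}_1$, so no norm-attaining coordinate of a $w \in W$ could have been hiding in $\mathcal{J}_1$. A secondary bookkeeping issue is the rescaling: the lemma asserts $\vert v^i_j \vert \in \{0,1\}$, so I should normalize each chosen basis vector to have $\lVert v^i \rVert_\infty = 1$ before recording it, which is harmless since scaling preserves membership in $E$ and the support structure.
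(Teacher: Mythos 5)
Your overall strategy---induction on $d=\dim E$, peeling off a block $\mathcal{J}_1$ on which a vector $v^1$ attains its supremum norm in exactly $k$ coordinates, passing to the complement $W=\{w\in E: w_j=0 \ \forall j\in\mathcal{J}_1\}$ of dimension $d-1$ viewed inside $\R^{n-k}$, and invoking the induction hypothesis---is precisely the paper's argument, and your justification for the existence of a vector attaining the norm in \emph{exactly} $k$ coordinates (applying Lemma~\ref{lem:LinAlg} with $k+1$) is a clean way to make explicit a point the paper leaves implicit. However, there is a genuine gap at the final step: you cannot simply ``adjoin $v^1$'' to the basis produced for $W$. Your $v^1$ satisfies $|v^1_j|=1$ for $j\in\mathcal{J}_1$ and $|v^1_j|<1$ for $j\notin\mathcal{J}_1$, but nothing forces $v^1_j=0$ outside $\mathcal{J}_1$, which is what the conclusion of the lemma demands. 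Concretely, take $E=\SPAN\{(1,1,0,0),(0,0,1,1)\}\subseteq\R^4$ with $k=2$: every vector $(a,a,b,b)$ attains its norm at least twice, and $v^1=(1,1,\tfrac12,\tfrac12)$ attains it in exactly the two coordinates $\mathcal{J}_1=\{1,2\}$ yet is nonzero on $\{3,4\}$. Adjoining this $v^1$ to the basis $(0,0,1,1)$ of $W$ does not produce a basis with the stated $\{0,1\}$-modulus structure.

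The paper closes exactly this gap with an additional correction step that your proposal omits: after the induction hypothesis delivers the blocks $\mathcal{J}_2,\dots,\mathcal{J}_d$ and the basis $(v^2,\dots,v^d)$ of $W$, the first vector is replaced by $v^1-\sum_{i\geq 2} v^1_{j_i}\sgn(v^i_{j_i})\,v^i$ for fixed representatives $j_i\in\mathcal{J}_i$ (which leaves the entries on $\mathcal{J}_1$ untouched since each $v^i$ vanishes there, and kills one coordinate in each other block), and a perturbation argument---if the corrected vector were nonzero at some $j\in\mathcal{J}_i$, then $v^i$ plus a suitable signed half of it would attain its supremum norm in at most $k-1$ coordinates---shows the corrected vector vanishes on \emph{all} of $\{1,\dots,n\}\setminus\mathcal{J}_1$. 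In my example this correction turns $(1,1,\tfrac12,\tfrac12)$ into $(1,1,0,0)$. Your proof needs this step (or an equivalent) to be complete; everything else matches the paper's argument.
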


\begin{proof}
    We will prove the statement by induction on the dimension of $E$. For simplicity, we write $d=\dim(E)$.\\
    \underline{Base case:} The claim is clear for $d=1$. Choose $v\in E$ with $\lVert v \rVert_\infty = 1$. Then, $(v)$ is a basis with 
    $\vert v_j\vert = 1$ for each $j=1,\dots,n$.\\
    \underline{Inductive step:} Assume the statement is true for $\dim(E)=d-1$. We shall show the statement for $\dim(E)=d$. Choose $v' \in E$ such that there exist exactly $k$ indices $i_1< \dots <i_k$ satisfying $\vert v'_{i_j} \vert = \lVert v' \rVert_\infty=1$ and define $\mathcal{J}_d=\{i_1,\dots,i_k\}$. As shown in Lemma~\ref{lem:LinAlg}, there exists a subspace $W \subset E$ such that  
    \begin{equation*}
        E = \SPAN(\{v'\}) \oplus W,
    \end{equation*} 
    and $w_j = 0$ for $j \in \mathcal{J}_d$, for every $w \in W$.
    We embed $W\subset \R^n$ into $\R^{n-k}$ by the map $\iota: W \to \R^{n-k}$, defined by $\iota(w) = (w_{j})_{j\not\in \mathcal{J}_d}$. Then,
    \begin{equation*}
        \dim(\iota(W)) = d-1 = \frac{n-k}{k}.
    \end{equation*}
    By the induction hypothesis we obtain a partition $\{\mathcal{J}'_{1}, \dots, \mathcal{J}'_{d-1}\}$ of the set $\{1, \dots, n-k\}$ and a basis $(\widetilde{v}^1, \dots,\widetilde{v}^{d-1})$ satisfying
    \begin{equation*}
        \vert \widetilde{v}^i_j\vert = \begin{cases}
            1&: j \in \mathcal{J}'_{i}, \\
            0&: \text{otherwise},
        \end{cases}
    \end{equation*}
    for each $i = 1,\dots,d-1$. Thus, the vectors $v^i = \iota^{-1}(\widetilde{v}^i)$ for $i=1,\dots,d-1$ form a basis of $W$ and it exists a partition $\{\mathcal{J}_1, \dots, \mathcal{J}_d\}$ of the set $\{1,\dots,n\}$ such that
    \begin{equation*}
        \vert v^i_j\vert = \begin{cases}
            1&: j \in \mathcal{J}_{i}, \\
            0&: \text{otherwise},
        \end{cases}
    \end{equation*}
    for each $i = 1, \dots,d-1$. Furthermore, $(v^1, \dots, v^{d-1}, v')$ forms a basis of $E$. Next, we define
    \begin{equation*}
        v^d = v'- \sum_{i=1}^{d-1} v'_{j_i} \sgn(v^{i}_{j_i})v^{i},
    \end{equation*}
    where $j_i$ is an arbitrary but fixed element of $\mathcal{J}_i$. By the Steinitz exchange lemma, $(v^1, \dots,v^d)$ forms a basis of $V$. As $v^i_j=0$ for every $j\in \mathcal{J}_d$ and $i=1,\dots,d-1$, we have $\vert v^d_j \vert = \vert v'_j \vert =1$ for every $j\in \mathcal{J}_d$. Finally, assume there exists an $j\not\in \mathcal{J}_d$ such that $\vert v^d_j \vert > 0$. Then, there exists $\mathcal{J}_k$ such that $j \in \mathcal{J}_k$. Using that $v^d_{j_k}=0$ by construction, we conclude that
    \begin{equation*}
        w = v^k + \frac{1}{2} \cdot \sgn(v^k_j) \cdot\sgn(v^d_j) \cdot v^d
    \end{equation*}
    attains $\lVert w\rVert_\infty$ in at most $k-1$ coordinates, contradicting our assumption. Therefore, the vector $v^d$ also satisfies
    \begin{equation*}
        \vert v^d_j\vert = \begin{cases}
            1&: j \in \mathcal{J}_{d}, \\
            0&: \text{otherwise}.
        \end{cases}
    \end{equation*}
    This concludes the proof.
\end{proof}

\subsection{Finding a suitable basis of the homology group}

The aim of this subsection is to find a basis of the homology group such that for every one-form in the basis and every vertex, there are only two edges containing this vertex, along which the one-form does not vanish.
Using the previous results, it is relatively easy to show this if the basis is allowed to depend on the vertex. The next lemma will be useful to show that the basis can be chosen independently of the vertex.

\begin{lemma}\label{lem:perm}
    Let $A:\R^m \to \R^m$ be a linear map such that $\lVert v \rVert_\infty = \lVert Av \rVert_\infty$ for all $v \in \R^m$. Then the entrywise absolute value $\vert A\vert := (\vert a_{ij} \vert)_{i,j=1,\dots,m}$ of $A$ is a permutation matrix.
\end{lemma}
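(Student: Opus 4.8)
The plan is to show that $|A|$ is a permutation matrix by establishing that each row and each column of $|A|$ contains exactly one nonzero entry, and moreover that this entry equals $1$. The hypothesis $\lVert v \rVert_\infty = \lVert Av \rVert_\infty$ for all $v$ says that $A$ is a bijective linear isometry of $(\R^m, \lVert \cdot \rVert_\infty)$ onto itself. The classical fact is that the isometry group of the $\ell^\infty$ norm on $\R^m$ consists exactly of the signed permutation matrices, so the proof amounts to reproving this in a self-contained way.

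First I would test $A$ on the standard basis vectors and on the all-ones-type vectors. Applying the isometry property to $e_j$ gives $\lVert A e_j \rVert_\infty = 1$, so every column of $A$ has $\ell^\infty$-norm exactly $1$; in particular every entry satisfies $|a_{ij}| \le 1$. The key step is to rule out columns with two or more entries of absolute value $1$, and to rule out entries strictly between $0$ and $1$. To control a single column $j$, I would consider the extreme points of the unit ball of $\lVert \cdot \rVert_\infty$, namely the sign vectors $\pm 1$ in each coordinate. Since $A$ is a linear isometric bijection, it permutes the extreme points of the unit cube among themselves: each vertex $v \in \{-1,1\}^m$ has $\lVert v \rVert_\infty = 1$ and is an extreme point, and a linear bijective isometry must send extreme points of the ball to extreme points. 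Thus $Av \in \{-1,1\}^m$ for every sign vector $v$.

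Now I would extract the structure from $Av \in \{-1,1\}^m$. Writing $(Av)_i = \sum_j a_{ij} v_j$, the requirement that this equals $\pm 1$ for every choice of signs $v_j \in \{-1,1\}$ forces, in each fixed row $i$, that $\sum_j |a_{ij}| = 1$ and that at most one $a_{ij}$ is nonzero. Indeed, choosing $v_j = \sgn(a_{ij})$ gives $\sum_j |a_{ij}| = |(Av)_i| = 1$; if two entries $a_{ij}, a_{ik}$ in row $i$ were nonzero, flipping only the sign of $v_k$ would change $(Av)_i$ by $2|a_{ik}|$, and since both the original and flipped values lie in $\{-1,1\}$ we would need $2|a_{ik}| \in \{0,2\}$, forcing $|a_{ik}| = 1$ and hence (by the row sum being $1$) all other entries of the row to vanish, a contradiction. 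Therefore each row of $|A|$ has exactly one entry equal to $1$ and the rest zero. The same argument applied to $A^{-1}$ (which is also an isometry, being the inverse of a bijective isometry) shows each column has exactly one entry of absolute value $1$. A matrix whose absolute value has exactly one $1$ per row and per column is precisely a permutation matrix, so $|A|$ is a permutation matrix.

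The main obstacle I anticipate is the step asserting that a linear bijective isometry maps extreme points of the unit ball to extreme points; while intuitively clear, making it rigorous requires noting that $A$ and $A^{-1}$ both preserve the norm, hence both map the unit ball onto itself bijectively, and a bijective affine (here linear) self-map of a convex body carries extreme points to extreme points. Alternatively, one can bypass extreme-point language entirely and argue directly on coordinates: from $\lVert Ae_j \rVert_\infty = 1$ one gets the column bound $|a_{ij}| \le 1$, and then the sign-flipping argument above on vectors of the form $v = \sum_j \sgn(a_{ij}) e_j$ yields the row structure directly. I would favor the direct coordinate argument since it avoids invoking convexity machinery and keeps the proof elementary and self-contained.
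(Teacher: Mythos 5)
Your proof is correct, but it takes a somewhat different route from the paper's. The paper never leaves the world of $2$-sparse test vectors: it applies the isometry to $e_k$ to see that every column contains an entry of modulus one, then to $\sgn(A_{ik})e_k+\sgn(A_{ik'})e_{k'}$ to rule out two modulus-one entries in a row, concludes by a pigeonhole count that each row has exactly one such entry, and finally tests $\sgn(A_{ik_i})e_{k_i}+\sgn(A_{il})e_l$ to kill any remaining nonzero entry in that row. You instead pass through the full vertex set $\{-1,1\}^m$ of the cube, using the fact that a linear bijective isometry maps extreme points of the unit ball to extreme points, which yields the row-sum identity $\sum_j\lvert a_{ij}\rvert=1$ and then the sign-flip argument; the column structure you get from $A^{-1}$ (it also follows already from invertibility, since a matrix whose rows are $\pm e_{\sigma(i)}^T$ is invertible only if $\sigma$ is a bijection). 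Your extreme-point step is legitimate as you justify it, and your proposed fallback — testing directly on $v=\sum_j\sgn(a_{ij})e_j$ to get $\sum_j\lvert a_{ij}\rvert\le 1$ and then counting against the $m$ modulus-one column entries — is arguably the cleanest version and avoids the convexity language entirely. What the paper's version buys is that it needs no appeal to extreme points or to $A^{-1}$; what yours buys is the stronger intermediate statement that $A$ is a signed permutation of the cube's vertices, which makes the classification of $\ell^\infty$ isometries explicit.
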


\begin{proof}
    Denote by $e_k \in \R^m$ the $k$-th standard unit vector, namely, $e_k = (\delta_{ki})_{i=1,\dots,m}$ where $\delta_{ki}$ is the Kronecker delta. Then,
    \begin{equation*}
        1=\lVert e_k \rVert_\infty = \lVert Ae_k \rVert_\infty = \max_{1\leq i\leq m} \vert A_{ik} \vert.
    \end{equation*}
    Thus, each column contains at least one entry with absolute value equal to one. Furthermore, it is not possible to have two entries with absolute value equal to one in the same row. Namely, assume there exists an index $i\in \{1,\dots m\}$ and indices $k\not= k'$ such that $\vert A_{ik} \vert = \vert A_{ik'}\vert = 1$. For $v = \sgn(A_{i_kk}) e_k + \sgn(A_{i_{k'}k'})e_k'$ we obtain
    \begin{equation*}
        1=\lVert v \rVert_\infty = \lVert Av \rVert_\infty \geq 2,
    \end{equation*}
    a contradiction. Hence, each row contains exactly one entry with absolute value equal to one. Namely, for every $i\in \{1,\dots, m\}$, there exists exactly one $k_i \in \{1,\dots, m\}$ such that $\vert A_{ik_i} \vert = 1$. Finally, assume there exists a row with two non-zero entries. Namely, assume there exists an index $i\in \{1,\dots, m\}$ and $l \not = k_i$ such that $\vert A_{il} \vert > 0$. For $v = \sgn(A_{ik_i}) e_{k_i} + \sgn( A_{il})e_{l}$ we obtain
    \begin{equation*}
        1= \lVert v \rVert_\infty =  \lVert Av \rVert_\infty > 1,
    \end{equation*}
    a contradiction. Thus, $\vert A \vert$ is a permutation matrix.
\end{proof}

We are now prepared to prove that there exists a basis of the homology group such that for every one-form in the basis and every vertex, there are only two edges containing this vertex, along which the one-form does not vanish.

\begin{lemma}\label{lem:alphauni}
    Let $G=(V,w,\mu,d)$ be a finite graph with non-negative Ollivier curvature satisfying 
    \begin{equation*}
        \beta_1(M_2(G)) = \frac{\deg_{\max}}{2}=: m.
    \end{equation*}
    Then, there exists a basis $(\alpha_1, \dots, \alpha_{m})$ of $H_1(M_2(G))$, such that for every $i\in\{1,\dots,m\}$ and $x\in X_0$ there exist $y=y(i,x),\;z=z(i,x)\in X_0$ with $x\sim y$, $x\sim z$ and 
    \begin{equation*}
        \alpha_i(x,x') = d(x,x')\big(1_y(x') - 1_z(x')\big),
    \end{equation*} 
    for every $x'\in X_0$.
\end{lemma}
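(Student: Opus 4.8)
We want to find a basis $(\alpha_1,\dots,\alpha_m)$ of $H_1(M_2(G))$ (where $m=\deg_{\max}/2=\beta_1$) such that for every basis element $\alpha_i$ and every vertex $x$, the one-form $\alpha_i$ is "supported on exactly two edges at $x$" in the sense that $\alpha_i(x,x') = d(x,x')(1_y(x') - 1_z(x'))$ for two neighbors $y,z$ depending on $i,x$.

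Let me think about what this means. At a fixed vertex $x$, consider the restriction map $\psi_x: H_1 \to \R^{S_1(x)}$ given by $\psi_x\alpha(y) = \alpha(x,y)/d(x,y)$. By Lemma~\ref{lem:AlphaEins}, every $\alpha \in H_1$ with $\|\alpha\|_\infty=1$ satisfies: there exist $y,z$ with $\psi_x\alpha(y) = 1 = -\psi_x\alpha(z)$... wait, let me re-read.

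The plan is to first record that the hypothesis forces $G$ to be regular: combining $\beta_1(M_2(G)) = \deg_{\max}/2$ with Theorem~\ref{th:MainRes} gives $\deg_{\max}/2 = \beta_1 \le \deg_{\min}/2$, whence $\deg_{\min} = \deg_{\max} = 2m$ and every sphere $\ON(x)$ has exactly $2m$ elements. For each vertex $x$ I would consider the restriction map $\psi_x : H_1(M_2(G)) \to \R^{\ON(x)}$ given by $\psi_x\alpha(y) = \alpha(x,y)/d(x,y)$, exactly as in the proof of Theorem~\ref{th:MainRes}. Lemma~\ref{lem:AlphaEins} shows that whenever $\lVert\alpha\rVert_\infty = 1$ the vector $\psi_x\alpha$ attains the values $+1$ and $-1$; this simultaneously yields that $\psi_x$ is injective, that $\lVert\psi_x\alpha\rVert_\infty = \lVert\alpha\rVert_\infty$ for all $\alpha$ (so $\psi_x$ is a sup-norm isometry onto its image), and that every vector of $E_x := \psi_x(H_1(M_2(G)))$ attains its sup norm in at least two coordinates. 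In particular $\dim E_x = m = \lvert\ON(x)\rvert/2$.

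Next I would apply Lemma~\ref{lem:starrheit} to $E_x \subseteq \R^{\ON(x)}$ with $k = 2$ and $n = 2m$. This produces a partition of $\ON(x)$ into $m$ pairs $\mathcal{J}_1^x, \dots, \mathcal{J}_m^x$ together with an adapted basis $(v^1_x, \dots, v^m_x)$ of $E_x$ whose entries are $\pm 1$ on the associated pair and $0$ elsewhere. The crucial observation is that in the coordinates of this basis the sup norm on $E_x$ becomes the honest $\ell^\infty$-norm on $\R^m$: since the pairs partition $\ON(x)$, the coordinate of $\sum_i c_i v^i_x$ indexed by some $j \in \mathcal{J}_i^x$ equals $\pm c_i$, so $\lVert\sum_i c_i v^i_x\rVert_\infty = \max_i \lvert c_i\rvert$. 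Hence the coordinate isomorphism $\Phi_x : E_x \to \R^m$ is an $\ell^\infty$-isometry. Fixing a base vertex $x_0$ and setting $\alpha_i := \psi_{x_0}^{-1}(v^i_{x_0})$ gives a basis of $H_1(M_2(G))$ that is pair-supported at $x_0$, with $\lVert\alpha_i\rVert_\infty = 1$ because $\psi_{x_0}$ is an isometry.

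The heart of the argument is to upgrade this to a basis that is pair-supported at every vertex simultaneously, and here I would invoke Lemma~\ref{lem:perm}. For an arbitrary vertex $x$ consider the composition $T := \Phi_x \circ \psi_x \circ \psi_{x_0}^{-1} \circ \Phi_{x_0}^{-1} : \R^m \to \R^m$. Each factor is a sup-norm (respectively $\ell^\infty$) isometry between the relevant spaces, so $T$ is an $\ell^\infty$-isometry of $\R^m$; by Lemma~\ref{lem:perm} its entrywise absolute value is a permutation matrix, i.e.\ $T$ is a signed permutation. Unwinding the definitions, this says $\psi_x(\alpha_i) = \pm v^{\sigma(i)}_x$ for some permutation $\sigma$, so $\psi_x(\alpha_i)$ is supported on exactly the pair $\mathcal{J}_{\sigma(i)}^x = \{y,z\}$ with entries of absolute value $1$. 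Finally I would fix the signs using Lemma~\ref{lem:AlphaEins} once more: since $\lVert\alpha_i\rVert_\infty = 1$, the vector $\psi_x(\alpha_i)$ attains both $+1$ and $-1$, and as it is supported on only the two coordinates $y,z$ this forces $\psi_x(\alpha_i) = 1_y - 1_z$. Translating back through the definition of $\psi_x$ gives precisely $\alpha_i(x,x') = d(x,x')\big(1_y(x') - 1_z(x')\big)$ for all $x'$, as required.

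The main obstacle I anticipate is the uniformization step: producing a single basis valid at all vertices rather than a vertex-dependent one. The mechanism that makes it work --- and the point that must be set up carefully --- is that each $\psi_x$ is a genuine sup-norm isometry and that the adapted coordinates of Lemma~\ref{lem:starrheit} convert the sup norm into the literal $\ell^\infty$-norm; only then do the transition maps become $\ell^\infty$-isometries of $\R^m$ to which Lemma~\ref{lem:perm} applies. A secondary technical point is pinning down the signs to obtain the exact antisymmetric form $1_y - 1_z$ rather than an arbitrary $\pm 1_y \pm 1_z$, which is exactly what the ``attains $+1$ and $-1$'' conclusion of Lemma~\ref{lem:AlphaEins} delivers.
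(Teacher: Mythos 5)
Your proposal is correct and follows essentially the same route as the paper's proof: regularity from Theorem~\ref{th:MainRes}, the sup-norm-isometric restriction maps $\psi_x$ via Lemma~\ref{lem:AlphaEins}, the adapted partition basis from Lemma~\ref{lem:starrheit}, the transition maps $\iota_x\circ\psi_x\circ\psi_{x_0}^{-1}\circ\iota_{x_0}^{-1}$ handled by Lemma~\ref{lem:perm}, and the final sign-fixing via Lemma~\ref{lem:AlphaEins}. The one point you make more explicit than the paper --- that the adapted coordinates turn the sup norm on $\psi_x(H_1)$ into the literal $\ell^\infty$-norm on $\R^m$ because the pairs partition $\ON(x)$ --- is exactly the observation the paper uses implicitly when asserting $\lVert\iota_x(v)\rVert_\infty=\lVert v\rVert_\infty$.
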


\begin{proof}
    Note that Theorem~\ref{th:MainRes} implies $\deg_{\max} = \deg_{\min}$, and consequently,  
    \begin{equation*}
        \deg(x) = \deg_{\max} \quad \forall x\in V.
    \end{equation*} Let $x\in V$ be arbitrary. We consider the linear map $\psi_{x}: H_1(M_2(G)) \to \R^{\ON(x)}$ given by 
    \begin{equation*}
        \psi_{x}\alpha(y) = \frac{\alpha(x,y)}{d(x,y)}.
    \end{equation*}
    According to Lemma~\ref{lem:AlphaEins}, the map $\psi_{x}$ satisfies 
    \begin{equation*}
        \lVert \psi_{x}\alpha\rVert_\infty = \lVert \alpha\rVert_\infty,
    \end{equation*}
    for every $\alpha \in H_1(M_2(G))$, and it exist $y\not=z\in \ON(x)$ such that 
    \begin{equation*}
        \vert \psi_{x}\alpha(y)\vert = \vert \psi_{x}\alpha(z)\vert = \lVert \alpha \rVert_\infty.
    \end{equation*}
    Furthermore, according to Theorem~\ref{th:MainRes} and by assumption, we have 
    \begin{equation*}
        \dim(\psi_x(H_1(M_2(G)))) = \beta_1(M_2(G)) = \frac{\deg_{\max}}{2}.
    \end{equation*}
    Therefore, we can apply Lemma~\ref{lem:starrheit}, and obtain a partition $\{\mathcal{J}^x_{1}, \dots, \mathcal{J}^x_{m}\}$ of the set $\{1,\dots,\deg_{\max}\}$ such that $\vert \mathcal{J}^x_i\vert =2$ for $k=1,\dots,m$ as well as a basis $(v^x_1, \dots, v^x_m)$ of $\psi_x(H_1(M_2(G)))$ satisfying
    \begin{equation}\label{eq:2}
        \vert v^x_i(j)\vert = \begin{cases}
            1&: j \in \mathcal{J}^x_{i}, \\
            0&: \text{otherwise},
        \end{cases}
    \end{equation}
    for each $i=1,\dots,m$. We can identify $\psi_x(H_1(M_2(G)))$ with $\R^m$ via the isomorphism $\iota_x: \psi_x(H_1(M_2(G))) \to \R^m$ defined by $\iota_x(v^x_i)=e_i$, where $e_i$ denotes the $i$-th standard unit vector in $\R^m$. Observe that 
    \begin{equation*}
        \lVert \iota_x(v)\rVert_\infty = \lVert v \rVert_\infty \quad \forall v\in \psi_x(H_1(M_2(G))).
    \end{equation*}
    
    Now, we fix an arbitrary $x_0 \in V$. Define $\alpha_i = \psi^{-1}_{x_0}(v^{x_0}_i)$ for $i=1,\dots, m$. Using the linear independence of the family $(\alpha_1,\dots,\alpha_m)$ and the assumption that $\dim H_1(M_2(G)) = m$, we conclude that 
    \begin{equation*}
        H_1(M_2(G)) = \SPAN(\alpha_1, \dots, \alpha_m).
    \end{equation*}
    Finally, let $x\in V$ be arbitrary. The map 
    \begin{equation*}
        A = \iota_x \circ \psi_{x} \circ \psi^{-1}_{x_0} \circ \iota^{-1}_{x_0}: \R^m \to \R^m 
    \end{equation*}
    is linear since it is a composition of linear maps. Furthermore, $A$ satisfies $\lVert v \rVert_\infty = \lVert Av \rVert_\infty$ for all $v\in \R^m$, as it is a composition of maps that preserve the $\lVert \cdot \rVert_\infty$ norm. Therefore, we can apply Lemma~\ref{lem:perm} and conclude that $\vert A \vert$ is a permutation matrix. Thus, there exists $k\in \{1,\dots,m\}$ and $s \in \{1,-1\}$ such that
    \begin{equation*}
        \psi_x(\alpha_i) = \iota^{-1}_x \circ A \circ \iota_{x_0} \circ \psi_{x_0}(\alpha_{i}) = \iota^{-1}_x \circ A(e_i) = \iota^{-1}_x (s\cdot e_k) =s\cdot v^x_k.
    \end{equation*} 
    We conclude that there exist $y,z\in V$ such that $x\sim y$, $x\sim z$ and 
    \begin{equation*}
        \frac{\alpha_i(x,x')}{d(x,x')} = 1_y(x') -1_z(x').
    \end{equation*}
    Here, we used Lemma~\ref{lem:AlphaEins}, to guarantee that $\frac{\alpha_i(x,y)}{d(x,y)} = -\frac{\alpha_i(x,z)}{d(x,z)}$. This concludes the proof.
\end{proof}

\subsection{Automorphisms induced by 1-forms}
Using the basis of the homology group from the previous subsection, we will construct certain graph automorphisms. The Cayley graph of the group generated by these graph automorphisms will later be shown to be abelian and isomorphic to the underlying graph.

We recall a finite graph $G=(V,w,\mu,d)$ is Ollivier-Betti sharp if it has non-negative Ollivier curvature and 
\begin{equation*}
    \beta_1(M_2(G)) = \frac{\deg_{\max}}{2} =: m.
\end{equation*}

Recall that, according to Lemma~\ref{lem:alphauni}, there exists a basis $(\alpha_1,\dots, \alpha_m)$ of $H_1(M_2(G))$, such that for every $i\in \{1,\dots,m\}$ and $x\in X_0$, there exist $y=y(i,x), \; z=z(i,x)\in X_0$ with $x\sim y, x\sim z$ and 
\begin{equation*}
    \alpha_i(x,x')=d(x,x')\big(1_y(x') -1_z(x')\big),
\end{equation*}
for every $x'\in X_0$. Using this, we define the maps $\phi_i: X_0 \to X_0$ by
\begin{equation*}
    \phi_i(x) = y(i,x),
\end{equation*}
for $i=1,\dots,m$. We aim to show that $\phi_i$ and $\phi_j$ commute for arbitrary $i,j\in\{1,\dots,m\}$ and that these maps are graph automorphisms. We start by proving that the maps are bijective and that they map the same nodes to distinct images.

\begin{lemma}\label{lem:phiBijective}
    Let $G=(V,w,\mu,d)$ be an Ollivier-Betti sharp graph.
    Then, for $i\not =j$, we have $\phi_i(x) \neq \phi_j(x)$ and $\phi^{-1}_i(x) \neq \phi_j(x)$  for all $x\in X_0$. Furthermore, the maps $\phi_i$ are bijective.
\end{lemma}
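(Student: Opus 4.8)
The plan is to handle the ``negative neighbour'' map alongside $\phi_i$. For $i \in \{1,\dots,m\}$ write $\zeta_i(x) := z(i,x)$, and let $\psi_x\alpha(y) := \alpha(x,y)/d(x,y)$ denote the restriction map at $x$, so that Lemma~\ref{lem:alphauni} reads
\[
\psi_x\alpha_i(x') = 1_{\phi_i(x)}(x') - 1_{\zeta_i(x)}(x') \qquad \text{for all } x' \in \ON(x);
\]
in particular, at every vertex $\psi_x\alpha_i$ has a unique neighbour carrying the value $+1$ (namely $\phi_i(x)$) and a unique one carrying $-1$ (namely $\zeta_i(x)$). The proof then splits into two independent pieces: first, that $\phi_i$ and $\zeta_i$ are mutually inverse, which simultaneously yields bijectivity and the identification $\phi_i^{-1} = \zeta_i$; and second, that for $i \neq j$ the two-element sets $\{\phi_i(x),\zeta_i(x)\}$ and $\{\phi_j(x),\zeta_j(x)\}$ are disjoint, which gives both stated inequalities at once.

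For bijectivity I would exploit the antisymmetry $\alpha_i(x,y) = -\alpha_i(y,x)$. Setting $y = \phi_i(x)$ gives $\alpha_i(x,y) = d(x,y)$, hence $\alpha_i(y,x) = -d(y,x)$ and $\psi_y\alpha_i(x) = -1$. Since $-1$ is attained at $y$ only at the neighbour $\zeta_i(y)$, we conclude $\zeta_i(\phi_i(x)) = x$. The symmetric computation, starting from $z = \zeta_i(x)$ where $\alpha_i(z,x) = d(z,x)$ forces $\psi_z\alpha_i(x) = +1$, gives $\phi_i(\zeta_i(x)) = x$. Thus $\zeta_i \circ \phi_i = \mathrm{id} = \phi_i \circ \zeta_i$, so $\phi_i$ is a bijection with $\phi_i^{-1} = \zeta_i$; in particular $\phi_i^{-1}(x) = \zeta_i(x)$, which is what reduces the second inequality to the disjointness of supports.

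The disjointness is the heart of the matter and rests on the partition structure produced inside Lemma~\ref{lem:alphauni}. Fix $x$. Applying Lemma~\ref{lem:starrheit} to the $m$-dimensional subspace $\psi_x(H_1(M_2(G))) \subseteq \R^{\ON(x)}$ yields a basis whose members have pairwise disjoint two-element supports partitioning $\ON(x)$. Any element of this subspace with support of size exactly two — such as each $\psi_x\alpha_i = 1_{\phi_i(x)} - 1_{\zeta_i(x)}$ — must be a scalar multiple of a single such basis vector, since the basis supports are disjoint and a combination involving two of them would already have support of size four; the scalar has modulus $1$, so $\psi_x\alpha_i$ is, up to sign, one of these pair-supported vectors. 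As $\psi_x$ is injective, distinct $i$ select distinct vectors, so the supports $\{\phi_i(x),\zeta_i(x)\}$ are pairwise disjoint. Reading off the $+1$-entries gives $\phi_i(x)\neq\phi_j(x)$, and comparing the $-1$-entry of $\psi_x\alpha_i$ with the $+1$-entry of $\psi_x\alpha_j$ gives $\phi_i^{-1}(x) = \zeta_i(x) \neq \phi_j(x)$.

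The main obstacle is exactly this disjointness, and its delicate point is that it must hold at \emph{every} vertex, whereas the basis $(\alpha_1,\dots,\alpha_m)$ is adapted to a single base point; it is Lemma~\ref{lem:alphauni} (through the permutation Lemma~\ref{lem:perm}) that propagates the two-peak rigidity of $\psi_x$ to arbitrary $x$. If one prefers an argument avoiding the partition, one can suppose the two supports met at a neighbour $w$ and form the combination $\alpha_i \pm \alpha_j$ whose $w$-entry is reinforced to $\pm 2 = \lVert \alpha_i \pm \alpha_j \rVert_\infty$; then $\psi_x(\alpha_i\pm\alpha_j)$ attains its supremum norm with only one sign, contradicting Lemma~\ref{lem:AlphaEins}, which forces both $+\lVert\cdot\rVert_\infty$ and $-\lVert\cdot\rVert_\infty$ to occur among the neighbours of $x$. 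The one case this misses is when the supports coincide as sets with matching signs, forcing $\psi_x\alpha_i = \pm\psi_x\alpha_j$; this is ruled out by injectivity of $\psi_x$, since it would give $\alpha_i = \pm\alpha_j$, contradicting linear independence.
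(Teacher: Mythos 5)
Your proof is correct, and it reaches the conclusion by a route that only partly overlaps with the paper's. For the disjointness of $\{\phi_i(x),z(i,x)\}$ and $\{\phi_j(x),z(j,x)\}$ you give two arguments, and your \emph{secondary} one --- forming $\alpha_i\pm\alpha_j$, noting it has supremum norm $2$ attained at the common neighbour but fails to attain $-2$ at any neighbour of $x$, contradicting Lemma~\ref{lem:AlphaEins}, with the degenerate case $\psi_x\alpha_i=\pm\psi_x\alpha_j$ excluded by injectivity of $\psi_x$ --- is precisely the paper's proof (there split into the sub-cases $z(i,x)=z(j,x)$ and $z(i,x)\neq z(j,x)$, and run once for $\phi_i(x)=\phi_j(x)$ and once for $\phi_i^{-1}(x)=\phi_j(x)$). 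Your \emph{primary} route instead re-applies Lemma~\ref{lem:starrheit} at the vertex $x$ to obtain a basis of $\psi_x(H_1(M_2(G)))$ with pairwise disjoint two-element supports, observes that any element whose support has size exactly two must be a scalar multiple of a single basis vector, and uses linear independence of the $\psi_x\alpha_i$ to conclude that distinct $i$ occupy distinct blocks of the partition; this is valid (the hypotheses of Lemma~\ref{lem:starrheit} are met by Lemma~\ref{lem:AlphaEins} and the injectivity of $\psi_x$), but it essentially re-runs the rigidity already packaged inside the proof of Lemma~\ref{lem:alphauni} via Lemma~\ref{lem:perm}, so it buys no additional generality. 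For bijectivity you construct the two-sided inverse explicitly, showing $z(i,\phi_i(x))=x$ and $\phi_i(z(i,x))=x$ from the antisymmetry of $\alpha_i$ and the uniqueness of the $\pm 1$ neighbours; the paper instead verifies injectivity directly (two preimages of $y$ would both have to be the unique neighbour of $y$ where $\alpha_i(y,\cdot)=-d(y,\cdot)$) and invokes finiteness of $X_0$. Your version is marginally tidier in that it identifies $\phi_i^{-1}=z(i,\cdot)$ before that notation is used, which the paper's ordering leaves implicit.
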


\begin{proof}
    We argue by contradiction. Assume there exist $i\not=j$ and $x\in X_0$ such that $\phi_i(x) = \phi_j(x)$. If $z(i,x) = z(j,x)$, we obtain
    \begin{equation*}
        \alpha_i(x,x') -\alpha_j(x,x') = 0\quad \forall x'\in X_0.
    \end{equation*}
    Lemma~\ref{lem:AlphaEins} implies that $\alpha_i -\alpha_j=0$, contradicting the linear independence of $\alpha_i$ and $\alpha_j$. Thus, $z(i,x) \not= z(j,x)$ and
    \begin{equation*}
        \alpha(x,x') := \frac{1}{2}\Big(\alpha_i(x,x') + \alpha_j(x,x')\Big) = d(x,x') \Big[(1_{\phi_i(x)}(x') - \frac{1}{2}\Big(1_{z(i,x)}(x') + 1_{z(j,x)}(x')\Big)\Big]. 
    \end{equation*}
    This contradicts Lemma~\ref{lem:AlphaEins}, as $\lVert \alpha \rVert_\infty = 1$ and for all $x'\in X_0$, we have $\alpha(x,x')\not=-d(x,x')$. Thus, our assumption is false, and $\phi_i(x) \not=\phi_j(x)$ must hold.

    Next, assume there exist $i\not=j$ and $x\in X_0$ such that $\phi^{-1}_i(x) = \phi_j(x)$. If $\phi_i(x) = z(j,x)$, we obtain
    \begin{equation*}
        \alpha_i(x,x') +\alpha_j(x,x') = 0\quad \forall x'\in X_0.
    \end{equation*}
    Lemma~\ref{lem:AlphaEins} implies that $\alpha_i +\alpha_j=0$, contradicting the linear independence of $\alpha_i$ and $\alpha_j$. Thus, $\phi_i(x) \not= z(j,x)$ and 
    \begin{equation*}
        \alpha'(x,x') := \frac{1}{2}\Big(\alpha_j(x,x') - \alpha_i(x,x')\Big) = d(x,x')\Big[1_{\phi_j(x)}(x') - \frac{1}{2}\Big(1_{\phi_i(x)}(x') + 1_{z(j,x)}(x')\Big)\Big]. 
    \end{equation*}
    This contradicts Lemma~\ref{lem:AlphaEins}, as $\lVert \alpha' \rVert_\infty = 1$ and for all $x'\in X_0$, we have $\alpha'(x,x')\not=-d(x,x')$. Thus, our assumption is false, and $\phi^{-1}_i(x) \not=\phi_j(x)$ must hold.

    It remains to show that the maps $\phi_i$ are bijective. Let $i\in \{1,\dots,m\}$ be arbitrary. Since $\phi_i:X_0 \to X_0$ and $X_0$ is a finite set, it is sufficient to prove that $\phi_i$ is injective. Assume this is not the case; namely, there exist $x\not=z\in X_0$ such that $\phi_i(x) = \phi_i(z)=:y$. Thus, $\alpha_i(y, x) = -d(y,x)$ and  $\alpha_i(y, z) = -d(y,z)$. This contradicts our assumptions on $\alpha_i$.
\end{proof}

Next, we calculate the supremum norm of one-forms.

\begin{lemma}\label{lem:supNormAlphaLinKomb}
    Let $G=(V,w,\mu,d)$ be an Ollivier-Betti sharp graph. Then, for all $\lambda_1,\dots, \lambda_m\in \R$,
    \begin{equation*}
        \lVert \lambda_1 \alpha_1 + \dots +  \lambda_m \alpha_m \rVert_\infty = \max_{1\leq k\leq m} \vert \lambda_k\vert.
    \end{equation*}
\end{lemma}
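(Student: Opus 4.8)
The plan is to leverage the structural description of the basis $(\alpha_1, \dots, \alpha_m)$ provided by Lemma~\ref{lem:alphauni} together with the $\ell^\infty$-norm preservation established in Lemma~\ref{lem:AlphaEins}. Fix a vertex $x \in X_0$ and consider the evaluation map $\psi_x : H_1(M_2(G)) \to \R^{\ON(x)}$ given by $\psi_x \alpha(y) = \alpha(x,y)/d(x,y)$. By Lemma~\ref{lem:AlphaEins}, this map preserves the supremum norm, so it suffices to compute $\lVert \psi_x(\lambda_1 \alpha_1 + \dots + \lambda_m \alpha_m)\rVert_\infty$. From Lemma~\ref{lem:alphauni}, each $\psi_x \alpha_i = 1_{y(i,x)} - 1_{z(i,x)}$ is supported on exactly two coordinates (the neighbours $y(i,x)$ and $z(i,x)$), taking values $+1$ and $-1$ there.

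The key combinatorial fact I would establish next is that, for a fixed $x$, the supports $\{y(i,x), z(i,x)\}$ for $i = 1, \dots, m$ form a partition of $\ON(x)$ into $m$ pairs. This is exactly what the partition $\{\mathcal{J}^x_1, \dots, \mathcal{J}^x_m\}$ from the proof of Lemma~\ref{lem:alphauni} (obtained via Lemma~\ref{lem:starrheit}) encodes: the sets $\mathcal{J}^x_i$ partition $\{1, \dots, \deg_{\max}\} = \ON(x)$ into blocks of size two, and by the permutation-matrix argument in Lemma~\ref{lem:alphauni}, the basis vector $\psi_x \alpha_i$ is (up to sign) supported precisely on one block $\mathcal{J}^x_{k}$. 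Because the blocks are disjoint, the vectors $\psi_x \alpha_1, \dots, \psi_x \alpha_m$ have pairwise disjoint supports.

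Given disjoint supports, the computation becomes immediate. For any choice of scalars, the vector $\psi_x(\sum_k \lambda_k \alpha_k) = \sum_k \lambda_k (1_{y(k,x)} - 1_{z(k,x)})$ takes the value $\pm \lambda_k$ on each of the two coordinates in the block corresponding to index $k$, and these coordinate sets do not overlap. Hence every entry of this vector equals $\pm \lambda_k$ for some $k$, and conversely each value $\lambda_k$ is actually attained (on both coordinates of its block). Taking the supremum over all $\deg_{\max}$ coordinates therefore yields exactly $\max_{1 \leq k \leq m} \vert \lambda_k \vert$, and by norm preservation this equals $\lVert \sum_k \lambda_k \alpha_k \rVert_\infty$.

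The main obstacle I anticipate is rigorously pinning down that the supports are genuinely disjoint and exhaust $\ON(x)$, rather than merely overlapping in a controlled way; this relies on carefully invoking the partition and permutation structure from the earlier lemmas rather than re-deriving it. A subtle point is that the labelling index $k$ attached to $\alpha_i$ at vertex $x$ may differ from $i$ (the permutation matrix $\vert A\vert$ relabels blocks), but since we only need the multiset of values $\{\lambda_1, \dots, \lambda_m\}$ to appear among the coordinates, this relabelling is harmless and the maximum is unaffected. Once the disjointness of supports is secured, the remainder is a one-line $\ell^\infty$ computation.
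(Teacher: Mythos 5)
Your proof is correct and follows essentially the same route as the paper: both arguments reduce to the local picture at a single vertex, use that the restrictions $\psi_x\alpha_1,\dots,\psi_x\alpha_m$ have pairwise disjoint two-element supports partitioning $\ON(x)$ with values $\pm 1$, and read off that the combination takes the value $\pm\lambda_k$ on the $k$-th block. The only cosmetic difference is that the paper obtains the disjointness by citing Lemma~\ref{lem:phiBijective} and handles the upper bound by evaluating at the norm-attaining edge, whereas you extract the same partition from the permutation-matrix structure inside the proof of Lemma~\ref{lem:alphauni} and invoke the norm preservation of $\psi_x$ from Lemma~\ref{lem:AlphaEins}; both are valid.
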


\begin{proof}
    Let $k\in \{1,\dots,m\}$ and $x\in X_0$ be arbitrary. Then $\alpha_k(x,\phi_k(x))=d(x,\phi_k(x))$, and, according to Lemma~\ref{lem:phiBijective}, $\alpha_j(x,\phi_k(x))=0$ for $j\not=k$. Thus, 
    \begin{equation*}
        \vert \lambda_k \vert = \abs*{\frac{\lambda_1 \alpha_1(x,\phi_k(x)) + \dots +  \lambda_1 \alpha_m(x,\phi_k(x)) }{d(x,\phi_k(x))}} \leq \lVert \lambda_1 \alpha_1 + \dots +  \lambda_m \alpha_m \rVert_\infty.
    \end{equation*}
    Since $k\in\{1,\dots,m\}$ was chosen arbitrarily , we obtain 
    \begin{equation*}
        \max_{1\leq k\leq m} \vert \lambda_k\vert \leq \lVert \lambda_1 \alpha_1 + \dots +  \lambda_m \alpha_m \rVert_\infty.
    \end{equation*}
    To prove the other inequality, let $x\sim y$ be an edge such that
    \begin{equation*}
        \abs*{\frac{\lambda_1 \alpha_1(x,y) + \dots +  \lambda_1 \alpha_m(x,y)}{d(x,y)}} = \lVert \lambda_1 \alpha_1 + \dots +  \lambda_m \alpha_m \rVert_\infty.
    \end{equation*}
    There exists $i\in \{1,\dots,m\}$ such that $\vert \alpha_i(x,y)\vert = d(x,y)$, and according to Lemma~\ref{lem:phiBijective}, we have $\alpha_j(x,y)=0$ for all $j\not=i$. Hence,
    \begin{equation*}
        \lVert \lambda_1 \alpha_1 + \dots +  \lambda_m \alpha_m \rVert_\infty =  \vert \lambda_i \vert \leq \max_{1\leq k\leq m} \vert \lambda_k \vert.
    \end{equation*}
    Putting together the upper and lower estimates yields
    \begin{equation*}
        \lVert \lambda_1 \alpha_1 + \dots +  \lambda_m \alpha_m \rVert_\infty = \max_{1\leq k\leq m} \vert \lambda_k\vert.
    \end{equation*}
    as desired.
\end{proof}

We next show that the maps $\phi_i$ preserve distances in most directions.
\begin{lemma}\label{lem:LaengeSeiten}
    Let $G=(V,w,\mu,d)$ be an Ollivier-Betti sharp graph. Let $x,y\in V$ with $y=\phi_j(x)$. Then, for $i\not=j$, we have
    \begin{equation*}
        d(x,y) = d(\phi_i(x), \phi_i(y)) \quad \mbox{ and } \quad d(x,\phi_i(x)) = d(y,\phi_i(y)).
    \end{equation*}
\end{lemma}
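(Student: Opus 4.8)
The plan is to lift the relevant forms to the universal cover of $M_2(G)$ and to reduce both equalities to a single distance computation there. I would use throughout that $h:=f_{\alpha_i}$ and $g:=f_{\alpha_j}$ are harmonic, $1$-Lipschitz, and, by the normal form of the basis in Lemma~\ref{lem:alphauni}, change only along $i$-edges and $j$-edges respectively. Fix a lift $\widetilde{x}$ of $x$ and choose lifts so that $\widetilde{x}_i\sim\widetilde{x}\sim\widetilde{y}\sim\widetilde{y}_i$ is a path in the cover with $\pr(\widetilde{x}_i)=\phi_i(x)$, $\pr(\widetilde{y})=y=\phi_j(x)$ and $\pr(\widetilde{y}_i)=\phi_i(y)$. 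Writing $p=d(x,y)$, $a=d(x,\phi_i(x))$ and $b=d(y,\phi_i(y))$, the identities $\widetilde\delta h=\alpha_i\circ\pr$ and $\widetilde\delta g=\alpha_j\circ\pr$ give
\[ g(\widetilde{x}_i)=g(\widetilde{x})=g(\widetilde{y})+p=g(\widetilde{y}_i)+p,\qquad h(\widetilde{x}_i)=h(\widetilde{x})-a,\qquad h(\widetilde{y}_i)=h(\widetilde{x})-b; \]
in particular $g(\widetilde{x}_i)-g(\widetilde{y}_i)=p$, and $h$ is unchanged along the $j$-edge $\widetilde{x}\sim\widetilde{y}$.

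The decisive step is to prove $\widetilde{d}(\widetilde{x}_i,\widetilde{y}_i)=p$. Since $(\widetilde{x},\widetilde{y})$ realises the maximal gradient of $g$, I would apply Lemma~\ref{lem:LipFun}$(ii)$ to $g$ on the cover (which again has non-negative Ollivier curvature) with $u=\widetilde{x}_i\in\ON(\widetilde{x})$, obtaining some $v\in\CN(\widetilde{y})$ with $g(\widetilde{x}_i)-g(v)=\widetilde{d}(\widetilde{x}_i,v)$. The main idea is then to feed this into the Lipschitz bound for a combined form: by Lemma~\ref{lem:supNormAlphaLinKomb} the function $f_{\lambda\alpha_i+\nu\alpha_j}=\lambda h+\nu g$ is $\max(\lvert\lambda\rvert,\lvert\nu\rvert)$-Lipschitz, so evaluating on the pair $(\widetilde{x}_i,v)$ with $\nu=1$ and $\lambda=\sgn(h(\widetilde{x}_i)-h(v))$ and using $g(\widetilde{x}_i)-g(v)=\widetilde{d}(\widetilde{x}_i,v)$ yields
\[ \lvert h(\widetilde{x}_i)-h(v)\rvert+\widetilde{d}(\widetilde{x}_i,v)\le \widetilde{d}(\widetilde{x}_i,v). \]
Hence $h(v)=h(\widetilde{x}_i)=h(\widetilde{x})-a$.

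Next I would identify $v$ by inspecting $h$ on $\CN(\widetilde{y})$. As $h$ changes only along $i$-edges, every vertex of $\CN(\widetilde{y})$ other than the $+i$-neighbour $\widetilde{y}_i$ has $h$-value at least $h(\widetilde{y})=h(\widetilde{x})$, whereas $h(\widetilde{y}_i)=h(\widetilde{x})-b<h(\widetilde{x})$. Since $h(v)=h(\widetilde{x})-a<h(\widetilde{x})$, the vertex $v$ must be this unique neighbour of $\widetilde{y}$ whose $h$-value lies below $h(\widetilde{y})$, so $v=\widetilde{y}_i$ and, matching values, $a=b$. This is exactly the second equality $d(x,\phi_i(x))=d(y,\phi_i(y))$. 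Note that this also excludes the degenerate return $v=\widetilde{x}_i$: if $\widetilde{x}_i$ lay in $\CN(\widetilde{y})$ it would have to coincide with $\widetilde{y}_i$, contradicting injectivity of $\phi_i$. Moreover $\widetilde{d}(\widetilde{x}_i,\widetilde{y}_i)=g(\widetilde{x}_i)-g(v)=p$.

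Finally I would descend to $G$. The four vertices $\phi_i(x),x,y,\phi_i(y)$ are pairwise distinct by Lemma~\ref{lem:phiBijective} (for instance $\phi_i(x)=\phi_j(x)$ and $\phi_i(y)=x$, i.e. $\phi_j(x)=\phi_i^{-1}(x)$, are both excluded), so $\phi_i(x)\sim x\sim y\sim\phi_i(y)$ is a path of length three through distinct vertices with lifts $\widetilde{x}_i\sim\widetilde{x}\sim\widetilde{y}\sim\widetilde{y}_i$. Lemma~\ref{lem:Propertiesd} then gives $d(\phi_i(x),\phi_i(y))=\widetilde{d}(\widetilde{x}_i,\widetilde{y}_i)=p=d(x,y)$, which is the first equality. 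I expect the main obstacle to be precisely this middle step, namely controlling which vertex the maximal-gradient lemma produces; the combined-form Lipschitz identity coming from Lemma~\ref{lem:supNormAlphaLinKomb} is the device that pins it down, while the rest is bookkeeping of the $h$- and $g$-values along $i$- and $j$-edges.
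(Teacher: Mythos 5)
Your proof is correct and follows essentially the same route as the paper: lift to the universal cover, apply Lemma~\ref{lem:LipFun}$(ii)$ to $f_{\alpha_j}$ at the maximal-gradient pair $(\widetilde x,\widetilde y)$, use the combined form $\lambda f_{\alpha_i}+f_{\alpha_j}$ together with Lemma~\ref{lem:supNormAlphaLinKomb} to force the target vertex to be $\widetilde\phi_i(\widetilde y)$, and descend with Lemma~\ref{lem:Propertiesd}. The only difference is organizational: you read both equalities off the single identification $v=\widetilde\phi_i(\widetilde y)$ (getting $a=b$ from $h(v)=h(\widetilde x_i)$), whereas the paper obtains the second equality by a separate application of the $1$-Lipschitz bound for $f_{\pm\alpha_i}+f_{\alpha_j}$.
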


\begin{proof}
    We first lift $\phi_i$ to a map $\widetilde \phi_i$ on the universal cover by requiring that  $\phi_i \circ \pr = \pr \circ \widetilde \phi_i$ and that $\widetilde \phi_i(\widetilde x) \sim \widetilde x$ for all $\widetilde x \in \widetilde X_0$. Let $x\in V$ be arbitrary and let $\widetilde{x} \in \widetilde{X}_0$ with $\pr(\widetilde{x})=x$. Additionally, let $\widetilde{y} = \widetilde{\phi}_j(\widetilde{x})$ and note that $\pr(\widetilde{y}) = \phi_j(x) = y$. We assume without loss of generality that $f_{\alpha_k}(\widetilde x) = 0$ for all $k$.  We notice that for all $\widetilde x' \in \CN(\widetilde x)$,
    \begin{equation*}
        f_{\alpha_k}(\widetilde x') - f_{\alpha_k}(\widetilde x) =
        \begin{cases}
        d(\widetilde{x}, \widetilde{x}')&: \widetilde x' = \widetilde \phi_k(\widetilde x) \\
        -d(\widetilde{x}, \widetilde{x}')&: \widetilde x = \widetilde \phi_k(\widetilde x') \\
        0&: \mbox{ otherwise}.
        \end{cases}
    \end{equation*}
    for all $k$. Define $g= f_{\alpha_i} + f_{\alpha_j}$. By Lemma~\ref{lem:supNormAlphaLinKomb}, we have $\|\nabla g\|_\infty = 1$. Moreover, on $\CN(\widetilde x) \setminus \{\widetilde \phi_i(\widetilde x)\}$
    \begin{equation*}
        g= f_{\alpha_i} + f_{\alpha_j} \leq f_{\alpha_j}.
    \end{equation*}
    For all $\widetilde x' \in \CN(\widetilde x) \setminus \{\widetilde \phi_i(\widetilde x)\}$,
    \begin{align*}
        f_{\alpha_j}(\widetilde{\phi_i}(\widetilde y)) - f_{\alpha_j}(\widetilde x') &=  g(\widetilde{\phi_i}(\widetilde y)) - \widetilde{d}(\widetilde{y}, \widetilde{\phi_i}(\widetilde y)) - f_{\alpha_j}(\widetilde x')\\
        &\leq g(\widetilde{\phi_i}(\widetilde y)) - \widetilde{d}(\widetilde{y}, \widetilde{\phi_i}(\widetilde y)) - g(\widetilde x') \\
        &\leq \widetilde{d}(\widetilde{\phi_i}(\widetilde y), \widetilde{x}') - \widetilde{d}(\widetilde{y}, \widetilde{\phi_i}(\widetilde y)) < \widetilde{d}(\widetilde{\phi_i}(\widetilde y), \widetilde{x}') .
    \end{align*}
    By Lemma~\ref{lem:LipFun} $(ii)$ applied to $f_{\alpha_j}$, 
    this implies
    \begin{equation*}
        \widetilde{d}(\widetilde \phi_i (\widetilde x),\widetilde{\phi_i}(\widetilde y)) = f_{\alpha_j}(\widetilde{\phi_i}(\widetilde y)) - f_{\alpha_j}(\widetilde \phi_i(\widetilde x)).
    \end{equation*}
    Using $f_{\alpha_j}(\widetilde \phi_i(\widetilde x)) = 0$ and $f_{\alpha_j}(\widetilde{\phi_i}(\widetilde y)) = \widetilde{d}(\widetilde{x}, \widetilde{y})$, we conclude
    \begin{equation*}
        d(x,y) = \widetilde{d}(\widetilde x, \widetilde y) = \widetilde{d}(\widetilde \phi_i (\widetilde x),\widetilde{\phi_i}(\widetilde y)) = d(\phi_i(x), \phi_i(y)),
    \end{equation*}
    where we used Lemma~\ref{lem:Propertiesd} for the last equality. For the second part of the proof, let $\lambda \in \{-1,1\}$. By Lemma~\ref{lem:supNormAlphaLinKomb}, we have $\|\nabla (f_{\lambda \alpha_i} + f_{\alpha_j})\|_\infty = 1$. Hence,
    \begin{align*}
        \widetilde{d}(\widetilde{x},\widetilde{y}) = \widetilde{d}(\widetilde \phi_i (\widetilde x),\widetilde{\phi_i}(\widetilde y)) &\geq (f_{\lambda \alpha_i} + f_{\alpha_j})(\widetilde{\phi}_i(\widetilde{y})) - (f_{\lambda \alpha_i} + f_{\alpha_j})(\widetilde{\phi}_i(\widetilde{x})) \\
        &= \lambda \widetilde{d}(\widetilde{y}, \widetilde{\phi}_i(\widetilde{y})) + \widetilde{d}(\widetilde{x}, \widetilde{y}) - \lambda \widetilde{d}(\widetilde{x}, \widetilde{\phi}_i(\widetilde{x})),
    \end{align*}
    and we conclude $d(x,\phi_i(x)) = \widetilde{d}(\widetilde{x}, \widetilde{\phi}_i(\widetilde{x})) = \widetilde{d}(\widetilde{y}, \widetilde{\phi}_i(\widetilde{y})) = d(y,\phi_i(y))$.
\end{proof}

We are now prepared to prove that the maps $\phi_i$ commute.

\begin{lemma}\label{lem:phiCommute}
    Let $G=(V,w,\mu,d)$ be an Ollivier-Betti sharp graph. Then $\phi_i\phi_j=\phi_j\phi_i$ for all $i,j=1,\dots,m$.
\end{lemma}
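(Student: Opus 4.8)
The plan is to dispose of the trivial case $i=j$ and then, for $i\neq j$, to pass to the universal cover and exploit that each $f_{\alpha_k}$ is a harmonic $1$-Lipschitz function whose one-step increments are exactly the ones recorded in the proof of Lemma~\ref{lem:LaengeSeiten}: for $\widetilde x'\in\CN(\widetilde x)$ one has $f_{\alpha_k}(\widetilde x')-f_{\alpha_k}(\widetilde x)=d(\widetilde x,\widetilde x')$ if $\widetilde x'=\widetilde\phi_k(\widetilde x)$, $=-d(\widetilde x,\widetilde x')$ if $\widetilde x=\widetilde\phi_k(\widetilde x')$, and $=0$ otherwise. Fix $x\in V$, choose a lift $\widetilde x$ and normalise $f_{\alpha_k}(\widetilde x)=0$ for all $k$. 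Writing $\widetilde a=\widetilde\phi_j(\widetilde x)$ and $\widetilde b=\widetilde\phi_i(\widetilde x)$, I introduce the two candidate endpoints $p=\widetilde\phi_i(\widetilde a)$ and $p'=\widetilde\phi_j(\widetilde b)$, which project to $\phi_i\phi_j(x)$ and $\phi_j\phi_i(x)$ respectively; the whole statement reduces to proving $p=p'$.

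First I would evaluate $f_{\alpha_j}$ at the relevant vertices. By the distinctness relations of Lemma~\ref{lem:phiBijective} applied at $x$ (namely $\phi_i(x)\neq\phi_j(x)$ and $\phi_j^{-1}(x)\neq\phi_i(x)$) the edge $\widetilde x\sim\widetilde b$ is neither a $\phi_j$- nor a $\phi_j^{-1}$-edge, so $f_{\alpha_j}(\widetilde b)=0$; applying the same relations at $a$ to the edge $\widetilde a\sim p=\widetilde\phi_i(\widetilde a)$ shows $f_{\alpha_j}$ is constant along it, so $f_{\alpha_j}(p)=f_{\alpha_j}(\widetilde a)=\widetilde d(\widetilde x,\widetilde a)=d(x,\phi_j(x))$. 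Hence $f_{\alpha_j}(p)-f_{\alpha_j}(\widetilde b)=d(x,\phi_j(x))$.

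The quantitative input is Lemma~\ref{lem:LaengeSeiten}. Its proof establishes the universal-cover identity $\widetilde d(\widetilde\phi_i(\widetilde x),\widetilde\phi_i(\widetilde a))=\widetilde d(\widetilde x,\widetilde a)$, that is $\widetilde d(\widetilde b,p)=d(x,\phi_j(x))$, while its second conclusion (with the roles of $i$ and $j$ exchanged) gives $\widetilde d(\widetilde b,p')=d(b,\phi_j(b))=d(x,\phi_j(x))$. Combining with the previous paragraph, $f_{\alpha_j}$ \emph{realises} the distance between $\widetilde b$ and $p$: $f_{\alpha_j}(p)-f_{\alpha_j}(\widetilde b)=\widetilde d(\widetilde b,p)$. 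Now take any geodesic $\widetilde b=\widetilde u_0\sim\dots\sim\widetilde u_n=p$. The $1$-Lipschitz bound $f_{\alpha_j}(\widetilde u_\ell)-f_{\alpha_j}(\widetilde u_{\ell-1})\le\widetilde d(\widetilde u_{\ell-1},\widetilde u_\ell)$ telescopes to an equality, so it is sharp on every edge and every increment of $f_{\alpha_j}$ is strictly positive. In particular the first edge is the unique ascending $f_{\alpha_j}$-edge at $\widetilde b$, which is $\widetilde b\sim\widetilde\phi_j(\widetilde b)=p'$. Thus $p'$ lies on a geodesic from $\widetilde b$ to $p$, whence $\widetilde d(\widetilde b,p)=\widetilde d(\widetilde b,p')+\widetilde d(p',p)$; since $\widetilde d(\widetilde b,p)=\widetilde d(\widetilde b,p')$ we obtain $\widetilde d(p',p)=0$, i.e. $p=p'$. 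Projecting yields $\phi_i\phi_j(x)=\phi_j\phi_i(x)$, and as $x$ was arbitrary the maps commute.

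I expect the main obstacle to be exactly the closing of the parallelogram, i.e. showing $p=p'$ in the cover; the crux that makes it work is the observation that $f_{\alpha_j}$ realises the \emph{full} distance $\widetilde d(\widetilde b,p)$, which rigidifies the first step of every geodesic issuing from $\widetilde b$ and forces it through $p'$. The points requiring care are to invoke the precise distinctness relations of Lemma~\ref{lem:phiBijective} at both $x$ and $a$, and to use the universal-cover distance identity extracted from the \emph{proof} of Lemma~\ref{lem:LaengeSeiten} rather than only its projected statement.
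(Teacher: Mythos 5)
Your proposal is correct and follows essentially the same route as the paper: both pass to the universal cover, use the distance identities of Lemma~\ref{lem:LaengeSeiten} (including the universal-cover identity $\widetilde d(\widetilde\phi_i(\widetilde x),\widetilde\phi_i\widetilde\phi_j(\widetilde x))=f_{\alpha_j}(\widetilde\phi_i\widetilde\phi_j(\widetilde x))-f_{\alpha_j}(\widetilde\phi_i(\widetilde x))$ from its proof), force the first edge of a geodesic from $\widetilde\phi_i(\widetilde x)$ to $\widetilde\phi_i\widetilde\phi_j(\widetilde x)$ to be the unique $f_{\alpha_j}$-ascending edge $\widetilde\phi_j\widetilde\phi_i(\widetilde x)$, and close the parallelogram via the degenerate triangle equality. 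The only cosmetic difference is that you re-derive $f_{\alpha_j}(p)-f_{\alpha_j}(\widetilde b)=d(x,\phi_j(x))$ directly from the increment table and Lemma~\ref{lem:phiBijective}, where the paper cites the corresponding identity from Lemma~\ref{lem:LaengeSeiten}.
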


\begin{proof}
    Let $x\in V$ be arbitrary. According to Lemma~\ref{lem:LaengeSeiten}, we have 
    \begin{equation}\label{eq:3}
        d(\phi_i(x), \phi_i\phi_j(x)) = d(x,\phi_j(x)) = d(\phi_i(x), \phi_j\phi_i(x)).
    \end{equation}
    Choose $\widetilde{y}\sim \widetilde{\phi_i}(\widetilde{x})$ such that $\widetilde{d}(\widetilde{\phi_i}(\widetilde{x}),\widetilde{y}) + \widetilde{d}(\widetilde{y},\widetilde{\phi_i}\widetilde{\phi_j}(\widetilde{x})) = \widetilde{d}(\widetilde{\phi_i}(\widetilde{x}),\widetilde{\phi_i}\widetilde{\phi_j}(\widetilde{x}))$. As shown in Lemma~\ref{lem:LaengeSeiten}, we have 
    \begin{equation*}
        \widetilde{d}(\widetilde \phi_i (\widetilde x),\widetilde{\phi_i}\widetilde{\phi}_j(\widetilde{x})) = f_{\alpha_j}(\widetilde{\phi_i}\widetilde{\phi}_j(\widetilde{x})) - f_{\alpha_j}(\widetilde \phi_i(\widetilde x)).
    \end{equation*}
    Using that $\| \nabla f_{\alpha_j}\|_\infty=1$, we obtain
    \begin{equation*}
        f_{\alpha_j}(\widetilde{y}) -f_{\alpha_j}(\widetilde{\phi_i}(\widetilde{x})) = \widetilde{d}(\widetilde{y},\widetilde{\phi_i}(\widetilde{x})) = d(\pr(\widetilde{y}), \phi_i(x)).
    \end{equation*}
    By the definition of $f_{\alpha_j}$, it follows that $\alpha_j(\phi_i(x), \pr(\widetilde{y}))= d(\phi_i(x), \pr(\widetilde{y}))$ and consequently, $\pr(\widetilde{y}) = \phi_j\phi_i(x)$ and $\widetilde{y} = \widetilde{\phi_j}\widetilde{\phi_i}(\widetilde{x})$. 
    
    Since $\widetilde{y}\sim \widetilde{\phi_i}(\widetilde{x})$ , we have $\widetilde{d}(\widetilde{\phi_i}(\widetilde{x}),\widetilde{y})= d(\phi_i(x),\phi_j\phi_i(x))$, and by Lemma~\ref{lem:Propertiesd}, we have $\widetilde{d}(\widetilde{\phi_i}(\widetilde{x}),\widetilde{\phi_i}\widetilde{\phi_j}(\widetilde{x})) = d(\phi_i(x), \phi_i\phi_j(x))$. Furthermore, let $\widetilde{y} = \widetilde{y}_0 \sim \widetilde{y}_1 \sim \dots \sim \widetilde{y}_n =  \widetilde{\phi_i}\widetilde{\phi_j}(\widetilde{x})$ be a geodesic from $\widetilde{y}$ to $\widetilde{\phi_i}\widetilde{\phi_j}(\widetilde{x})$. Then,
    \begin{equation*}
        d(\pr(\widetilde{y}), \phi_i\phi_j(\widetilde{x})) \leq \sum_{k=1}^{n} d(\pr(\widetilde{y}_k), \pr(\widetilde{y}_{k-1})) = \sum_{k=1}^{n} \widetilde{d}(\widetilde{y}_k, \widetilde{y}_{k-1}) = \widetilde{d}(\widetilde{y},\widetilde{\phi_i}\widetilde{\phi_j}(\widetilde{x})).
    \end{equation*} 
    
    Hence,
    \begin{equation*}
        d(\phi_i(x), \phi_i\phi_j(x)) \geq d(\phi_i(x), \phi_j\phi_i(x)) + d(\phi_j\phi_i(x), \phi_i\phi_j(x)),
    \end{equation*}
    and using Equation~\ref{eq:3}, we conclude $\phi_j\phi_i(x) = \phi_i\phi_j(x)$.
\end{proof}

We will now consider graph isomorphisms of weighted graphs which only need to preserve the topology. Particularly, we say two weighted graphs $G_i=(V_i,w_i,\mu_i)$ for $i=1,2$ are isomorphic, if there exists a bijective $\Psi:V_1 \to V_2$ such that $w_1(x_1,y_1) >0$ if and only if $w_2(\Psi(x_1),\Psi(y_1))>0$. Using this definition, we prove that the maps $\phi_i$ are automorphisms.

\begin{lemma}
    Let $G=(V,w,\mu,d)$ be an Ollivier-Betti sharp graph. Then, the maps $\phi_i$ are graph automorphisms.
\end{lemma}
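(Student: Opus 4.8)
The plan is to verify the two defining properties of a graph automorphism for each $\phi_i$: bijectivity, which is already supplied by Lemma~\ref{lem:phiBijective}, and preservation of adjacency in both directions, i.e. $x \sim x'$ if and only if $\phi_i(x) \sim \phi_i(x')$. The organizing idea is that, once I pin down the open neighbourhood of an arbitrary vertex entirely in terms of the maps $\phi_j$ and their inverses, the commutativity from Lemma~\ref{lem:phiCommute} will make adjacency-preservation nearly automatic. Note that I only need to track the combinatorial relation $\sim$, not the metric, so the distance results will not be invoked here.

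First I would identify the inverse maps with the ``negative directions'' of the one-forms. Fix $x$ and set $u = z(i,x)$. Since $\alpha_i(x,u) = -d(x,u)$ and $\alpha_i$ is antisymmetric, we get $\alpha_i(u,x) = d(u,x)$; comparing this with the representation of $\alpha_i$ at the vertex $u$ provided by Lemma~\ref{lem:alphauni} forces $x = y(i,u) = \phi_i(u)$, whence $z(i,x) = \phi_i^{-1}(x)$. Next I would show that $\ON(x) = \{\phi_j(x),\, \phi_j^{-1}(x) : j = 1,\dots,m\}$ and that these $2m$ vertices are pairwise distinct. Distinctness of $\phi_i(x)$ and $\phi_j(x)$ for $i \neq j$, and of $\phi_i^{-1}(x)$ and $\phi_j(x)$, is exactly Lemma~\ref{lem:phiBijective}; distinctness of $\phi_i^{-1}(x)$ and $\phi_j^{-1}(x)$ for $i \neq j$ follows by applying the same lemma at the common value $\phi_i^{-1}(x) = \phi_j^{-1}(x)$, which would give $\phi_i$ and $\phi_j$ agreeing there; and $\phi_i(x) \neq \phi_i^{-1}(x)$ holds because $y(i,x) \neq z(i,x)$ by the construction of $\alpha_i$. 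Since $G$ is regular of degree $\deg_{\max} = \deg_{\min} = 2m$ by Theorem~\ref{th:MainRes}, these $2m$ distinct neighbours exhaust $\ON(x)$.

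With the neighbourhood pinned down, I would conclude via commutativity. By the previous step, $x \sim x'$ holds precisely when $x' = \phi_j^{\pm 1}(x)$ for some $j$. If $x' = \phi_j(x)$, then Lemma~\ref{lem:phiCommute} gives $\phi_i(x') = \phi_i\phi_j(x) = \phi_j\phi_i(x)$, a neighbour of $\phi_i(x)$; if $x' = \phi_j^{-1}(x)$, then commutativity of $\phi_i$ with $\phi_j^{-1}$ (a formal consequence of $\phi_i\phi_j = \phi_j\phi_i$) gives $\phi_i(x') = \phi_j^{-1}\phi_i(x)$, again a neighbour of $\phi_i(x)$. Hence $x \sim x'$ implies $\phi_i(x) \sim \phi_i(x')$. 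The converse follows symmetrically: if $\phi_i(x) \sim \phi_i(x')$, then $\phi_i(x') = \phi_j^{\pm 1}\phi_i(x) = \phi_i\phi_j^{\pm 1}(x)$ for some $j$, and applying $\phi_i^{-1}$ yields $x' = \phi_j^{\pm 1}(x)$, so $x \sim x'$.

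Combining bijectivity with this two-sided adjacency-preservation shows that each $\phi_i$ is a graph automorphism in the sense of the definition above. I expect the only genuinely delicate point to be the bookkeeping in the second paragraph—verifying that the $2m$ vertices $\phi_j^{\pm 1}(x)$ are all distinct so that they fill out the entire neighbourhood. All of the needed non-coincidences are already encoded in Lemma~\ref{lem:phiBijective}, so this reduces to a careful case check; once the neighbourhood is identified, commutativity does the remaining work essentially for free.
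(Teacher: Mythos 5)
Your proposal is correct and follows essentially the same route as the paper: bijectivity is quoted from Lemma~\ref{lem:phiBijective}, and adjacency-preservation comes from writing each edge at $x$ as $x\sim\phi_j^{\pm 1}(x)$ (via the structure of the basis from Lemma~\ref{lem:alphauni}) and then invoking the commutativity of Lemma~\ref{lem:phiCommute}. The only cosmetic difference is that the paper disposes of the converse implication by a finiteness/counting argument on the edge set, whereas you prove it directly by first identifying $\ON(x)=\{\phi_j^{\pm 1}(x):j=1,\dots,m\}$ and then applying $\phi_i^{-1}$; both are valid.
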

  
\begin{remark}
Note that these graph automorphisms need not preserve the weights.
\end{remark}

\begin{proof}
    Let $i\in \{1,\dots,m\}$ be arbitrary. According to Lemma~\ref{lem:phiBijective}, the map $\phi_i$ is bijective. To complete the proof, we have to show that $x\sim y$ if and only if $\phi_i(x) \sim \phi_i(y)$. Since $G$ has only finitely many edges, it suffices to prove the implication $x\sim y \implies \phi_i(x)\sim \phi_i(y)$. To this end, let $x\sim y$ be an arbitrary edge in $G$. Choose $j\in\{1,\dots,m\}$ such that $\vert \alpha_j(x,y)\vert = d(x,y)$. Thus, we have either $\phi_j(x) = y$ or $\phi_j(y) = x$. If $\phi_j(x)=y$, we obtain
    \begin{equation*}
        \phi_i(y) = \phi_i\phi_j(x) = \phi_j\phi_i(x) \sim \phi_i(x),
    \end{equation*}
    where we used the commutativity of $\phi_i$ and $\phi_j$, as stated in Lemma~\ref{lem:phiCommute}. Similarly, if $\phi_j(y)=x$, we obtain
    \begin{equation*}
        \phi_i(x) = \phi_i\phi_j(y) = \phi_j\phi_i(y) \sim \phi_i(y).
    \end{equation*}
\end{proof}

Denote by $\SG=\langle \phi_1,\dots,\phi_m\rangle$ the subgroup of the graph's automorphism group $\AUT(G)$ generated by the automorphisms $\phi_i$. We show that the Cayley graph of $\SG$ with respect to the generating set $\{\phi_1,\dots,\phi_m\}$, denoted by $\CAY(\SG,\{\phi_1,\dots,\phi_m\})$, is isomorphic to $G$.

\subsection{Ollivier-Betti sharp implies abelian Cayley}

Having established that the maps $\phi_i$ commute and are graph automorphisms, we now show that the Cayley graph generated by these maps is isomorphic to the underlying graph.

\begin{theorem}\label{th:GisomorphCay}
    Let $G=(V,w,\mu,d)$ be an Ollivier-Betti sharp graph. Then $G$ is isomorphic to the Cayley graph $\CAY(\SG,\{\phi_1,\dots,\phi_m\})$.
\end{theorem}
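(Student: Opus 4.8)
The plan is to exhibit an explicit graph isomorphism $\Psi \colon \CAY(\SG,\{\phi_1,\dots,\phi_m\}) \to G$ by fixing a base vertex $x_0 \in V$ and defining $\Psi(\sigma) = \sigma(x_0)$ for each $\sigma \in \SG$. This is the natural candidate, and since $\SG$ is abelian (by Lemma~\ref{lem:phiCommute}) and generated by the $\phi_i$, every group element can be written as a word in the generators; the map $\Psi$ sends the identity to $x_0$ and, by the Cayley graph structure, sends the edge $\{\sigma, \sigma\phi_i\}$ to the pair $\{\sigma(x_0), \phi_i(\sigma(x_0))\}$, which is an edge of $G$ since $\phi_i$ is an automorphism with $\phi_i(v)\sim v$. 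So $\Psi$ is automatically a graph homomorphism in the edge-preserving sense; the real content is showing it is a \emph{bijection}.

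First I would verify surjectivity. Since $G$ is connected, every vertex $x \in V$ is reachable from $x_0$ by a path $x_0 \sim x_1 \sim \dots \sim x_n = x$. For each edge $x_{k}\sim x_{k+1}$, by Lemma~\ref{lem:alphauni} there is some index $i$ with $|\alpha_i(x_k,x_{k+1})| = d(x_k,x_{k+1})$, which means either $x_{k+1} = \phi_i(x_k)$ or $x_{k+1} = \phi_i^{-1}(x_k)$. Hence each step along the path is realized by applying some $\phi_i^{\pm 1}$, and composing these shows $x = \sigma(x_0)$ for a suitable word $\sigma \in \SG$. Thus $\Psi$ is surjective.

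The hard part will be injectivity, equivalently showing that if $\sigma(x_0) = x_0$ for $\sigma \in \SG$ then $\sigma$ is the identity automorphism (since then $\Psi(\sigma_1)=\Psi(\sigma_2)$ gives $\sigma_2^{-1}\sigma_1(x_0)=x_0$, forcing $\sigma_1=\sigma_2$). The key obstacle is to rule out a nontrivial word in the $\phi_i$ that fixes $x_0$ but does not fix every vertex. I expect the argument to proceed by using that $\SG$ is abelian, so any $\sigma$ can be written in the normal form $\phi_1^{a_1}\cdots\phi_m^{a_m}$ with $a_i \in \Z$, and then tracking the $\alpha_i$-values: applying $\phi_i$ to a vertex $x$ changes the ``coordinate'' recorded by the harmonic form $\alpha_i$ in a controlled additive way, while Lemma~\ref{lem:phiBijective} guarantees the generators move vertices in genuinely distinct directions. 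One shows that the displacement caused by $\sigma$ is detected by the lifted harmonic functions $f_{\alpha_i}$ on the universal cover: lifting $\sigma$ to the cover, the value $f_{\alpha_i}$ increases by $a_i \cdot d(x_0,\phi_i(x_0))$ along the corresponding lifted path, so $\sigma(x_0)=x_0$ together with the strict-monotonicity of $f_{\alpha_i}$ along $\phi_i$-edges (established in Lemma~\ref{lem:LaengeSeiten} and the supremum-norm computation of Lemma~\ref{lem:supNormAlphaLinKomb}) forces all $a_i = 0$.

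Finally, I would assemble these pieces: $\Psi$ is a well-defined bijection that preserves adjacency in both directions (edge-preservation of $\Psi$ was noted above, and the reverse implication follows since an edge $\sigma(x_0)\sim\tau(x_0)$ of $G$ must, by the path argument, correspond to $\tau = \sigma\phi_i^{\pm1}$, which is a Cayley edge). Hence $\Psi$ is a graph isomorphism in the sense defined just before the theorem, where only adjacency need be preserved and not the weights. The main subtlety to be careful about is that the isomorphism notion here is purely topological, so I need not match the edge weights $w$ or vertex weights $\mu$ — this simplifies the final step considerably and means the whole argument rests on the combinatorial adjacency structure together with the injectivity argument above.
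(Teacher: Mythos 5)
Your overall architecture matches the paper's: you define $\Psi(\sigma)=\sigma(x_0)$, prove surjectivity by decomposing any path from $x_0$ into $\phi_i^{\pm 1}$-steps via Lemma~\ref{lem:alphauni}, and handle edge-preservation through the automorphism property of the $\phi_i$ together with injectivity. The surjectivity and edge-preservation parts are sound and essentially identical to the paper's.

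However, your injectivity argument has a genuine gap. You propose to show that $\sigma(x_0)=x_0$ forces all exponents $a_i=0$ in the normal form $\phi_1^{a_1}\cdots\phi_m^{a_m}$, by lifting to the universal cover and using the monotone growth of $f_{\alpha_i}$ along $\phi_i$-edges. This cannot work: $\SG$ is a finite group (it acts faithfully on the finite set $V$), so each $\phi_i$ has finite order and $\phi_i^{\operatorname{ord}(\phi_i)}$ fixes $x_0$ with a nonzero exponent. What your universal-cover computation actually detects is that a closed loop at $x_0$ with nonzero winding numbers $a_i$ lifts to a non-closed path, i.e., the loop is homotopically nontrivial in $M_2(G)$ --- but that is perfectly consistent with $\sigma(x_0)=x_0$ in the base graph and yields no contradiction. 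The statement you need is not ``the word is trivial'' but ``the automorphism is trivial,'' and for that the correct (and much shorter) argument is the one the paper uses: since $\SG\leq \AUT(G)$, two elements coincide iff they agree as maps on all of $V$; given $\sigma_1(x_0)=\sigma_2(x_0)$ and any $y=\tau(x_0)$ (which exists by the surjectivity you already proved), commutativity from Lemma~\ref{lem:phiCommute} gives $\sigma_1(y)=\tau\sigma_1(x_0)=\tau\sigma_2(x_0)=\sigma_2(y)$, so $\sigma_1=\sigma_2$. This is the standard fact that a transitive abelian permutation group acts simply transitively; no lifting to the cover is needed for this step.
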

\begin{remark}
Note that this graph isomorphism need not preserve the weights.
\end{remark}

\begin{proof}
    Fix an arbitrary vertex $x\in V$, and define the map $\Psi:\SG \to V$ by
    \begin{equation*}
        \Psi(\phi_1^{a_1}\phi_2^{a_2}\dots\phi_m^{a_m}) = \phi_1^{a_1}\phi_2^{a_2}\dots\phi_m^{a_m}(x).
    \end{equation*}
    We will prove that $\Psi$ is a graph isomorphism.
    
    For surjectivity, let $y\in V$ be an arbitrary vertex. As $G$ is connected, there exists a path $x=x_0\sim x_1\sim \dots \sim x_n=y$ from $x$ to $y$. For $j=1,\dots, n$ there exists an $i_j\in\{1,\dots,m\}$ such that $\vert \alpha_{i_j}(x_{j-1},x_j)\vert = d(x_{j-1},x_j)$. Thus, there exists $\eps_{j}\in \{-1,1\}$ such that $\phi_{i_j}^{\eps_{j}}(x_{j-1}) = x_j$. Using the commutativity of the maps $\phi_k$, we obtain
    \begin{equation*}
        \Psi(\phi_1^{a_1}\phi_2^{a_2}\dots\phi_m^{a_m}) = \phi_1^{a_1}\phi_2^{a_2}\dots\phi_m^{a_m}(x) = y,
    \end{equation*}
    where $a_k = \sum_{j=1}^{n}\eps_{j} \delta_k(i_j)$, and $\delta_k(i_j)$ denotes the Kronecker delta.

    For injectivity, assume there exists $\phi_1^{a_1}\phi_2^{a_2}\dots\phi_m^{a_m}\in \SG$ and $\phi_1^{b_1}\phi_2^{b_2}\dots\phi_m^{b_m}\in \SG$, such that
    \begin{equation*}
        \Psi(\phi_1^{a_1}\phi_2^{a_2}\dots\phi_m^{a_m}) = \Psi(\phi_1^{b_1}\phi_2^{b_2}\dots\phi_m^{b_m}).
    \end{equation*}
    We will show that $\phi_1^{a_1}\phi_2^{a_2}\dots\phi_m^{a_m}=\phi_1^{b_1}\phi_2^{b_2}\dots\phi_m^{b_m}$. To this end, let $y\in V$ be arbitrary. By surjectivity, there exists $\phi_1^{c_1}\phi_2^{c_2}\dots \phi_m^{c_m}\in \SG$, such that
    \begin{equation*}
        \Psi(\phi_1^{c_1}\phi_2^{c_2}\dots\phi_m^{c_m}) = \phi_1^{c_1}\phi_2^{c_2}\dots\phi_m^{c_m}(x) = y.
    \end{equation*}
    Thus, using the commutativity, we obtain
    \begin{align*}
        \phi_1^{a_1}\phi_2^{a_2}\dots\phi_m^{a_m}(y) &= \phi_1^{a_1}\phi_2^{a_2}\dots\phi_m^{a_m}(\phi_1^{c_1}\phi_2^{c_2}\dots\phi_m^{c_m}(x)) \\
        &= \phi_1^{c_1}\phi_2^{c_2}\dots\phi_m^{c_m}\phi_1^{a_1}\phi_2^{a_2}\dots\phi_m^{a_m}(x)\\
        &= \phi_1^{c_1}\phi_2^{c_2}\dots\phi_m^{c_m}\phi_1^{b_1}\phi_2^{b_2}\dots\phi_m^{b_m}(x) \\
        &= \phi_1^{b_1}\phi_2^{b_2}\dots\phi_m^{b_m}(\phi_1^{c_1}\phi_2^{c_2}\dots\phi_m^{c_m}(x)) \\
        &= \phi_1^{b_1}\phi_2^{b_2}\dots\phi_m^{b_m}(y).
    \end{align*}
    Since $y\in V$ was chosen arbitrarily, we conclude that $\phi_1^{a_1}\phi_2^{a_2}\dots\phi_m^{a_m}=\phi_1^{b_1}\phi_2^{b_2}\dots\phi_m^{b_m}$.

    It remains to show that $\Psi$ is edge-preserving. To this end, let $\phi_1^{a_1}\phi_2^{a_2}\dots\phi_m^{a_m}$ be an arbitrary vertex of $\CAY(\SG,\{\phi_1,\dots,\phi_m\})$, $j\in \{1,\dots,m\}$ and $\eps\in \{-1,1\}$. We obtain 
    \begin{equation*}
        \Psi(\phi_j^{\eps}\phi_1^{a_1}\phi_2^{a_2}\dots\phi_m^{a_m}) = \phi_j^{\eps}\phi_1^{a_1}\phi_2^{a_2}\dots\phi_m^{a_m}(x) 
        = \phi_j^{\eps}(\Psi(\phi_1^{a_1}\phi_2^{a_2}\dots\phi_m^{a_m})) \sim \Psi(\phi_1^{a_1}\phi_2^{a_2}\dots\phi_m^{a_m}).
    \end{equation*}
    Conversely, assume $\Psi(\phi_1^{a_1}\phi_2^{a_2}\dots\phi_m^{a_m}) \sim \Psi(\phi_1^{b_1}\phi_2^{b_2}\dots\phi_m^{b_m})$ for some $\phi_1^{a_1}\phi_2^{a_2}\dots\phi_m^{a_m}, \; \phi_1^{b_1}\phi_2^{b_2}\dots\phi_m^{b_m}\in \SG$. Then there exist $i\in \{1,\dots,m\}$ and $\eps \in \{-1,1\}$ such that 
    \begin{equation*}
        \phi_i^{\eps}(\Psi(\phi_1^{a_1}\phi_2^{a_2}\dots\phi_m^{a_m})) = \Psi(\phi_1^{b_1}\phi_2^{b_2}\dots\phi_m^{b_m}). 
    \end{equation*}
    Since $\Psi$ is injective and 
    \begin{equation*}
        \phi_i^{\eps}(\Psi(\phi_1^{a_1}\phi_2^{a_2}\dots\phi_m^{a_m})) = \phi_i^{\eps}\phi_1^{a_1}\phi_2^{a_2}\dots\phi_m^{a_m}(x) = \Psi(\phi_i^{\eps}\phi_1^{a_1}\phi_2^{a_2}\dots\phi_m^{a_m}),
    \end{equation*}
    we conclude that $\phi_i^{\eps}\phi_1^{a_1}\phi_2^{a_2}\dots\phi_m^{a_m} = \phi_1^{b_1}\phi_2^{b_2}\dots\phi_m^{b_m}$. Thus, we conclude $\phi_1^{a_1}\phi_2^{a_2}\dots\phi_m^{a_m} \sim \phi_1^{b_1}\phi_2^{b_2}\dots\phi_m^{b_m}$ in $\CAY(\SG,\{\phi_1,\dots,\phi_m\})$. This concludes the proof.
\end{proof}

Hence, we have shown that an Ollivier-Betti sharp graph is isomorphic to an abelian Cayley graph. We can also say more about the edge- and vertex-weights of an Ollivier-Betti sharp graph.

\begin{lemma}\label{lem:GraphsWeights}
    Let $G=(V,w,\mu,d)$ be an Ollivier-Betti sharp graph. Then, for an edge $x\sim y$ with $y=\phi_i(x)$, we obtain
    \begin{itemize}
        \item[$(i)$] $w(x,y)\cdot d(x,y) = w(y, \phi_i(y))\cdot d(y,\phi_i(y)) $,
        \item[$(ii)$] $\frac{w(x,y)}{\mu(x)} = \frac{w(\phi_j(x), \phi_j(y))}{\mu(\phi_j(x))}$ for all $j \in\{1,\dots,m\}$ with $j\not=i$.
    \end{itemize}
\end{lemma}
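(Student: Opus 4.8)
The identity $(i)$ is immediate from co-closedness. Since $\alpha_i \in H_1(M_2(G))$ we have $\delta^*\alpha_i = 0$, so $0 = \mu(y)\,\delta^*\alpha_i(y) = \sum_{y'} w(y,y')\,\alpha_i(y,y')$. By the defining property of the basis (Lemma~\ref{lem:alphauni}) the form $\alpha_i$ is supported, at the vertex $y = \phi_i(x)$, only on the two incident edges towards $\phi_i(y)$ and towards $\phi_i^{-1}(y) = x$, with $\alpha_i(y,\phi_i(y)) = d(y,\phi_i(y))$ and $\alpha_i(y,x) = -d(y,x)$. Hence the sum reduces to $w(y,\phi_i(y))\,d(y,\phi_i(y)) - w(y,x)\,d(y,x) = 0$, and the symmetry of $w$ and $d$ turns this into $(i)$.

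For $(ii)$ I would pass to the universal cover and exploit the improved transport plan. Write $f := f_{\alpha_i}$ and $h := f_{\alpha_j}$ for the lifted potentials; both are $1$-Lipschitz and harmonic, and their local behaviour was recorded in the proof of Lemma~\ref{lem:LaengeSeiten}: from a fixed lift $\widetilde x$ of $x$ (normalised so that $f(\widetilde x) = h(\widetilde x) = 0$) the function $f$ changes by $\pm\,\widetilde{d}$ only in the directions $\widetilde{\phi}_i^{\pm1}$ and is flat on every other incident edge, and likewise for $h$ in the directions $\widetilde{\phi}_j^{\pm1}$. By Lemma~\ref{lem:phiBijective} the directions $i$ and $j$ are genuinely distinct, so in particular $f(\widetilde{\phi}_j\widetilde x) = 0$ while $h(\widetilde{\phi}_j\widetilde x) = \widetilde{d}(\widetilde x,\widetilde{\phi}_j\widetilde x)$. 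Thus $h$ realises its Lipschitz constant on the direction-$j$ edge $(\widetilde{\phi}_j\widetilde x,\widetilde x)$, and I may apply the transport-plan analysis of Lemma~\ref{lem:LipFun} to $h$ on this edge, with $x_0 = \widetilde{\phi}_j\widetilde x$ and $y_0 = \widetilde x$.

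The key point is the rigidity of the coupling. Since the curvature lifts to the cover and is non-negative and $\Delta h = 0$, the inequality chain in the proof of Lemma~\ref{lem:LipFun} collapses to equalities; consequently any optimal transport plan $\rho$ on $\CN(\widetilde{\phi}_j\widetilde x)\times\CN(\widetilde x)$ satisfies $h(a) - h(b) = \widetilde{d}(a,b)$ for every pair with $\rho(a,b) > 0$. Now I would feed in the second direction: by Lemma~\ref{lem:supNormAlphaLinKomb} both $f+h$ and $h-f$ are $1$-Lipschitz, so their global Lipschitz bounds give, on the support of $\rho$,
\[
(f+h)(a) - (f+h)(b) \leq \widetilde{d}(a,b) \quad\text{and}\quad (h-f)(a) - (h-f)(b) \leq \widetilde{d}(a,b).
\]
Subtracting $h(a) - h(b) = \widetilde{d}(a,b)$ from the first and adding it to the second yields $f(a) - f(b) \leq 0$ and $f(a) - f(b) \geq 0$; hence $\rho$ couples only vertices on which $f = f_{\alpha_i}$ agrees.

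It remains to read off masses. On $\CN(\widetilde x)$ the value $d_i := \widetilde{d}(\widetilde x,\widetilde{\phi}_i\widetilde x) > 0$ of $f$ is attained \emph{only} at $\widetilde{\phi}_i\widetilde x$, while on $\CN(\widetilde{\phi}_j\widetilde x)$ it is attained only at $\widetilde{\phi}_i\widetilde{\phi}_j\widetilde x = \widetilde{\phi}_j\widetilde{\phi}_i\widetilde x$, where $f = d_i$ by the distance identity $\widetilde{d}(\widetilde{\phi}_j\widetilde x,\widetilde{\phi}_i\widetilde{\phi}_j\widetilde x) = \widetilde{d}(\widetilde x,\widetilde{\phi}_i\widetilde x)$ of Lemma~\ref{lem:LaengeSeiten}. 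The constancy of $f$ along the coupling therefore forces all $\rho$-mass reaching $\widetilde{\phi}_i\widetilde x$ to originate from $\widetilde{\phi}_i\widetilde{\phi}_j\widetilde x$ and conversely. Comparing the two marginal conditions of Lemma~\ref{Prop:TransPlan} at these vertices gives
\[
\frac{w(\widetilde x,\widetilde{\phi}_i\widetilde x)}{\mu(\widetilde x)} = \rho(\widetilde{\phi}_i\widetilde{\phi}_j\widetilde x,\,\widetilde{\phi}_i\widetilde x) = \frac{w(\widetilde{\phi}_j\widetilde x,\widetilde{\phi}_i\widetilde{\phi}_j\widetilde x)}{\mu(\widetilde{\phi}_j\widetilde x)},
\]
and projecting down via $\pr$ (using Lemma~\ref{lem:Propertiesd}, so that the relevant distances, weights and measures coincide with those on $G$) yields $(ii)$. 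The main obstacle is exactly the middle step: converting non-negative curvature into the statement that $f_{\alpha_i}$ is constant along the optimal coupling of a direction-$j$ edge. Once the transport-plan equality for $h$ is combined with the sup-norm identity for $f\pm h$, the remaining marginal bookkeeping is routine.
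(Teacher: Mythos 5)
Your part $(i)$ is exactly the paper's argument: expand $\delta^*\alpha_i(y)=0$, note that $\alpha_i$ is supported at $y$ only on the edges to $x=\phi_i^{-1}(y)$ and $\phi_i(y)$, and read off the identity. For part $(ii)$ you arrive at the correct statement by a genuinely different, dual route. The paper works on the Lipschitz-function side of Kantorovich duality: it forms the perturbed test function $g=\max\{f_{\alpha_j},f_{\alpha_j+\alpha_i}\}=f_{\alpha_j}+(f_{\alpha_i})_+$, which is $1$-Lipschitz and realizes gradient one along the $j$-direction edge $\widetilde x\sim\widetilde\phi_j(\widetilde x)$, and then extracts the inequality $\Delta(f_{\alpha_i})_+(\widetilde x)\geq\Delta(f_{\alpha_i})_+(\widetilde\phi_j(\widetilde x))$ directly from $0\leq\kappa\leq\nabla\Delta g$; the reverse inequality comes from the companion function $g'=\max\{f_{-\alpha_j},f_{\alpha_i-\alpha_j}\}$. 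You instead work on the transport-plan side: the collapse of the inequality chain in Lemma~\ref{lem:LipFun} forces $f_{\alpha_j}(a)-f_{\alpha_j}(b)=\widetilde d(a,b)$ on the support of any optimal plan for the edge $\widetilde\phi_j\widetilde x\sim\widetilde x$, the $1$-Lipschitz bounds on $f_{\alpha_i}\pm f_{\alpha_j}$ from Lemma~\ref{lem:supNormAlphaLinKomb} then force $f_{\alpha_i}$ to be constant along the coupling, and since the value $d_i=\widetilde d(\widetilde x,\widetilde\phi_i\widetilde x)>0$ is attained at exactly one vertex of each closed neighbourhood (using Lemma~\ref{lem:LaengeSeiten} for $f_{\alpha_i}(\widetilde\phi_i\widetilde\phi_j\widetilde x)=d_i$), the two marginal conditions of Lemma~\ref{Prop:TransPlan} at $\widetilde\phi_i\widetilde x$ and $\widetilde\phi_i\widetilde\phi_j\widetilde x$ both equal the single mass $\rho(\widetilde\phi_i\widetilde\phi_j\widetilde x,\widetilde\phi_i\widetilde x)$, yielding the equality in one step rather than as two one-sided bounds. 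I checked the details: the relevant vertices lie in the open neighbourhoods so the marginal constraints do apply, $d_i$ cannot coincide with the value $0$ or with the negative value taken in the $\phi_i^{-1}$ direction, and every lemma you invoke precedes Lemma~\ref{lem:GraphsWeights} in the paper, so there is no circularity. What your version buys is a single symmetric argument and a concrete picture of where the optimal coupling moves mass; what the paper's version buys is brevity, since the maximum of two Lipschitz functions is automatically Lipschitz and no rigidity analysis of the coupling support is needed.
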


\begin{proof}
    $(i)$ Since $\alpha_i\in H_1(M_2(G))$, it follows that
    \begin{equation*}
        0 = \delta^*\alpha_i(y) = \sum_{z: y\sim z} \frac{w(y,z)}{\mu(z)}\alpha_i(y,z) = \frac{w(y,\phi_i(y))}{\mu(y)}d(y,\phi_i(y)) - \frac{w(y,x)}{\mu(y)}d(y,x).
    \end{equation*}
    Using $y=\phi_i(x)$, we conclude $w(x,y)\cdot d(x,y)= w(\phi_i(x),\phi_i(y))\cdot d(\phi_i(x),\phi_i(y)) $.

    $(ii)$ We first lift $\phi_i$ to a map $\widetilde{\phi_i}$ on the universal cover by requiring that $\phi_i\circ \pr = \pr \circ \widetilde{\phi_i}$ and that $\widetilde{\phi_i}(\widetilde{z}) \sim \widetilde{z}$ for all $\widetilde{z} \in \widetilde{X_0}$. Next, we fix an $\widetilde{x} \in \pr^{-1}(x)$. Observe that $\pr(\widetilde{\phi_i}(\widetilde{x})) = \phi_i(\pr(\widetilde{x})) = y$. Let $j\not = i$ be arbitrary. We assume without loss of generality that $f_{\alpha_i}(\widetilde{x}) = f_{\alpha_j}(\widetilde{x}) = 0$ and define
    \begin{equation*}
        g := \max\{f_{\alpha_j},f_{(\alpha_j + \alpha_i)}\} = f_{\alpha_j} + (f_{\alpha_i})_+.
    \end{equation*}
    Observe that $g$ is the pointwise maximum of two 1-Lipschitz functions, and hence it is itself 1-Lipschitz. Furthermore, we have $g(\widetilde{x}) = 0$ and $g(\widetilde{\phi_j}(\widetilde{x})) = d(\widetilde{x},\widetilde{\phi_j}(\widetilde{x}) )$. 
    By the non-negative Ollivier curvature of the universal cover, we obtain
    \begin{equation*}
        0 \leq \kappa(\widetilde{x},\widetilde{\phi_j}(\widetilde{x})) \leq \Delta g(\widetilde{x}) - \Delta g(\widetilde{\phi_j}(\widetilde{x})) = \Delta f_{\alpha_j}(\widetilde{x}) + \Delta(f_{\alpha_i})_+(\widetilde{x}) - \Delta f_{\alpha_j}(\widetilde{\phi_j}(\widetilde{x})) - \Delta(f_{\alpha_i})_+(\widetilde{\phi_j}(\widetilde{x})).
    \end{equation*}
    Using $\Delta f_{\alpha_j}=0$, we conclude
    \begin{equation*}
        \frac{w(x,y)}{\mu(x)}d(x,y) 
        =\Delta(f_{\alpha_i})_+(\widetilde{x})
        \geq \Delta(f_{\alpha_i})_+(\widetilde{\phi_j}(\widetilde{x})) 
        = \frac{w(\phi_j(x),\phi_j(y))}{\mu(\phi_j(x))}d(\phi_j(x), \phi_j(y)).
    \end{equation*}
    Finally, observe that $d(x,y) = d(\phi_j(x), \phi_j(y))$ by Lemma~\ref{lem:LaengeSeiten}. Applying the same reasoning to the function 
    \begin{equation*}
        g' = \max\{f_{-\alpha_j}, f_{(\alpha_i - \alpha_j)}\}
    \end{equation*}
    yields the other inequality. This concludes the proof.
\end{proof}

We now introduce the notion of a discrete weighted flat torus which will characterize Ollivier-Betti sharpness.

\begin{definition}\label{def:Torus}
    We say $T=((V,\cdot), (a_1,\dots,a_m), w,\mu,d)$ is a weighted, flat Torus if $(V,\cdot)$ is an abelian group with generating set $\{a_1,\dots,a_m\} \subseteq V$ 
    satisfying $a_i \neq a_j^{\eps}$ for all $i\neq j$ and $\eps \in \{-1,1\}$, and $d: V\times V \to [0,\infty)$ is a general path metric, 
    such that 
    \begin{itemize}
        \item[$(i)$] $w(x,y) > 0$ if and only if $y \in\{a_i^{\eps}\cdot x:  i=1,\dots,m;\;  \eps =\pm 1\}$ for all $x,y \in V$,
        \item[$(ii)$] $w(x,a_i\cdot x)d(x,a_i\cdot x) = w(x, a_i^{-1}\cdot x) d(x, a_i^{-1}\cdot x)$ for all $x\in V$ and $i\in \{1,\dots,m\}$,
        \item[$(iii)$] $p(x,a_i^{\eps_1}\cdot x) = p(a_j^{\eps_2}\cdot x, a_i^{\eps_1}\cdot a_j^{\eps_2} \cdot x)$, where $p(x,y) = \frac{w(x,y)}{\mu(x)}$, for all $x\in V$, $i,j\in \{1,\dots,m\}$ with $i\not=j$ and $\eps_1,\eps_2\in\{-1,1\}$,
        \item[$(iv)$] $d(x,a_i\cdot x) = d(a_j\cdot x, a_i\cdot a_j\cdot x)$ for all $x\in V$ and $i,j\in \{1,\dots,m\}$ with $i\not=j$,
        \item[$(v)$] $\beta_1(M_2(T)) = m$. 
    \end{itemize}
\end{definition}

Next, we prove that a graph satisfying the properties $(i)-(iv)$ of the previous definition has non-negative Ollivier curvature.

\begin{lemma}\label{lem:nonnegORC}
    Let $G=(V,w,\mu,d)$ be a finite graph. Let $"\cdot"$ be an abelian group structure on $V$ with generating set $\{a_1,\dots,a_m\}\subseteq V$ with $a_i\neq a_j^{\eps}$ for all $i\neq j$ and $\eps \in \{-1,1\}$ such that properties $(i)-(iv)$ of Definition~\ref{def:Torus} are satisfied. Then $G$ has non-negative Ollivier curvature.
\end{lemma}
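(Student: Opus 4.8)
The plan is to verify the curvature inequality edge by edge, using the gradient form of the definition of $\kappa$ rather than transport plans. First I would fix an edge and write it as $x_0=x\sim y_0=a_i\cdot x$; since the neighbours of a vertex are exactly $a_j^{\pm1}\cdot x$ by $(i)$ and since $\kappa$ is symmetric (replace $f$ by $-f$ in the definition), every edge can be brought into this form after possibly swapping the roles of $x_0,y_0$ and replacing $a_i$ by $a_i^{-1}$. By definition, $\kappa(x_0,y_0)\ge 0$ is equivalent to
\[
\Delta f(x_0)\ \ge\ \Delta f(y_0)
\]
for every $f$ with $\lVert\nabla f\rVert_\infty\le 1$ and $f(y_0)-f(x_0)=d(x_0,y_0)=:d_0$ (the boundary condition already forces $\lVert\nabla f\rVert_\infty=1$, so this is exactly the admissible class). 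Writing $p(u,v)=w(u,v)/\mu(u)$ and $\Delta f(u)=\sum_{v\sim u}p(u,v)\big(f(v)-f(u)\big)$, I would expand $\Delta f(x_0)-\Delta f(y_0)$ and split it into a \emph{transverse} part ($j\ne i$) and a \emph{longitudinal} part ($j=i$).

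For the transverse part I would pair, for each $j\ne i$ and $\eps\in\{\pm1\}$, the neighbour $a_j^{\eps}x$ of $x_0$ with the neighbour $a_j^{\eps}a_ix=a_i\cdot a_j^{\eps}x$ of $y_0$. Property $(iii)$ gives equal transition probabilities $p(x,a_j^{\eps}x)=p(a_ix,a_j^{\eps}a_ix)=:p_j$, and property $(iv)$---applied with base point $x$ when $\eps=+1$ and with base point $a_j^{-1}x$ when $\eps=-1$---gives $d(a_j^{\eps}x,a_j^{\eps}a_ix)=d_0$. Hence each paired contribution equals
\[
p_j\Big[\big(f(a_ix)-f(x)\big)-\big(f(a_j^{\eps}a_ix)-f(a_j^{\eps}x)\big)\Big]=p_j\Big[d_0-\big(f(a_j^{\eps}a_ix)-f(a_j^{\eps}x)\big)\Big],
\]
which is $\ge 0$ because $p_j\ge 0$ and $f$ is $1$-Lipschitz along the length-$d_0$ edge $a_j^{\eps}x\sim a_j^{\eps}a_ix$.

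The longitudinal part is the step I expect to be the main obstacle, since there is no parallel partner and the three edge lengths $d(x,a_i^{-1}x),\,d_0,\,d(a_ix,a_i^2x)$ need not coincide. Collecting the four terms with $j=i$, using $f(a_ix)-f(x)=d_0$, and bounding the two free increments by Lipschitzness, $f(a_i^{-1}x)-f(x)\ge -d(x,a_i^{-1}x)$ and $f(a_i^2x)-f(a_ix)\le d(a_ix,a_i^2x)$, I would arrive at the lower bound
\[
\big[p(x,a_ix)\,d_0-p(x,a_i^{-1}x)\,d(x,a_i^{-1}x)\big]+\big[p(a_ix,x)\,d_0-p(a_ix,a_i^2x)\,d(a_ix,a_i^2x)\big].
\]
Here property $(ii)$ does the decisive work: at the vertex $x$ it reads $w(x,a_ix)d_0=w(x,a_i^{-1}x)d(x,a_i^{-1}x)$, so after dividing by $\mu(x)$ the first bracket vanishes, and at the vertex $a_ix$ it reads $w(a_ix,a_i^2x)d(a_ix,a_i^2x)=w(a_ix,x)d_0$, so after dividing by $\mu(a_ix)$ the second bracket vanishes. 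Thus the longitudinal part is $\ge 0$ as well, and summing the two parts gives $\Delta f(x_0)\ge\Delta f(y_0)$, hence $\kappa(x_0,y_0)\ge0$.

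Finally I would note that degenerate configurations (coinciding neighbours, e.g.\ $a_i^2=e$) only remove terms and make the estimate easier, so they are covered too; and that the same result can be obtained on the primal side by building the transport plan that performs the parallel shift $a_j^{\eps}x\mapsto a_i\cdot a_j^{\eps}x$ for $j\ne i$ and balances the longitudinal and idle mass using $(ii)$, but the gradient computation keeps the bookkeeping of the free centres $x_0,y_0$ implicit and is cleaner.
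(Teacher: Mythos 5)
Your argument is correct, and it takes the dual route: you verify $\Delta f(x_0)\ge \Delta f(y_0)$ for every admissible Lipschitz function in the infimum defining $\kappa$, whereas the paper works on the primal side of Lemma~\ref{Prop:TransPlan}, explicitly constructing a transport plan $\rho$ that shifts each transverse neighbour $a_j^{\eps}x_0$ to $a_j^{\eps}y_0$, keeps idle mass at $x_0$ and $y_0$, and routes the longitudinal imbalance $r=p(x_0,a_i^{-\eps}x_0)-p(y_0,a_i^{\eps}y_0)$ separately. The two proofs use the hypotheses in exactly parallel ways --- $(iii)$ to match transverse probabilities, $(iv)$ to see that the matched edges have length $d_0$, and $(ii)$ to cancel the longitudinal contributions at $x_0$ and at $y_0$ --- so they are genuinely the same estimate seen from the two sides of Kantorovich duality. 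What your version buys is that you never have to verify the marginal conditions of a coupling or case-split on the sign of $r$ (the Lipschitz bounds $f(a_i^{-1}x)-f(x)\ge -d(x,a_i^{-1}x)$ and $f(a_i^2x)-f(a_ix)\le d(a_ix,a_i^2x)$ absorb both signs automatically); what the paper's version buys is an explicit optimal-type coupling that is recycled almost verbatim in the bone-idle analysis (Theorem~\ref{th:OBSandBI}, $(iii)\Rightarrow(ii)$), where a genuine coupling of the measures $m_{x_0}^1,m_{y_0}^1$ is needed and the dual formulation is less convenient. Your side remarks are also sound: the admissible class is exactly $\{\lVert\nabla f\rVert_\infty\le 1,\ f(y_0)-f(x_0)=d_0\}$, property $(iv)$ does extend to all sign choices by changing the base point as you indicate, and the only possible coincidence among neighbours is $a_j^2=e$ (since the $a_j^{\pm1}$ are assumed mutually distinct), which indeed only deletes terms.
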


\begin{proof}
    Let $x_0\sim y_0$ be an arbitrary edge in $G$. By property $(i)$ there exists $i\in \{1,\dots,m\}$ and $\eps \in\{-1,1\}$ such that $y_0 = a_i^{\eps}\cdot x_0$. Without loss of generality, we assume $r := p(x_0,a_i^{-\eps}x_0)- p(y_0,a_i^{\eps}y_0) \geq 0$.
     
     We define the following transport plan $\rho: \CN(x_0) \times \CN(y_0)\to [0,\infty)$ by

     \begin{equation*}
        \rho(x,y) = \begin{cases}
            p(y_0, x_0)
            &: x=y=x_0,\\
            p(x_0, y_0)
            &: x=y=y_0,\\
            p(y_0,y) &: (x,y) \in \{(a_j^{\eps'}x_0, a_j^{\eps'}y_0): \eps' \in \{-1,1\}, j\neq i \} \mbox{ or } (x,y) = (a_i^{-\eps}x_0,  a_i^{\eps}y_0) \\
            r &: (x,y)= (a_i^{-\eps}x_0, y_0),\\
            0 &: \mbox{otherwise}.
            \end{cases}
    \end{equation*}
    By property $(iii)$, we have $p(x_0, a_j^{\eps'}x_0) = p(y_0, a_j^{\eps'}y_0)$ for all $j\not=i$ and $\eps'\in \{-1,1\}$, and we conclude that $\rho$ is a transport plan.  
    Using that $d(a_i^{-\eps}x_0,a_i^{\eps}y_0) \leq d(a_i^{-\eps}x_0,x_0) + d(x_0,y_0) + d(y_0,a_i^{\eps}y_0)$, we obtain
    \[
        p(y_0,a_i^{\eps}y_0)\left[d(x_0,y_0) - d(a_i^{-\eps}x_0, a_i^{\eps}y_0) \right] \geq -p(y_0,a_i^{\eps}y_0) \left[d(a_i^{-\eps}x_0,x_0) + d(y_0,a_i^{\eps}y_0)\right].
    \]
    Similarly, we obtain
    \[
        r\left[d(x_0,y_0) - d(a_i^{-\eps}x_0,y_0)\right] \geq d(a_i^{-\eps}x_0,x_0)\left[p(y_0,a_i^{\eps}y_0) - p(x_0,a_i^{-\eps}x_0)\right].
    \]
    Putting these inequalities together, we obtain
    \begin{align*}
        d(x_0,y_0) \kappa(x_0,y_0) &\geq \sum_{\substack{x\in \CN(x_0)\\y\in \CN(y_0)}} \rho(x,y) \left[d(x_0,y_0)- d(x,y)\right] \\
        &\geq d(x_0,y_0)\left[p(y_0,x_0) + p(x_0y_0)\right] -p(y_0,a_i^{\eps}y_0)d(y_0,a_i^{\eps}y_0) - p(x_0,a_i^{-\eps}x_0)d(x_0,a_i^{-\eps}x_0)\\
        &= 0,
    \end{align*}
    where we used property $(iv)$ for the second inequality and property $(ii)$ for the last equality. Since $d(x_0,y_0) >0$, we conclude that $\kappa(x_0,y_0) \geq 0$ must holds. This concludes the proof.
\end{proof}

\subsection{Characterization of Ollivier Betti sharpness}

In this subsection, we prove our second main theorem, that is, a graph is Ollivier Betti sharp if and only if it is a weighted flat torus.

\begin{theorem}\label{th:CharacBS}
    Let $G=(V,w,\mu,d)$ be a finite graph. Then $G$ is Ollivier-Betti sharp with $\beta_1(M_2(G)) = m$ if and only if $((V,\cdot), (a_1,\dots,a_m), w, \mu,d)$ is a weighted, flat Torus for some abelian group structure $"\cdot"$ on $V$ and some generators $a_1, \dots,a_m$.
\end{theorem}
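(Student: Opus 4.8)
The plan is to prove both implications, treating the two directions quite differently since one is essentially a packaging of the automorphism machinery already developed, while the other is a direct verification.

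For the direction that a weighted flat torus is Ollivier-Betti sharp, I would proceed as follows. Property $(v)$ of Definition~\ref{def:Torus} already gives $\beta_1(M_2(T)) = m$, so the only remaining content is non-negative Ollivier curvature together with the identification $m = \deg_{\max}/2$. Non-negative curvature is exactly the conclusion of Lemma~\ref{lem:nonnegORC}, which requires only properties $(i)$--$(iv)$, so I would invoke it directly. For the degree count, property $(i)$ says that the neighbours of any $x$ are precisely $\{a_i^{\eps}\cdot x : i=1,\dots,m,\ \eps = \pm 1\}$; the condition $a_i \neq a_j^{\eps}$ for $i \neq j$ (part of the torus definition) guarantees these $2m$ group elements are pairwise distinct, so $\deg(x) = 2m$ for every $x$, whence $\deg_{\max} = 2m$ and $\beta_1 = m = \deg_{\max}/2$. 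This establishes Ollivier-Betti sharpness.

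For the converse, suppose $G$ is Ollivier-Betti sharp with $\beta_1(M_2(G)) = m$. Here I would assemble the results of the preceding subsections. By Lemma~\ref{lem:alphauni} I obtain the distinguished basis $(\alpha_1,\dots,\alpha_m)$ and hence the maps $\phi_1,\dots,\phi_m$; by the automorphism lemma these are graph automorphisms, and by Lemma~\ref{lem:phiCommute} they commute, so $\SG = \langle \phi_1,\dots,\phi_m\rangle$ is an abelian subgroup of $\AUT(G)$. Fixing a base vertex $x$ and using the isomorphism $\Psi$ from Theorem~\ref{th:GisomorphCay}, I transport the group structure of $\SG$ to $V$: define $x_1 \cdot x_2 := \Psi(\Psi^{-1}(x_1)\,\Psi^{-1}(x_2))$, making $(V,\cdot)$ an abelian group with identity $x$, and set $a_i := \phi_i(x) = \Psi(\phi_i)$ as generators. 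Because $\Psi$ is an edge-preserving bijection and in the Cayley graph the neighbours of a group element $g$ are exactly $\phi_i^{\eps} g$, property $(i)$ of Definition~\ref{def:Torus} follows immediately, and the distinctness condition $a_i \neq a_j^{\eps}$ is Lemma~\ref{lem:phiBijective}. Then I would verify properties $(ii)$--$(iv)$ by translating the weight and length identities already proved: property $(ii)$ is exactly Lemma~\ref{lem:GraphsWeights}$(i)$ rewritten in group notation (noting $a_i \cdot x$ corresponds to $\phi_i(x)$), property $(iii)$ is Lemma~\ref{lem:GraphsWeights}$(ii)$, and property $(iv)$ is the second equation of Lemma~\ref{lem:LaengeSeiten}. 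Finally, property $(v)$ holds by hypothesis. Thus $((V,\cdot),(a_1,\dots,a_m),w,\mu,d)$ is a weighted flat torus.

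The main obstacle I anticipate is purely bookkeeping rather than conceptual: one must carefully match the group-theoretic indexing of Definition~\ref{def:Torus} (where the neighbour of $x$ in direction $i$ is $a_i\cdot x$) with the automorphism indexing (where it is $\phi_i(x)$), making sure that the action of $\phi_i$ on $V$ is precisely left-multiplication by $a_i$ under the transported group structure. This requires checking that $\phi_i(g\cdot x) = a_i \cdot (g \cdot x)$ for all group elements $g$, which follows from the commutativity of the $\phi_j$ and the definition of $\Psi$, but must be stated explicitly so that the weight identities of Lemma~\ref{lem:GraphsWeights} and Lemma~\ref{lem:LaengeSeiten}, which are phrased in terms of the $\phi_i$, can be read off as the corresponding torus axioms. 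Once this dictionary between $\phi_i$ and $a_i\cdot{}$ is fixed, properties $(ii)$--$(iv)$ are immediate transcriptions and no further estimates are needed.
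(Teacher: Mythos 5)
Your proposal is correct and follows essentially the same route as the paper: the forward direction assembles Lemma~\ref{lem:alphauni}, the automorphism and commutativity lemmas, Theorem~\ref{th:GisomorphCay}, Lemma~\ref{lem:GraphsWeights}, and Lemma~\ref{lem:LaengeSeiten} to verify the torus axioms, while the converse combines Lemma~\ref{lem:nonnegORC} with the $2m$-regularity forced by property $(i)$ and the distinctness of the generators. Your explicit attention to the dictionary identifying $\phi_i$ with left-multiplication by $a_i$ is a point the paper leaves implicit, but it introduces no divergence in substance.
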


\begin{proof}
    $"\implies"$ As before, let $\SG=\langle\phi_1, \dots,\phi_m\rangle$ denote the subgroup of the graph's automorphism group $\AUT(G)$, generated by the automorphisms $\phi_i$. 
    By Theorem~\ref{th:GisomorphCay}, the graph is isomorphic to the abelian Cayely graph of $\SG$ with generators $\phi_i$. In particular,    
    the group structure on $\SG$ induces the group structure on $V$.
    Note that by Lemma~\ref{lem:phiBijective} the $\phi_i$ and $\phi_j^{-1}$ are mutually distinct.   
    Property $(i)$ is an immediate consequence of Theorem~\ref{th:GisomorphCay}. Properties $(ii)$ and $(iii)$ follow from Lemma~\ref{lem:GraphsWeights}. Lemma~\ref{lem:LaengeSeiten} implies property $(iv)$ and $(v)$ follows by assumption.

    $"\impliedby"$ By Lemma~\ref{lem:nonnegORC}, the graph has non-negative Ollivier curvature. Furthermore, $G$ is $2m$-regular by property $(i)$ of Definition~\ref{def:Torus}. Using property $(v)$, we conclude
    \[
        \beta_1(M_2(G)) = m = \frac{\deg_{\max}}{2}.
    \] 
    Hence, the graph is Ollivier-Betti sharp.
\end{proof}

Next, we turn to the special case of the combinatorial path distance, namely
\[
    d(x,y) = \inf\{n : x = x_0\sim \dots \sim x_n = y\}.
\]  

In this case, we derive the following equivalent characterization of a weighted flat torus.

\begin{corollary}\label{cor:BettiSharpCombDist}
    Let $G=(V,w,\mu,d)$ be a finite graph, where $d$ denotes the combinatorial path distance. Then $G$ is Ollivier-Betti sharp with $\beta_1(M_2(G))=m$ if and only if there exists some abelian group structure $"\cdot"$ on $V$ and some generators $a_1,\dots,a_m$, such that
    \begin{itemize}
        \item[$(i)$] $w(x,y) > 0$ if and only if $y \in\{a_i^{\eps}\cdot x:  i=1,\dots,m;\;  \eps =\pm 1\}$ for all $x,y \in S$,
        \item[$(ii)$] $w(x,a_i\cdot x) = w(x, a_i^{-1}\cdot x)$ for all $x\in V$ and $i\in \{1,\dots,m\}$,
        \item[$(iii)$] $p(x,a_i^{\eps_1}\cdot x) = p(a_j^{\eps_2}\cdot x, a_i^{\eps_1}\cdot a_j^{\eps_2} \cdot x)$, where $p(x,y) = \frac{w(x,y)}{\mu(x)}$, for all $x\in V$, $i,j\in \{1,\dots,m\}$ with $i\not=j$ and $\eps_1,\eps_2\in\{-1,1\}$,
        \item[$(iv)$] $\forall v \in \Z^m$ with $0 < \lVert v \rVert_1 \leq 5$, we have $a_1^{v_1}\cdots a_m^{v_m} \not = e$.
    \end{itemize}
\end{corollary}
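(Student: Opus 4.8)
The plan is to derive the corollary from the general characterization in Theorem~\ref{th:CharacBS} and then to reduce the remaining content to a topological computation of $\beta_1$ for abelian Cayley graphs. First I would specialize Definition~\ref{def:Torus} to the combinatorial path distance: since every edge then has length one, condition $(iv)$ of that definition holds automatically, condition $(ii)$ collapses to condition $(ii)$ of the corollary, and $(i),(iii)$ are verbatim those of the corollary. Hence Theorem~\ref{th:CharacBS} says that $G$ is Ollivier-Betti sharp with $\beta_1(M_2(G))=m$ if and only if $V$ carries an abelian group structure with generators satisfying the corollary's $(i)$--$(iii)$ and, in addition, $\beta_1(M_2(G))=m$. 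It therefore remains to prove, for a graph equipped with such an abelian Cayley structure, that $\beta_1(M_2(G))=m$ is equivalent to condition $(iv)$.

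For this I would argue topologically. By Hodge theory the number $\beta_1(M_2(G))=\dim\{\alpha:\delta\alpha=0,\ \delta^*\alpha=0\}$ equals the rank of the cellular group $H_1(M_2(G);\Z)$, which is independent of the weights $w,\mu$ and determined by the $1$-skeleton $G$ and the cell set $X_2$. Writing $A=(V,\cdot)=\Z^m/\Lambda$ with $\Lambda=\ker(\Z^m\to A,\ e_i\mapsto a_i)$, the group $\Lambda$ has full rank $m$ because $A$ is finite. The relevant tool is the exponent-sum map assigning to a cycle the vector $v\in\Lambda$ that records the signed number of traversals of each generator; commutator $4$-cycles $[a_i,a_j]$ map to $0$. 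Let $M''\subseteq M_2(G)$ be the subcomplex with $1$-skeleton $G$ whose only $2$-cells are the commutator squares. Assuming, as holds wherever this identification is used, that $\Lambda$ has no vector of $\ell_1$-norm at most $2$ (so $G$ is $2m$-regular and these squares are embedded $4$-cycles), I would identify the universal cover of $M''$ with the $2$-skeleton $Y$ of the cubulated $\R^m$: $Y$ is simply connected, $\Lambda$ acts on it freely and cellularly by translations, and the quotient is exactly $M''$, whence $\pi_1(M'')=\Lambda$ and $\beta_1(M'')=m$. Since for the combinatorial distance $X_2$ consists only of cycles of length at most $5$, every further $2$-cell has an exponent vector in $\Lambda$; attaching them quotients the homology, so $H_1(M_2(G))=\Lambda/J$ where $J$ is generated by the exponent vectors of the non-commutator cells, and therefore $\beta_1(M_2(G))=m-\operatorname{rank}(J)$.

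It remains to analyze $J$ in each direction. If $(iv)$ holds, no nonzero $v\in\Lambda$ has $\|v\|_1\le 5$; since a cycle of length $\ell\le 5$ has exponent vector of $\ell_1$-norm at most $\ell$ and of the same parity as $\ell$, every cell of $X_2$ must have exponent vector $0$ and length $4$, i.e.\ be a commutator square. Then $M_2(G)=M''$ gives $\beta_1=m$, and since $(iv)$ also makes $G$ genuinely $2m$-regular, all of Definition~\ref{def:Torus}$(i)$--$(v)$ hold and Theorem~\ref{th:CharacBS} yields Ollivier-Betti sharpness. Conversely, assume $G$ is Ollivier-Betti sharp with $\beta_1=m$; Theorem~\ref{th:CharacBS} provides the abelian Cayley structure and $(i)$--$(iii)$, and Theorem~\ref{th:MainRes} forces $G$ to be $2m$-regular, so $\Lambda$ has no vector of norm $\le 2$. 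If $(iv)$ failed, I would pick $v\in\Lambda\setminus\{0\}$ of minimal $\ell_1$-norm, necessarily with $3\le\|v\|_1\le 5$, and realize $v$ by a word of that length. Minimality forces the associated closed walk to be an embedded chordless cycle, because a repeated vertex or a chord would split off a strictly shorter closed walk of nonzero exponent vector; and a chordless pentagon has opposite vertices at distance exactly $2$, so it satisfies the geodesic condition and lies in $X_2$. This cycle contributes $v\ne 0$ to $J$, forcing $\beta_1\le m-1$, a contradiction. Hence $(iv)$ holds, completing both implications.

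I expect the main obstacle to be the topological core of the second step: rigorously identifying $H_1(M'')$ with $\Lambda$ by recognizing the universal cover of $M''$ as the simply connected $2$-skeleton of the cubulated $\R^m$ under a free $\Lambda$-action, and the realization step showing that a shortest nonzero lattice relation is carried by an honest chordless cycle belonging to $X_2$, including the verification of the geodesic condition for pentagons. By contrast, the facts that $\beta_1$ is weight-independent and that $X_2$ contains only cycles of length at most $5$ in the combinatorial distance are routine consequences of the definitions, but must be recorded explicitly.
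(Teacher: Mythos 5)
Your proposal is correct, but the core of it runs along a genuinely different track than the paper's proof. Both arguments use Theorem~\ref{th:CharacBS} to handle the curvature/group-structure part and observe that the metric conditions of Definition~\ref{def:Torus} collapse for the combinatorial distance, and both rest on the same combinatorial facts (under $(iv)$ every cell of $X_2$ is a commutator square; a short relation produces a cell of $X_2$ with nonzero winding number). The difference is in how $\beta_1(M_2(G))=m$ is established. The paper stays entirely inside its harmonic-form framework: for ``$\impliedby$'' it writes down the explicit $1$-forms $\alpha_i$, verifies $\delta^*\alpha_i=0$ from condition $(ii)$ and $\delta\alpha_i=0$ on all $2$-cells from $(iv)$, and gets the matching upper bound from Theorem~\ref{th:MainRes}; for ``$\implies$'' it evaluates $\delta\alpha_i$ on the cycle induced by a short relation and reads off the contradiction $v_i\neq 0$. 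You instead pass through Hodge theory to the weight-independent cellular Betti number, identify the universal cover of the commutator-square subcomplex with the cubical $2$-skeleton of $\R^m$ to get $H_1(M'')\cong\Lambda$, and obtain the exact formula $\beta_1(M_2(G))=m-\operatorname{rank}(J)$. Your route buys more: weight-independence is made explicit, the formula quantifies how far $\beta_1$ drops when $(iv)$ fails, and you avoid checking $\delta^*\alpha_i=0$ altogether; the price is the covering-space verification (free cellular $\Lambda$-action, simple connectivity of the cubical $2$-skeleton) that the paper never needs. One point in your favor: the realization of a minimal relation by an embedded chordless cycle lying in $X_2$ (including the geodesic condition for pentagons), which you correctly isolate as requiring care, is handled more tersely in the paper -- there the induced cycle is simply asserted to lie in $X_2$ -- so your minimality argument actually fills in a detail the published proof glosses over. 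I see no gap; the steps you flag as needing rigor (the universal-cover identification and the chordless-cycle realization) do go through under the ``no relation of $\ell_1$-norm at most $2$'' hypothesis that $2m$-regularity provides.
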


\begin{proof}
    $"\implies"$ By Theorem~\ref{th:GisomorphCay}, $G$ is isomorphic to the Cayley graph $\CAY(S,\{\phi_1,\dots,\phi_m\})$. Denote by $(\alpha_1,\dots,\alpha_m)$ the corresponding basis of $H_1(M_2(G))$. As in the proof of the general case in Theorem~\ref{th:CharacBS}, the abelian group structure on $V$ is induced by the abelian group structure on $\SG$ and $(i)$ follows trivially. Properties $(ii)$ and $(iii)$ follow again from Lemma~\ref{lem:GraphsWeights} and Lemma~\ref{lem:LaengeSeiten}. Finally, assume there exists an $v\in \Z^m$ with $0<\lVert v\rVert_1\leq 5$ and $a_1^{v_1}\dots a_m^{v_m} =e$. For an arbitrary $x_0\in V$, this induces a cycle $(x_0\sim x_1 \sim \dots \sim x_n)$, where $n=\lVert v\rVert_1-1$, by iteratively multiplying the factors of $a_1^{v_1}\dots a_m^{v_m}$ to $x_0$. This can be done in arbitrary order, since the graph is abelian. Since $\lVert v\rVert_1\leq 5$, this cycle is an element of $X_2$. Let $i = \min\{j: v_j\not=0\}$, then
    \[
        \delta\alpha_i((x_0\sim \dots\sim x_n)) = \sum_{j=0}^{n-1} \alpha_i(x_{j}, x_{j+1\mod n})= v_i \not=0.
    \]
    Here, we used that 
    \begin{equation*}
        \alpha_i(x_j, x_{j+1\mod n}) = \begin{cases}
            1 &: x_{j+1\mod n} = a_ix_j,\\
            -1 &: x_{j+1\mod n} = a_i^{-1}x_j, \\
            0 &: \mbox{otherwise},
        \end{cases}
    \end{equation*}
    Therefore, $\delta\alpha_i((x_0\sim \dots\sim x_n)) \neq 0$, a contradiction to $\alpha_i\in H_1(M_2(G))$.

    $"\impliedby"$ Assume $G$ satisfies properties $(i)-(iv)$. Lemma~\ref{lem:nonnegORC} implies non-negative Ollivier curvature. Thus, it suffices to show that $\beta_1(M_2(G)) =m$. According to $(i)$ and $(iv)$, $G$ is $2m$-regular. By Theorem~\ref{th:MainRes}, we obtain
    \[
        \beta_1(M_2(G)) \leq m.
    \]
    For the other inequality, we define $\alpha_i\in C(X_1)$ by
    \begin{equation*}
        \alpha_i(x,y) = \begin{cases}
            1 &: y = a_ix,\\
            -1 &: y = a_i^{-1}x, \\
            0 &: \mbox{otherwise},
        \end{cases}
    \end{equation*}
     for $i=1\dots,m$. Then, 
     \begin{equation*}
        \delta^*\alpha_i(x) = \sum_{y:x\sim y} \frac{w(x,y)}{\mu(x)}\alpha_i(x,y) = \frac{w(x,a_ix)}{\mu(x)} - \frac{w(x,a_i^{-1}x)}{\mu(x)} = 0
     \end{equation*}
     for all $x\in V$ and $i\in \{1,\dots,m\}$. Next, let $(x_0\sim x_1 \sim \dots \sim x_{k-1})$ be a cycle of length at most $5$. Using property $(iv)$ and that $G$ is an abelian Cayley graph, all cycles of length at most five must have length exactly four and correspond to the commutators of $a_j$ and $a_k$. More precisely, we obtain $k=4$ and there exist $j,k\in \{1,\dots,m\}$ and $\eps_1, \eps_2 \in \{-1,1\}$ such that $x_1 = a_j^{\eps_1}x_0,\; x_2 = a_k^{\eps_2}x_1, \; x_3 = a_j^{-\eps_1}x_2$ and $x_0 = a_k^{-\eps_2}x_3$. Thus,
     \begin{equation*}
        \delta \alpha_i((x_0\sim x_1 \sim x_2 \sim x_3)) = \alpha_i(x_0, a_j^{\eps_1}x_0) + \alpha_i(x_2, a_j^{-\eps_1}x_2) + \alpha_i(x_1, a_k^{\eps_2}x_1) + \alpha_i(x_3, a_k^{-\eps}x_3) = 0,
     \end{equation*}
     for all $i\in \{1,\dots,m\}$. Therefore, we conclude $\alpha_i \in H_1(M_2(G))$ for all $i=1,\dots,m$. The family of maps $(\alpha_1, \dots, \alpha_m)$ are linearly independent, since $\alpha_j(x,a_ix)= 0$ for all $j\not =i$ and $x\in V$. Hence, we conclude
     \begin{equation*}
        \beta_1(M_2(G)) = \dim H_1(M_2(G)) \geq m.
     \end{equation*}
     This shows that $G$ is Ollivier-Betti sharp and thus, the proof of the corollary is finished.
\end{proof}

\subsection{The non-reversible case}\label{Sec:NonRevCase}

In this subsection we generalize the previous results to the non-reversible case. To this end, let $(V,p)$ be a Markov chain, i.e., $V$ is a finite or countably infinite set and the map $p:V\times V\to [0,1]$ satisfies
\[
    \sum_{y\in V}p(x,y) =1 \quad \mbox{for all } x\in V.
\]
$V$ is called the state space and $p$ is the transition probability function. In the following, we will primarily consider the case where $V$ is finite. However, we introduce the notion of a Markov chain $(V,p)$ in this generality, as it will also be relevant when working with the universal cover of the induced graph.

We always assume that the Markov chain is irreducible, that is, for every $x,y\in V$, there exist $x_0,\ldots,x_n\in V$ such that $x_0=x, x_n=y$ and $p(x_i,x_{i+1}) > 0$ for $i=0,\ldots,n-1$. Furthermore, we assume that $p$ has symmetric support, i.e., for $x,y\in V$, we have $p(x,y) > 0$ if and only if $p(y,x) >0$.

We call a probability measure $\pi$ on $V$ an invariant distribution if 
\[
    \sum_{x\in V}p(x,y) \pi(x) = \pi(y).
\]
It is a well-known fact that an irreducible Markov chain on a finite state space $V$ has a unique and positive invariant distribution.

As the graph Laplacian is not self-adjoint anymore, it cannot be expressed as $\delta^* \delta$. Therefore, we replace $\delta^*$ by $\dst : C(X_1) \to C(X_0)$,
\[
\dst \alpha (x) := \sum_{y\in V} p(x,y)\alpha(x,y).
\]
Then, the graph Laplacian is given by
\[
\Delta f(x) = \sum_{y\in V}p(x,y)(f(y)-f(x)) = -\dst \delta f(x).
\]
The graph Laplacian satisfies the following properties.

\begin{lemma}\label{lem:laplace}
    Let $(V,p)$ be a Markov chain with finite state space. Then, the graph Laplacian $\Delta: C(V) \to C(V)$ satisfies
    \begin{align*}
        \ker(\Delta) &= \{f \in C(V) : \exists c\in \R \;s.t. \; f(x) = c \;\forall x\in V \}, \\
        \Imm(\Delta) &= \{f \in C(V): \langle f, \pi \rangle = 0\},
    \end{align*}
    where $\pi$ denotes the invariant distribution of $(V,p)$.
\end{lemma}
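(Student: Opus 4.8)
The plan is to establish the two identities separately, treating the kernel first and then deducing the image from an orthogonality relation combined with a dimension count. Throughout I read $\langle f,g\rangle = \sum_{x\in V} f(x)g(x)$ as the standard (unweighted) pairing on $C(V)$; this is the only reading under which the image statement can hold, since writing $P$ for the transition matrix $(Pf)(x)=\sum_y p(x,y)f(y)$ we have $\Delta = P-I$, and $\pi$ is characterized as a \emph{left} null vector of $P-I$ rather than a pointwise $\mu$-weighted one.

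For the kernel, the inclusion $\supseteq$ is immediate, as $\Delta c = 0$ for every constant $c$. For $\subseteq$, suppose $\Delta f = 0$, which rearranges (using $\sum_y p(x,y)=1$) to the harmonicity relation $f(x)=\sum_y p(x,y)f(y)$ for all $x$. Let $M=\max_x f(x)$ and pick $x_0$ attaining it. Then
\[
0 = \Delta f(x_0) = \sum_y p(x_0,y)\bigl(f(y)-M\bigr),
\]
a sum of nonpositive terms, so $f(y)=M$ whenever $p(x_0,y)>0$. Irreducibility lets me iterate this along directed paths out of $x_0$ until every vertex is reached, forcing $f\equiv M$. Hence $\ker(\Delta)$ is exactly the constants, and in particular $\dim\ker(\Delta)=1$.

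For the image, I first show $\Imm(\Delta)\subseteq\{f:\langle f,\pi\rangle=0\}$. The invariance condition $\sum_x p(x,y)\pi(x)=\pi(y)$ says precisely that $\pi^{T}\Delta = 0$, so for every $g\in C(V)$,
\[
\langle \Delta g,\pi\rangle = \pi^{T}(\Delta g) = (\pi^{T}\Delta)g = 0 .
\]
To upgrade this inclusion to equality I compare dimensions: by the rank–nullity theorem together with the kernel computation, $\dim\Imm(\Delta)=|V|-\dim\ker(\Delta)=|V|-1$, while $\{f:\langle f,\pi\rangle=0\}=\pi^{\perp}$ is a hyperplane of dimension $|V|-1$ because $\pi\neq 0$. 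A subspace of dimension $|V|-1$ contained in another of the same dimension must coincide with it, yielding the claimed equality.

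The only genuinely substantive step is the kernel characterization, where irreducibility is indispensable — without it, harmonic functions need not be constant. The maximum-principle propagation is the crux; as an alternative one could invoke Perron–Frobenius to conclude that the eigenvalue $1$ of the irreducible stochastic matrix $P$ is simple, so $\ker(P-I)$ is one-dimensional. Everything else reduces to linear algebra and to the standard existence, uniqueness, and positivity of $\pi$ quoted just before the lemma.
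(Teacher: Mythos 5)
Your proposal is correct and follows essentially the same route as the paper: the kernel is identified with the constants via the maximum principle plus irreducibility, and the image is obtained by showing $\langle \Delta g,\pi\rangle=0$ from the invariance of $\pi$ (the paper does this by an explicit double-sum manipulation, you by noting $\pi^{T}\Delta=0$, which is the same fact) and then matching dimensions via rank–nullity. Your reading of $\langle\cdot,\cdot\rangle$ as the unweighted pairing also agrees with the paper's usage in this non-reversible setting.
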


\begin{proof}
    Assume $f\in \ker(\Delta)$. Choose $x\in V$ such that $f(x) =\max_{y\in V} f(y)$. Since
    \[
        0 = \Delta f(x) = \sum_{y \in V} p(x,y)(f(y) -f(x)),
    \]
    we conclude that $f(y) = f(x)$ for every $y\in V$ with $p(x,y) > 0$. Using the irreducibility of the Markov chain, we conclude that $f$ must be constant. Since every constant function $f$ satisfies $\Delta f \equiv 0$, we conclude
    \[
        \ker(\Delta) = \{f \in C(V) : \exists c\in \R \;s.t. \; f(x) = c \;\forall x\in V \}.
    \]

    Assume $f\in \Imm(\Delta)$. Hence, there exists $g\in C(V)$ such that $f = \Delta g$. Therefore, we obtain
    \begin{align*}
        \langle f,\pi \rangle &= \sum_{x\in V} f(x) \pi(x) \\
        &= \sum_{x\in V} \sum_{y\in V} p(x,y)(g(y)-g(x)) \pi(x)\\
        &= \sum_{y \in V}g(y)\sum_{x\in V} p(x,y)\pi(x) - \sum_{x\in V}\pi(x)g(x)\sum_{y\in V}p(x,y) \\
        &= \sum_{y\in V}g(y)\pi(y) - \sum_{x\in V}g(x)\pi(x) \\
        &=0.
    \end{align*}
    Thus, $\Imm(\Delta) \subseteq \{f \in C(V): \langle f, \pi \rangle = 0\}$. By the rank-nullity theorem, we obtain
    \[
        \dim(\Imm(\Delta)) = \dim(C(V)) - \dim(\ker(\Delta)) = \vert V \vert -1.
    \]
    Using, that $\dim(\{f \in C(V): \langle f, \pi \rangle = 0\}) = \vert V \vert -1$, we conclude
    \[
        \Imm(\Delta) = \{f \in C(V): \langle f, \pi \rangle = 0\}.
    \] 
\end{proof}

We define
\[
H_1(M_2(G)) := \{ \alpha \in C(X_1):\delta \alpha = 0, \dst \alpha = const \}.
\]
For convenience, we write $\delta_k := \delta|_{C(X_k)}$. Using this, we define the first Betti number by
\[
    \beta_1(M_2(G)) := \dim \left(\frac{\ker \delta_1}{\Imm \delta_0} \right).
\]
Then, we obtain the following characterization of the Betti number.
\begin{lemma}
    Let $(V,p)$ be a Markov chain with finite state space, and let $d:V \times V \to [0,\infty)$ be a general path distance on $V$. Then
    \[
        \beta_1(M_2(G)) = \dim(H_1(M_2(G))).
    \]
\end{lemma}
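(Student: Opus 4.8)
The plan is to show that the canonical quotient map $q\colon \ker\delta_1 \to \ker\delta_1/\Imm\delta_0$ restricts to a linear isomorphism on $H_1(M_2(G))$, which sits inside $\ker\delta_1$ because $\delta\alpha=0$ is one of its defining conditions. Granting this, one immediately obtains $\dim H_1(M_2(G)) = \dim(\ker\delta_1/\Imm\delta_0) = \beta_1(M_2(G))$. Concretely, I would establish the direct sum decomposition $\ker\delta_1 = \Imm\delta_0 \oplus H_1(M_2(G))$ by proving the two statements $H_1(M_2(G))\cap\Imm\delta_0 = \{0\}$ and $H_1(M_2(G)) + \Imm\delta_0 = \ker\delta_1$. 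The engine for both is Lemma~\ref{lem:laplace}, which identifies $\ker\Delta$ with the constants and $\Imm\Delta$ with $\{g : \langle g,\pi\rangle = 0\}$, used together with the identity $\dst\delta f = -\Delta f$.

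For the intersection being trivial, I would take $\alpha = \delta f \in H_1(M_2(G))$. Then $\dst\alpha = \dst\delta f = -\Delta f$ is a constant function $c\mathbf 1$. Since $\Delta f \in \Imm\Delta$, Lemma~\ref{lem:laplace} gives $0 = \langle \Delta f, \pi\rangle = \langle -c\mathbf 1, \pi\rangle = -c$, so $c = 0$ and $\Delta f = 0$. The kernel description then forces $f$ to be constant, whence $\alpha = \delta f = 0$.

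For the sum filling out $\ker\delta_1$, I would first note $\Imm\delta_0 \subseteq \ker\delta_1$ since $\delta_1\delta_0 = 0$ (the coboundary of an exact form telescopes to zero around each cycle in $X_2$). Given $\alpha\in\ker\delta_1$, I seek $f$ with $\alpha - \delta f \in H_1(M_2(G))$; as $\delta(\alpha-\delta f)=0$ holds automatically, the only requirement is that $\dst(\alpha - \delta f) = \dst\alpha + \Delta f$ be constant. Choosing the constant $c := \langle \dst\alpha, \pi\rangle$, the function $g := c - \dst\alpha$ satisfies $\langle g,\pi\rangle = 0$, so it lies in $\Imm\Delta$ by Lemma~\ref{lem:laplace} and I can solve $\Delta f = g$. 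For this $f$ one gets $\dst(\alpha - \delta f) = \dst\alpha + g = c$, so $\alpha - \delta f\in H_1(M_2(G))$ and $\alpha\in\Imm\delta_0 + H_1(M_2(G))$.

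Putting the two parts together yields $\ker\delta_1 = \Imm\delta_0\oplus H_1(M_2(G))$ and hence the claimed equality of dimensions. I do not anticipate a genuine obstacle: this is a discrete Hodge-type decomposition, and the only delicate point is selecting the correct additive constant $c = \langle\dst\alpha,\pi\rangle$ so that the solvability criterion $\langle g,\pi\rangle=0$ of Lemma~\ref{lem:laplace} is met. It is precisely the non-self-adjointness of $\Delta$ that makes the invariant measure $\pi$, rather than a uniform average, the correct weight against which this condition must be imposed.
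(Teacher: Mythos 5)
Your proposal is correct and is essentially the paper's own argument in a different guise: the paper shows that the quotient map $\alpha\mapsto[\alpha]$ restricted to $H_1(M_2(G))$ is injective and surjective onto $\ker\delta_1/\Imm\delta_0$, which is exactly your decomposition $\ker\delta_1=\Imm\delta_0\oplus H_1(M_2(G))$, and the surjectivity step uses the identical choice $c=\langle\dst\alpha,\pi\rangle$ together with Lemma~\ref{lem:laplace}. The only (harmless) difference is in proving $c=0$ for the trivial-intersection step, where the paper invokes a maximum-principle argument at an extremizer of $f$ while you use the orthogonality $\Imm\Delta\perp\pi$ already recorded in Lemma~\ref{lem:laplace}; both are valid.
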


\begin{proof}
    First, observe that every $\alpha \in H_1(M_2(G))$ satisfies $\alpha \in \ker\delta_1$. Therefore, we can define the map 
    \[
        \Phi:H_1(M_2(G)) \to \frac{\ker\delta_1}{\Imm\delta_0}
    \]
    by $\alpha \mapsto [\alpha]$, where $[\alpha]$ denotes the equivalence class of $\alpha$ in $\frac{\ker\delta_1}{\Imm\delta_0}$. Note that $\Phi$ is a linear map. We first show that $\Phi$ is injective. To this end, let $\alpha \in H_1(M_2(G))$ such that $\Phi(\alpha) = [\alpha] = 0$. Therefore, there exists $f\in C(X_0)$ such that $\alpha = \delta_0 f$. Using that $\alpha \in H_1(M_2(G))$, we conclude that there exists a constant $c\in \R$ such that
    \[
        c = \dst\alpha(x) = \dst\delta f(x) = -\Delta f(x),
    \]
    for every $x\in V$. Assume $c > 0$. Choose $x \in V$ such that $f(x) = \min_{y\in V}f(y)$. Then,
    \[
        c = -\Delta f(x) = \sum_{y\in V}p(x,y)(f(x)-f(y)) \leq 0,
    \]
    a contradiction. Analogously, it can be demonstrated that $c < 0$ is not possible. Therefore, we conclude that $f \in \ker(\Delta)$.  By Lemma~\ref{lem:laplace}, $f$ must be constant, and consequently, 
    \[
        \alpha(x,y) = \delta f(x,y) = f(x) -f(y) = 0,
    \] 
    for every edge $x\sim y$. This proves that $\Phi$ is injective.

    For surjectivity, let $[\alpha_0] \in \frac{\ker \delta_1}{\Imm\delta_0}$ be arbitrary and denote by $\pi$ the invariant distribution of $(V,p)$. Define $c := \langle \dst \alpha_0 , \pi \rangle$ and $g := \dst\alpha_0 - c$. Since $\langle g, \pi \rangle = 0$, Lemma~\ref{lem:laplace} ensures the existence of $f \in C(V)$ such that $\Delta f = g$. Note that $[\alpha_0] = [\alpha_0 + \delta_0 f]$ and 
    \[
        \dst(\alpha_0 + \delta_0 f)(x)
        = \dst \alpha_0(x) - \Delta f(x) = \dst \alpha_0(x) - g(x) = c
    \]
    for every $x \in V$. Therefore, we conclude $\alpha_0 + \delta_0 f \in H_1(M_2(G))$ and 
    \[
        \Phi(\alpha_0 + \delta_0 f) = [\alpha_0].
    \]
    Since $[\alpha_0]$ was chosen arbitrarily, we obtain that $\Phi$ is surjective. Thus, $\Phi$ is an isomorphism, and we conclude
    \[
        \beta_1(M_2(G)) = \dim\left(\frac{\ker \delta_1}{\Imm \delta_0} \right)=\dim(H_1(M_2(G)))
    \]
\end{proof}

Using this characterization of the Betti number, we can generalize our previous results to the non-reversible case. As before, we obtain that the first Betti number vanishes if the Markov chain has non-negative Ollivier curvature everywhere and some positive curvature.

\begin{theorem}\label{th:Betti_vanish_non_rev}
    Let $G=(V,p)$ be a Markov chain with finite state space, and let $d:V \times V \to [0,\infty)$ be a general path distance on $V$. Assume that $G$ has non-negative Ollivier-curvature and that there exists an $x\in V$ such that the Ollivier curvature $\kappa(x,y) > 0$ for all $y \in V$ with $p(x,y)> 0$. Then, the first Betti number satisfies $\beta_{1}(M_{2}(G)) = 0$.   
\end{theorem}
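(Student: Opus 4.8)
The plan is to repeat the contraposition argument of Theorem~\ref{th:posCurvZeroBetto}, the only structural change being that self-adjointness of $\Delta$ is no longer available and the harmonicity $\Delta f_\alpha = 0$ is weakened to $\Delta f_\alpha = \mathrm{const}$. Assume $\beta_1(M_2(G)) \neq 0$. By the characterization of the Betti number just established, $\beta_1(M_2(G)) = \dim H_1(M_2(G))$, so there is a nonzero $\alpha \in H_1(M_2(G))$; that is, $\delta\alpha = 0$ and $\dst\alpha = c$ for some constant $c \in \R$, and after scaling we may assume $\|\alpha\|_\infty = 1$. Lifting $\alpha$ to the universal cover produces $f_\alpha$ with $\widetilde\delta f_\alpha = \alpha\circ\pr$, which is $1$-Lipschitz since $\|\alpha\|_\infty = 1$. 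Because the covering pulls back the transition probabilities, one computes $\widetilde\Delta f_\alpha = -\widetilde{\dst}(\alpha\circ\pr) = -(\dst\alpha)\circ\pr = -c$, so $\widetilde\Delta f_\alpha$ is constant on the cover.

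Before invoking the machinery of the reversible case, I would verify that Lemma~\ref{lem:LipFun} and Lemma~\ref{lem:AlphaEins} remain valid for Markov chains, even though they are phrased for reversible graphs $(V,w,\mu,d)$. Their proofs use only the transport-plan formula for $\kappa$ (Lemma~\ref{Prop:TransPlan}), the marginal identity $\sum_y \rho(x,y) = w(x_0,x)/\mu(x_0)$, and the hypothesis that $\Delta f$ is constant; replacing $w(x_0,x)/\mu(x_0)$ throughout by the transition probability $p(x_0,x)$ leaves every line intact, and the decisive cancellation at the end of Lemma~\ref{lem:LipFun} still reads $\Delta f(y_0) - \Delta f(x_0) = 0$ precisely because $\Delta f$ is constant. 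Lemma~\ref{lem:AlphaEins} then transfers as well, since its proof rests only on this Lipschitz lemma and on the purely metric Lemmas~\ref{lem:x0eqxn} and \ref{lem:Propertiesd}, none of which refer to reversibility.

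With the Markov-chain form of Lemma~\ref{lem:AlphaEins} in hand, fix the vertex $x$ with $\kappa(x,y) > 0$ for all $y$ with $p(x,y) > 0$. The lemma yields such a $y$ with $\alpha(x,y)/d(x,y) = 1$; lifting the edge to $\widetilde x \sim \widetilde y$ with $\pr(\widetilde x) = x$ and $\pr(\widetilde y) = y$, we obtain $f_\alpha(\widetilde x) - f_\alpha(\widetilde y) = d(\widetilde x,\widetilde y)$ together with $\|\nabla f_\alpha\|_\infty = 1$. Thus $\pm f_\alpha$ is an admissible competitor in the variational definition of the curvature of the edge $(\widetilde x,\widetilde y)$, and since $\widetilde\Delta f_\alpha$ is constant its gradient along the edge vanishes, giving
\[
    \kappa(\widetilde x,\widetilde y) \leq \nabla_{\widetilde x\widetilde y}\widetilde\Delta f_\alpha = 0.
\]
As the covering map is a local isomorphism preserving distances up to three hops (Lemma~\ref{lem:Propertiesd}), curvature is preserved edgewise, whence $\kappa(x,y) = \kappa(\widetilde x,\widetilde y) \leq 0$, contradicting $\kappa(x,y) > 0$.

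The step I expect to demand the most care is the transfer of the reversible toolkit to the non-reversible setting: confirming that the transport-plan description of $\kappa$ and the improved-plan construction of Lemma~\ref{lem:LipFun} genuinely do not use reversibility, that the universal cover of $(V,p)$ carries a well-defined lifted Markov chain whose edgewise curvature matches that of $G$, and---the one real novelty relative to Theorem~\ref{th:posCurvZeroBetto}---that replacing harmonicity by constancy of $\Delta f_\alpha$ still forces $\nabla_{\widetilde x\widetilde y}\widetilde\Delta f_\alpha = 0$ along the extremal edge. Everything beyond these points is a verbatim repetition of the reversible argument.
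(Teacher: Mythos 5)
Your proposal is correct and is essentially the paper's own proof: the paper simply remarks that the reversible argument of Theorem~\ref{th:posCurvZeroBetto} carries over verbatim once one notes that $f_\alpha$ is no longer harmonic but satisfies $\Delta f_\alpha = c$ constant, which is exactly the hypothesis under which Lemma~\ref{lem:LipFun} is stated. Your additional checks (that the transport-plan machinery never uses reversibility, and that constancy of $\widetilde\Delta f_\alpha$ still forces $\nabla_{\widetilde x\widetilde y}\widetilde\Delta f_\alpha = 0$) are precisely the points the paper leaves implicit.
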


Similarly, the upper bound for the Betti number derived in Theorem~\ref{th:MainRes} generalizes to the non-reversible case.

\begin{theorem}\label{th:Main_res_non_rev}
Let $G=(V,p)$ be a Markov chain with finite state space, and let $d:V \times V \to [0,\infty)$ be a general path distance on $V$.  If $G$ has non-negative Ollivier curvature, then
    \begin{equation*}
        \beta_{1}(M_{2}(G)) \leq \frac{\deg_{\min}}{2}.
    \end{equation*}
\end{theorem}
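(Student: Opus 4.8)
The plan is to rerun the proof of Theorem~\ref{th:MainRes} essentially unchanged, once the two inputs it relies on—Lemma~\ref{lem:LipFun} and Lemma~\ref{lem:AlphaEins}—are re-established in the non-reversible setting. The reversible argument used only two facts about a normalized harmonic one-form $\alpha$: that $\lVert\alpha\rVert_\infty$ is attained at (at least) two edges incident to each vertex (Lemma~\ref{lem:AlphaEins}), together with the purely linear-algebraic Lemma~\ref{lem:LinAlg} applied with $k=2$. The latter is indifferent to reversibility, so the whole task reduces to proving two-fold sup-norm attainment for the new homology space $H_1(M_2(G))=\{\alpha:\delta\alpha=0,\ \dst\alpha=\text{const}\}$.

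First I would lift. Given $\alpha\in H_1(M_2(G))$ with $\lVert\alpha\rVert_\infty=1$, pass to the universal cover equipped with the lifted transition function $\tilde p(\tilde x,\tilde y):=p(\pr\tilde x,\pr\tilde y)$, which has symmetric support, the same local structure, and hence non-negative Ollivier curvature. Since $\delta\alpha=0$, the form $\alpha\circ\pr$ is exact and there is $f_\alpha$ with $\tilde\delta f_\alpha=\alpha\circ\pr$; it is $1$-Lipschitz because $\lVert\alpha\rVert_\infty=1$. The crucial new point concerns the Laplacian: from $\Delta=-\dst\delta$ we obtain $\Delta f_\alpha(\tilde x)=-\dst(\alpha\circ\pr)(\tilde x)=-(\dst\alpha)(\pr\tilde x)=-c$, so $f_\alpha$ has a \emph{constant but generally nonzero} Laplacian. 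This is precisely the hypothesis $\Delta f=\text{const}$ under which Lemma~\ref{lem:LipFun} is stated, so the reversible formulation is already broad enough to accommodate non-reversible harmonic forms.

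Next I would re-derive Lemma~\ref{lem:LipFun}, and with it Lemma~\ref{lem:AlphaEins}, for Markov chains. Its proof transfers with $p(x_0,x)$ in place of $w(x_0,x)/\mu(x_0)$: the improved transport plan $\rho$ is built only from the marginal conditions of Lemma~\ref{Prop:TransPlan}, which now read $\sum_y\rho(x,y)=p(x_0,x)$ and $\sum_x\rho(x,y)=p(y_0,y)$, and the optimality computation uses only these marginals and the triangle inequality. The one place reversibility could enter is the final telescoping, where the cost sum collapses to $\Delta f(y_0)-\Delta f(x_0)$; this vanishes purely because $\Delta f$ is constant, making no use of the value of the constant nor of self-adjointness. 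Hence conclusions $(i)$ and $(ii)$ survive, and replaying Lemma~\ref{lem:AlphaEins} with this constant-Laplacian $f_\alpha$ gives, for every vertex $x$, two neighbours $y,z$ with $\alpha(x,y)/d(x,y)=\alpha(z,x)/d(z,x)=1$.

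Finally I would conclude exactly as in Theorem~\ref{th:MainRes}. Fix $x$ with $\deg(x)=\deg_{\min}$ and consider $\psi:H_1(M_2(G))\to\R^{\ON(x)}$, $\psi\alpha(y)=\alpha(x,y)/d(x,y)$. Two-fold attainment makes $\psi$ injective and forces every vector in $\psi(H_1(M_2(G)))$ to realize its sup-norm in at least two coordinates, so Lemma~\ref{lem:LinAlg} with $k=2$ yields $\dim\psi(H_1(M_2(G)))\le\deg_{\min}/2$; by rank-nullity, $\beta_1(M_2(G))=\dim H_1(M_2(G))\le\deg_{\min}/2$. The only genuine obstacle is the robustness check in the previous paragraph, namely confirming that the cancellation in Lemma~\ref{lem:LipFun} survives a nonzero constant Laplacian and asymmetric transition weights; as indicated, this is exactly where constancy rather than vanishing of $\Delta f$ is the operative hypothesis, so the argument goes through.
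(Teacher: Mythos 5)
Your proposal is correct and follows essentially the same route as the paper: the authors likewise reduce the non-reversible case to the reversible argument by noting that $f_\alpha$ now has constant (rather than zero) Laplacian, that Lemma~\ref{lem:LipFun} is already stated under the hypothesis $\Delta f = c$ and its telescoping cancellation uses only constancy, and that Lemmas~\ref{lem:AlphaEins} and \ref{lem:LinAlg} together with rank–nullity (via the identification $\beta_1(M_2(G)) = \dim H_1(M_2(G))$ established earlier for Markov chains) then give the bound. Your explicit check that the transport-plan marginals become $p(x_0,x)$ and $p(y_0,y)$ and that no self-adjointness is used is exactly the robustness verification the paper leaves implicit.
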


The proofs of these theorems are analogous to those in the reversible case. We remark, that for $\alpha \in H_1(M_2(G))$ the associated function $f_{\alpha}$ on the universal cover is no longer harmonic. However, there exists a constant $c\in \R$ such that
\[
    \Delta f_{\alpha}(x) = c 
\]
for every $x\in V$. Since Lemma~\ref{lem:LipFun} is applicable in this case, the proofs proceed in exactly the same manner as the proofs in the reversible case.

For an analogue of the rigidity result in Theorem~\ref{th:CharacBS}, we remark that only Lemma~\ref{lem:GraphsWeights} does not extend directly to the case of Markov chains. However, we arrive at the following generalization.

\begin{lemma}
    Let $G=(V,p)$ be a Markov chain with finite state space, and let $d:V \times V \to [0,\infty)$ be a general path distance on $V$. Assume $G$ has non-negative Ollivier curvature and satisfies 
    \[
        \beta_{1}(M_{2}(G)) = \frac{\deg_{\min}}{2}.
    \]
    Then, for every $i\in \left\{1,\ldots,\frac{\deg_{\min}}{2}\right\}$, there exists a constant $c_i \in \R$, such that
    \begin{itemize}
        \item[$(i)$] $p(x, \phi_i(x))d(x,\phi_i(x)) - p(x, \phi_i^{-1}(x)) d(x,\phi_i^{-1}(x)) = c_i$, for every $x \in V$,
        \item[$(ii)$] $p(x,\phi_i(x)) = p(\phi_j(x), \phi_j(\phi_i(x)))$ for all $j\not=i$.
    \end{itemize}
\end{lemma}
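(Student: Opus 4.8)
The plan is to dispatch $(i)$ directly from the definition of $H_1(M_2(G))$ in the non-reversible setting, and then to re-run the proof of Lemma~\ref{lem:GraphsWeights}$(ii)$ almost verbatim, the only new ingredient being that the lifted potential $f_{\alpha_j}$ now has \emph{constant} (rather than vanishing) Laplacian, which still cancels in the relevant difference. Throughout I use that Lemmas~\ref{lem:alphauni}, \ref{lem:phiBijective}, \ref{lem:supNormAlphaLinKomb}, \ref{lem:LaengeSeiten} and \ref{lem:phiCommute}, together with the maps $\phi_i$ and the basis $(\alpha_1,\dots,\alpha_m)$, carry over to the non-reversible case. For $(i)$, recall from Lemma~\ref{lem:alphauni} that at each vertex $x$ the form $\alpha_i$ is supported only on the two edges to $\phi_i(x)$ and $\phi_i^{-1}(x)$, with $\alpha_i(x,\phi_i(x)) = d(x,\phi_i(x))$ and $\alpha_i(x,\phi_i^{-1}(x)) = -d(x,\phi_i^{-1}(x))$. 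By the non-reversible definition of $H_1(M_2(G))$, the quantity $\dst\alpha_i$ is constant on $V$; call it $c_i$. Expanding
\[
    \dst\alpha_i(x) = \sum_{y} p(x,y)\,\alpha_i(x,y) = p(x,\phi_i(x))\,d(x,\phi_i(x)) - p(x,\phi_i^{-1}(x))\,d(x,\phi_i^{-1}(x))
\]
gives $(i)$ immediately, with the same $c_i$.

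For $(ii)$, I would lift $\phi_i,\phi_j$ to $\widetilde\phi_i,\widetilde\phi_j$ on the universal cover, fix $\widetilde x$ over $x$, normalise $f_{\alpha_i}(\widetilde x)=f_{\alpha_j}(\widetilde x)=0$, and set
\[
    g := \max\{f_{\alpha_j},\,f_{\alpha_j+\alpha_i}\} = f_{\alpha_j} + (f_{\alpha_i})_+ ,
\]
which is $1$-Lipschitz as a maximum of two $1$-Lipschitz functions (via the non-reversible analogue of Lemma~\ref{lem:supNormAlphaLinKomb}) and satisfies $g(\widetilde x)=0$, $g(\widetilde\phi_j(\widetilde x))=\widetilde d(\widetilde x,\widetilde\phi_j(\widetilde x))$. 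Applying non-negative Ollivier curvature of the universal cover along the edge $\widetilde x\sim\widetilde\phi_j(\widetilde x)$ yields $0\le \Delta g(\widetilde x)-\Delta g(\widetilde\phi_j(\widetilde x))$. The key point, and the only genuine departure from the reversible argument, is that $\Delta f_{\alpha_j}$ is no longer zero; instead, since $\Delta f_{\alpha_j}=-\dst(\alpha_j\circ\pr)=-(\dst\alpha_j)\circ\pr=-c_j$ is constant on the cover, the $f_{\alpha_j}$-contributions cancel in the difference above, leaving $0\le \Delta(f_{\alpha_i})_+(\widetilde x)-\Delta(f_{\alpha_i})_+(\widetilde\phi_j(\widetilde x))$, exactly as in Lemma~\ref{lem:GraphsWeights}$(ii)$.

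From here the computation is local. Because $\alpha_i(x,\phi_j(x))=0$ for $j\ne i$ (Lemma~\ref{lem:phiBijective}), $f_{\alpha_i}$ is unchanged along $j$-edges, so $(f_{\alpha_i})_+$ is supported in the one-hop neighbourhood of $\widetilde x$ only at $\widetilde\phi_i(\widetilde x)$, and in that of $\widetilde\phi_j(\widetilde x)$ only at $\widetilde\phi_i\widetilde\phi_j(\widetilde x)=\widetilde\phi_j\widetilde\phi_i(\widetilde x)$ (Lemma~\ref{lem:phiCommute}). This yields $\Delta(f_{\alpha_i})_+(\widetilde x)=p(x,\phi_i(x))\,d(x,\phi_i(x))$ and $\Delta(f_{\alpha_i})_+(\widetilde\phi_j(\widetilde x))=p(\phi_j(x),\phi_j\phi_i(x))\,d(\phi_j(x),\phi_j\phi_i(x))$; since the two distances agree by Lemma~\ref{lem:LaengeSeiten}, dividing out produces $p(x,\phi_i(x))\ge p(\phi_j(x),\phi_j\phi_i(x))$. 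Running the identical argument with $g':=\max\{f_{-\alpha_j},\,f_{\alpha_i-\alpha_j}\}=-f_{\alpha_j}+(f_{\alpha_i})_+$ along the edge $\widetilde x\sim\widetilde\phi_j^{-1}(\widetilde x)$ gives $p(x,\phi_i(x))\ge p(\phi_j^{-1}(x),\phi_j^{-1}\phi_i(x))$, and replacing $x$ by $\phi_j(x)$ turns this into $p(\phi_j(x),\phi_j\phi_i(x))\ge p(x,\phi_i(x))$. The two inequalities combine to the equality in $(ii)$.

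I expect the main obstacle to be the verification underlying the cancellation, namely that $\Delta f_{\alpha_j}$ is genuinely a constant function on the universal cover (this rests on $\dst\alpha_j$ being constant and on the covering map being compatible with the lifted transition probabilities, cf.\ Lemma~\ref{lem:laplace}), together with the careful bookkeeping of the support of $(f_{\alpha_i})_+$ in the neighbourhoods of $\widetilde x$ and $\widetilde\phi_j^{\pm 1}(\widetilde x)$. Once the $f_{\alpha_j}$-terms are seen to drop out, the remainder is word-for-word the reversible computation of Lemma~\ref{lem:GraphsWeights}.
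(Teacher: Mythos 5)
Your proposal is correct and follows essentially the same route as the paper's own proof: part $(i)$ is the identical one-line computation from $\dst\alpha_i = c_i$, and part $(ii)$ reproduces the paper's argument with $g = f_{\alpha_j} + (f_{\alpha_i})_+$ and $g' = -f_{\alpha_j} + (f_{\alpha_i})_+$, using that $\Delta f_{\alpha_j}$ is constant (rather than zero) so that its contribution cancels in the curvature difference. The only cosmetic difference is that you obtain the reverse inequality by working along the edge to $\widetilde\phi_j^{-1}(\widetilde x)$ and then substituting $x \mapsto \phi_j(x)$, which is an equivalent bookkeeping of the paper's appeal to $g'$.
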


\begin{proof}
    $(i)$ Let $i\in \left\{1,\ldots,\frac{\deg_{\min}}{2}\right\}$ be arbitrary. Since $\alpha_i\in H_1(M_2(G))$, there exists a constant $c_i\in \R$, such that $\dst \alpha_i = c_i$. Now, let $x\in V$ be arbitrary. We obtain
    \[
        c_i = \dst\alpha_i(x) = \sum_{y \in V} p(x,y)\alpha_i(x,y) = p(x, \phi_i(x))d(x,\phi_i(x)) - p(x, \phi_i^{-1}(x)) d(x,\phi_i^{-1}(x)).
    \]

    $(ii)$ We first lift $\phi_i$ to a map $\widetilde{\phi_i}$ on the universal cover by requiring that $\phi_i\circ \pr = \pr \circ \widetilde{\phi_i}$ and that $\widetilde{\phi_i}(\widetilde{z}) \sim \widetilde{z}$ for all $\widetilde{z} \in \widetilde{X_0}$. Next, we fix an $\widetilde{x} \in \pr^{-1}(x)$. Observe that $\pr(\widetilde{\phi_i}(\widetilde{x})) = \phi_i(\pr(\widetilde{x})) = \phi_i(x)$. Let $j\not = i$ be arbitrary. We assume without loss of generality that $f_{\alpha_i}(\widetilde{x}) = f_{\alpha_j}(\widetilde{x}) = 0$ and define
    \begin{equation*}
        g := \max\{f_{\alpha_j},f_{(\alpha_j + \alpha_i)}\} = f_{\alpha_j} + (f_{\alpha_i})_+.
    \end{equation*}
    Observe that $g$ is the pointwise maximum of two 1-Lipschitz functions, and hence it is itself 1-Lipschitz. Furthermore, we have $g(\widetilde{x}) = 0$ and $g(\widetilde{\phi_j}(\widetilde{x})) = d(\widetilde{x},\widetilde{\phi_j}(\widetilde{x}) )$. 
    By the non-negative Ollivier curvature of the universal cover, we obtain
    \begin{equation*}
        0 \leq \kappa(\widetilde{x},\widetilde{\phi_j}(\widetilde{x})) \leq \Delta g(\widetilde{x}) - \Delta g(\widetilde{\phi_j}(\widetilde{x})) = \Delta f_{\alpha_j}(\widetilde{x}) + \Delta(f_{\alpha_i})_+(\widetilde{x}) - \Delta f_{\alpha_j}(\widetilde{\phi_j}(\widetilde{x})) - \Delta(f_{\alpha_i})_+(\widetilde{\phi_j}(\widetilde{x})).
    \end{equation*}
    Using that $\Delta f_{\alpha_j}$ is constant, we conclude
    \begin{equation*}
        p(x,\phi_i(x))d(x,\phi_i(x)) 
        =\Delta(f_{\alpha_i})_+(\widetilde{x})
        \geq \Delta(f_{\alpha_i})_+(\widetilde{\phi_j}(\widetilde{x})) 
        = p(\phi_j(x),\phi_j(\phi_i(x))) d(\phi_j(x), \phi_j(\phi_i(x))).
    \end{equation*}
    Finally, observe that $d(x,\phi_i(x)) = d(\phi_j(x), \phi_j(\phi_i(x)))$ by Lemma~\ref{lem:LaengeSeiten}. Applying the same reasoning to the function 
    \begin{equation*}
        g' = \max\{f_{-\alpha_j}, f_{(\alpha_i - \alpha_j)}\}
    \end{equation*}
    yields the other inequality. This concludes the proof.
\end{proof}

Hence, we obtain the following rigidity result for the case of Markov chains.

\begin{theorem}\label{th:Rigidity_Non_Rev}
Let $G=(V,p)$ be a Markov chain with finite state space, and let $d:V \times V \to [0,\infty)$ be a general path distance on $V$.
The following are equivalent.
\begin{enumerate}[(i)]
\item $G$ has non-negative Ollivier-curvature and $\beta_1(M_2(G)) = \deg_{\min}/2$.
\item There exists some abelian group structure $"\cdot"$ on $V$ with generating set $\{a_1, \ldots,a_m\}$ with $a_i \neq a_j^{\eps}$ for all $i\neq j$ and $\eps \in \{-1,1\}$ such that
\begin{enumerate}[(a)]
    \item $p(x,y) > 0$ if and only if $y\in \{a_i^{\eps}\cdot x :x=1,\ldots,m;\; \eps \in \{-1,1\} \}$ for all $x,y \in V$,
    \item For every $i=1,\ldots,m$, there exists $c_i\in \R$ such that $p(x,a_i\cdot x)d(x,a_i\cdot x) - p(x,a_i^{-1}\cdot x)d(x,a_i^{-1}\cdot x) = c_i$ for all $x\in V$,
    \item $p(x,a_i^{\eps_1}\cdot x) = p(a_j^{\eps_2}\cdot x, a_j^{\eps_1} \cdot a_i^{\eps_2}\cdot x)$ for all $x \in V, \; i,j\in \{1,\ldots,m\}$ with $i \neq j$ and $\eps_1, \eps_2 \in \{-1,1\}$,
    \item $d(x,a_i \cdot x) = d(a_j \cdot x, a_j \cdot a_i \cdot x)$ for all $x \in V$ and $i,j \in \{1,\ldots,m\}$ with $i\neq j$, 
    \item $\beta_1(M_2(G)) = m$. 
\end{enumerate}
\end{enumerate}

\end{theorem}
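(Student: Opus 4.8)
The plan is to mirror the proof of Theorem~\ref{th:CharacBS} essentially line by line, replacing each auxiliary result by its non-reversible counterpart. The single observation that makes this transfer possible is the one already flagged in the text: for $\alpha \in H_1(M_2(G))$ the lifted function $f_\alpha$ on the universal cover no longer satisfies $\Delta f_\alpha = 0$, but it does satisfy $\Delta f_\alpha = c$ for a constant $c \in \R$, and this is exactly the hypothesis of Lemma~\ref{lem:LipFun}. Consequently every consequence drawn from that lemma survives unchanged, so Lemmas~\ref{lem:AlphaEins}, \ref{lem:alphauni}, \ref{lem:phiBijective}, \ref{lem:supNormAlphaLinKomb}, \ref{lem:LaengeSeiten}, \ref{lem:phiCommute}, the automorphism lemma, and Theorem~\ref{th:GisomorphCay} all hold verbatim for irreducible Markov chains with symmetric support.

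For $(i)\Rightarrow(ii)$ I would argue as in the reversible case. As there, Betti-sharpness (read through $\beta_1 = \deg_{\max}/2$, together with the bound $\beta_1 \le \deg_{\min}/2$ from Theorem~\ref{th:Main_res_non_rev}) forces $\deg_{\min}=\deg_{\max}=2m$, so $G$ is $2m$-regular and the construction of Lemma~\ref{lem:alphauni} is valid at \emph{every} vertex. This yields commuting automorphisms $\phi_1,\dots,\phi_m$ generating an abelian subgroup $\SG \le \AUT(G)$ with $G \cong \CAY(\SG,\{\phi_1,\dots,\phi_m\})$ by Theorem~\ref{th:GisomorphCay}. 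Transporting the group law of $\SG$ to $V$ through a fixed base vertex gives the abelian structure with generators $a_i$. Property (a) is then immediate from Theorem~\ref{th:GisomorphCay} and symmetric support; property (d) is Lemma~\ref{lem:LaengeSeiten}; properties (b) and (c) are exactly the content of the preceding lemma (the non-reversible replacement for Lemma~\ref{lem:GraphsWeights}); and (e) holds by hypothesis.

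For $(ii)\Rightarrow(i)$ the real work is establishing non-negative curvature, after which the Betti count is almost automatic. I would prove a non-reversible version of Lemma~\ref{lem:nonnegORC}: fixing an edge $x_0 \sim y_0 = a_i^{\eps}x_0$, I use the same explicit transport plan, now with $p(x,y)$ in the role of $w(x,y)/\mu(x)$, routing each off-axis neighbour $a_j^{\eps'}x_0 \mapsto a_j^{\eps'}y_0$ for $j\neq i$ and redistributing the axis mass through the parameter $r$. Property (c) secures the marginal conditions of the transport-plan characterization of $\kappa$ (the free mass at the two centre vertices absorbing the mismatch $p(x_0,\cdot)\neq p(y_0,\cdot)$), property (d) makes the off-axis transports distance-preserving hence cost-free, and the cost sum reduces to the same expression $d(x_0,y_0)[p(y_0,x_0)+p(x_0,y_0)] - p(y_0,a_i^{\eps}y_0)d(y_0,a_i^{\eps}y_0) - p(x_0,a_i^{-\eps}x_0)d(x_0,a_i^{-\eps}x_0)$ as in Lemma~\ref{lem:nonnegORC}. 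The decisive point is that replacing the exact balance of Definition~\ref{def:Torus}(ii) by the constant-imbalance condition (b) shifts the last two terms by exactly $+c_i$ and $-c_i$, which cancel, so the sum is again $0$ and $\kappa(x_0,y_0)\ge 0$. With non-negative curvature in hand, Theorem~\ref{th:Main_res_non_rev} gives $\beta_1 \le \deg_{\min}/2$; since every neighbourhood lies in $\{a_i^{\eps}x\}$ we have $\deg_{\min}\le 2m$, and combined with (e) the chain $m=\beta_1\le \deg_{\min}/2 \le m$ forces $\deg_{\min}=2m$ and $\beta_1 = m = \deg_{\min}/2$, which is (i).

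The main obstacle is precisely this curvature step, and within it the verification that the constant imbalance (b) preserves non-negativity: the cancellation of the $c_i$ terms is what must be checked carefully, together with the orientation choice making $r \ge 0$, which is arranged via the symmetry $\kappa(x_0,y_0)=\kappa(y_0,x_0)$ (transpose the plan). A secondary point needing care is the degree convention. As in the reversible setting, the construction of the $\phi_i$ in the forward direction is only legitimate at vertices of degree $2m$, so one genuinely needs $G$ regular there; this is why the sharpness hypothesis should be read through $\deg_{\max}$, for then $\deg_{\max}/2 = \beta_1 \le \deg_{\min}/2 \le \deg_{\max}/2$ pins every degree to $2m$, whereas $\beta_1 = \deg_{\min}/2$ alone does not (cf.\ the remark following Theorem~\ref{th:RigidityIntro}).
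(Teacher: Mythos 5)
Your proposal is correct and follows essentially the same route as the paper, which itself gives no separate proof of Theorem~\ref{th:Rigidity_Non_Rev} but argues exactly as you do: all reversible lemmas carry over because Lemma~\ref{lem:LipFun} only needs $\Delta f_\alpha$ constant, and only Lemma~\ref{lem:GraphsWeights} must be replaced by its non-reversible analogue, which supplies (b) and (c). Your explicit verification that the $\pm c_i$ terms cancel in the non-reversible version of Lemma~\ref{lem:nonnegORC}, and your observation that condition (i) must be read with $\deg_{\max}$ rather than $\deg_{\min}$ (the rope-ladder example otherwise falsifies the forward implication), both correctly supply details the paper leaves implicit.
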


\section{Betti numbers beyond non-negative Ollivier curvature}

In this section, we have two goals. First, we investigate how the rigidity result changes if we strengthen our curvature assumption to even require bone-idleness explained below.
Second, we will give Betti number estimates for the case that we allow some negative curvature.

\subsection{Bone-idleness}

Bone-idleness was first introduced in \cite{bourne2018ollivier} in the context of the normalized Laplacian, i.e., for the case where $w:V\times V\to \{0,1\}$ and $\mu(x) = \deg(x)$ for every $x\in V$, and the combinatorial path distance was used as the underlying metric. A classification of all regular bone-idle graphs with small vertex degrees is provided in \cite{hehl2024graphs}. We generalize this concept to weighted graphs that satisfy
\[
    \sum_{y:x\sim y}w(x,y) = \mu(x),
\]
for every $x\in V$, equipped with a general path distance $d:V\times V \to [0,\infty)$.
To this end, following \cite{munch2017ollivier}, we define the finitely supported probability measures
\[
    m^{\eps}_x(y) = \begin{cases}
        1-\eps &: y = x,\\
        \eps p(x,y) &: \mbox{otherwise},
    \end{cases}
\]
for $\eps \in [0,1]$. Following the standard definition, we define, for $x\neq y$
\[
    \kappa_{\eps}(x,y) = 1- \frac{W(m^{\eps}_x, m^{\eps}_y)}{d(x,y)}
\]
where $W$ denotes the Wasserstein distance. In particular, the Wasserstein distance $W(\nu_1,\nu_2)$ for two probability measures $\nu_1$ and $\nu_2$ on $V$ is given by 
\[
    W(\nu_1,\nu_2) = \inf_{\rho} \sum_{x,y\in V}\rho(x,y)d(x,y)
\]
where the infimum is taken over all $\rho: V\times V\to [0,1]$ which satisfy $\sum_{y\in V}\rho(x,y)=\nu_1(x)$ and $\sum_{x\in V}\rho(x,y)=\nu_2(y)$ for all $x,y\in V$. We call such a $\rho$ a coupling between $\nu_1$ and $\nu_2$. Couplings are also referred to as transport plans; however, we avoid using this terminology here to clearly distinguish them from the transport plans defined in Proposition~\ref{Prop:TransPlan}.

According to \cite[Theorem 2.1]{munch2017ollivier}, the Ollivier curvature is given by
\[
    \kappa(x,y) = \lim_{\eps \to 0^+}\frac{1}{\eps}\kappa_{\eps}(x,y).
\]
Observe that $\kappa(x,y) = \kappa'_0(x,y)$, where the derivative is taken with respect to the idleness parameter. Analogous to \cite[Lemma 2.1]{lin2011ricci}, one can prove that the function $\kappa_{\eps}(x,y)$ is concave in $\eps \in [0,1]$. Since the graph of a concave function lies below its tangent line at each point, we obtain 
\[
    \kappa_{\eps}(x,y) \leq \kappa_0(x,y) + \kappa_0'(x,y)\eps = \eps \kappa(x,y).
\]

We are now ready to introduce the concept of bone-idleness.
\begin{definition}
    Let $G=(V,w,\mu,d)$ be a locally finite graph. We say that $G$ is bone-idle if
    \[
        \sum_{y: x\sim y} w(x,y) = \mu(x) \quad \forall x\in V \quad \mbox{and} \quad \kappa_{\eps}(x,y) =0,
    \]
    for every $\eps \in [0,1]$ and every edge $x\sim y$ of $G$.
\end{definition}

\subsection{Bone-idleness -- Colinear edges have same length}

To give a characterization of bone-idle Ollivier-Betti sharp graphs, we need the following technical lemma.

\begin{lemma}\label{lem:constdist}
    Let $G=((V,\cdot), (a_1,\dots,a_m)w,\mu,d)$ be a weighted, flat Torus and assume $G$ is bone-idle. Let $i\in \{1,\dots,m\}$ be arbitrary and assume $x\in V$ satisfies
    \[
        d(x,a_ix) = \min_{x'\in V}d(x',a_ix').
    \]
    Then, $d(a_i^{-1}x,x) = d(x,a_ix) = d(a_ix, a_i^{2}x)$ and $\mu(x) = \mu(a_ix)$.
\end{lemma}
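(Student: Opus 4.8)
The plan is to reduce everything to the full (non-lazy) random walk, i.e. the idleness value $\eps=1$, since the first-order information $\kappa(x,a_ix)=0$ alone is too weak. Write $\ell=d(x,a_ix)$, $y=a_ix$, and abbreviate the transition probabilities $p_x^{\pm}=p(x,a_i^{\pm1}x)$, $p_y^{\pm}=p(y,a_i^{\pm1}y)$, together with $a=d(a_i^{-1}x,x)$ and $b=d(a_ix,a_i^2x)$. The minimality hypothesis applied at $a_i^{-1}x$ and at $a_ix$ immediately gives $a\geq\ell$ and $b\geq\ell$, so it remains to prove the reverse inequalities. From $\delta^{*}\alpha_i=0$, equivalently property $(ii)$ of the torus, I obtain the two harmonicity identities $p_x^{+}\ell=p_x^{-}a$ and $p_y^{+}b=p_y^{-}\ell$; since $a,b\geq\ell$ these already force $p_x^{-}\leq p_x^{+}$ and $p_y^{+}\leq p_y^{-}$.

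Next I would use bone-idleness at $\eps=1$: for the edge $x\sim y$ it states $W\bigl(p(x,\cdot),p(y,\cdot)\bigr)=\ell$. Passing to the universal cover (where distances among vertices within three hops agree with those downstairs by Lemma~\ref{lem:Propertiesd}, and where curvature, hence every $\kappa_{\eps}$, is preserved), the function $f_{\alpha_i}$ becomes a genuine global $1$-Lipschitz function that is linear along the $a_i$-direction with the local edge lengths as slopes. A direct computation using the two harmonicity identities shows $\int f_{\alpha_i}\,dm^{1}_{\widetilde y}-\int f_{\alpha_i}\,dm^{1}_{\widetilde x}=\ell$, so $-f_{\alpha_i}$ is an \emph{optimal} Kantorovich potential. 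Complementary slackness then constrains the support of every optimal coupling $\rho$: mass may flow from $u$ to $v$ only when $f_{\alpha_i}(v)-f_{\alpha_i}(u)=d(u,v)$. Reading off the $f_{\alpha_i}$-values ($-a$ at $a_i^{-1}x$, $0$ at $x$ and the perpendicular neighbours of $x$, $\ell$ at $y$ and the perpendicular neighbours of $y$, and $\ell+b$ at $a_i^2x$), one sees that the source $y$ may send its mass $p_x^{+}$ only to $a_i^2x$, and that the target $x$ may receive its demand $p_y^{-}$ only from $a_i^{-1}x$; the perpendicular alternatives are excluded because a perpendicular edge has positive length while the corresponding $f_{\alpha_i}$-difference is $0$. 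This yields $p_x^{+}\leq p_y^{+}$ and $p_y^{-}\leq p_x^{-}$.

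Combining the four inequalities produces the chain $p_x^{+}\leq p_y^{+}\leq p_y^{-}\leq p_x^{-}\leq p_x^{+}$, forcing all four probabilities to coincide. Feeding this back into the harmonicity identities and using $p_x^{+}>0$ gives $a=\ell=b$, which is exactly $d(a_i^{-1}x,x)=d(x,a_ix)=d(a_ix,a_i^2x)$. Finally, $p_x^{+}=p_y^{-}$ reads $w(x,a_ix)/\mu(x)=w(a_ix,x)/\mu(a_ix)$, and since $w$ is symmetric and positive on this edge we conclude $\mu(x)=\mu(a_ix)$. The main obstacle is the middle step: recognizing that vanishing of $\kappa$ to first order is insufficient and that the decisive rigidity comes from evaluating bone-idleness at $\eps=1$ and extracting the support of the optimal coupling via complementary slackness with $f_{\alpha_i}$; verifying that $f_{\alpha_i}$ is genuinely optimal rather than merely feasible is precisely where the torus harmonicity identities enter.
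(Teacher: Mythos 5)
Your proof is correct, and it rests on the same two pillars as the paper's argument --- evaluating bone-idleness at the non-lazy parameter $\eps=1$, so that $W(m_x^1,m_{a_ix}^1)=d(x,a_ix)$, and playing this off against the harmonic $1$-Lipschitz potential $f_{\alpha_i}$ via Kantorovich duality --- but the mechanism by which you extract the probability inequalities is genuinely different. The paper perturbs the potential: it takes the minimal Lipschitz extension $g$ of $(f_{\alpha_i}+\eps 1_x)|_{\CN(y)}$, checks that $(g-f_{\alpha_i})|_{\CN(x)}=\eps 1_x+\eps 1_z$ with $z=a_i^{-1}x$, and reads off the single inequality $p(y,x)\le p(x,z)$ from the weak duality inequality alone; it then closes the cycle using property $(ii)$ of the torus together with minimality of $d(x,a_ix)$ and a WLOG normalization $\mu(x)\ge\mu(a_ix)$, and needs a second pass (interchanging the roles of $x$ and $y$) to obtain $d(a_ix,a_i^2x)=d(x,a_ix)$. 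You instead observe that $f_{\alpha_i}$ is already an \emph{optimal} potential (its harmonicity gives $\int f_{\alpha_i}\,dm_y^1-\int f_{\alpha_i}\,dm_x^1=d(x,y)=W$) and invoke complementary slackness to pin down the support of any optimal coupling: the mass sitting at $a_ix\in\ON(x)$ can only be sent to $a_i^2x$, and the demand at $x\in\ON(a_ix)$ can only be met from $a_i^{-1}x$, which gives the two inequalities $p_x^{+}\le p_y^{+}$ and $p_y^{-}\le p_x^{-}$. Combined with the harmonicity identities $p_x^{+}\ell=p_x^{-}a$ and $p_y^{+}b=p_y^{-}\ell$ and with $a,b\ge\ell$ from minimality, your four-term cycle collapses everything at once, delivering both distance equalities and $\mu(x)=\mu(a_ix)$ symmetrically, with no WLOG and no second pass. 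The only point worth making fully explicit is that the coupling lives on the base graph while $f_{\alpha_i}$ lives on the cover; this is harmless because every pair in $\ON(x)\times\ON(a_ix)$ is within three hops, so Lemma~\ref{lem:Propertiesd} guarantees that the restricted potential satisfies the Lipschitz bound for the base metric on exactly the pairs that carry mass --- a point you do flag.
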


\begin{proof}
    Denote by $(\alpha_1,\dots, \alpha_m)$ the corresponding basis of $H_1(M_2(G))$, i.e.,
    \[
        \alpha_i(x,a_jx) = \delta_{ij} d(x,a_jx) \quad \mbox{and} \quad \alpha_i(x,a^{-1}_jx) = -\delta_{ij} d(x,a_j^{-1}x).
    \]
    Let $i\in \{1,\dots,m\}$ be arbitrary and $x \in V$ such that 
    \[
        d(x,a_ix) = \min_{x'\in V}d(x',a_ix').
    \]
    For ease of reading, we introduce the notation $y=a_ix$ and $z=a_i^{-1}x$. We assume, without loss of generality, that $\mu(x) \geq \mu(y)$. If this is not the case, we can replace the generator $a_i$ by $a_i^{-1}$.
    Since $G$ is bone-idle, the equality
    \[
        W(m_x^1, m_y^1) = d(x,y)
    \]
    holds true.
    Denote by $f_{\alpha_i}$ the associated 1-Lipschitz function on the universal cover that satisfies $f_{\alpha_i}(x) = 0$. Recall that the functions $f_{\alpha_i}$ are harmonic. Let $g$ be the minimal Lipschitz extension of 
    $(f_{\alpha_i} + \eps 1_x)|_{\CN(y)}$ given by 
    \[
        g(x') = \max_{y' \in \CN(y)} (f_{\alpha_i} + \eps 1_x)(y') - d(y',x')
    \]
    for every $x' \in V$. Since $G$ is a flat Torus, it follows that 
    \[
        (g-f_{\alpha_i})|_{\CN(x)} = \eps 1_x + \eps 1_z,
    \]
    for $\eps$ small enough.
    Hence, we obtain by the Kantorovich duality 
    \begin{align*}
        d(x,y) &= W(m_y^1, m_x^1) \\
        &\geq \sum_{y': y\sim y'} g(y') m_y^1(y') - \sum_{x': x\sim x'} g(x') m_x^1(x') \\
        &= \sum_{y': y\sim y'} (f_{\alpha_i} + \eps 1_x)(y') m_y^1(y') - \sum_{x': x\sim x'} (f_{\alpha_i} + \eps1_x + \eps 1_z)(x') m_x^1(x') \\ 
        &= \sum_{y': y\sim y'} f_{\alpha_i}(y') m_y^1(y') + \eps m_y^1(x) - \sum_{x': x\sim x'} f_{\alpha_i}(x') m_x^1(x') - \eps m_x^1(z) \\
        &= d(x,y) + \eps (m_y^1(x) - m_x^1(z)),
    \end{align*}
    where we used that $f_{\alpha_i}$ is harmonic, $f_{\alpha_i}(x) = 0$ and therefore
    \[
        \sum_{y': y\sim y'} f_{\alpha_i}(y') m_y^1(y') - \sum_{x': x\sim x'} f_{\alpha_i}(x') m_x^1(x') = \Delta f_{\alpha_i}(y) + f_{\alpha_i}(y) - \Delta f_{\alpha_i}(x) = f_{\alpha_i}(y) = d(x,y).
    \]
    Thus, we obtain
    \[
        p(y,x) =m_y^1(x) \leq m_x^1(z) = p(x,z).
    \]
    On the other hand, we have 
    \[
        p(x,z)d(x,y) \leq p(x,z)d(x,z) = p(x,y)d(x,y),
    \]
    where we used $w(x,z)d(x,z) =w(x,y)d(x,y)$ for the last equality. By assumption, we have $\mu(x) \geq \mu(y)$ and therefore
    \[
        p(y,x) \leq p(x,z) \leq p(x,y) \leq p(y,x).
    \]
    We conclude $p(y,x) = p(x,y)= p(x,z)$, which implies $\mu(x) = \mu(y)$ and 
    \[
        p(x,y)d(x,z) = p(x,z)d(x,z) = p(x,y)d(x,y),
    \]
    leading to $d(x,y) = d(x,a_i^{-1}x)$. 
    Using $\mu(x) = \mu(y)$, we can interchange the roles of $x$ and $y$ to obtain $d(x,y) = d(y,a_iy)$. Thus, we have shown
    \[
        d(a_i^{-1}x,x) = d(x,a_ix) = d(a_ix, a_i^2x).
    \]
\end{proof}

\subsection{Characterization of bone-idle Ollivier-Betti sharp graphs}

In this subsection we will give a characterization of bone-idle Ollivier-Betti sharp graphs. Notably, under the assumption of Ollivier-Betti sharpness, bone-idleness is equivalent to non-negative curvature for the non-lazy random walk.

\begin{theorem}\label{th:OBSandBI}
    Let $G=(V,w,\mu,d)$ be a finite graph that satisfies $\sum_{y:x\sim y}w(x,y) =\mu(x)$ for every $x\in V$. The following are equivalent.
    \begin{itemize}
        \item[$(i)$] $G$ is Ollivier-Betti sharp and bone-idle.
        \item[$(ii)$] $G$ is Ollivier-Betti sharp and $\kappa_1(x,y) \geq 0$ for every edge $x\sim y$.
        \item[$(iii)$] $G$ is a weighted flat Torus, $\mu$ is constant, and $d(x, a_ix) = d(x',a_ix')$ for every $x,x' \in V$ and $i\in \{1,\dots,m\}$, where $(a_1,\dots,a_m)$ is the generating set from the Torus structure. 
    \end{itemize}
\end{theorem}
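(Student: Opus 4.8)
The plan is to establish the cycle of implications $(i)\Rightarrow(ii)\Rightarrow(iii)\Rightarrow(i)$. The first implication is immediate from the definitions: bone-idleness asserts $\kappa_\eps(x,y)=0$ for all $\eps\in[0,1]$ and every edge, so in particular $\kappa_1(x,y)=0\geq 0$, and the standing hypothesis $\sum_y w(x,y)=\mu(x)$ is common to both statements.

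For $(ii)\Rightarrow(iii)$ I first invoke Theorem~\ref{th:CharacBS} to identify the Ollivier-Betti sharp graph $G$ with a weighted flat torus, so properties $(i)$--$(v)$ of Definition~\ref{def:Torus} hold for some abelian structure with generators $a_1,\dots,a_m$. The key observation is that the proof of Lemma~\ref{lem:constdist} uses bone-idleness only through the single relation $W(m_x^1,m_y^1)=d(x,y)$, and there only the inequality $W(m_x^1,m_y^1)\leq d(x,y)$ is genuinely needed to run the Kantorovich-duality estimate; but this inequality is precisely the hypothesis $\kappa_1(x,y)\geq 0$. Hence the conclusions of Lemma~\ref{lem:constdist}, namely $d(a_i^{-1}x,x)=d(x,a_ix)=d(a_ix,a_i^2x)$ and $\mu(x)=\mu(a_ix)$, remain valid at every vertex $x$ minimizing $d(\,\cdot\,,a_i\,\cdot\,)$. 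I then propagate minimality: the identity $d(a_ix,a_i^2x)=d(x,a_ix)$ shows that $a_ix$ is again a minimizer, so repeated application of the lemma spreads the minimal value along the whole $a_i$-orbit, while property $(iv)$ of Definition~\ref{def:Torus} spreads it across every $a_j$-direction with $j\neq i$. Since the $a_k$ generate the group, $d(x,a_ix)$ is constant in $x$ for each $i$; consequently every vertex is a minimizer, and a final application of Lemma~\ref{lem:constdist} at each vertex in each direction gives $\mu(a_ix)=\mu(x)$ for all $x$ and $i$, whence $\mu$ is constant. This is exactly $(iii)$.

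For $(iii)\Rightarrow(i)$ the flat-torus structure makes $G$ Ollivier-Betti sharp by Theorem~\ref{th:CharacBS}, so it remains to prove bone-idleness. I first upgrade the torus axioms to full translation invariance of the walk: combining $\mu\equiv\mu_0$ with the symmetry of $w$ gives $p(x,a_ix)=p(a_ix,x)$; the constancy of $d(\,\cdot\,,a_i\,\cdot\,)$ together with property $(ii)$ of Definition~\ref{def:Torus} gives $p(x,a_ix)=p(x,a_i^{-1}x)$; and property $(iii)$ gives invariance of $p(\,\cdot\,,a_i\,\cdot\,)$ under every $a_j$ with $j\neq i$. These combine to yield $a_i$-invariance, $p(a_ix,a_i^2x)=p(a_ix,x)=p(x,a_ix)$, so $p(x,a_i^{\eps}x)$ is independent of $x$ and of the sign $\eps$, and therefore $m^\eps_{gx}$ is the pushforward of $m^\eps_x$ under translation by $g$ for every group element $g$. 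Fixing an edge $x\sim y=a_ix$, the map $z\mapsto a_iz$ is then a coupling of $m^\eps_x$ and $m^\eps_y$ of cost $\sum_z m^\eps_x(z)\,d(z,a_iz)=d(x,a_ix)$, using that $d(\,\cdot\,,a_i\,\cdot\,)$ is constant on the support; this gives $W(m^\eps_x,m^\eps_y)\leq d(x,y)$, hence $\kappa_\eps(x,y)\geq 0$. For the matching lower bound I pass to the universal cover, which preserves the transport problem since $m^\eps_x,m^\eps_y$ are supported on balls that lift isometrically, and use the lifted harmonic potential $f_{\alpha_i}$: it is $1$-Lipschitz with increment $d(x,a_ix)$ in the $a_i$-direction and $0$ in the others, so by Kantorovich duality and translation invariance $W(m^\eps_{\widetilde x},m^\eps_{\widetilde y})\geq \sum_{\widetilde z}\big(f_{\alpha_i}(a_i\widetilde z)-f_{\alpha_i}(\widetilde z)\big)m^\eps_{\widetilde x}(\widetilde z)=d(x,y)$. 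Thus $\kappa_\eps(x,y)=0$ for every $\eps\in[0,1]$ and every edge, which is bone-idleness.

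The main obstacle is the implication $(iii)\Rightarrow(i)$, because bone-idleness requires vanishing curvature for the \emph{entire} one-parameter family of lazy walks simultaneously, not merely in the limit $\eps\to0$ or at $\eps=1$. The concavity of $\eps\mapsto\kappa_\eps(x,y)$ together with $\kappa_0=0$ and $\kappa_1\geq0$ only forces $\kappa_\eps\geq0$, so the exact vanishing cannot be obtained by interpolation and must instead be extracted from the rigid translation-invariant geometry, by exhibiting a coupling and a dual potential that match at every $\eps$. The delicate point within this is deducing the $a_i$-direction invariance of $p$, which is not among the torus axioms and must be recovered from the symmetry of $w$ and property $(ii)$ of Definition~\ref{def:Torus}.
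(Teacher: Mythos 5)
Your proof is correct, but it runs the cycle in the opposite direction from the paper ($(i)\Rightarrow(ii)\Rightarrow(iii)\Rightarrow(i)$ versus the paper's $(i)\Rightarrow(iii)\Rightarrow(ii)\Rightarrow(i)$), and this forces you to do genuinely different work at two points. First, your observation that the proof of Lemma~\ref{lem:constdist} only consumes the inequality $W(m_x^1,m_y^1)\leq d(x,y)$, i.e.\ $\kappa_1(x,y)\geq 0$, rather than full bone-idleness, is accurate and is a mild strengthening of that lemma that the paper never needs to make explicit, since it deduces $(iii)$ directly from $(i)$; your propagation of the minimizing property along orbits then matches the paper's argument. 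Second, and more substantially, for $(iii)\Rightarrow(i)$ you build an exact optimal coupling (the translation $z\mapsto a_iz$) and a matching Kantorovich potential ($f_{\alpha_i}$ lifted to the cover) at \emph{every} idleness $\eps$, which requires you to first extract full translation invariance of $p$ from the torus axioms. The paper avoids all of this: it only shows $\kappa_1\geq 0$ by a coupling, and then in $(ii)\Rightarrow(i)$ uses that Ollivier-Betti sharpness forces $\kappa(x,y)=0$ on every edge (via the harmonic potential $f_{\alpha_i}$), whence the tangent-line bound $\kappa_\eps\leq\eps\,\kappa=0$ combined with concavity of $\eps\mapsto\kappa_\eps$ between $\kappa_0=0$ and $\kappa_1=0$ yields $\kappa_\eps\equiv 0$. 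Your closing remark that ``the exact vanishing cannot be obtained by interpolation'' is therefore not right — it can, once one knows $\kappa=0$ — but this does not invalidate your argument; it only means your direct construction, while correct and more self-contained geometrically, is more labor than necessary.
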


\begin{proof}
    $(i) \implies (iii)$ Theorem~\ref{th:CharacBS} implies that $G$ is a weighted, flat Torus. Let $i\in \{1,\dots,m\}$ and $x \in V$ be arbitrary. Choose $y_0\in V$ such that 
    \[
        d(y_0,a_iy_0) = \min_{x'\in V}d(x',a_ix').
    \]
    There exist $j_1, \dots, j_m\in \Z$ such that $x = a_1^{j_1}\ldots a_m^{j_m}y_0$. Define $y = a_1^{j_1} \ldots a_{i-1}^{j_{i-1}} a_{i+1}^{j_{i+1}} \ldots a_m^{j_m}y_0$. Using property $(iv)$ from the definition of a weighted, flat Torus, we obtain
    \[
        d(y,a_iy) = d(y_0,a_iy_0) = \min_{x'\in V}d(x',a_ix').
    \]
    By Lemma~\ref{lem:constdist}, we obtain
    \[
        d(a_i^{-1}y, y) = d(a_iy, a_i^{2}y) = d(y,a_iy) = \min_{x'\in V}d(x',a_ix').
    \]
    Iterating the lemma yields,
    \[
        \min_{x'\in V}d(x',a_ix') = d(y,a_iy) = d( a_i^{j_i}y, a_i^{j_i+1}y).
    \]
    Using $a_i^{j_i}y = x$, we conclude
    \[
        \min_{x'\in V}d(x',a_ix') = d(x,a_ix).
    \]
    Since $x\in V$ and $i\in\{1,\dots,m\}$ was chosen arbitrarily, we conclude
    \[
        d(x,a_ix) = d(x',a_ix'),
    \]
    for every $x,x'\in V$ and $i\in\{1,\dots,m\}$.
    It remains to show that $\mu$ is constant. To this end, let $x\sim a_ix$ be an arbitrary edge in $G$. Since $G$ is bone-idle, the equality
    \[
        \sum_{y: x \sim y} p(x,y) = 1 =  \sum_{y: a_ix \sim y} p(a_ix,y)
    \]
    holds. Using that $G$ is a weighted, flat Torus, this implies
    \begin{align*}
        \frac{1}{\mu(x)}\left(w(x,a_i^{-1}x) + w(x,a_ix)\right) &= p(x,a_i^{-1}x) + p(x,a_ix) \\
        &=p(a_ix,x) + p(a_ix,a_i^2x) \\
        &= \frac{1}{\mu(a_ix)}\left(w(a_ix,x) + w(a_ix,a_i^2x)\right).
    \end{align*}
    Since $d(x,a_i^{-1}x) = d(x,a_ix) = d(a_ix,a_i^2x)$, we obtain
    $w(x,a_i^{-1}x) = w(x,a_ix) = w(a_ix,a_i^2x)$ by property $(ii)$ from the definition of a weighted, flat Torus. Therefore, we conclude $\mu(x) = \mu(a_ix)$. Since the edge $x \sim a_ix$ was chosen arbitrarily, we conclude that $\mu$ must be constant.

    $(iii) \implies (ii)$ Since $G$ is a weighted, flat Torus, it follows from Theorem~\ref{th:CharacBS} that $G$ is Ollivier-Betti sharp. 
    Let $x_0\sim y_0$ be an arbitrary edge in $G$. We assume, without loss of generality, that $y_0 =a_ix_0$ for some $i\in \{1,\dots,m\}$. Define the transport plan $\rho: \ON(x_0) \times \ON(y_0) \to [0,1]$ transporting $m_{x_0}^1$ to $m_{y_0}^1$ by
    \[
        \rho(x,y) = \begin{cases}
            p(x_0,x) &: (x,y) \in \{(a_j^{\eps}x_0,a_j^{\eps}y_0): j \leq m, \eps \in \{-1,1\}\},\\
            0 &: \mbox{otherwise}.
            \end{cases}
    \]
    Since $p(x_0, a_j^{\eps}x_0) = p(y_0, a_j^{\eps}y_0)$ for $j\neq i$ and $\eps \in \{-1,1\}$, and since $p(x_0,a_i^{-1}x_0) = p(y_0,x_0) =p(x_0,y_0) = p(y_0,a_iy_0)$, we conclude that $\rho$ is indeed a transport plan, transporting $m_{x_0}^1$ to $m_{y_0}^1$. Hence,
    \begin{align*}
        W(m_{x_0}^1, m_{y_0}^1) &\leq \sum_{x,y\in V}\rho(x,y)d(x,y)\\
        &= \sum_{\substack{j\leq m\\ \eps\in \{-1,1\}}}p(x_0,a_j^{\eps}x_0)d(a_j^{\eps}x_0,a_j^{\eps}y_0) \\
        &= d(x_0,y_0) \sum_{y: x_0\sim y} p(x_0,y)\\
        &= d(x_0,y_0),
    \end{align*}
    where we used $d(a_j^{\eps}x_0,a_j^{\eps}y_0) = d(x_0,y_0)$ for every $j\neq i$ and $\eps\in \{-1,1\}$, as well as $d(a_i^{-1}x_0,x_0) = d(x_0,y_0) = d(y_0,a_iy_0)$. Hence,
    \[
        \kappa_1(x_0,y_0) = 1- \frac{W(m^{1}_{x_0}, m^{1}_{y_0})}{d(x_0,y_0)} \geq 0.
    \]

    $(ii) \implies (i)$ Since $G$ is Ollivier-Betti sharp, there exists a basis $(\alpha_1, \dots, \alpha_m)$, with $m = \beta_1(M_2(G))$, of $H_1(M_2(G))$ satisfying the properties from Lemma~\ref{lem:alphauni}. 
    
    Let $x\sim y$ be an arbitrary edge in $G$. There exists $i\in \{1,\dots,m\}$ such that, without loss of generality,
    \[
        \alpha_i(x,y)  = d(x,y).
    \]
    Denote by $f_{\alpha_i}$ the associated 1-Lipschitz function on the universal cover that satisfies $f_{\alpha_i}(x) = 0$. Recall that the function $f_{\alpha_i}$ is harmonic since $\alpha_i \in H_1(M_2(G))$. Furthermore,
    \[
        \nabla_{yx}f_{\alpha_i} = \frac{f_{\alpha_i}(y) - f_{\alpha_i}(x)}{d(x,y)} = 1.
    \] 
    Thus, we obtain
    \[
    \kappa(x,y) = \inf_{\substack{\nabla_{yx}f = 1\\ \lVert \nabla f \rVert_\infty = 1}} \nabla_{xy} \Delta f \leq \nabla_{xy} \Delta f_{\alpha_i} = 0.
    \]
    On the other hand, $G$ is Ollivier-Betti sharp, which implies that $G$ has non-negative Ollivier curvature. Therefore, we conclude $\kappa(x,y) = 0$. Using $\kappa_1(x,y) \leq \kappa(x,y) = 0$ and our assumption, we obtain 
    \[
        \kappa_1(x,y) = \kappa(x,y)= 0. 
    \]
    Since the function $\kappa_{\eps}(x,y)$ is concave in $\eps$, it follows that $\kappa_{\eps}(x,y) = 0$ for every  $\eps \in [0,1]$. Hence, the graph is bone-idle. This concludes the proof.
\end{proof}

As an immediate consequence we obtain that for a graph equipped with the combinatorial path distance, in the case of the normalized Laplacian, Ollivier-Betti sharp implies bone-idleness.

\begin{corollary}\label{Cor:OBSimpliesBI}
    Let $G=(V,w,\mu,d)$ be a finite graph, where $d$ denotes the combinatorial path distance. In the case of the normalized Laplacian, that is, when $w:V\times V \to \{0,1\}$ and $\mu(x) = \deg(x)$, the following implication holds true. If $G$ is Ollivier-Betti sharp, then $G$ is bone-idle.
\end{corollary}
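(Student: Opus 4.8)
The plan is to deduce the corollary directly from Theorem~\ref{th:OBSandBI} by checking its standing hypothesis and then verifying condition $(iii)$, whose equivalence with condition $(i)$ is exactly the assertion that $G$ is bone-idle. First I would confirm that the hypothesis $\sum_{y:x\sim y}w(x,y)=\mu(x)$ of Theorem~\ref{th:OBSandBI} holds in the present setting: since $w$ takes values in $\{0,1\}$ and $\mu(x)=\deg(x)$, we have
\[
    \sum_{y:x\sim y}w(x,y)=\#\{y:x\sim y\}=\deg(x)=\mu(x)
\]
for every $x\in V$, so Theorem~\ref{th:OBSandBI} is applicable.

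Next I would establish the three parts of condition $(iii)$. Because $G$ is Ollivier-Betti sharp, Theorem~\ref{th:CharacBS} shows that $G$ is a weighted flat torus with some abelian group structure and generating set $\{a_1,\dots,a_m\}$. Combining the sharpness assumption $\beta_1(M_2(G))=\deg_{\max}/2$ with the upper bound $\beta_1(M_2(G))\le \deg_{\min}/2$ from Theorem~\ref{th:MainRes} forces $\deg_{\min}=\deg_{\max}=2m$, so $G$ is $2m$-regular. Consequently $\mu(x)=\deg(x)=2m$ is constant, which is the second clause of $(iii)$.

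The only remaining clause of $(iii)$ is $d(x,a_ix)=d(x',a_ix')$ for all $x,x'\in V$ and all $i$. This is where the combinatorial path distance does all the work: by property $(i)$ of Definition~\ref{def:Torus} each vertex $a_i^{\eps}\cdot x$ is a neighbour of $x$, so $d(x,a_ix)=1$ for every $x$ and every $i$. In particular $d(x,a_ix)=d(x',a_ix')$ holds trivially, and the clause is satisfied.

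With all three parts of condition $(iii)$ of Theorem~\ref{th:OBSandBI} verified, the equivalence stated there yields condition $(i)$, namely that $G$ is bone-idle, completing the argument. I do not anticipate a genuine obstacle here: the substance is the observation that the combinatorial distance automatically trivializes the ``parallel edges have equal length'' requirement, while Ollivier-Betti sharpness already forces regularity and hence constancy of $\mu$, so that the general torus characterization collapses onto the bone-idle case.
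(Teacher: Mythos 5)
Your proposal is correct and follows exactly the route the paper intends: the corollary is stated as an immediate consequence of Theorem~\ref{th:OBSandBI}, obtained by checking condition $(iii)$ — the torus structure comes from Theorem~\ref{th:CharacBS}, constancy of $\mu=\deg$ from the regularity forced by $\beta_1=\deg_{\max}/2\le\deg_{\min}/2$, and the equal-length condition trivially from the combinatorial distance. No gaps.
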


\begin{remark}
    The converse implication does not hold true. A counterexample illustrating this is provided in Subsection~\ref{sec:OBSnotBI}.
\end{remark}

\subsection{Betti number estimates allowing some negative Ollivier curvature
}

In this section, we study graphs with non-negative Ollivier curvature outside a finite subset. Let $G=(V,w,\mu,d)$ be a weighted graph and let $W\subseteq V$ be a finite subset of $V$ such that $\kappa(x,y) \geq 0$ if $x,y \in V\setminus W$. Denote by
\[
    \eW = \{ x \sim y: x \in W\}
\]
the set of edges with at least one endpoint in $W$. In this setting, we obtain the following Betti-number estimate.

\begin{theorem}\label{th:BettiNumEstSomeNegCurv}
    Let $G=(V,w,\mu,d)$ be a finite graph and let $W \subseteq V$ be a non-empty set such that $\kappa(x,y) \geq 0$ if $x,y\in V\setminus W$. Then
    \[
        \beta_1(M_2(G)) \leq \vert \eW \vert.
    \]
\end{theorem}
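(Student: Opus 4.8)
The plan is to exhibit an injective linear map from $H_1(M_2(G))$ into $\R^{\eW}$, which at once gives $\beta_1(M_2(G)) = \dim H_1(M_2(G)) \le |\eW|$. After fixing an orientation of the edges, I would consider the restriction map
\[
\psi : H_1(M_2(G)) \to \R^{\eW}, \qquad \psi\alpha(e) = \frac{\alpha(x,y)}{d(x,y)} \quad \text{for } e=\{x,y\}\in\eW .
\]
Since $\dim\R^{\eW}=|\eW|$, the whole statement reduces to the injectivity of $\psi$, i.e. to showing that a harmonic one-form $\alpha\in H_1(M_2(G))$ vanishing on every edge incident to $W$ must vanish identically.

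So I would suppose $\alpha\in\ker\psi$ with $\lVert\alpha\rVert_\infty=1$ and derive a contradiction. Lift $\alpha$ to the universal cover to obtain the harmonic, $1$-Lipschitz function $f_\alpha$ with $\widetilde\delta f_\alpha=\alpha\circ\pr$, exactly as in the proof of Lemma~\ref{lem:AlphaEins}. The crucial ingredient is a $W$-avoidance property: whenever two vertices $\widetilde u\ne\widetilde v$ of the universal cover satisfy $f_\alpha(\widetilde u)-f_\alpha(\widetilde v)=\widetilde d(\widetilde u,\widetilde v)$, every geodesic from $\widetilde v$ to $\widetilde u$ projects entirely into $V\setminus W$. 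Indeed, $1$-Lipschitzness forces $f_\alpha$ to decrease at maximal rate on each edge of such a geodesic, so each projected edge $\{w_{i-1},w_i\}$ satisfies $\alpha(w_{i-1},w_i)=d(w_{i-1},w_i)\ne 0$; since $\alpha$ vanishes on $\eW$, none of these edges lies in $\eW$, whence every $w_i\in V\setminus W$. In particular $\pr(\widetilde u),\pr(\widetilde v)\in V\setminus W$, so by the hypothesis $\kappa\ge 0$ on $V\setminus W$ together with curvature-preservation under the universal cover, $\kappa(\widetilde u,\widetilde v)\ge 0$. I would also record that the proof of Lemma~\ref{lem:LipFun} uses non-negativity of the curvature only at the single pair under consideration, so it may be invoked at any max-gradient pair even in the presence of negative curvature elsewhere.

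With this in place I would rerun the argument of Lemma~\ref{lem:AlphaEins}, verifying that every pair to which Lemma~\ref{lem:LipFun} is applied is a max-gradient pair, hence has non-negative curvature by the observation above. First, iterating Lemma~\ref{lem:LipFun}$(i)$ from a max-gradient edge produces $\widetilde y_0,\widetilde z_0$ with $f_\alpha(\widetilde y_0)-f_\alpha(\widetilde z_0)=\widetilde d(\widetilde y_0,\widetilde z_0)\ge 3m\cdot\diam(G)$, where $m=\max_{x\sim y}d(x,y)$, each intermediate pair being a max-gradient pair. Now fix any $x_*\in W$ (possible since $W\ne\emptyset$) and a lift $\widetilde x_*$ with $\widetilde d(\widetilde x_*,\widetilde y_0)\le\diam(G)$, and propagate along a geodesic from $\widetilde y_0$ to $\widetilde x_*$ via Lemma~\ref{lem:LipFun}$(ii)$ at the successive max-gradient pairs, losing at most $2m$ per step. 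As in Lemma~\ref{lem:AlphaEins}, this yields a neighbour $\widetilde y$ of $\widetilde x_*$ with $f_\alpha(\widetilde x_*)-f_\alpha(\widetilde y)=\widetilde d(\widetilde x_*,\widetilde y)=d(x_*,\pr(\widetilde y))$, i.e. $\alpha(x_*,\pr(\widetilde y))=d(x_*,\pr(\widetilde y))\ne 0$. But $x_*\in W$ gives $\{x_*,\pr(\widetilde y)\}\in\eW$, contradicting $\alpha\in\ker\psi$. Hence $\ker\psi=0$, $\psi$ is injective, and the estimate follows.

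I expect the main obstacle to be exactly this $W$-avoidance bookkeeping: because curvature may be negative on edges meeting $W$, one cannot propagate the maximal gradient indiscriminately, and the argument only survives because every max-gradient pair is automatically supported in $V\setminus W$, allowing the machinery of Lemmas~\ref{lem:LipFun} and~\ref{lem:AlphaEins} to be driven through a chosen vertex of $W$. A secondary point to make explicit is that Lemma~\ref{lem:LipFun}, although stated under a global curvature hypothesis, genuinely needs only $\kappa(x_0,y_0)\ge 0$ at the pair in question, which is transparent from its proof.
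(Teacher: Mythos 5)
Your proof is correct and follows essentially the same route as the paper's: reduce the bound to injectivity of the restriction map to $\eW$, lift a norm-one $\alpha\in\ker\psi$ to the universal cover, and propagate the maximal gradient via Lemma~\ref{lem:LipFun} until an edge incident to $W$ is forced to carry $\vert\alpha\vert>0$. The only organizational difference is that the paper runs an explicit algorithm that halts as soon as a lift of $W$ appears on a geodesic of the current max-gradient pair (and then separately checks the terminal pair is non-degenerate), whereas you observe up front that every max-gradient pair automatically has all its geodesics projecting into $V\setminus W$ -- so Lemma~\ref{lem:LipFun} is always applicable -- and then drive the pair directly onto a chosen vertex of $W$; this is a slightly cleaner packaging of the same argument.
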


\begin{proof}
    We will show that if $\alpha \in H_1(M_2(G))$ satisfies $\alpha |_{\eW} = 0$, then it follows that $\alpha = 0$. To this end, assume there exists a non-zero $\alpha \in H_1(M_2(G))$ satisfying $\alpha |_{\eW} = 0$. We assume without loss of generality that $\lVert \alpha \rVert_{\infty} = 1$. Therefore, the associated function $f_{\alpha}$ on the universal cover is 1-Lipschitz. Choose $\widetilde{x}_0, \widetilde{y}_0 \in \widetilde{X}_0$ such that
    \[
        f_{\alpha}(\widetilde{x}_0) - f_{\alpha}(\widetilde{y}_0) = \widetilde{d}(\widetilde{x}_0, \widetilde{y}_0). 
    \]
    According to Lemma~\ref{lem:LipFun}, we can construct two paths, starting from $\widetilde{x}_0$ and $\widetilde{y}_0$, respectively, as described below.
    \begin{algorithmic}
        \State $k=0$
        \While{$\nexists\widetilde{w} \in \pr^{-1}(W)$ on a geodesic from $\widetilde{x}_k$ to $\widetilde{y}_k$}
        \If{$\max_{i \leq k} \widetilde{d}(\widetilde{x}_i,\widetilde{y}_i) < 3\diam(G) \cdot \frac{d_{\max}}{d_{\min}}$} 
            \State Choose  $\widetilde{x}_{k+1} \in \CN(\widetilde{x}_k)$ and $\widetilde{y}_{k+1} \in \CN(\widetilde{y}_k)$ such that 
            \[
            f(\widetilde{x}_{k+1}) - f(\widetilde{y}_{k+1}) = \widetilde{d}(\widetilde{x}_{k+1}, \widetilde{y}_{k+1}) > \widetilde{d}(\widetilde{x}_{k}, \widetilde{y}_{k})
            \]
        \Else
           \State Choose $\widetilde{y}_{k+1}$ on a geodesic from $\widetilde{y}_{k}$ to $\pr^{-1}(W)$ and $\widetilde{x}_{k+1}$ such that 
           \[
            f(\widetilde{x}_{k+1}) - f(\widetilde{y}_{k+1}) = \widetilde{d}(\widetilde{x}_{k+1}, \widetilde{y}_{k+1})
           \]
        \EndIf 
        \State $k \gets k+1$
        \EndWhile
    \end{algorithmic}
    where $d_{\max} = \max_{x\sim y} d(x,y)$ and $d_{\min} = \min_{x\sim y} d(x,y)$. Observe that, by construction, the algorithm must terminate after a finite number of steps and denote by $\widetilde{x}_n$ and $\widetilde{y}_n$ the final vertices of the paths. We claim that $\widetilde{x}_n  \neq \widetilde{y}_n$. Suppose not, then there exists $k \leq n$ such that 
    \[
        \widetilde{d}(\widetilde{x}_k,\widetilde{y}_k) > 3\diam(G) \cdot \frac{d_{\max}}{d_{\min}}.
    \]
    Since $\widetilde{d}(\widetilde{x}, q^{-1}(W)) \leq \diam(G)$ for every $\widetilde{x} \in \widetilde{X}_0$, we conclude
    \[
        \widetilde{d}(\widetilde{y}_k, \widetilde{y}_n) \leq \diam(G).
    \]
    Thus, we obtain
    \[
        \widetilde{d}(\widetilde{x}_k, \widetilde{x}_n) \leq \frac{d_{\max}}{d_{\min}}\diam (G).
    \]
    Using the triangle inequality, we conclude 
    \[
        0 = \widetilde{d}(\widetilde{x}_n, \widetilde{y}_n) \geq \widetilde{d}(\widetilde{x}_k, \widetilde{y}_k) - \widetilde{d}(\widetilde{y}_k, \widetilde{y}_n) - \widetilde{d}(\widetilde{x}_k, \widetilde{x}_n) > 0.
    \]
    Hence, our assumption leads to a contradiction, implying that $\widetilde{x}_n  \neq \widetilde{y}_n$ must hold and therefore
    \[
        f_{\alpha}(\widetilde{x}_n) - f_{\alpha}(\widetilde{y}_n) = d(\widetilde{x}_n, \widetilde{y}_n) > 0.
    \]
    Thus, there exists $\widetilde{w} \in \pr^{-1}(W)$ on a geodesic from  $\widetilde{x}_n$ to $\widetilde{y}_n$ and a neighboring vertex $\widetilde{z}$ of $\widetilde{w}$ on the geodesic such that
    \[
        \vert f_{\alpha}(\widetilde{w}) - f_{\alpha}(\widetilde{z}) \vert = \vert \alpha(\pr(\widetilde{w}), \pr(\widetilde{z})) \vert > 0.
    \]
    This contradicts $\alpha |_{\eW} = 0$ since $q(\widetilde{w}) \in W$.

    Therefore, we have established the equality $\alpha_1 = \alpha_2$ for any $\alpha_1, \alpha_2 \in H_1(M_2(G))$ satisfying $\alpha_1 |_{\eW} = \alpha_2|_{\eW}$, which implies
    \[
        \beta_1(M_2(G)) \leq \vert \eW \vert.
    \]
    This completes the proof.
\end{proof}

\section{Examples}\label{sec:Examples}

The rigidity result in Theorem~\ref{th:CharacBS} refers to the maximal and not the minimal vertex degree. The reason is that there is a variety of examples for which $\beta_1 = \deg_{\min}/2$ with quite different combinatorial structures which seem out of reach to characterize. In the following we will give two such examples. 

Before that, we show that for a potentially non-reversible Markov chain on a cycle of at least five vertices, there always exists a path metric with constant Ollivier curvature. This metric has non-negative Ollivier curvature and the curvature is equal to zero if and only if the graph is Ollivier-Betti sharp.

\subsection{Markov chain on a cycle}
\label{sec:CycleGraphs}

We define the cycle graph $C_n=(V=\{0, \ldots, n-1\},p)$ with an arbitrary Markov kernel $p:V^2 \to [0,\infty)$, satisfying $p(i,j) > 0$ if and only if $j \in \{(i+1) \mod n, (i-1) \mod n\}$.
Particularly, we allow the Markov chain to be non-reversible. For ease of notation, in the following, all indices will be considered modulo $n$, though this will not be explicitly stated again.

While the related case of birth death chains and their curvature is well investigated in the literature
\cite{hua2023ricci,smith2017cutoff,
johnson2017discrete,joulin2007poisson,
mielke2013geodesic}, the case of arbitrarily weighted cycles still provides some interesting questions which we want to address now.
Specifically, the aim of this subsection is to show the following:
\begin{itemize}
\item There exists a constant curvature metric which is unique up to scaling. 
\item The corresponding curvature is non-negative.
\item Constant zero curvature is equivalent to Ollivier Betti sharpness.
\end{itemize}
A key tool for establishing these results is the Ollivier Ricci flow 
which is given by
\[
d_{k+1}(x,y) = d_k(x,y) - \alpha \kappa_k(x,y) d_k(x,y)
\]
for $x\sim y$, and this metric is extended to non-neighbors by taking path distance. Here, $d_k$ is the metric after the $k-$the step of the Ricci flow, and $\kappa_k$ is the curvature corresponding to $d_k$. Moreover, $0<\alpha<1$ is a constant.
We will use the following convergence result from 
\cite[Theorem~4]{li2024convergence}.
\begin{theorem}\label{thm:RicciFlow}
Let the Ollivier Ricci flow be as above. Then, 
\begin{enumerate}[(i)]
\item Assume $\frac{\max_{x,y} d_k(x,y)}{\min_{x\neq y}d_k(x,y)}$ is bounded in $k$. Then,
\[
\frac{d_k}{\max_{x,y}d_k(x,y)} \stackrel{k\to \infty}{\longrightarrow} d_\infty
\]
for some constant curvature metric $d_\infty$.
\item The curvature $\kappa_k$ is bounded in $k$.
\end{enumerate}
\end{theorem}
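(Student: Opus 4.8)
The plan is to read the Ollivier Ricci flow on $C_n$ as a discrete dynamical system on the vector of edge lengths $(\ell_0,\dots,\ell_{n-1})$, where $\ell_i=d(i,i+1)$, and to exploit the scale-invariance of the Ollivier curvature. First I would record that $\kappa$ is invariant under a global rescaling $d\mapsto\lambda d$: both the transport cost and $d(x,y)$ scale linearly, so $\kappa_\eps(x,y)$, and hence $\kappa=\lim_{\eps\to0^+}\kappa_\eps/\eps$, are unchanged. Consequently the flow descends to the projective simplex of normalized edge lengths, and a point is fixed by the normalized flow exactly when all the scaling factors $1-\alpha\kappa_i$ agree, i.e. when the curvature is constant along every edge. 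This identifies the fixed points of the projective system with the constant-curvature metrics and reduces part $(i)$ to proving convergence of the normalized trajectory to such a fixed point.

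The second preparatory step is to make the curvature explicit. Since for $n\ge5$ the balls $\CN(i)$ and $\CN(i+1)$ overlap only in $\{i,i+1\}$, Lemma~\ref{Prop:TransPlan} (equivalently the Lipschitz formula defining $\kappa$) shows that $\kappa_i$ depends only on the three consecutive lengths $\ell_{i-1},\ell_i,\ell_{i+1}$ and on the transition probabilities $p$. I would compute this function and isolate the two asymptotics that drive the argument: as $\ell_i/\max(\ell_{i-1},\ell_{i+1})\to0$ one finds $\kappa_i\to-\infty$, whereas as $\ell_i$ becomes large relative to its neighbours $\kappa_i$ converges to the finite value $-(p(i,i+1)+p(i+1,i))$.

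For part $(ii)$ I would combine these asymptotics with the self-correcting nature of the flow. A universal upper bound $\kappa_k\le2$ is obtained by evaluating the defining infimum at a single explicit $1$-Lipschitz test function, so only the lower bound carries content. An edge of small relative length has strongly negative curvature, and the update $\ell_i^{(k+1)}=\ell_i^{(k)}(1-\alpha\kappa_i^{(k)})$ then multiplies that edge by a large factor $1-\alpha\kappa_i^{(k)}>1$; quantifying this shows the flow cannot push any edge to a relatively vanishing length. Hence the only way the ratio $\max/\min$ could degenerate is through an edge becoming relatively long, where the curvature stays bounded. This yields a uniform two-sided bound on $\kappa_k$ independent of $k$, which is exactly $(ii)$ and also explains why $(ii)$ needs no hypothesis while $(i)$ does.

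For part $(i)$, the assumption that $\max_{x,y}d_k/\min_{x\neq y}d_k$ is bounded confines the normalized trajectory to a compact subset of the open projective simplex, bounded away from its degenerate faces. On this invariant compact set I would prove convergence to the constant-curvature fixed point by a discrete maximum-principle argument: using the monotonicity of $\kappa_i$ in $\ell_{i-1},\ell_i,\ell_{i+1}$, show that $\max_i\kappa_i^{(k)}$ is non-increasing and $\min_i\kappa_i^{(k)}$ is non-decreasing, so that the oscillation $\max_i\kappa_i^{(k)}-\min_i\kappa_i^{(k)}$ tends to $0$ and every subsequential limit has constant curvature; the normalization $d_k/\max_{x,y}d_k(x,y)$ then pins down a single limit $d_\infty$. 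I expect the main obstacle to be precisely this step, namely ruling out persistent oscillation and establishing the monotone squeezing of the curvature extremes. The delicate point is that each $\kappa_i^{(k+1)}$ reacts to the \emph{simultaneous} flow-induced changes of both neighbouring edges rather than to a single edge in isolation, so the sign analysis underlying the maximum principle is genuinely a three-term estimate; if a clean monotone Lyapunov functional proves elusive, the fallback is to invoke analyticity of the flow map together with a {\L}ojasiewicz-type inequality to force convergence to one fixed point on the compact invariant set.
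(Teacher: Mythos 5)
First, a point of reference: the paper does not prove this theorem at all. It is imported from \cite[Theorem~4]{li2024convergence} (with part $(ii)$ extracted from the proof of \cite[Theorem~2]{li2024convergence}), and the paper's only added content is the remark that the argument there rests on showing that $\max \kappa_k$ is non-increasing and $\min \kappa_k$ is non-decreasing along the flow, and that this carries over to the non-reversible setting. So your proposal is being measured against an external proof you were asked to reconstruct.

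Measured that way, there is a genuine gap, and you have put your finger on it yourself: the entire argument for $(i)$ hinges on the claim that $\max_i\kappa_i^{(k)}$ is non-increasing and $\min_i\kappa_i^{(k)}$ is non-decreasing, which you state as the plan, call ``the main obstacle,'' and do not establish. That monotonicity is precisely the nontrivial content of the cited result, so the proposal assumes what needs to be proved. Moreover, even granting it, the two extremes are monotone bounded sequences and hence converge, but possibly to \emph{different} limits; deducing that the oscillation tends to zero, and that the normalized metrics converge to a single $d_\infty$ rather than merely having constant-curvature subsequential limits, requires a quantitative contraction estimate on the compact set away from the degenerate faces (or the {\L}ojasiewicz fallback you mention, which is itself nontrivial here since the flow map is only piecewise analytic due to the $\inf$/$\sup$ in the definition of $\kappa$). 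Two further issues: your treatment of $(ii)$ via asymptotics of $\kappa_i$ in the edge-length ratios is both incomplete (``quantifying this shows the flow cannot push any edge to a relatively vanishing length'' is exactly the inductive estimate that needs to be written down, and it must hold uniformly in $k$ without the boundedness hypothesis of $(i)$) and unnecessary — once the monotonicity of the extremes is available, $(ii)$ is immediate because $\kappa_k$ is sandwiched between $\min_i\kappa_i^{(0)}$ and $\max_i\kappa_i^{(0)}$, which is how the paper reads it off from \cite{li2024convergence}. Finally, the theorem is stated for the flow on a general graph, whereas your locality reduction (curvature of edge $i$ depending only on $\ell_{i-1},\ell_i,\ell_{i+1}$) is specific to $C_n$ with $n\ge 5$; that suffices for the application in Lemma~\ref{lem:RicciFlowNoSurgery} but does not recover the statement as given.
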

We remark that we use a slightly more general version than in \cite{li2024convergence} where only the reversible case was considered, however the proof carries over verbatim to the non-reversible case.
Furthermore, the formulation is slightly different, as in \cite{li2024convergence}, surgeries are used to enforce convergence if the distance quotients are unbounded.
Also, statement $(ii)$ is not mentioned explicitly in \cite{li2024convergence}, but follows easily from the proof of \cite[Theorem~2]{li2024convergence}, where it is shown that the maximum curvature is decreasing in $k$ and the minimum curvature is increasing in $k$.

To prove the existence of a constant curvature metric, we show that the assumption of $(i)$ in Theorem~\ref{thm:RicciFlow} is satisfied on cycles of length at least five.

\begin{lemma}\label{lem:RicciFlowNoSurgery}
On the cycle $C_n$ for $n \geq 5$, the distance quotient $\max_{x,y} d_k(x,y)/\min_{x\neq y} d_k(x,y)$ is bounded in $k$.
\end{lemma}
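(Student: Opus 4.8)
The plan is to translate everything into edge lengths and then show that the Ollivier--Ricci flow contracts, or at least does not expand, the ratio between the longest and the shortest edge. Write $\ell_i^{(k)} := d_k(i,i+1)$ for the length of the $i$-th edge after $k$ steps and $L_k := \sum_i \ell_i^{(k)}$ for the perimeter. Since $d_k$ is a path metric, each edge is a geodesic, so the minimal distance between distinct vertices equals the shortest edge length $\min_i \ell_i^{(k)}$, while the diameter lies between $\max_i \ell_i^{(k)}$ and $L_k/2 \le \tfrac{n}{2}\max_i\ell_i^{(k)}$. Hence, for the fixed cycle length $n$, the distance quotient $\max_{x,y}d_k(x,y)/\min_{x\neq y}d_k(x,y)$ is comparable, up to the constant factor $n/2$, to the edge-length ratio $F_k := \max_i \ell_i^{(k)}/\min_i \ell_i^{(k)}$. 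It therefore suffices to bound $F_k$ uniformly in $k$.

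Next I would record the consequences of Theorem~\ref{thm:RicciFlow}(ii): the curvatures are uniformly bounded, say $\kappa_k(x,y)\in[-K,K]$ for all $k$ and all edges, and moreover the maximal edge curvature is non-increasing and the minimal edge curvature is non-decreasing in $k$. The flow update $\ell_i^{(k+1)} = \ell_i^{(k)}\bigl(1-\alpha\kappa_k(i,i+1)\bigr)$ shows that the evolution of $\log \ell_i^{(k)}$, and hence of $\log F_k$, is governed by the differences of the edge curvatures: the ratio between two edges shrinks precisely when the longer of the two carries the larger curvature.

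The heart of the argument is therefore the following comparison on $C_n$ for $n\ge 5$: among all edges, a longest edge has curvature at least as large as that of a shortest edge, with a strictly positive gap once $F_k$ is large. I would prove this by analysing optimal transport plans through Lemma~\ref{Prop:TransPlan}. For an edge $\{i,i+1\}$ the sphere marginals force the masses $p(i,i-1)$ and $p(i+1,i+2)$ onto the outer vertices $i-1$ and $i+2$, and for $n\ge 5$ these four vertices $i-1,i,i+1,i+2$ are distinct, so the contribution of this forced transport to $1-d(\cdot,\cdot)/\ell_i$ depends monotonically on $\ell_i$ relative to the neighbouring and wrap-around arc lengths. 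Carrying out this analysis should yield that a longest edge is contracted, in relative terms, at least as fast as a shortest edge, so that $F_{k+1}\le F_k$ whenever $F_k$ exceeds a threshold $R=R(n,p)$; together with the uniform curvature bound, which limits the one-step change of $F_k$ by a bounded factor, this gives a uniform bound $F_k \le C(F_0,n,p,K,\alpha)$ for all $k$. Combined with the first paragraph this proves the lemma.

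The main obstacle is exactly this curvature comparison. The naive hope that adjacent edge lengths stay within a bounded ratio is false --- a short edge flanked by two long edges can have bounded, even positive, curvature --- so the bound cannot be local and must instead compare the relative contraction rates of the extremal edges. Making the comparison quantitative requires controlling the optimal transport plan in the presence of competing wrap-around geodesics, and it is precisely here that the hypothesis $n\ge 5$ (ensuring the local neighbourhoods form a genuine arc) and the validity of $d_k$ as a path metric (ensuring each edge is a geodesic) enter; the degenerate cases $n\in\{3,4\}$ behave differently and are excluded.
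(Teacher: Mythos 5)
Your reduction to the edge-length ratio $F_k=\max_i\ell_i^{(k)}/\min_i\ell_i^{(k)}$ is fine, but the proof then rests entirely on a claim you do not establish: that once $F_k$ is large, a longest edge has Ollivier curvature at least that of a shortest edge, so that $F_{k+1}\le F_k$. You describe the transport-plan analysis that ``should yield'' this and yourself identify it as the main obstacle, but it is never carried out, and it is precisely the nontrivial content of the lemma. Note also that even if the comparison held for the extremal pair at step $k$, the inequality $F_{k+1}\le F_k$ does not follow: the edges realizing the maximum and minimum can change from step $k$ to step $k+1$, so you would need a curvature comparison between \emph{every} pair of near-extremal edges, not just the current argmax and argmin. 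A further soft spot is the claim that the distance quotient is comparable to $F_k$: this uses that every edge is a geodesic for $d_k$, which itself requires an argument once one edge is allowed to dominate the perimeter. As it stands the proposal is a plausible strategy with its central step missing, not a proof.

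For contrast, the paper avoids any monotonicity statement for $F_k$. It argues by contradiction: if some ratio $d_k(x,x-1)/d_k(x,x+1)$ blows up along a subsequence, then the uniform curvature bound of Theorem~\ref{thm:RicciFlow}(ii) forces, for every vertex $i$, the quotient $\min\bigl(d_k(i-1,i),\,d_k(i-1,i+2)\bigr)/d_k(i,i+1)$ to stay bounded; applied at $x$ this bounds the complementary arc, hence all edges except $(x-1,x)$ and $(x+1,x+2)$ relative to $d_k(x,x+1)$, and a second application at the relabelled vertex $x-1$ (this is where $n\ge 5$ enters twice) bounds $(x+1,x+2)$ as well. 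The triangle inequality then bounds the remaining edge $(x-1,x)$, a contradiction. If you want to salvage your approach, you would need to actually prove the curvature comparison between extremal edges; otherwise the paper's static argument from bounded curvature is the more economical route.
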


\begin{proof}
Suppose not. W.l.o.g. and by taking subsequence, $d_k(x,x-1)/d_k(x,x+1) \to \infty$ for $k \to \infty$ for some fixed vertex $x$.
As $\kappa_k$ is bounded by Theorem~\ref{thm:RicciFlow}(ii), we infer that for all vertices $i$, the distance quotient
\[
\frac{\min(d_k(i-1,i),d_k(i-1,i+2))}{d_k(i,i+1)}
\]
is bounded in $k$.
Particularly, $\frac{\min(d_k(x-1,x+2))}{d_k(x,x+1)}$ is bounded in $k$. As $d_k(x,x-1)/d_k(x,x+1) \to \infty$, we see that all the quotients $d_k(y,y+1)/d_k(x,x+1)$ are bounded in $k$ for $y \notin \{x-1,x+1\}$.
We want to show boundedness also for $y=x+1$.
To this end, as $n \geq 5$, we notice that $d_k(x,x-1)/d_k(x-2,x-1) \to \infty$ for $k\to \infty$.
Now, we are in the same situation as in the beginning, and by relabeling vertices and doing the same argument again, we obtain
that $d_k(y,y+1)/d_k(x-2,x-1)$ is bounded in $k$ for all $y \notin \{x-3,x-1\}$.
Specifically, $d_k(x+1,x+2)/d_k(x-2,x-1)$ is bounded as $n \geq 5$, and thus also $d_k(x+1,x+2)/d_k(x,x+1)$ is bounded in $k$.
In summary, we see that $d_k(y,y+1)/d_k(x,x+1)$ are bounded in $k$ for $y \notin \{x-1\}$. This however implies by the triangle inequality that  $d_k(x-1,x)/d_k(x,x+1)$ is also bounded in $k$ which is a contradiction and finishing the proof.
\end{proof}

We are now prepared to prove the existence of a constant curvature metric on cycles of length at least five. Furthermore, we show that this constant curvature metric is unique up to scaling, and the corresponding curvature is non-negative.

\begin{theorem}\label{th:UniqueConstCurvMetr}
On the cycle $C_n$ for $n \geq 5$, there exists a  constant curvature metric which is unique up to multiplication by a positive constant. Moreover, this constant curvature metric has non-negative Ollivier curvature.
\end{theorem}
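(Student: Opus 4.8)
The plan is to produce the constant-curvature metric as a limit of the normalised Ollivier Ricci flow and then to analyse the limit algebraically. First I would apply Lemma~\ref{lem:RicciFlowNoSurgery}: on $C_n$ with $n\geq 5$ the distance quotient $\max_{x,y}d_k(x,y)/\min_{x\neq y}d_k(x,y)$ stays bounded along the flow, so no surgery is needed and hypothesis $(i)$ of Theorem~\ref{thm:RicciFlow} holds. Hence the normalised flow $d_k/\max_{x,y}d_k(x,y)$ converges to a metric $d_\infty$ of constant Ollivier curvature $\kappa_\infty$, which settles existence. Throughout I would use that the Ollivier curvature is invariant under global rescaling of the metric, so ``unique up to a positive constant'' is the natural notion of uniqueness.

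For uniqueness I would translate the constant-curvature condition into a positive-eigenvector problem. Encoding a metric by its edge-length vector $\vec\ell=(\ell_i)_i$ with $\ell_i=d(i,i+1)$, I would expand $\kappa(i,i+1)=\kappa_\infty$ edge by edge using the transport-plan characterisation of Lemma~\ref{Prop:TransPlan}. Once the geodesic structure is fixed, this becomes a homogeneous linear relation among the three consecutive lengths $\ell_{i-1},\ell_i,\ell_{i+1}$, i.e.\ an eigenvalue equation $A\vec\ell=\kappa_\infty\vec\ell$ whose coefficients are built from the transition probabilities $p(i,i\pm1)$. The matrix $A$ is, after a diagonal shift $cI-A$, an irreducible nonnegative matrix on the cycle, so the Perron--Frobenius theorem forces its positive eigenvector to be one-dimensional; since a genuine metric has $\vec\ell>0$, every constant-curvature metric is this Perron eigenvector up to scaling, and $\kappa_\infty$ is the associated eigenvalue. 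This yields uniqueness up to scaling.

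For non-negativity I would run a Gauss--Bonnet-type summation around the cycle. By Lemma~\ref{Prop:TransPlan}, for each edge $\ell_i\,\kappa(i,i+1)=\sup_{\rho}\sum_{x,y}\rho(x,y)\big[\ell_i-d(x,y)\big]$, the supremum over couplings of the transition probabilities on $\ON(i)$ and $\ON(i+1)$. I would insert, for each edge, the explicit ``shift'' coupling that moves the two branches of the walk one step along the cycle (as in the proof of Lemma~\ref{lem:nonnegORC}); this gives a lower bound on each $\ell_i\,\kappa(i,i+1)$. Summing these lower bounds over all edges, the cross terms telescope around the loop and the remainder is nonnegative, so $\sum_i \ell_i\,\kappa(i,i+1)\geq 0$. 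Specialising to $d_\infty$, whose curvature is the constant $\kappa_\infty$, the left-hand side equals $\kappa_\infty\sum_i\ell_i$, and positivity of the total length forces $\kappa_\infty\geq0$.

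The step I expect to be the main obstacle is the control of the geodesic structure, which underlies both arguments. Whether the distance between vertices two or three apart is realised by the direct arc or by wrapping around the cycle determines both the matrix $A$ and which couplings are admissible, and for small $n$ (already $n=5$) the wrapping arc can be the geodesic. I would therefore first prove a rigidity statement, valid for $n\geq5$, pinning down which arcs are geodesics for any constant-curvature metric, so that a single linear system and a single family of couplings govern all edges simultaneously; only then do the Perron--Frobenius and telescoping computations apply cleanly. Handling this regime uniformly, rather than edge by edge, is where I expect the real work to lie.
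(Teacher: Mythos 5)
Your existence step coincides with the paper's (Lemma~\ref{lem:RicciFlowNoSurgery} plus Theorem~\ref{thm:RicciFlow}), and your non-negativity argument, while different from the paper's, is sound: the paper passes to the universal cover of the cycle (the line) and derives a contradiction from the periodicity of $\widetilde\Delta f$ for a Busemann function $f$, whereas your telescoping of the shift-coupling lower bounds
\begin{equation*}
\ell_i\,\kappa(i,i+1)\;\geq\;\ell_i\bigl(p(i,i+1)+p(i+1,i)\bigr)-p(i,i-1)\,\ell_{i-1}-p(i+1,i+2)\,\ell_{i+1}
\end{equation*}
sums to exactly zero around the cycle using only the triangle inequality, hence needs no control of the geodesic structure at all. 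That part is a genuine, and arguably cleaner, alternative.

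The gap is in uniqueness. Your reduction to a Perron--Frobenius eigenvector problem requires that every constant-curvature metric on $C_n$ satisfies the \emph{same} linear system $A\vec\ell=\kappa\vec\ell$, i.e., that all such metrics share one geodesic structure and one active optimal coupling. You flag this as ``the main obstacle'' but do not resolve it, and it is not a technicality: the theorem following this one in the paper shows that whether the wrapping arcs are geodesics (equivalently, whether $X_2=\emptyset$) changes with the sign of the constant curvature, so the linearity region of the piecewise-linear map $\vec\ell\mapsto(\ell_i\kappa(i,i+1))_i$ genuinely depends on the metric. Two constant-curvature metrics lying in different linearity regions satisfy different eigenvalue equations, and Perron--Frobenius then gives no comparison between them. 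The paper circumvents this entirely: it first shows the two curvature constants agree by comparing the evolutions $d_k=(1-\alpha K)^k d_0$ under the flow, then rescales so that $d\leq d'$ with equality on one edge and propagates equality edge by edge along the support of an optimal transport plan --- an argument that never needs to identify the global geodesic structure. To salvage your approach you would have to prove the geodesic-rigidity statement you postulate, which for $n=5$ (where $d(i-1,i+2)$ can be realized by the two-edge wrapping arc) is exactly where the difficulty sits.
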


\begin{proof}
The existence follows directly from Lemma~\ref{lem:RicciFlowNoSurgery} and Theorem~\ref{thm:RicciFlow}.
Suppose this constant curvature metric has constant curvature $-K$ for some $K>0$.
Taking the universal cover of the graph (without any 2-cells) gives a weighted graph on the line with curvature upper bounded by $-K$. Considering the Busemann function 
$$f:= \lim_{m\to -\infty}\widetilde d(m,\cdot)-\widetilde d(m,0),$$
with $\widetilde d$ the inherited distance on the universal cover,
 we see that this function is an optimizer in the Lipschitz characterization of the curvature and thus,
\[
-K \geq \widetilde \kappa(x,y) = \frac {\widetilde \Delta f(x)-\widetilde \Delta f(y)}{\widetilde d(x,y)}
\]
where $\widetilde \kappa$ is the curvature, and $\widetilde \Delta$ the Laplacian  in the universal cover. But this is a contradiction as $\widetilde\Delta f$ is periodic, and therefore, $\widetilde \Delta f(x)$ would have to tend to infinity as $x \to \infty$. Hence, there exists a $K > 0$ such that $\kappa(x,y) = K$ for every edge $x\sim y$.

It remains to prove the uniqueness. To this end, assume there exists another constant curvature metric $d'$ with constant curvature $K'$. We denote by $d_n'$ the metric obtained after the $k-$the step of the 
Ricci flow. We first show that $K = K'$  must hold. Assume this is not the case and $K > K'$. We may assume without loss of generality, that $d_0 \geq d_0'$. Hence $d_n \geq d_n'$ must hold for every $n \in \N$. Using that $d_n = (1-\alpha K)^nd_0$, we obtain
\begin{equation*}
    \frac{d_n}{d_n'} = \frac{d_0}{d_0'} \left(\frac{1-\alpha K}{1-\alpha K'}\right)^n.
\end{equation*}
Using that $K>K'$, we conclude that there exists an $n \in N$ such that $d_n < d_n'$, a contradiction. The case $K < K'$ can be proven in a similar manner, and we conclude that $K=K'$ must hold.

Next, we prove that $d = d'$ holds true. After scaling by an appropriate constant, we may assume, without loss of generality, that $d(x_1,x_2) = d'(x_1,x_2)$ and $d \leq d'$. Let $\rho'$ be an optimal transport plan, such that
\begin{equation*}
    \kappa'(x_1,x_2) = \sum_{\substack{x \in \CN(x_1) \\ y \in \CN(x_2)}}\rho'(x,y) \left[1 - \frac{d'(x,y)}{d'(x_1,x_2)}\right].
\end{equation*}
If mass is transported over the edge $x_2 \sim x_3$, we conclude that $d(x_2,x_3) = d'(x_2,x_3)$ must hold, as otherwise
\begin{equation*}
    \kappa'(x_1,x_2) = \sum_{\substack{x \in \CN(x_1) \\ y \in \CN(x_2)}}\rho'(x,y) \left[1 - \frac{d'(x,y)}{d'(x_1,x_2)}\right] <  \sum_{\substack{x \in \CN(x_1) \\ y \in \CN(x_2)}}\rho'(x,y) \left[1 - \frac{d(x,y)}{d(x_1,x_2)}\right] \leq \kappa(x_1,x_2).
\end{equation*}
This contradicts $\kappa(x_1,x_2) = \kappa'(x_1,x_2)$. Hence, we may assume without loss of generality that mass is transported along the path $(x_0 \sim x_{n-1} \sim \ldots \sim x_3)$. Again, this implies $d(x_i, x_{(i+1) \mod n}) = d'(x_i,x_{(i+1) \mod n})$ for every $i \in \{3,\ldots,n-1\}$.

By applying the same arguments to the edge $x_0 \sim x_{n-1}$, we obtain that $d(x_2,x_3) = d'(x_2,x_3)$ or $d(x_0,x_1)=d'(x_0,x_1)$. By symmetry, we may assume without loss of generality that $d(x_2,x_3) = d'(x_2,x_3)$. It remains to prove that $d(x_0,x_1) = d'(x_0,x_1)$. Assume, for the sake of contradiction, that $d(x_0,x_1) < d'(x_0,x_1)$. Let $\rho$ be an optimal transport plan, such that
\begin{equation*}
    \kappa(x_0,x_1) = \sum_{\substack{x \in \CN(x_0) \\ y \in \CN(x_1)}}\rho(x,y) \left[1 - \frac{d(x,y)}{d(x_0,x_1)}\right].
\end{equation*}
Using that $d(x_i, x_{i+1}) = d'(x_i, x_{i+1})$ for every $i \neq 0$, we conclude
\begin{equation*}
    \kappa(x_0,x_1) = \sum_{\substack{x \in \CN(x_0) \\ y \in \CN(x_1)}}\rho(x,y) \left[1 - \frac{d(x,y)}{d(x_0,x_1)}\right] <  \sum_{\substack{x \in \CN(x_1) \\ y \in \CN(x_2)}}\rho(x,y) \left[1 - \frac{d'(x,y)}{d'(x_0,x_1)}\right] \leq \kappa'(x_0,x_1),
\end{equation*}
contradicting $\kappa(x_0,x_1) = \kappa'(x_0,x_1)$. Hence, $d(x_0,x_1) = d'(x_0,x_1)$ must hold, completing the proof.
\end{proof}

Next, we prove the main result of this subsection. Namely, the unique constant curvature metric on a cycle of length at most five is Ollivier-Betti sharp if and only if it has zero Ollivier curvature.

\begin{theorem}
    Let $C_n$ be a weighted cycle of length $n \geq 5$, equipped with the unique constant curvature metric. Then, the following are equivalent:
    \begin{itemize}
        \item[$(i)$] $C_n$ is Ollivier-Betti sharp.
        \item[$(ii)$] $C_n$ has zero Ollivier curvature.
        \item[$(iii)$] $M_2(C_n)$ does not contain any 2-cells, i.e., $X_2 = \emptyset$.
    \end{itemize}
\end{theorem}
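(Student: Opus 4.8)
The plan is to establish the cyclic chain $(iii)\Rightarrow(i)\Rightarrow(ii)\Rightarrow(iii)$. Write $\ell_i=d(i,i+1)$ for the edge lengths, $L=\sum_i\ell_i$ for the total length, and recall that $C_n$ is $2$-regular, so $\deg_{\max}/2=1$. The first task is to understand $(iii)$ combinatorially. Since $C_n$ has no cycle other than the full $n$-cycle, $X_2$ is either empty or equal to $\{(0\sim\dots\sim n-1)\}$; unwinding the defining condition of $X_2$ shows that the full cycle lies in $X_2$ exactly when some three consecutive edges have total length strictly greater than $L/2$. Hence $(iii)$ is equivalent to demanding that every three consecutive edges have total length at most $L/2$. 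A direct computation of $\ker\delta_1/\Imm\delta_0$ then gives $\beta_1(M_2(C_n))=1$ when $X_2=\emptyset$ (the complex is homotopy equivalent to $S^1$) and $\beta_1(M_2(C_n))=0$ otherwise (a disc is glued in). Together with the fact that the constant curvature metric always has non-negative curvature (Theorem~\ref{th:UniqueConstCurvMetr}), this immediately yields $(iii)\Rightarrow(i)$: if $X_2=\emptyset$ then $\beta_1(M_2(C_n))=1=\deg_{\max}/2$, which is precisely Ollivier-Betti sharpness.

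For $(i)\Rightarrow(ii)$ I would argue by contradiction. Suppose $C_n$ is Ollivier-Betti sharp but its constant curvature $K$ is strictly positive. Then every edge has strictly positive Ollivier curvature, so Theorem~\ref{th:Betti_vanish_non_rev} forces $\beta_1(M_2(C_n))=0$, contradicting sharpness. As $K\ge 0$ always holds, we conclude $K=0$.

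The heart of the argument is $(ii)\Rightarrow(iii)$, which I would prove in contrapositive form: if $X_2\neq\emptyset$ then $K>0$. The idea is a total-curvature comparison between $C_n$ and its universal cover, the bi-infinite weighted line $\widetilde{\mathcal L}$ coming from $M_1(C_n)$, whose lengths and transition probabilities are $n$-periodic. On $\widetilde{\mathcal L}$ there are no shortcuts, and mirroring the Busemann argument of Theorem~\ref{th:UniqueConstCurvMetr} I would first show that the arc-length (Busemann) function $f$ is the optimizer in the Lipschitz characterization of the curvature of each edge: minimizing $\nabla_{x,x+1}\widetilde\Delta g$ over $1$-Lipschitz $g$ with $\nabla_{x+1,x}g=1$ forces $g(x-1)-g(x)=-\ell_{x-1}$ and $g(x+2)-g(x+1)=\ell_{x+1}$, which is exactly the local profile of $f$. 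Consequently $\ell_x\,\widetilde\kappa(x,x+1)=\widetilde\Delta f(x)-\widetilde\Delta f(x+1)$, and summing over one period telescopes, using periodicity of $\widetilde\Delta f$, to $\sum_x\ell_x\,\widetilde\kappa(x,x+1)=0$. Next, since the relevant cycle distances are pointwise at most the lifted ones while the base length $\ell_x$ is unchanged, the feasible local profiles for the curvature of $x\sim x+1$ on $C_n$ form a subset of those on $\widetilde{\mathcal L}$, whence $\kappa(x,x+1)\ge\widetilde\kappa(x,x+1)$ for every edge. Finally, if the three consecutive edges around $1\sim 2$ satisfy $\ell_0+\ell_1+\ell_2>L/2$, then $d(0,3)=L-\ell_0-\ell_1-\ell_2<\ell_0+\ell_1+\ell_2$ is a genuine shortcut; the profile minimizing the objective for the edge $1\sim 2$ has $g(3)-g(0)=\ell_0+\ell_1+\ell_2$ and hence violates the cycle Lipschitz constraint $|g(3)-g(0)|\le d(0,3)$, so it is infeasible downstairs and the constrained infimum is strictly larger, giving $\kappa(1,2)>\widetilde\kappa(1,2)$. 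Summing the per-edge inequalities with this one strict then gives $KL=\sum_x\ell_x\,\kappa(x,x+1)>\sum_x\ell_x\,\widetilde\kappa(x,x+1)=0$, hence $K>0$.

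The main obstacle lies in this last implication, and two points deserve care. The first is to prove that the arc-length function is genuinely optimal on the line, so that the periodic telescoping produces exactly $0$ rather than merely an inequality; this rests on the fact that $\nabla_{x,x+1}\widetilde\Delta g$ depends only on the four values $g(x-1),\dots,g(x+2)$ and is separately minimized and maximized at the extreme admissible slopes. The second is upgrading the per-edge bound $\kappa\ge\widetilde\kappa$ to a strict inequality at the middle edge, which I would obtain from the linearity of the objective in $(g(x-1),g(x+2))$ together with the observation that the unique line-minimizer becomes infeasible once the shortcut Lipschitz constraint is imposed. A minor technical case, to be dispatched separately, is the degenerate possibility that a single edge has length exceeding $L/2$; then that edge is not a geodesic and the base distance $d(x,x+1)$ in the comparison must be handled directly, but this situation already forces $X_2\neq\emptyset$ and yields $K>0$ by the same shortcut mechanism.
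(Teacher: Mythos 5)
Your proposal is correct and follows essentially the same route as the paper: $(i)\Rightarrow(ii)$ via the vanishing theorem for positive curvature, $(iii)\Rightarrow(i)$ via a direct computation of $\beta_1=1$ together with the non-negativity of the constant curvature metric, and the key implication $(ii)\Leftrightarrow(iii)$ via the universal cover of the $1$-skeleton, the optimality of the Busemann (arc-length) function, and the periodicity of $\widetilde\Delta f$. The only differences are presentational: you package the final step as an exact telescoping identity $\sum_x \ell_x\,\widetilde\kappa(x,x+1)=0$ over one period and conclude $K>0$ from the strict gap $\kappa>\widetilde\kappa$ at the shortcut edge, whereas the paper derives a contradiction from divergent partial sums against the boundedness of $\widetilde\Delta f$; you also spell out in more detail why the arc-length profile is the unique optimizer and why the shortcut constraint forces the strict inequality, points the paper asserts more tersely.
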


\begin{proof}
    $"(i) \implies (ii)"$ We prove this by contraposition. To this end, assume the constant curvature metric has strictly positive curvature. By Theorem~\ref{th:Betti_vanish_non_rev}, the first Betti number vanishes and therefore $C_n$ is not Ollivier-Betti sharp.

    $"(ii) \implies (iii)"$ Denote by $d$ the constant curvature metric. Suppose the constant curvature metric has curvature zero, and there is a two-cell for which we will find a contradiction. Taking the universal cover of the graph without any 2-cells gives a weighted graph on the line. Since $X_2 \neq \emptyset$, there exists an edge $(x_i,x_{i+1})$ such that
    \[
        d(x_{i-1},x_i) + d(x_i,x_{i+1}) + d(x_{i+1},x_{i+2}) > d(x_{i+2},x_{i+3}) + \ldots + d(x_{i-2},x_{i-1}).
    \]
    This implies that in the universal cover of the graph without any 2-cells, the images of the edge $(x_i,x_{i+1})$ have negative Ollivier curvature.
    Moreover the universal cover has non-positive curvature.
    We recall that all objects on the universal cover are indicated by a tilde.
    As the Busemann function $f:=\lim_{m \to -\infty} \widetilde d(m,\cdot) - \widetilde d(m,0)$ on the universal cover is an optimizer of the Lipschitz characterization of the curvature, we have
    \[
        \widetilde \kappa(x,y) = \frac {\widetilde \Delta f(x)-\widetilde \Delta f(y)}{\widetilde d(x,y)}.
    \]
    Particularly, for $j>i$,
    \[
        \widetilde \Delta f(i) - \widetilde \Delta f(j) = \sum_{k=i}^{j-1} \widetilde d(k,k+1) \widetilde \kappa(k,k+1). 
    \]
    However, this sum converges to minus infinity, as $j \to \infty$ and $\widetilde \Delta f(j)$ stays bounded as $j \to \infty$. This is a contradiction proving $"(ii) \implies (iii)"$. 

    $"(iii) \implies (i)"$ Assume $X_2 = \emptyset$. By solving a homogeneous system of $n$ linear equations with $n+1$ unknowns, we obtain a non-zero $\alpha \in C(X_1)$ and $c \in \R$ such that $\dst \alpha =c$. Since $X_2 = \emptyset$, we conclude that $\alpha \in H_1(M_2(C_n))$ and therefore $\beta_1(M_2(C_n)) \geq 1$. Using Theorem~\ref{th:Main_res_non_rev}, we obtain $\beta_1(M_2(C_n)) = 1$. According to Theorem~\ref{th:UniqueConstCurvMetr}, the cycle $C_n$ has non-negative Ollivier curvature. Hence, we conclude that $C_n$ is Ollivier-Betti sharp. This concludes the proof.
\end{proof}

\subsection{Rope ladder graph}

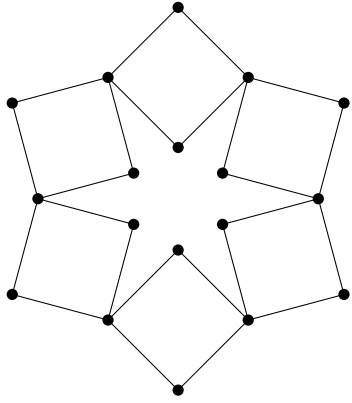
\begin{figure}
    \begin{center}
        \begin{tikzpicture}[x=0.75pt,y=0.75pt,yscale=-1,xscale=1, vertex/.style={
            shape=circle, fill=black, inner sep=1.5pt	
        }]
        \draw   (315,14.64) -- (350.36,50) -- (315,85.36) -- (279.64,50) -- cycle ;
        \node[vertex] (0) at (315,14.64) {};
        \node[vertex] (1) at (350.36,50) {};
        \node[vertex] (2) at (315,85.36) {};
        \node[vertex] (3) at (279.64,50) {};
        \draw   (398.65,62.94) -- (385.71,111.24) -- (337.41,98.3) -- (350.36,50) -- cycle ;
        \node[vertex] (4) at (398.65,62.94) {};
        \node[vertex] (5) at (385.71,111.24) {};
        \node[vertex] (6) at (337.41,98.3) {};
        \draw   (398.65,159.53) -- (350.36,172.47) -- (337.41,124.18) -- (385.71,111.24) -- cycle ;
        \node[vertex] (7) at (398.65,159.53) {};
        \node[vertex] (8) at (350.36,172.47) {};
        \node[vertex] (9) at (337.41,124.18) {};
        \draw   (315,207.83) -- (279.64,172.47) -- (315,137.12) -- (350.36,172.47) -- cycle ;
        \node[vertex] (10) at (315,207.83) {};
        \node[vertex] (11) at (279.64,172.47) {};
        \node[vertex] (12) at (315,137.12)  {};
        \draw   (231.35,159.53) -- (244.29,111.24) -- (292.59,124.18) -- (279.64,172.47) -- cycle ;
        \node[vertex] (13) at (231.35,159.53) {};
        \node[vertex] (14) at (244.29,111.24) {};
        \node[vertex] (15) at (292.59,124.18) {};
        \draw   (231.35,62.94) -- (279.64,50) -- (292.59,98.3) -- (244.29,111.24) -- cycle ;
        \node[vertex] (16) at (231.35,62.94) {};
        \node[vertex] (18) at (292.59,98.3) {};
        \end{tikzpicture}
        \caption{Illustration of the rope ladder graph for $n=6$}
        \label{fig:rope_ladder_graph}  
    \end{center}
\end{figure}

Take $n$ different 4-cycles and denote them by $(x^k_0\sim x^k_1\sim x^k_2\sim x^k_3)$ for $k=0,\dots,n-1$. The rope ladder graph is obtained by identifying $x^k_2$ with $x^{(k+1) \mod n}_0$ for $k = 0, \dots, n-1$. We set the vertex weight $\mu\equiv 1$, $w(x,y) = 1$ if $x\sim y$ in some 4-cycle and $w(x,y) = 0$ otherwise. An illustration of the resulting graph for $n=6$ can be found in Figure~\ref{fig:rope_ladder_graph}. Denote the resulting graph by $G=(V,w,\mu,d)$ where $d$ denotes the combinatorial path distance. It is an easy exercise to check that $\kappa(x,y) = 0$ for every edge $x\sim y$, if $n \geq 3$.

This graph satisfies $\deg_{\min} = 2$ and $\deg_{\max} = 4$. Furthermore, for $n \geq 3$, the first Betti number of the graph is equal to one. This can be seen as follows. According to Theorem~\ref{th:MainRes}, we have 
\[
    \beta_1(M_2(G)) \leq \frac{\deg_{\min}}{2} = 1.
\] 
On the other hand, define $\alpha \in C(X_1)$ by 
\[
    \alpha(x,y) = \begin{cases}
        1 &: (x,y) \in \{(x_3^k, x_0^k), (x_1^k, x_0^k): k=0,\dots,n-1\},\\
        -1 &:(x,y) \in \{(x_3^k, x_2^k), (x_1^k, x_2^k): k=0,\dots,n-1\}.
    \end{cases}
\]
The 1-form $\alpha$ satisfies 
\begin{align*}
    \delta^*\alpha(x_2^k) &= \sum_{y:x_2^k\sim y} \frac{w(x_2^k,y)}{\mu(x_2^k)}\alpha(x_2^k,y) \\
    &= \alpha(x_2^k, x_3^k) + \alpha(x_2^k, x_1^k) + \alpha(x_0^{(k+1)\mod n}, x_3^{(k+1)\mod n}) + \alpha(x_0^{(k+1)\mod n}, x_1^{(k+1)\mod n})\\
    &= 0,
\end{align*}
for every $k=0,\dots,n-1$. Since $x^k_2 = x^{(k+1) \mod n}_0$, we obtain $\delta^*\alpha(x_0^k) = 0$ for every $k=0,\dots,n-1$. Furthermore, we have
\begin{align*}
    \delta^*\alpha(x_1^k) &= \sum_{y:x_1^k\sim y} \frac{w(x_1^k,y)}{\mu(x_1^k)}\alpha(x_1^k,y) \\
    &= \alpha(x_1^k, x_0^k) + \alpha(x_1^k, x_2^k)\\
    &= 0,
\end{align*}
for every $k=0,\dots,n-1$. Similarly, one can show that $\delta^*\alpha(x_3^k) = 0$. Finally, observe that the only cycles of length less than or equal to five are the 4-cycles $(x^k_0\sim x^k_1\sim x^k_2\sim x^k_3)$ for $k=0,\dots,n-1$ and 
\[
    \delta\alpha((x^k_0\sim x^k_1\sim x^k_2\sim x^k_3)) = \alpha(x_0^k,x_1^k) + \alpha (x_1^k, x_2^k) + \alpha(x_2^k, x_3^k) + \alpha(x_3^k, x_0^k) = 0.
\]
Hence, $\alpha \in H_1(M_2(G))$ and therefore $\beta_1(M_2(G)) = 1$. Thus, $G$ is an example for which $\beta_1(M_2(G)) = \deg_{\min}/2$, while not having the combinatorial structure of a flat Torus.

\subsection{Zero-range process}

\begin{figure}
    \begin{center}     
        \begin{tikzpicture}[x=0.75pt,y=0.75pt,yscale=-1,xscale=1,vertex/.style={
            shape=circle, fill=black, inner sep=1.5pt	
        }]

        \draw   (330,96.64) -- (365.36,132) -- (330,167.36) -- (294.64,132) -- cycle ;
        \node[vertex] (0) at (330,96.64) {};
        \node[vertex] (1) at (365.36,132) {};
        \node[vertex] (2) at (330,167.36) {};
        \node[vertex] (3) at (294.64,132) {};
        \draw   (294.64,132) -- (330,167.36) -- (294.64,202.71) -- (259.29,167.36) -- cycle ;
        \node[vertex] (4) at (294.64,202.71) {};
        \node[vertex] (5) at (259.29,167.36) {};
        \draw   (365.36,132) -- (400.71,167.36) -- (365.36,202.71) -- (330,167.36) -- cycle ;
        \node[vertex] (6) at (400.71,167.36) {};
        \node[vertex] (7) at (365.36,202.71) {};
        \draw   (330,167.36) -- (365.36,202.71) -- (330,238.07) -- (294.64,202.71) -- cycle ;
        \node[vertex] (8) at (330,238.07) {};
        \draw   (259.29,167.36) -- (294.64,202.71) -- (259.29,238.07) -- (223.93,202.71) -- cycle ;
        \node[vertex] (9) at (259.29,238.07) {};
        \node[vertex] (10) at (223.93,202.71) {};
        \draw   (294.64,202.71) -- (330,238.07) -- (294.64,273.42) -- (259.29,238.07) -- cycle ;
        \node[vertex] (11) at (294.64,273.42) {};
        \draw   (400.71,167.36) -- (436.07,202.71) -- (400.71,238.07) -- (365.36,202.71) -- cycle ;
        \node[vertex] (12) at (436.07,202.71) {};
        \node[vertex] (13) at (400.71,238.07) {};
        \draw   (365.36,202.71) -- (400.71,238.07) -- (365.36,273.42) -- (330,238.07) -- cycle ;
        \node[vertex] (14) at (365.36,273.42) {};
        \draw   (436.07,202.71) -- (471.42,238.07) -- (436.07,273.42) -- (400.71,238.07) -- cycle ;
        \node[vertex] (15) at (471.42,238.07) {};
        \node[vertex] (16) at (436.07,273.42) {};
        \draw   (223.93,202.71) -- (259.29,238.07) -- (223.93,273.42) -- (188.58,238.07) -- cycle ;
        \node[vertex] (17) at (223.93,273.42)  {};
        \node[vertex] (18) at (188.58,238.07) {};

        \draw    (471.42,238.07) -- (506.78,273.42) ;
        \node[vertex] (19) at (506.78,273.42)  {};
        \draw    (188.58,238.07) -- (153.22,273.42) ;
        \node[vertex] (20) at (153.22,273.42)  {};

        \draw    (153.22,273.42) .. controls (196.5,147) and (196.5,146) .. (330,96.64) ;
        \draw    (330,96.64) .. controls (461.5,145) and (462.5,145) .. (506.78,273.42) ;
        \draw    (188.58,238.07) .. controls (288.5,2) and (289.5,1) .. (365.36,132) ;
        \draw    (223.93,202.71) .. controls (309.5,1) and (309.5,0) .. (400.71,167.36) ;
        \draw    (294.64,132) .. controls (370.5,0) and (370.5,1) .. (471.42,238.07) ;
        \draw    (259.29,167.36) .. controls (351.5,1) and (351.5,1) .. (436.07,202.71) ;
        \end{tikzpicture}
        \caption{Illustration of the zero-range process graph for $l=6$ and $M=2$}
        \label{fig:zero_range_process} 
    \end{center}
\end{figure}
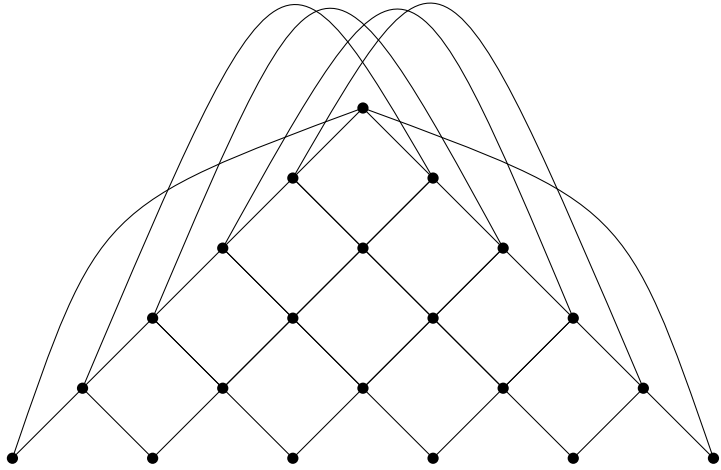

The zero-range process is a stochastic model of interacting particles. It describes the dynamics of particles hopping between the vertices of a graph, with the hopping rates depending only on the number of particles at the departure site. We discuss the case where the underlying graph is $C_l=(x_0 \sim x_1 \sim \dots \sim x_{l-1})$, the cycle of length $l$. If we denote by $M$ the total number of particles, the possible configurations of the system are given by
\[
    V = \left\{n \in \N_0^{C_n} : \sum_{k=0}^{l-1}n(x_k) =M \right\},
\]
where $n(x_k)$ denotes the number of particles at vertex $x_k$. It is possible to transition from one configuration $n$ to another configuration $n'$ if and only if $n-n'= 1_x - 1_y$ for some adjacent vertices $x \sim y$ in $C_l$. In this case, we set $w(n,n') = 1$; otherwise, we set $w(n,n') = 0$. Furthermore, we set $\mu \equiv 1$ and denote the resulting graph by $G=(V,w,\mu,d)$, where $d$ denotes the combinatorial path distance. An illustration of the resulting graph for $l=6$ and $M=2$ can be found in Figure~\ref{fig:zero_range_process}. 

In the following, we study the case where $l\geq6$ and $M=2$. It is an easy exercise to check that $\kappa(x,y) = 0$ for every edge $x\sim y$. The graph satisfies $\deg_{\min} = 2$ and $\deg_{\max} = 4$. Furthermore, the first Betti number of the graph is equal to one. This can be seen as follows. According to Theorem~\ref{th:MainRes}, we have
\[
    \beta_1(M_2(G)) \leq \frac{\deg_{\min}}{2} =1.
\]
Given a configuration $n$, we define
\[
    (p_1(n), p_2(n)) = (\min\{k: n(x_k) > 0\}, \max\{k: n(x_k) > 0\}),
\]
i.e., the locations of the two particles in the current configuration. Observe, that a configuration is fully determined by the quantities $p_1(n)$ and $p_2(n)$. Given a configuration $n$, we define the configuration $n^+$ by 
\[
    p_1(n^+) = p_1(n) \quad \mbox{ and } \quad p_2(n^+) = (p_2(n) + 1) \mod l.
\]
Similarly, define the configuration $n^{-}$ by
\[
    p_1(n^-) = p_1(n) \quad \mbox{ and } \quad p_2(n^-) = (p_2(n) - 1) \mod l,
\]
as well as the configuration $n_+$ by 
\[
    p_1(n_+) = (p_1(n) + 1) \mod l  \quad \mbox{ and } \quad p_2(n_+) = p_2(n),
\]
and the configuration $n_-$ by
\[
    p_1(n_+) = (p_1(n) - 1) \mod l  \quad \mbox{ and } \quad p_2(n_+) = p_2(n),
\]

Define $\alpha \in C(X_1)$ by 
\[
    \alpha(n,n') =  \begin{cases}
        1 &: n' \in \{n^+, n_+\},\\
        -1 &: n' \in \{n^-,n_-\}.
    \end{cases}
\]
If $n$ is a configuration with $p_1(n) = p_2(n)$, then
\begin{align*}
    \delta^*\alpha(n) &= \sum_{n': n \sim n'} \frac{w(n,n')}{\mu(n)} \alpha(n,n') \\
    &= \alpha(n, n^+) + \alpha(n,n^-) \\
    &= 0.
\end{align*}
On the other hand, if $p_1(n) < p_2(n)$, we obtain
\begin{align*}
    \delta^*\alpha(n) &= \sum_{n': n \sim n'} \frac{w(n,n')}{\mu(n)} \alpha(n,n') \\
    &= \alpha(n, n^+) + \alpha(n,n^-) + \alpha(n,n_+) + \alpha(n,n_-) \\
    &= 0.
\end{align*}
Since $l\geq 6$, the only cycles of length less than or equal to five are the 4-cycles of the form
\[
    (n \sim n^+ \sim (n^+)_- \sim ((n^+)_-)^-).
\]
For 4-cycles of this form we obtain
\begin{align*}
    \delta\alpha((n \sim n^+ \sim (n^+)_- \sim (n^+)_-)^-) &= \alpha(n, n^+) + \alpha(n^+, (n^+)_-) + \alpha((n^+)_-,  ((n^+)_-)^-) + \alpha(((n^+)_-)^-, n) \\
    &= 0.
\end{align*}
Hence, $\alpha \in H_1(M_2(G))$ and therefore $\beta_1(M_2(G)) = 1$. Thus, $G$ is another example for which $\beta_1(M_2(G)) = \deg_{\min}/2$, while not having the combinatorial structure of a flat Torus.

\subsection{Bone-idle does not imply Ollivier-Betti sharp}\label{sec:OBSnotBI}

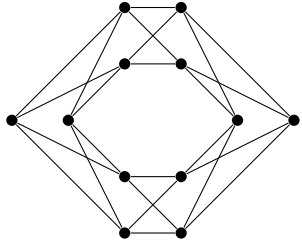
\begin{figure}
    \begin{center}
        \begin{tikzpicture}[x=1.5cm, y=1.5cm,
            vertex/.style={
                shape=circle, fill=black, inner sep=1.5pt	
            }
        ]
        
        \node[vertex] (1) at (0, 0) {};
        \node[vertex] (7) at (-0.5, 0) {};
        \node[vertex] (2) at (0.5, 0.5) {};
        \node[vertex] (8) at (0.5, 1) {};
        \node[vertex] (3) at (1, 0.5) {};
        \node[vertex] (9) at (1, 1) {};
        \node[vertex] (4) at (1.5, 0) {};
        \node[vertex] (10) at (2, 0) {};
        \node[vertex] (5) at (1, -0.5) {};
        \node[vertex] (11) at (1, -1) {};
        \node[vertex] (6) at (0.5, -0.5) {};
        \node[vertex] (12) at (0.5, -1) {};

        \draw (1) -- (2);
        \draw (2) -- (3);
        \draw (3) -- (4);
        \draw (4) -- (5);
        \draw (5) -- (6);
        \draw (6) -- (1);
        \draw (7) -- (8);
        \draw (8) -- (9);
        \draw (9) -- (10);
        \draw (10) -- (11);
        \draw (11) -- (12);
        \draw (12) -- (7);
        \draw (7) -- (2);
        \draw (7) -- (6);
        \draw (8) -- (1);
        \draw (8) -- (3);
        \draw (9) -- (2);
        \draw (9) -- (4);
        \draw (10) -- (3);
        \draw (10) -- (5);
        \draw (11) -- (4);
        \draw (11) -- (6);
        \draw (12) -- (5);
        \draw (12) -- (1);
    
        \end{tikzpicture}
        \caption{Illustration of the bone-idle graph $BI_{6}$}
        \label{fig:bone_idle_graph}
    \end{center}
\end{figure}

Let $G=(V,w,\mu,d)$ be a graph equipped with the combinatorial path distance $d:V\times V \to [0,\infty)$. In the case of the normalized Laplacian, that is, when $w:V\times V \to \{0,1\}$ and $\mu(x) = \deg(x)$, we have seen in Corollary~\ref{Cor:OBSimpliesBI} that an Ollivier-Betti sharp graph is bone-idle. In this section, we present a counterexample to the converse implication.

To this end, we take two different $n$-cycles $(x_0\sim x_1 \sim \dots \sim x_{n-1})$ and $(y_0\sim y_1 \sim \dots \sim y_{n-1})$. We then add the edges $x_k\sim y_{(k-1)\mod n}$ and $x_k\sim y_{(k+1)\mod n}$. Denote the resulting graphs by $BI_n$. An illustration of the graph $BI_6$ can be found in Figure~\ref{fig:bone_idle_graph}. As shown in \cite{hehl2024ollivier}, the resulting graph is bone-idle for $n \geq 6$. But $BI_n$ is not Ollivier-Betti sharp, since the first Betti number $\beta_1(M_2(BI_n)) = 1 < \deg_{\max}/2 = 2$.

\begin{figure}
    \begin{center}
        \begin{tikzpicture}[x=1.5cm, y=1.5cm,
            vertex/.style={
                shape=circle, fill=black, inner sep=1.5pt	
            }
        ]
        \node[vertex] (1) at (0, 0) {};
        \node[vertex] (2) at (1, 0) {};
        \node[vertex] (3) at (2, 0) {};
        \node[vertex] (4) at (3, 0) {};
        \node[vertex] (5) at (4, 0) {};

        \node[vertex] (6) at (0, 1) {};
        \node[vertex] (7) at (1, 1) {};
        \node[vertex] (8) at (2, 1) {};
        \node[vertex] (9) at (3, 1) {};
        \node[vertex] (10) at (4, 1) {};
        
        \node[vertex] (11) at (0, 2) {};
        \node[vertex] (12) at (1, 2) {};
        \node[vertex] (13) at (2, 2) {};
        \node[vertex] (14) at (3, 2) {};
        \node[vertex] (15) at (4, 2) {};

        \node[vertex] (16) at (0, 3) {};
        \node[vertex] (17) at (1, 3) {};
        \node[vertex] (18) at (2, 3) {};
        \node[vertex] (19) at (3, 3) {};
        \node[vertex] (20) at (4, 3) {};

        \node[vertex] (21) at (0, 4) {};
        \node[vertex] (22) at (1, 4) {};
        \node[vertex] (23) at (2, 4) {};
        \node[vertex] (24) at (3, 4) {};
        \node[vertex] (25) at (4, 4) {};

        \draw (1) -- (2);
        \draw (2) -- (3);
        \draw (3) -- (4);
        \draw (4) -- (5);

        \draw (6) -- (7);
        \draw (7) -- (8);
        \draw (8) -- (9);
        \draw (9) -- (10);

        \draw (1) -- (7);
        \draw (2) -- (6);
        \draw (3) -- (9);
        \draw (4) -- (8);

        \draw (1) -- (6);
        \draw (2) -- (7);
        \draw (3) -- (8);
        \draw (4) -- (9);
        \draw (5) -- (10);

        \draw (7) -- (13);
        \draw (8) -- (12);
        \draw (9) -- (15);
        \draw (10) -- (14);

        \draw (11) -- (12);
        \draw (12) -- (13);
        \draw (13) -- (14);
        \draw (14) -- (15);

        \draw (11) -- (17);
        \draw (12) -- (16);
        \draw (13) -- (19);
        \draw (14) -- (18);

        \draw (11) -- (6);
        \draw (12) -- (7);
        \draw (13) -- (8);
        \draw (14) -- (9);
        \draw (15) -- (10);
        
        \draw (17) -- (23);
        \draw (18) -- (22);
        \draw (19) -- (25);
        \draw (20) -- (24);

        \draw (16) -- (17);
        \draw (17) -- (18);
        \draw (18) -- (19);
        \draw (19) -- (20);

        \draw (11) -- (16);
        \draw (12) -- (17);
        \draw (13) -- (18);
        \draw (14) -- (19);
        \draw (15) -- (20);

        \draw (21) -- (22);
        \draw (22) -- (23);
        \draw (23) -- (24);
        \draw (24) -- (25);

        \draw (21) -- (16);
        \draw (22) -- (17);
        \draw (23) -- (18);
        \draw (24) -- (19);
        \draw (25) -- (20);

        \draw[dashed] (1) -- (0,-1);
        \draw[dashed] (2) -- (1,-1);
        \draw[dashed] (3) -- (2,-1);
        \draw[dashed] (4) -- (3,-1);
        \draw[dashed] (5) -- (4,-1);

        \draw[dashed] (1) -- (-1,-1);
        \draw[dashed] (2) -- (2,-1);
        \draw[dashed] (3) -- (1,-1);
        \draw[dashed] (4) -- (4,-1);
        \draw[dashed] (5) -- (3,-1);

        \draw[dashed] (21) -- (0,5);
        \draw[dashed] (22) -- (1,5);
        \draw[dashed] (23) -- (2,5);
        \draw[dashed] (24) -- (3,5);
        \draw[dashed] (25) -- (4,5);

        \draw[dashed] (21) -- (1,5);
        \draw[dashed] (22) -- (0,5);
        \draw[dashed] (23) -- (3,5);
        \draw[dashed] (24) -- (2,5);
        \draw[dashed] (25) -- (5,5);
        
        \draw[dashed] (1) -- (-1,0);
        \draw[dashed] (6) -- (-1,1);
        \draw[dashed] (11) -- (-1,2);
        \draw[dashed] (16) -- (-1,3);
        \draw[dashed] (21) -- (-1,4);

        \draw[dashed] (6) -- (-1,2);
        \draw[dashed] (11) -- (-1,1);
        \draw[dashed] (16) -- (-1,4);
        \draw[dashed] (21) -- (-1,3);

        \draw[dashed] (5) -- (5,0);
        \draw[dashed] (10) -- (5,1);
        \draw[dashed] (15) -- (5,2);
        \draw[dashed] (20) -- (5,3);
        \draw[dashed] (25) -- (5,4);

        \draw[dashed] (5) -- (5,1);
        \draw[dashed] (10) -- (5,0);
        \draw[dashed] (15) -- (5,3);
        \draw[dashed] (20) -- (5,2);
    \end{tikzpicture}
    \caption{Illustration of the chessboard lattice}
    \label{fig:chessboard_lattice}
    \end{center}
\end{figure}
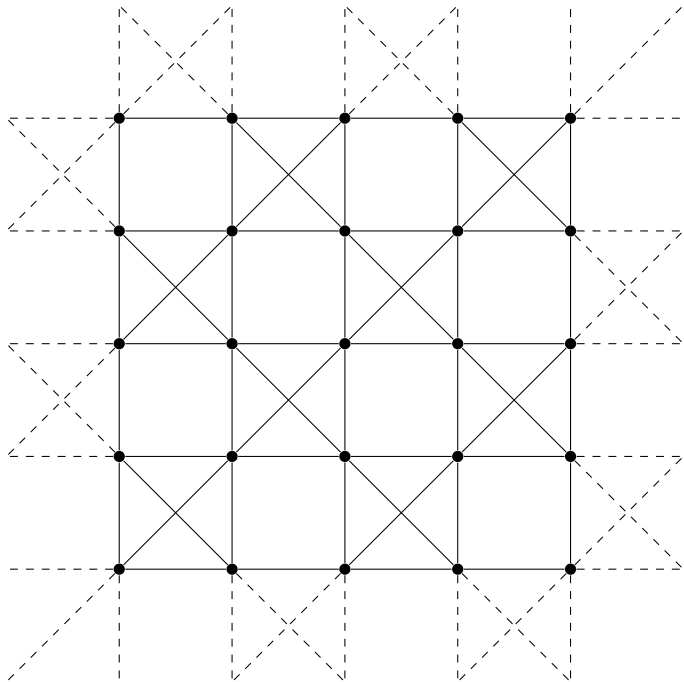

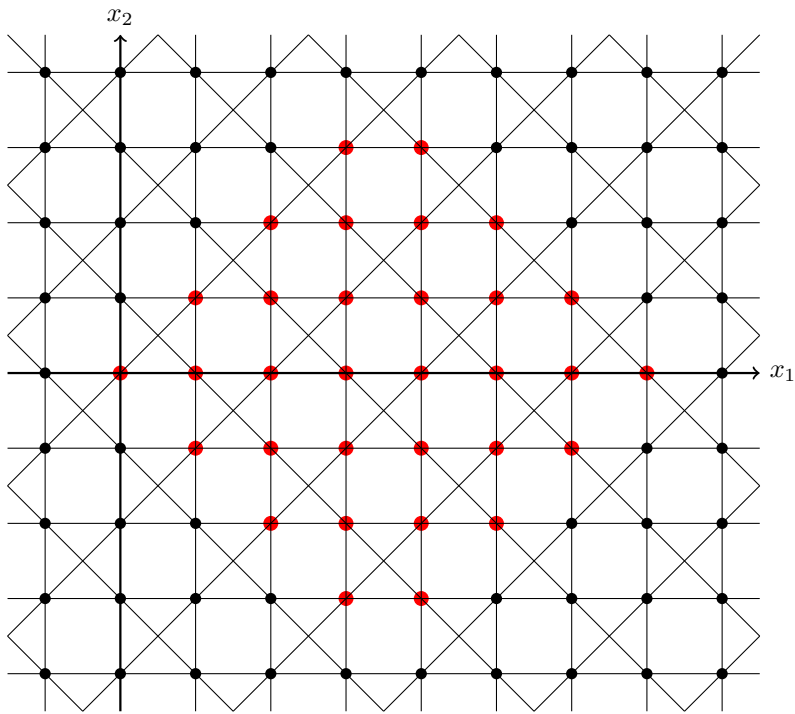
\begin{figure}
    \begin{center}

    \begin{tikzpicture}
        \tikzstyle{default point}=[circle, fill=black, inner sep=1.5pt]
        \tikzstyle{highlight point}=[circle, fill=red, inner sep=2pt]
        
        \foreach \x in {-1,8}{
            \foreach \y in {-4,...,4} {
                \node[default point] at (\x, \y) {};
            }
        }
        \foreach \y in {-4,...,-1} {
            \node[default point] at (0, \y) {};
        }
        \node[highlight point] at (0, 0) {};
        \foreach \y in {1,...,4} {
            \node[default point] at (0, \y) {};
        }
        \foreach \y in {-4,...,-2} {
            \node[default point] at (1, \y) {};
        }
        \foreach \y in {-1,...,1} {
            \node[highlight point] at (1, \y) {};
        }
        \foreach \y in {2,...,4} {
            \node[default point] at (1, \y) {};
        }
        \foreach \y in {-4,...,-3} {
            \node[default point] at (2, \y) {};
        }
        \foreach \y in {-2,...,2} {
            \node[highlight point] at (2, \y) {};
        }
        \foreach \y in {3,...,4} {
            \node[default point] at (2, \y) {};
        }
        
        \node[default point] at (3, -4) {};
        \foreach \y in {-3,...,3} {
            \node[highlight point] at (3, \y) {};
        }
        \node[default point] at (3, 4) {};

        \node[default point] at (4, -4) {};
        
        \foreach \y in {-3,...,3} {
            \node[highlight point] at (4, \y) {};
        }
        \node[default point] at (4, 4) {};
        \foreach \y in {-4,...,-3} {
            \node[default point] at (5, \y) {};
        }
        \foreach \y in {-2,...,2} {
            \node[highlight point] at (5, \y) {};
        }
        \foreach \y in {3,...,4} {
            \node[default point] at (5, \y) {};
        }
        \foreach \y in {-4,...,-2} {
            \node[default point] at (6, \y) {};
        }
        \foreach \y in {-1,...,1} {
            \node[highlight point] at (6, \y) {};
        }
        \foreach \y in {2,...,4} {
            \node[default point] at (6, \y) {};
        }
        \foreach \y in {-4,...,-1} {
            \node[default point] at (7, \y) {};
        }
        \node[highlight point] at (7, 0) {};

        \foreach \y in {1,...,4} {
            \node[default point] at (7, \y) {};
        }

        \draw[line width= 0.3mm, ->] (-1.5, 0) -- (8.5, 0) node[right] {$x_1$};
        \draw[line width= 0.3mm, ->] (0, -4.5) -- (0, 4.5) node[above] {$x_2$};
        
        \draw (-1.5,1) -- (8.5,1);
        \draw (-1.5,2) -- (8.5,2);
        \draw (-1.5,3) -- (8.5,3);
        \draw (-1.5,4) -- (8.5,4);
        \draw (-1.5,-1) -- (8.5,-1);
        \draw (-1.5,-2) -- (8.5,-2);
        \draw (-1.5,-3) -- (8.5,-3);
        \draw (-1.5,-4) -- (8.5,-4);

        \draw (-1,-4.5) -- (-1,4.5);
        \draw (1,-4.5) -- (1,4.5);
        \draw (2,-4.5) -- (2,4.5);
        \draw (3,-4.5) -- (3,4.5);
        \draw (4,-4.5) -- (4,4.5);
        \draw (5,-4.5) -- (5,4.5);
        \draw (6,-4.5) -- (6,4.5);
        \draw (7,-4.5) -- (7,4.5);
        \draw (8,-4.5) -- (8,4.5);

        \draw (-1.5, 2.5) -- (0.5,4.5);
        \draw (-1.5, 0.5) -- (2.5,4.5);
        \draw (-1.5,-1.5) -- (4.5,4.5);
        \draw (-1.5,-3.5) -- (6.5,4.5);
        \draw (-0.5,-4.5) -- (8.5,4.5);
        \draw (1.5,-4.5) -- (8.5,2.5);
        \draw (3.5,-4.5) -- (8.5,0.5);
        \draw (5.5,-4.5) -- (8.5,-1.5);
        \draw (7.5,-4.5) -- (8.5,-3.5);

        \draw (-1.5,4.5) -- (7.5, -4.5);
        \draw (0.5,4.5) -- (8.5, -3.5);
        \draw (2.5,4.5) -- (8.5, -1.5);
        \draw (4.5,4.5) -- (8.5, 0.5);
        \draw (6.5,4.5) -- (8.5, 2.5);

        \draw (-1.5,2.5) -- (5.5, -4.5);
        \draw (-1.5,0.5) -- (3.5, -4.5);
        \draw (-1.5,-1.5) -- (1.5, -4.5);
        \draw (-1.5,-3.5) -- (-0.5, -4.5);
        
    \end{tikzpicture}
    \end{center}
    \caption{The fundamental domain of the finite chessboard graph highlighted in red}
    \label{fig:fundamental_domain}
\end{figure}

Another counterexample can be constructed from the chessboard lattice, illustrated in Figure~\ref{fig:chessboard_lattice}. To do so, we identify the vertices $x,y\in \Z^2$ if and only if 
\[
    x - y \in \Z \begin{pmatrix}
        4 \\
        4
        \end{pmatrix}
        + \Z \begin{pmatrix}
            4 \\
            -4
            \end{pmatrix}.
\]
The fundamental domain of the construction is illustrated in Figure~\ref{fig:fundamental_domain}.
The resulting graph is 6-regular, bone-idle and satisfies
\[
    \beta_1 = 0 < \frac{\deg_{\min}}{2}.
\]
To verify this, a Python implementation for computing the Betti number is provided in a public repository.\footnote{\url{https://github.com/moritzmath/Cell-Complex-Betti-Number}}. Furthermore, this repository contains an efficient Python implementation to calculate the Bakry-\'Emery curvature on graphs.

\bigskip

\textbf{Acknowledgements.} We want to thank Jürgen Jost, Mark Kempton, Gabor Lippner, Christian Rose and Shing-Tung Yau for fruitful discussions. M.H. wants to thank Max von Renesse for his invaluable support. M.H. is partly supported by BMBF (Federal Ministry of Education and Research) in DAAD project 57616814 (\href{https://secai.org/}{SECAI, School of Embedded Composite AI}).

\printbibliography

\end{document}